\newtheorem{thm}{Theorem}[section]
\newtheorem{lemma}[thm]{Lemma}
\newtheorem{prop}[thm]{Proposition}
\theoremstyle{plain}
\newtheorem{rem}[thm]{Remark}
\numberwithin{equation}{section}
\numberwithin{figure}{section}
\newcommand{\les}{\lesssim}
\newcommand{\al}{{\alpha}}
\newcommand{\Gam}{{\Gamma}}
\newcommand{\de}{{\delta}}
\newcommand{\ve}{{\varepsilon}}
\newcommand{\lam}{{\lambda}}
\newcommand{\vp}{{\varphi}}
\newcommand{\ka}{{\kappa}}
\newcommand{\Om}{{\Omega}}
\newcommand{\tT}{{\theta}}
\newcommand{\R}{{\mathbb R}}
\newcommand{\Z}{{\mathbb Z}}
\newcommand{\C}{{\mathbb C}}
\newcommand{\N}{{\mathbb N}}
\def\cE{\mathcal E}
\def\cF{\mathcal F}
\def\cG{\mathcal G}
\def\cI{\mathcal I}
\def\cJ{\mathcal J}
\def\cL{\mathcal L}
\def\cN{\mathcal N}
\def\cO{\mathcal O}
\def\cP{\mathcal P}
\def\cR{\mathcal R}
\def\cU{\mathcal U}
\def\cV{\mathcal V}
\def\cK{\mathcal K}
\def\normo#1{\left\|#1\right\|}
\def\bra#1{\left\langle #1\right\rangle}
\def\wt#1{\widetilde{#1}}
\def\wh#1{\widehat{#1}}
\def\ol#1{\overline{#1}}
\newcommand{\bm}{\mathbf}
\newcommand{\p}{\partial}
\newcommand{\supp}{{\rm supp}}
\newcommand{\brad}{{\bra{D}}}
\newcommand{\braxi}{{\bra{\xi}}}
\newcommand{\brat}{{\bra{t}}}
\newcommand{\cl}{{\mathcal L}}
\newcommand{\vpjk}{{\vp_{j,k}}}
\newcommand{\xil}{{\xi_l}}
\newcommand{\pk}{{P_{k}}}
\newcommand{\pko}{{P_{k_1}}}
\newcommand{\pkt}{{P_{k_2}}}
\newcommand{\pkf}{{P_{k_4}}}
\newcommand{\thez}{{\theta_0}}
\newcommand{\theo}{{\theta_1}}
\newcommand{\thet}{{\theta_2}}
\newcommand{\theth}{{\theta_3}}
\newcommand{\thef}{{\theta_4}}
\newcommand{\thej}{{\theta_j}}
\newcommand{\doth}{{\dot{H}}}
\newcommand{\hang}{{H_\Omega^{0,1}}}
\newcommand{\qjk}{Q_{jk}}
\newcommand{\psit}{\psi_\theta}
\newcommand{\cm}{{\rm CM}}
\begin{document}
\title[Maxwell-Dirac system]{The global dynamics for the  Maxwell-Dirac system}

\author{Yonggeun Cho}
\address{Department of Mathematics, and Institute of Pure and Applied Mathematics,
	Jeonbuk National University, Jeonju, 54896, Republic of Korea}
\email{changocho@jbnu.ac.kr}
\author{Kiyeon Lee}
\address{Stochastic Analysis and Application Research Center(SAARC), Korea Advanced Institute of Science and Technology, 291 Daehak-ro, Yuseong-gu, Daejeon, 34141, Republic of	Korea}
\email{kiyeonlee@kaist.ac.kr}

\thanks{2020 {\it Mathematics Subject Classification.} 35Q41, 35Q55, 35Q40.}
\thanks{{\it Keywords and phrases.} Maxwell-Dirac system, Lorenz gauge, global existence, modified scattering, space-time resonance, vector-field energy method.}

\begin{abstract}
	In this paper, we study the global existence and modified scattering of solutions to (1+3) dimensional massive Maxwell-Dirac system in the Lorenz gauge. We employ a vector-field energy method combined with a delicate analysis of the space-time resonance argument. This approach allows us to establish decay estimates and energy bounds crucial for proving the main theorem.  Especially, we provide an explicit phase correction arising from the strong nonlinear resonances.
\end{abstract}

\maketitle

\tableofcontents

\section{Introduction}

\subsection{Maxwell-Dirac system}
In this paper we consider (1+3)-dimensional Maxwell-Dirac system:
\begin{align}\label{md}
	\left\{ \begin{aligned}
		i\al^\mu \textbf{D}_{\mu} \psi &= m \beta \psi,\\
		\partial^\nu F_{\mu\nu} &= - \bra{\psi, \al_\mu \psi},
	\end{aligned}
	\right. \tag{MD}
\end{align}
where  the spinor field  is $\psi : \R^{1+3} \to \C^{4}$, the gauge fields are $A_\mu : \R^{1+3} \to \R$,  the covariant derivative $\textbf{D}_\mu$ denotes $\partial_\mu -iA_\mu$ for $\mu = 0, 1, 2, 3$, and $\partial_0 = \partial_t$.  The curvature is defined by $F_{\mu\nu} = \partial_\mu A_\nu - \partial_\nu A_\mu$. $\al^\mu$ and $\beta$ are Hermitian $4\times 4$ matrices. In particular, $\al^0$ denotes the $4\times 4$ identity $I_4$ and $\al^1, \al^2, \al^3, \beta$ have the relation
\begin{align*}
	 \al^j \al^k + \al^k \al^j = 2\delta^{jk}I_4, \;\;\al^j\beta + \beta\al^j = 0\;\;(j,k = 1, 2, 3), \;\;\mbox{and}\;\; \beta^2 = I_4.
\end{align*}
We use the standard Pauli-Dirac representation \cite{book:bjorken-drell,book:thaller} in this paper. The Greek indices indicate the space-time components $\mu,\nu = 0,\, 1,\,2,\,3$ and Roman indices mean the spatial components $j=1,\, 2,\,3,$ in the sequel.  The Einstein summation convention is in effect with Greek indices summed over $\mu=0,\cdots, 3$ and Roman indices summed over the spatial variables $j = 1, 2, 3$. We lower and raise indices with Minkowski metric $\eta = {\rm diag}(-1,1,1,1)$. Thus $\al_\mu = \eta_{\mu\nu}\alpha^\nu$ and $\p^\mu = \eta^{\mu\nu}\p_\nu$ . The $\langle \phi , \psi \rangle = \phi^\dagger \psi$ denotes a standard complex inner product. We call \emph{massive} and \emph{massless} \eqref{md} if the mass parameter is $m > 0$ and $m=0$, respectively.

Maxwell-Dirac system is the Euler-Lagrange equations for  $\textbf{S}[A_\mu,\psi]$, where
\begin{align*}
	\textbf{S}[A_\mu,\psi] = \int_{\R^{1+3}}  \left[ -\frac14 F^{\mu\nu}F_{\mu\nu} + i \bra{\psi, \al^\mu \textbf{D}_\mu \psi} - m \bra{\psi, \beta\psi}  \right] dxdt.
\end{align*}
%Computing this Euler-Lagrangian, we arrive at the \eqref{md}.
The system \eqref{md} models an electron in electromagnetic field and form a fundamental system in quantum electrodynamics. For detailed description, we refer to \cite{book:bjorken-drell,book:schwartz}.

One of the basic features of \eqref{md} is the gauge invariance. Indeed, \eqref{md} is invariant under the gauge transformation $
(\psi,A) \longmapsto (e^{i\chi}\psi, A-d\chi),$ for a real-valued function $\chi$ on $\R\times\R^{3}$. For the sake of concreteness of discussion, let us choose the Lorenz gauge
\begin{align}\label{gauge:lorenz}
	\partial^\mu A_\mu =0.
\end{align}
Under \eqref{gauge:lorenz}, the system \eqref{md} becomes
\begin{align}
	\left\{ \begin{aligned}(-i\partial_t +  \alpha \cdot D  + m{\beta})\psi &=  A_\mu\alpha^\mu \psi\quad\mathrm{in}\;\;\mathbb{R}^{1+3},\\
		-\square A_{\mu} & = \bra{\psi , \alpha_\mu \psi },
	\end{aligned}
	\right.\label{maineq:md-lorenz}
\end{align}
The momentum operator $D = (D_1, D_2, D_3)$ is defined by $D_j = -i\partial_j\;(j = 1, 2,3)$ and $\al = (\al^1,\al^2,\al^3)$ and $\square=-\partial_{t}^{2}+\Delta$. %For other common choices, there are Coulomb gauge condition $\p^j A_j =0$ and temporal gauge condition $A_0=0$.
We will consider a Cauchy problem of \eqref{maineq:md-lorenz} with initial data
\begin{align}\label{eq:initial}
	\psi(0) = \psi_0,\quad A_\mu(0)& = a_{ \mu}, \quad\partial_tA_\mu(0) = \dot{a}_{ \mu}.
\end{align}
If the solution to $\eqref{maineq:md-lorenz}$ with  \eqref{eq:initial} is sufficiently smooth, the system possesses the charge conservation law
\begin{align*}
	\|\psi(t)\|_{L^2} = \|\psi_0\|_{L^2}.
\end{align*}

\subsection{Previous works} There is a large amount of literature dealing with the local and global well-posedness, and the asymptotic behavior of solutions of IVP to \eqref{md}. For early work in \cite{gross1966,bour1996}, the authors considered the local well-posedness of \eqref{md} on $\R^{1+3}$ and Georgiev \cite{geor1991} proved the global existence for small, smooth initial data in the Lorenz gauge. Later, D'Ancona-Foschi-Selberg \cite{anfosel2010} obtained an almost optimal regularity $(\psi_0, a_\mu, \dot{a}_\mu) \in H^\ve \times H^{\ve+\frac12} \times H^{\ve-\frac12}$ on $\R^{1+3}$ of local solution in the Lorenz gauge. They exploited the spinorial null structures, which stem from Dirac projection operators.  D'Ancona-Selberg \cite{ansel} extended their previous approach to \eqref{md} on $\R^{1+2}$ and proved the global well-posedness in the charge class $L^2 \times H^\frac12 \times H^{-\frac12}$. Regarding the local well-posedness for \eqref{md} in the Coulomb gauge ($\p^j A_j = 0$), we refer to \cite{bemaupoup1998,masnaka2003-imrn}. Moreover, Masmoudi and Nakanishi \cite{masnaka2003-cmp} showed the unconditional uniqueness results for (1+3) dimensional \eqref{md} in the Coulomb gauge.

Concerning the asymptotic behavior of global solution to \eqref{md} in the $\R^{1+3}$ Minkowski space, we refer to \cite{flasimon1997, psa2005}. In \cite{flasimon1997}, Flato et al considered a final state problem of \eqref{md} in the Lorenz gauge and they showed the asymptotic behavior and asymptotic completeness which leads  to the global well-posedness of IVP to \emph{massive} \eqref{md} for a data set in Schwartz class and also to a modified scattering. The result of \cite{flasimon1997} seems to be the first nonlinear scattering result obtained without compact support condition. Later, Psarelli \cite{psa2005} showed the global behavior for the IVP to \emph{massive} \eqref{md} in the Lorenz gauge with compactly supported initial data.
Regarding %Dirac equations with the magnetic potential, D'Ancona and Okamoto  \cite{ancooka2017} showed an scattering results.
\eqref{md} with vanishing magnetic field, in \cite{CKLY2022, cloos}, the authors established the modified scattering results in the Lorenz gauge, independently. Recently, Herr, Ifrim, and Spitz \cite{herrifrim2024} considered \eqref{md} in the Lorenz gauge and showed the global existence and modified scattering based on the method of testing with wave packets. Especially, they investigated the asymptotic behavior of the solutions via asymptotic expansion inside the light cone. %However, we believe that our analysis method for modified scattering used in this paper has its own specific advantages that would be of independent interest in various applications.

%It should be noticed that the method of \cite{heif2024} is totally different from ours and the works were carried out independently\footnote{While preparing the current paper, we learned  in the conference ??? that the authors of \cite{h} were preparing.}.

For other problems related to \eqref{md}, we first refer to \cite{gaoh} for a modified scattering in the energy critical space on $\mathbb R^{1+d}\;(d\ge 4)$ in the Coulomb gauge and also \cite{lee2023} for a linear scattering on $\mathbb R^{1+4}$ in the Lorenz gauge.  % results are shown by some results.  established the global well-posedness of \emph{massless} \eqref{md} with Coulomb gauge condition for (1+d) Minkowski space $d \ge 4$ and modified scattering of the solution. They also showed the global well-posedness in $\doth^\frac{d-3}2\times \doth^{\frac{d-2}2} \times \doth^{\frac{d-4}2}$, which is absolutely critical regularity.
As a scalar counterpart of \eqref{md}, the Maxwell-Klein-Gordon system (MKG) has been also studied by many authors. Regarding the global well-posedness for the (1+4) dimensional \emph{massless} (MKG), we refer the reader to \cite{kristertata2015-duke, rodtao2004}. In \cite{krieluhr2015, ohtataru2016-inven, ohtata2018}, the authors studied independently the scattering results for \emph{massless} (MKG) in the Coulomb gauge on $\mathbb R^{1+4}$. Concerning the global existence for (1+3) dimension, we refer to \cite{seltes2010}. In particular, in \cite{yang2018, yangyu2019, cankaulin2019, he2021}, the authors presented an asymptotic behavior of the solution to the \emph{massless} (MKG) in the Coulomb and Lorenz gauge on $\mathbb R^{1+3}$. We also refer to \cite{iopau2019,ouyang2023} for the modified scattering results of the \emph{massive} Maxwell-Klein-Gordon type system in the Lorenz gauge. While the \emph{massless} (MKG) are concerned by many authors, there is a few scattering results for \emph{massive} (MKG). We would like to mention the recent result in \cite{gav2019} which establishes the global regularity for the modified scattering results of the higher dimensional \emph{massive} (MKG) on $\R^{1+d}(d \ge 4)$ in the Coulomb gauge. See also \cite{klaiwangyang2020, fangwangyang2021} and \cite{dowya2024, dolimayu2024} for the asymptotic behavior of the (1+3) dimensional \emph{massive} (MKG) and wave-Klein-Gordon type systems, respectively.

\subsection{Main Theorem}

To identify the asymptotic behavior for Dirac spinor, we introduce Dirac projection operator and decompose spinor by using this projection. Let us denote $\bra{D} := \mathcal F^{-1}(\bra{\xi})$ and $|D| := \mathcal F^{-1}(|\xi|)$, where $\cF, \cF^{-1}$ are Fourier transform and its inverse, respectively, and $\bra{\,\cdot\,} := (1+ |\cdot|^2)^{\frac12}$.
Then we define projection operators $\Pi_{\theta}$ for $\theta \in \{+, -\}$
by
\begin{align}\label{eq:projection}
	\Pi_{\theta} \bigg(\!\!=\Pi_{\theta}(D)  \bigg):=\frac{1}{2}\left( I_4 + \theta \frac{\alpha \cdot  D + {\beta}}{\bra{D}}\right).
\end{align}
We denote the symbol of $\Pi_\theta$ by $\Pi_\theta(\xi)$. This simply implies $\psi = \Pi_+\psi  + \Pi_-\psi$.

Let us define the standard vector fields as follows:
\begin{align*}
	\p_\mu\,(\mu =0,1,2,3),  \quad \Om_{jk}:= x_j \p_k - x_k\p_j \,(jk \in \{23,31,12\}), \quad \Gam_j:= t\p_j + x_j \p_t \,(j=1,2,3),
\end{align*}
which are the infinitesimal generators of translation, rotation, and Lorentz boost, respectively. For $n \in \N$, we define sets of differential operators by
\begin{align*}
	\cV_0 := \{ I \}\;\;\mbox{and}\;\;	\cV_n := \left\{   [\p]^a  [\Om]^b [\Gam]^c: |a| + |b| + |c| \le n  \right\},
\end{align*}
where $a=(a_0,a_1,a_2,a_3)\, b=(b_1,b_2,b_3), c=(c_1,c_2,c_3)$, $|a|=\sum_{\mu=0}^4a_\mu$, $|b|=\sum_{j=1}^3 b_j$, $|c|=\sum_{j=1}^3c_j$, and
\begin{align*}
	[\p]^a:= \p_0^{a_0}\p_1^{a_1}\p_2^{a_2}\p_3^{a_3}, \quad [\Om]^b := \Om_{23}^{b_1}\Om_{31}^{b_2}\Om_{12}^{b_3}, \quad [\Gam]^c:= \Gam_1^{c_1}\Gam_2^{c_2}\Gam_1^{c_2}.
\end{align*}

Our main theorem is stated as follows:
\begin{thm} \label{mainthm}
	Let  $N(n)$ be indexed as follows:
	\vspace{-0.5cm}
	\begin{center}
		\begin{align*}
			\begin{tabular}{|c||c|c|c|c|}
				\hline
				n & 0 & 1 & 2 &3 \\
				\hline
				N(n) & 70 & 30 & 20 & 10\\
				\hline
			\end{tabular}
		\end{align*}
	\end{center}
	Them, there exist $\ve_{0}>0$ such that:
	\begin{itemize}
		\item[(i)] Suppose that the initial data \eqref{eq:initial} satisfies that
		\begin{align}
			\|\bra{\xi}^{20}|\xi|^{\frac12}\widehat{\psi_{0}}\|_{L_{\xi}^{\infty}} + \sum_{n=0}^3\left[\|\bra{x}^n\bra{D}^{n}\psi_{0}\|_{H^{N(n)}}+ \|\bra{x}^n\bra{D}^n|D|^{\frac12}(a_\mu,\dot a_\mu)\|_{H^{N(n)}}\right] < \ve_{0}.\label{condition-initial}
		\end{align}
		Then there exists a unique global solution $(\psi, A_\mu)$ with $(\psi, |D|^{\frac12} A_\mu )\in C([0, \infty); H^{N(0)})$ to \eqref{md} in the Lorenz gauge \eqref{gauge:lorenz}.
		
		\item[(ii)] The solution $(\psi(t), A_\mu(t) )$ has the following asymptotic behavior: There exists $(\psi^\infty(t), A_\mu^\infty(t))$ and phase modification $B_\theta (t,D) (\theta \in \{+, -\})$  such that for some $0 < \de, \zeta \ll 1$,
		\begin{align}\label{mainthm:asymptotic}
			\left\{\begin{array}{c}
						\normo{ \Pi_\theta \psi(t) - e^{iB_\theta(t,D)}\Pi_\theta\psi^\infty(t)}_{L^2}\\
  \normo{(A_\mu(t), \p_t A_\mu(t))- (A_\mu^\infty(t),\p_t A_\mu^\infty(t))}_{\doth^{\frac12+\zeta}\times \doth^{-\frac12 + \zeta}}
			\end{array}\right\} = O(t^{-\de}) \;\;\mbox{as}\;\;t\to\infty,
		\end{align}
		where the phase modification is defined by the symbol
		\begin{align}\label{mainthm:correction}
			B_\theta(t,\xi)  &:= \Pi_\theta(\xi) \al^\mu\int_0^t (P_{\le K} A_\mu)\left(s, \theta \frac{ s \xi}{\bra{\xi}}\right) ds
		\end{align}
		for $K = K(s)$ which is the largest integer such that $2^K \le \bra{s}^{-\frac23 - 2\zeta}$,
		and $(\psi^\infty, A_\mu^\infty ) \in C(\mathbb R ;L^2 \times \dot H^{\frac12+\zeta})$ is a solution to the  linear system \eqref{md}:
		\begin{align*}
			\left\{ \begin{aligned}i\al^\mu \p_\mu \psi^\infty &= m\beta \psi^\infty,\\
				\square A_{\mu}^\infty & = 0.
			\end{aligned}
			\right.
		\end{align*}
		\item[(iii)] For some small $\ol{\de} >0$,  the solution $(\psi,A_\mu)$ satisfies the energy bounds with slow growth,
		\begin{align*}
			\sum_{n=0}^3	\sup_{\cL \in \cV_n}\| \cL \Pi_\theta \psi(t)\|_{H^{N(n)}} + \||D|^\frac12\cL A_\mu(t)\|_{H^{N(n)}} \les \ve_0 \bra{t}^{\ol \de},
		\end{align*}
		for $t \in [0, \infty)$. Moreover, $(\psi,A_\mu)$ decays as follows:
		\begin{align}\label{mainthm:decay}
			\sum_{n=0}^3	\sup_{\cL \in \cV_n}	\|\cL\Pi_\theta \psi(t)\|_{L^\infty} + \| \cL A_\mu (t)\|_{L^\infty} \les \ve_0 \bra{t}^{-1+\ol \de}.
		\end{align}
	\end{itemize}
	%%%%%%%%%%%%%%%%%%%%%%%%%%%%%%%%%%%%%%%%%%%%%%%%%%%%%%%%%%%%%%%%%%%%%%%%%%%%%%%%%%%%%%%%%%%%%%%%%%%%%%%%%%%%%%%%%%%%%%%%%%%%%%%%%%%%%%%%%%%%%%%%%%%%%%%%%%%%%%%%%%%%%%%%%%%%%%%%%%%%%%%%%%%%%%%%%%%%%%%%%%%%%%%%%%%%%%%%%%%%%%%%%%%%
	%%%%%%%%%%%%%%%%%%%%%%%%%%%%%%%%%%%%%%%%%%%%%%%%%%%%%%%%%%%%%%%%%%%%%%%%%%%%%%%%%%%%%%%%%%%%%%%%%%%%%%%%%%%%%%%%%%%%%%%%%%%%%%%%%%%%%%%%%%%%%%%%%%%%%%%%%%%%%%%%%%%%%%%%%%%%%%%%%%%%%%%%%%%%%%%%%%%%%%%%%%%%%%%%%%%%%%%%%%%%%%%%%%%%
	
\end{thm}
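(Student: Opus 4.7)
The plan is to prove Theorem \ref{mainthm} by a continuity/bootstrap argument that couples a vector-field energy hierarchy with a space-time-resonance analysis of the Dirac profile. Starting from the local theory of \cite{anfosel2010}, assume the solution exists on $[0,T]$ and improve the bootstrap norms on that interval. First, decompose $\psi = \Pi_+\psi + \Pi_-\psi$ so that each piece satisfies the half-wave equation $(i\p_t - \theta\bra{D})\Pi_\theta\psi = \Pi_\theta(D)(A_\mu\al^\mu\psi)$, and factor $\square$ into half-wave operators acting on $A_\mu^\pm := \tfrac12(|D|^{1/2}A_\mu \mp i|D|^{-1/2}\p_tA_\mu)$. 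Introduce the profiles $f_\theta(t):=e^{it\theta\bra{D}}\Pi_\theta\psi(t)$ and $g_\mu^\pm(t):=e^{\pm it|D|}A_\mu^\pm(t)$, constant along the linear flow. The bootstrap assumes (a) the vector-field energies $E_n(t):=\sup_{\cL\in\cV_n}\bigl(\|\cL\Pi_\theta\psi(t)\|_{H^{N(n)}} + \||D|^{1/2}\cL A_\mu(t)\|_{H^{N(n)}}\bigr)\le C\ve_0\bra{t}^{\ol\de}$ and (b) a dispersive $Z$-type frequency norm such as $Z(t):=\|\bra{\xi}^{20}|\xi|^{1/2}\widehat{f_\theta}(t)\|_{L^\infty_\xi}$ uniformly bounded in $t$ (up to logarithmic losses). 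The $L^\infty$ decay in \eqref{mainthm:decay} then follows from Klainerman-Sobolev inequalities applied to the $\Pi_\theta$-localized pieces using the Lorentz and rotational generators in $\cV_n$, sharpened at each frequency by linear dispersive estimates fed by the $Z$-norm.

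For the energy step, commute each $\cL\in\cV_n$ with both the half-wave and wave equations. Since $\p_\mu,\Om_{jk},\Gam_j$ commute (or almost commute, modulo spin rotations absorbed into $\Pi_\theta$) with the free Dirac operator and with $\square$, the standard energy identity reduces $\tfrac{d}{dt}E_n^2$ to trilinear $L^2$ integrals of $\cL(A_\mu\al^\mu\psi)$ paired with $\cL\psi$, and of $\cL\bra{\psi,\al_\mu\psi}$ paired with $|D|\cL A_\mu$. Distribute vector fields via Leibniz, place the highest-order factor in $L^2$, and bound the others in $L^\infty$ through the decay from (a) and (b). The spinorial null symbol $\Pi_\theta(\xi)\al^\mu\Pi_{\theta'}(\xi-\eta)$ vanishes along the resonant collinear direction, controlling otherwise borderline high-high-low interactions and making possible the slow growth $\bra{t}^{\ol\de}$ in (iii).

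The core of (ii) is a Duhamel analysis of $\widehat{f_\theta}$, which after substituting the profile identities becomes a sum of trilinear oscillatory integrals in $(\eta,s)$ with phase $\phasep(\xi,\eta) = \theta\bra{\xi} - \theta'\bra{\xi-\eta} - \pmt|\eta|$ and symbol $\Pi_\theta(\xi)\al^\mu\Pi_{\theta'}(\xi-\eta)$ acting on $|\eta|^{-1/2}\widehat{g_\mu^{\pmt}}(s,\eta)\,\widehat{f_{\theta'}}(s,\xi-\eta)$. Integration by parts in $s$ closes the estimate wherever $|\phasep|$ is bounded below, and integration by parts in $\eta$ (using the null symbol to absorb the bad direction) handles regions where $|\nabla_\eta\phasep|$ is large. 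A direct computation shows that the space-time-resonant set, where both $\phasep$ and $\nabla_\eta\phasep$ vanish, collapses at $\eta=0$ only in the diagonal channel $\theta'=\theta$, with $\eta$ forced to lie along the group velocity $\theta\xi/\bra{\xi}$. Near this set no removal of the oscillation is possible: a stationary-phase expansion in $\eta$ identifies the leading contribution as exactly the infinitesimal action of $iB_\theta(t,\xi)$ on $\widehat{f_\theta}$, with $A_\mu$ evaluated at the characteristic $\theta s\xi/\bra{\xi}$. The cutoff $P_{\le K}$ at $2^K\sim\bra{s}^{-2/3-2\zeta}$ is the matching scale between the bilinear phase degeneracy (of size $|\eta|^2/\bra{\xi}$) and the $\bra{s}^{-1}$ pointwise decay of $A_\mu$, chosen so that the discarded frequencies contribute a remainder summable in $s$, yielding the $O(t^{-\de})$ error in \eqref{mainthm:asymptotic}. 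For the wave profiles $g_\mu^\pm$ no correction is needed, since $\bra{\psi,\al_\mu\psi}$ carries a classical bilinear null structure once the spinors are split by $\Pi_\pm$, so direct integration by parts in $s$ yields linear asymptotics $A_\mu^\infty\in\doth^{1/2+\zeta}$.

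The hardest step will be the resonant analysis in Step 3: quantifying the stationary-phase expansion rigorously, verifying that \eqref{mainthm:correction} captures the full resonant leading order up to the stated error, and showing that $B_\theta(t,D)$ is itself bounded on $L^2$ (so that $e^{iB_\theta}$ is well defined) despite the slow decay of $A_\mu$ at very low frequency. The cutoff $2^K\sim\bra{s}^{-2/3-2\zeta}$ is what reconciles these competing demands, and its optimality is dictated by the coupling between the $t^{\ol\de}$-growing vector-field energies and the uniform-in-$t$ $Z$-norm that underlies the genuine $\bra{t}^{-1}$ dispersion of the Dirac equation.
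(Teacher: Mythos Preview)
Your overall architecture---bootstrap, profiles, space-time resonance, phase correction at $\eta=0$---matches the paper, but two load-bearing steps rest on claims that are false for this system. First, you assert that the spinorial symbol $\Pi_\theta(\xi)\al^\mu\Pi_{\theta'}(\xi-\eta)$ vanishes along the resonant direction and that this controls the borderline high-high-low interactions in the energy step. It does not: the identity $\al^j\Pi_\theta(\xi)=\Pi_{-\theta}(\xi)\al^j+\theta\xi_j/\bra{\xi}$ always leaves a Riesz-transform piece with no null structure, regardless of the sign relation (see \eqref{eq:lack-null} and the discussion in Section~\ref{sec:ideas}). Consequently your energy scheme ``Leibniz, top factor in $L^2$, rest in $L^\infty$'' cannot close at top order with only $\bra{t}^{\ol\de}$ growth. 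The paper instead builds a cubic-corrected energy functional $\cE_\cL^{\bm D}$ (Section~\ref{sec:energy1}) and runs normal forms and integration by parts in $\eta$ \emph{inside the energy estimate itself}, exploiting the lower bounds \eqref{eq:resonance-time}--\eqref{eq:nonresonance-space} rather than any null cancellation.

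Second, you are missing the weighted-energy tier \eqref{assum:weight} entirely, and this is what actually drives the decay. The paper does not use Klainerman--Sobolev; the $L^\infty$ bounds come from dispersive estimates \eqref{eq:dispersive-dirac}--\eqref{eq:dispersive-maxwell} fed by the localized weighted bounds \eqref{eq:elliptic-q}, and those weighted bounds are obtained by the Lorentz-boost identities \eqref{eq:weight-lorentz-dirac}--\eqref{eq:weight-lorentz-wave}, which convert $x_l\phi_{\theta,\cL}$ into $\Gam_l\psi_{\theta,\cL}$ plus a nonlinear remainder (Section~\ref{sec:weight}). Without this conversion the weights produce uncontrolled $t$-growth. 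Finally, for the Maxwell part your appeal to a ``classical bilinear null structure'' in $\bra{\psi,\al_\mu\psi}$ is again misplaced: the linear scattering of $A_\mu$ in $\dot H^{1/2+\zeta}$ comes not from a null form but from the fact that the space-time resonance of $q_{\Theta'}$ sits at $\xi=0$, and the extra $|\xi|^\zeta$ in the norm is exactly what kills it (Section~\ref{sec:s-maxwell} and Remark~(4) after the theorem).
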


The novelty of this paper is to provide an explicit form of interaction between spinor and gauge fields resulting in modified scattering. The proof is based on the space-time resonance argument developed by Germain-Masmoudi-Shatah \cite{gemasha2008,gemasha2012-annals,gemasha2012-jmpa} and the vector-field energy method by Ionescu-Pausader \cite{iopau2019}. We believe that our method for modified scattering used in this paper has its own specific advantages that would be of independent interest in various applications.
It remains still open to obtain the global existence and asymptotic behavior of \eqref{md} in the other gauges (especially the Coulomb gauge). The Maxwell part consists of elliptic and wave equations in the Coulomb gauge. Due to the lack of Lorentz invariance in the elliptic equation, the extension of current results to the problem in the Coulomb gauge will be nontrivial. This will be addressed as a next issue.

\begin{rem} 
$(1)$ The proof of Theorem \ref{mainthm} relies on a bootstrap argument under the a priori assumptions \eqref{assum:energy}--\eqref{assum:s-norm}. The initial data condition \eqref{condition-initial} is essential for satisfying these assumptions. Specifically, the Fourier amplitude condition in \eqref{condition-initial} is crucial to demonstrate modified scattering behavior for the Dirac component.\\

$(2)$ The phase correction $B_\theta (t,\xi)$ is a real-valued and exhibits $t^{\ol \de}$-growth due to the slow decay  \eqref{mainthm:decay} of the gauge fields (see \eqref{eq:decay-correction}). In view of the dispersive effects, even if we obtain the full decay $t^{-1}$ of the Maxwell part, this phase correction still possesses at least logarithmic divergence.  \\

$(3)$ In \eqref{mainthm:asymptotic}, this asymptotic behavior for Dirac spinor implies that there exists $B(t,D)$ such that
	\begin{align*}
		\normo{ \psi(t) - e^{iB(t,D)}\psi^\infty(t)}_{L^2} \xrightarrow{t\to \infty} 0.
	\end{align*}
	Here, $B(t,D)$ can be defined as
	\begin{align*}
		e^{iB(t,D)}: = e^{iB_+(t,D)}\Pi_+ +  e^{iB_-(t,D)}\Pi_-.
	\end{align*}

$(4)$ Theorem \ref{mainthm} describes the linear scattering behavior for the wave part, while the spinor component exhibits modified scattering effects. %According to $(3)$ in this remark, we can infer that $\psi$ has  optimal decay.
The regularity of the homogeneous space plays a role of a null structure, which implies additional time decay. Based on this observation, we also obtain
	\begin{align}\label{eq:linear-sc-high}
		\normo{(A_\mu(t), \p_t A_\mu(t))- (A_\mu^\infty(t),\p_t A_\mu^\infty(t))}_{\doth^{m}\times \doth^{m-1}} \les \ve_0 \bra{t}^{-\de},
	\end{align}
	for $m \in [1, N(2)]$.\\

$(5)$ Within the existence time the solution has the same regularity as the initial data. We brief on this in Section \ref{main proof}.\\

$(6)$ Some exponents appearing in Theorem \ref{mainthm} are obtained roughly. For instance, the regularity exponent $N(n)$ could be improved. In this paper, we set $\de = 10^{-10}$, $\zeta = 1050 \de$, and $\ol{\de} = 410 \de$ and we will not pursue the sharpness of these parameters.\\

 $(7)$ We tried to find the minimal order of $\cV_n (n \in \{0, 1, \cdots, n_0\})$, and $n_0 = 3$ seems to be best in our analysis. One may consider higher orders $n_0 \ge 4$ by adjusting $N(n)$ and $\de$ suitably. In this higher order case, one may get $t^{-\frac32 + \ol \de}$ decay for Dirac part.\\ %$N(n) = 10 + 20(n_0 - n)$ and $\de = 10^{-10(n_0-2)}$.

%	$(7)$ In view of the standard dispersive estimates for the Klein-Gordon and wave equations, our decay results \eqref{mainthm:decay} are nearly optimal for the wave part but not for the Dirac part. However, once the order $n_0$ is raised up to $7$, by the classical decay estimate for Klein-Gordon equation (for instance see Appendix B of \cite{{geor1991}}) together with energy estimates, Propositions \ref{prop:energy} and \ref{prop:weighted} below, we can show the pointwise bound
%$$
%|\psi(t, x)| \les \ve_0(1 + t + |x|)^{-\frac32 + \de'} \;(0 < \de' = \de'(\de) \ll 1).
%$$

%}	
\end{rem}

\subsection{Main idea}\label{sec:ideas}
We prove the existence of global solution and the asymptotic behavior with a bootstrap argument starting from an a priori assumption with energy and weighted energy estimates, which are equipped with differential operators $\cL \in \cV _n$ and scattering norms that we devise to control Fourier amplitude throughout the paper (see \eqref{assum:energy}--\eqref{assum:s-norm}). Our argument is summarized as the vector-field energy method, which enables us to handle the higher-order energy estimates by the normal form approach exploiting space-time resonances suitably.

%By observing the nonlinearities of \eqref{md},
The nonlinear terms of \eqref{maineq:md-lorenz} give rise to several oscillatory integrals with the stationary points of the phase interactions. Our analysis begins with addressing the space, time, and space-time resonances of phase functions originating from the Dirac and wave propagators. %within the oscillatory integrals.
Specifically, through a standard reformulation with the Dirac projection operator and first-order wave equations (see Section \ref{sec:reformul} below), we obtain the nonlinearities $\mathbf{\Phi}_{\Theta}$ and $\mathbf{V}_{\mu, \Theta'}$ for the profiles $\phi_\thez = e^{\thez it\brad}\psi_{\thez}$  and $V_{\mu, \thez'} = |D|^{\frac12}e^{-\thez' it|D|}A_{\mu, \thez'}$, respectively:
\begin{align}\label{intro:nonlinear}
	\begin{aligned}
		&\int_{\R^{1+3}} e^{is\,p_\Theta(\xi, \eta)  }\Pi_{\theta_0}(\xi) |\eta|^{-\frac12}\widehat{ V_{\mu, \theta_2}}(s, \eta) \alpha^\mu \widehat{ \phi_{\theta_1}}(s, \xi-\eta)\,d\eta ds, \;\; p_\Theta(\xi, \eta) = \theta \bra{\xi} - \theta_1\bra{\xi-\eta} + \theta_2|\eta|,\\
		&\int_{\R^{1+3}} |\xi|^{-\frac12} e^{is q_{\Theta'}(\xi,\eta)} \bra{\widehat{\phi_{\theta_1'}}(s,\eta),\alpha_\mu\wh{\phi_{\theta_2'}}(s,\xi+\eta)}\, d \eta ds, \;\; q_{\Theta'}(\xi, \eta) = -\theta' |\xi| + \theta'_1\bra{\eta} - \theta'_2 \bra{\xi+\eta},
	\end{aligned}
\end{align}
where 3-tuples $\Theta = (\theta, \theo, \thet)$ and $\Theta' = (\theta', \theo', \thet')$ where $\thej, \thej' \in \{+,-\}\,(j=1,2)$. We call the functions $p_\Theta$ and $q_{\Theta'}$ \emph{phase interactions} in this paper. As we can observe obviously, the nonlinearity of the Dirac part and wave part consists of Dirac-wave interaction and Dirac-Dirac interaction, respectively. Hence the phase interactions exhibit various sign relations, requiring consideration of different resonance cases.

While the phases experience space-time resonance, the nonlinearities of \eqref{maineq:md-lorenz} lack sufficient null structures to eliminate these resonances. As observed in \cite{anfosel2010, huhoh2016, gaoh, CKLY2022, lee2023}, the relation between projection operators and Dirac matrices $\al_j$ induces sign-changed projection parts and Riesz transform parts:
\begin{align}\label{eq:lack-null}
	\al^j \Pi_{\theta}(\xi) =  \Pi_{-\theta}(\xi)\al^j + \theta \frac{\xi_j}{\bra{\xi}}.
\end{align}
Depending on the sigh relation, some terms with null structures emerge, whereas the interaction generates terms without null structures, regardless of the sign relation. To address the lack of null structure, we use a feature of Lorentz invariance in the Lorenz gauge. More precisely, we exploit the vector-field energy method alongside the space-time resonance argument. It is crucial to introduce the vector fields in the energy estimates in order to avoid the time growth, which is a cost of weighted estimates occurring in the space-time resonance estimates under the a priori assumptions. In fact, the number of weights implies the same number of time growth in the Duhamel's formula. %, the more weights are used, the more time growth it should be treated.
However, by imposing the vector fields in both energy and weighted estimates, we can convert weighted estimates into nonlinear estimates without additional time growth arising from Lorentz boosts (see \eqref{eq:weight-lorentz-dirac} and \eqref{eq:weight-lorentz-wave}). Since the Dirac operator does not commute with Lorentz boots in contrast to the Klein-Gordon equation, % and the Dirac projection does not commute with both the rotation and Lorentz boosts.
we find out the exact additional terms coming from the commutator between vector fields and Dirac operators (see Section \ref{sec:vector}). %It turns out that the additional terms are very smooth and have no harm in the energy estimates. Hence we can handle them as a smooth error through the entire estimates.
It turns out that the additional terms have no harm in the energy estimates.  These terms arising in the exchange procedure can be decomposed into two parts. One part comprises vector fields of lower order, while the other part includes at least one time-translation vector field. In particular, the latter does not involve Lorentz boosts. Therefore, we can treat these terms as an easier case due to the lower order of vector fields throughout the entire estimates.

To show the asymptotic behavior of spinor, we need to control the scattering norm $\|\cdot\|_{\bm D}$ as defined in \eqref{def:s-norm-dirac}. At the same time, a phase modification is required. In view of the dispersive estimates for Dirac and Maxwell equations, the nonlinearities are not integrable in time due to the resonance in the infrared regime. Hence the phase modification is necessary to get rid of the resonance effect of slow decay of the low frequency part of gauge fields $A_\mu$ associated with the spinor. Indeed, the phase interaction of Dirac part has a decomposition $p_\Theta= p_{res} + p_{non}$, in which $p_{res} = \theta \frac{s\xi}{\bra{\xi}}$ (when $\theta = \theta_1)$ has the most strong resonance and $p_{non}$ can give a suitable decay effect (see \eqref{eq:phase-approximation}). By cutting off the low frequency part from $A_\mu$ as stated in the main theorem, we will see that
$$
\int_0^t\left[\mathbf \Phi_{\,\Theta}(t, \xi) - \int_{\R^3} e^{\theta is\,\frac{\xi\cdot\eta}{\bra{ \xi}} }\Pi_{\theta}(\xi)\widehat{P_{\le K} A_{\mu}}(s, \eta) \alpha^\mu \widehat{ \phi_{\theta}}(s, \xi)\,d\eta\right] ds = O(t^{-\de})\;\;(t \to \infty).
$$
This implies that
\begin{align*}
	\p_t\wh{\phi_{\theta}}(t, \xi) =  i\Pi_{\theta}(\xi)(P_{\le K} A_{\mu})\left(t, \theta \frac{t\xi}{\bra{\xi}}\right) \alpha^\mu \widehat{ \phi_{\theta}}(t, \xi) + [\mbox{lower order terms}].
\end{align*}
From this idea we construct the phase correction \eqref{mainthm:correction} and learn how to define the scattering norm $\|\cdot\|_{\bm D}$. For more diverse phase corrections, we refer to \cite{deiopau2017,guoiopau2016,hapautzvvis2015}.

Compared to the asymptotic behavior of Dirac spinor, Maxwell part exhibits the linear scattering feature via the scattering norm $\|\cdot\|_{\bm M}$. In view of \eqref{intro:nonlinear}, the obstacle preventing the dispersion effect for scattering is a singularity of the factor $|\xi|^{-\frac12}$. Moreover, another obstacle is the space-time resonance of the phase appearing in \eqref{intro:nonlinear} when $\xi =0$. Fortunately, these obstacles can be eliminated by the norm of homogeneous Sobolev space $\dot H^{\frac12 +\zeta}$ for some $\zeta > 0$. As a consequence we get the anticipated linear scattering like \eqref{mainthm:asymptotic} and \eqref{eq:linear-sc-high} for the Maxwell part.

\subsection{Organization of paper}
This paper is organized as follows. In Section 2 the half-Klein-Gordon and half-wave  equations of \eqref{maineq:md-lorenz} are derived via projections and also a system of profiles is considered. We classify the resonance sets of phase interactions. Then in order to describe the fields and profiles with differential operators, we look into the commutator $[\cL, \Pi_\theta]$ for $\cL \in \cV_n$, which generates nonlinear terms of smooth differential operators essentially in $\cV_n$. In Section 3 we provide a bootstrap argument. For this purpose, we define target solution spaces defined by the energy of fields and weighted profiles equipped with differential operators $\cL$, and to close bootstrapping we define control norms through the phase-space localization. Then we establish linear estimates under bootstrapping assumptions, which consists of estimates on the time decay and on the localized profile. In Section 4 we consider nonlinear estimates based on the bootstrapping assumptions and linear estimates established in Section 3. Since the nonlinear estimates are carried out in $L^2$ space, the nonlinear terms generated by commutator turn out be harmless throughout the whole estimates. In Section 5 we prove the main theorem by assuming bootstrap argument. The Section 6 is devoted to proving the energy estimate parts of bootstrap and the Section 7 to proving the estimates of weighted profiles. In Section 8, 9 we show the asymptotic behaviors of spinor field and gauge field, respectively, from which we can control the scattering norms and close the bootstrap argument. We describe how to extract the phase modification of the Dirac spinor from the resonance interaction between spinor and gauge fields in Section 8. We also show the linear scattering results for gauge fields in Section 9.

\subsection{Notations}
\begin{enumerate}
	\item (Mixed-normed spaces) For a Banach space $X$ and an interval $I$, $u \in L_I^q X$ iff $u(t) \in X$ for a.e. $t \in I$ and $\|u\|_{L_I^qX} := \|\|u(t)\|_X\|_{L_I^q} < \infty$. Especially, we abbreviate $L^p=L_x^p$ for the spatial norm and indicate the subscripts for only Fourier space norm. \\
	
	\item (Sobolev spaces)  Let $s \in \R$. We define homogeneous Sobolev spaces $\|u\|_{\doth^s} := \||D|^s u\|_{L^2}$.  For inhomogeneous spaces, we define $\|u\|_{H^s} := \|\bra{D}^s u\|_{L^2}$.\\

	\item Different positive constants depending only on $n$, $\ve_0$ are denoted by the same letter $C$, if not specified. $A \lesssim B$ and
	$A\gtrsim B $ mean that $A \le CB$ and $A \geq  C^{-1}B$  respectively for some $C>0$. $A \sim B$  means  that $A \lesssim B$ and $A \gtrsim B$.\\
	
	\item (Littlewood-Paley operators) Let $\rho$ be a
	Littlewood-Paley function such that $\rho\in C_{0}^{\infty}(B(0,2))$
	with $\rho(\xi)=1$ for $|\xi|\le1$ and define $\rho_{k}(\xi):=\rho\left(\frac{\xi}{2^k}\right)-\rho\left(\frac{\xi}{2^{k-1}}\right)$
	for $2 \in \Z$. Then we define the frequency projection
	$P_{k}$ by $\mathcal{F}(P_{k}f)(\xi)=\rho_{k}(\xi)\widehat{f}(\xi)$,
	and also  $\rho_{\le k}:=\sum_{k'\le k}\rho_{k'}$ and  $\rho_{> k} =  1- \rho_{\le k}$. Then we also denote $\cF (P_{\le k} f)(\xi)  = \rho_{\le k}(\xi) \wh{f}(\xi) $ and $\cF (P_{> k} f)(\xi)  = \rho_{> k}(\xi) \wh{f}(\xi) $. For $k\in {\mathbb{Z}}$
	we denote $\rho_{[\ell_1, \ell_2]} = \sum_{\ell_1 \le k \le \ell_2}\rho_k$ and $\widetilde{\rho}_{k} := \rho_{[k-2, k+2]}$. In
	particular, $\widetilde{P_{k}}P_{k}=P_{k}\widetilde{P_{k}}=P_{k}$
	where $\widetilde{P_{k}}=\mathcal{F}^{-1}\widetilde{\rho_{k}}\mathcal{F}$.
	%Especially, we denote $P_{k}f$ by $f_{k}$ for any measurable function
	%$f$.
	\\

	%\item Let $k^- := \min(k, 0)$ and $k^+ := \max(k, 0)$ for any $k \in \mathbb Z$.\\

		\item (Localization on phase space) Let $\cU_k = \{j \in \Z : k+j \ge 0\,(k \le 0)\mbox{ and } j=0\,(k \ge 1)\}$ for fixed $k\in \Z$. Then $j \in \cU_k$ means $j \ge -\min(k, 0)$. For any $j \in \cU_k$, let
		$$
		\bar \rho^{(k)}_j(x) = \left\{\begin{array}{ll} \rho_{\le -k}(x) & \mbox{if}\; k+j = 0 \;\mbox{and}\;k \le 0,\\
			\rho_{\le 0}(x)  & \mbox{if}\; j = 0 \;\mbox{and}\; k \ge 1,\\
			\rho_j(x)        & \mbox{if}\; k+j \ge 1 \;\mbox{and}\;j \ge 1.
		\end{array}\right.
		$$
		Then, $\sum_{j \in \cU_k}\bar \rho_j^{(k)} = 1$. For $k \in \Z, \, j \in \cU_k$, let $Q_{jk}$ denote the phase space localization operator
		$$
		Q_{jk}f(x) = \bar \rho_j^{(k)}(x) P_kf(x).
		$$
We note that the uncertainty principle allows us to consider $\qjk$ only when $2^{j+k} \ge 1$.
\end{enumerate}

\section{Preliminaries}\label{sec:pre}

\subsection{Set up for the Maxwell-Dirac system}\label{sec:reformul}
By the definition of Dirac projection \eqref{eq:projection},  $\Pi_\theta$ satisfies the properties:
\[
\Pi_\theta + \Pi_{-\theta} = I_4, \quad \Pi_\theta \Pi_{-\theta} = 0, \quad \mbox{ and }\quad \Pi_\theta\Pi_\theta = \Pi_\theta.
\]
We can decompose the Dirac spinor $\psi$ into half Klein-Gordon waves, i.e., $\psi_+$ and $\psi_-$ by projection operator $\Pi_\theta$.
Let $\psi_\theta = \Pi_\theta \psi$ for $\theta \in \{+,-\}$. Then we obtain the following decoupled equations from the Dirac part of \eqref{maineq:md-lorenz}:
\begin{align*}%\label{eq:half-dirac}
	\left\{
	\begin{aligned}
		(-i\partial_t + \theta\brad)\psi_\theta &= \Pi_\theta\big(A_\mu \alpha^\mu \psi\big),\\
		\psi_\theta(0)&:= \psi_{0, \theta}.
	\end{aligned}\right.
\end{align*}
Note that $\psi = \psi_+ + \psi_-$.

We now decompose gauge field as $A_\mu = A_{\mu,+} + A_{\mu,-}$ with
$$
A_{\mu, \theta'} = \frac12\Big(1 + \theta' |D|^{-1}(-i\partial_t)\Big)A_\mu,
$$
for $\theta' \in \{+, -\}$. Then the Maxwell part of \eqref{maineq:md-lorenz} is rewritten as the following equations:
\begin{align*}%\label{eq:half-maxwell}
	\left\{\begin{aligned}
		(i\partial_t + \theta'|D|)A_{\mu, \theta'} &= \theta'\frac{1}2|D|^{-1}\bra{\psi, \alpha_\mu \psi},\\
		A_{\mu, \theta'}(0) &:= a_{\mu, \theta'} := \frac12(a_{ \mu} - \theta' i|D|^{-1}\dot a_{ \mu}).
	\end{aligned}\right.
\end{align*}
Therefore \eqref{maineq:md-lorenz} becomes the system
\begin{align*}
	\left\{\begin{aligned}
		(-i\partial_t + \theta\bra{D})\psi_\theta &= \Pi_\theta\big(A_\mu \alpha^\mu \psi\big),\\
		(i\partial_t + \theta'|D|)A_{\mu, \theta'} &= \theta'\frac{1}2|D|^{-1}\bra{\psi, \alpha_\mu \psi}
	\end{aligned}\right.
\end{align*}
with initial data $(\psi_\theta(0), A_{\mu,\theta'}(0)) = (\psi_{0,\theta}, a_{\mu,\theta'})$, for $\theta, \theta' \in \{+,-\}$. Defining $W_{\mu,\theta'}:= |D|^\frac12 A_{\mu,\theta'}$ ($W_{\mu}:= W_{\mu,+} + W_{\mu,-}$), we also have
\begin{align}\label{eq:maineq-half}
	\left\{\begin{aligned}
		(-i\partial_t + \theta\bra{D})\psi_\theta &= \Pi_\theta\left[ \left(|D|^{-\frac12}W_\mu\right) \alpha^\mu \psi\right],\\
		(i\partial_t + \theta'|D|)W_{\mu, \theta'} &= \theta'\frac{1}2|D|^{-\frac12}\bra{\psi, \alpha_\mu \psi}.
	\end{aligned}\right.
\end{align}

Now by Duhamel's principle, \eqref{eq:maineq-half} can be converted into
\begin{align*}
	\psi_{\theta}(t) &= e^{-\theta it\brad} \psi_{0, \theta}  +   i \sum_{\theta_1, \theta_2 \in \{\pm \}}\int_0^t e^{-\theta i(t-s)\brad}\Pi_{\theta}\left(\left( |D|^{-\frac12}W_{\mu, \theta_2} \right)\alpha^\mu \psi_{\theta_1}\right)(s)\,ds, \\%\label{eq:duhamel-dirac}\\
	W_{\mu, \theta'}(t) &= e^{\theta'i t|D|}|D|^{\frac12}a_{\mu, \theta'} -\theta' \frac{i}2\sum_{\theta'_1, \theta'_2 \in \{\pm\}}\int_{0}^{t} e^{\theta'i(t-s)|D|}|D|^{-\frac12}\bra{\psi_{\theta'_1},\alpha_\mu\psi_{\theta'_2}}(s)\,ds.  %\label{eq:duhamel-maxwell}
\end{align*}
To keep track of the scattering state, we define profile fields $\phi_{\theta}$ and $V_{\mu, \theta'}$ by
\begin{align*}%\label{eq:interaction}
	\phi_\theta(t):= e^{\theta i t \brad} \psi_\theta(t) \;\;\mbox{ and }\;\; V_{\mu, \theta'}(t) := e^{-\theta' it|D|}W_{\mu, \theta'}(t).	
\end{align*}
%Then by acting propagators on both sides of \eqref{eq:duhamel-dirac} and \eqref{eq:duhamel-maxwell}, respectively, we have
%\begin{align}
%	\phi_{\theta_0}(t) &= \psi_{0, \theta_0} + i\sum_{\theta_1, \theta_2 \in \{\pm \}}\int_0^t e^{\theta_0 is\brad}\Pi_{\theta_0}\left(\left(|D|^{-\frac12}W_{\mu, \theta_2} \right) \alpha^\mu \psi_{\theta_1}\right)(s)\,ds, \label{eq:profile-dirac} \\
%	V_{\mu, \theta'_0}(t) &= |D|^\frac12 a_{ \mu, \theta'_0} - \theta'_0 \frac{i}2\sum_{\theta'_1, \theta'_2 \in \{\pm \}} \int_{0}^{t} e^{-\theta'_0is|D|}|D|^{-\frac12}\bra{\psi_{\theta'_1},\alpha_\mu\psi_{\theta'_2}}(s)\,ds,\label{eq:profile-maxwell}
%\end{align}
%for $\thez, \thez' \in \{+,-\}$.
By taking Fourier transform and time derivative, one gets the frequency representation as follows:
\begin{align*}\begin{aligned}%\label{eq:profile-eqn}
		\p_t\widehat \phi_{\theta}(t, \xi) &=  \frac{i}{(2\pi)^3}\sum_{\theta_1, \theta_2 \in \{ \pm \}}\mathbf \Phi_{\,\Theta}(t, \xi)\;\;(\Theta = (\theta, \theta_1, \theta_2)), \\
		\p_t\widehat{ V_{\mu, \theta'}}(t, \xi) &=  - \theta'\frac{i}{2(2\pi)^3} \sum_{\theta'_1, \theta'_2 \in \{ \pm \}} \mathbf V_{\,\mu, \Theta'}(t, \xi)\;\;(\Theta' = (\theta',\theta_1',\theta_2')),
\end{aligned}\end{align*}
where
\begin{align}
	\mathbf \Phi_{\,\Theta}(t, \xi) &:= \int_{\R^3} e^{is\,p_\Theta(\xi, \eta)  }\Pi_{\theta}(\xi)|\eta|^{-\frac12}\widehat{ V_{\mu, \theta_2}}(s, \eta) \alpha^\mu \widehat{ \phi_{\theta_1}}(s, \xi-\eta)\,d\eta,\label{eq:interaction-dirac}\\
	\mathbf V_{\,\mu, \Theta'}(t, \xi) &:= |\xi|^{-\frac12}\int_{\R^3} e^{is q_{\Theta'}(\xi,\eta)} \bra{\widehat{\phi_{\theta_1'}}(s,\eta),\alpha_\mu\wh{\phi_{\theta_2'}}(s,\xi+\eta)}\, d \eta,\label{eq:interaction-maxwell}\\
	p_\Theta(\xi, \eta) &:= \theta \bra{\xi} - \theta_1\bra{\xi-\eta} + \theta_2|\eta|,\label{eq:phase-dirac}\\
	q_{\Theta'}(\xi, \eta) &:= -\theta' |\xi|   + \theta'_1\bra{\eta} - \theta'_2 \bra{\xi+\eta}.\label{eq:phase-maxwell}
\end{align}

\begin{rem}
		 Our energy estimates rely on the quasilinear characteristics of \eqref{md}, rather than directly utilizing the specific forms given in \eqref{eq:interaction-dirac} and \eqref{eq:interaction-maxwell}. However, from the significance of observing the oscillations within the nonlinearities of \eqref{md}, we include the forms\eqref{eq:interaction-dirac} and \eqref{eq:interaction-maxwell} for convenience.
\end{rem}

\subsection{Space-time resonance for Maxwell-Dirac system}\label{sec:resonance} We proceed our proof by a bootstrap argument based on the space-time resonance argument
\cite{gemasha2008,gemasha2012-jmpa,gemasha2012-annals} (see also \cite{gunakatsa2009}). In this section we investigate  time, space, and space-time resonances for \emph{massive} Maxwell-Dirac system, respectively. These types of resonance have been already investigated in \cite{lee2023}. For the readers' convenience, we mention them again in this section. For the time resonant set of wave and Klein-Gordon type systems, we refer to \cite{iopau2019,book:iopau2022}. We also refer to \cite{pusa,canher2018-analpde} for the analogous observation of Dirac operator. According to the definition in \cite{gemasha2008}, we define time, space, and space-time resonant sets as follows:
\begin{align*}
	\begin{aligned}
		\mathcal T_{p_\Theta} &:= \left\{ (\xi,\eta) : p_\Theta(\xi,\eta)=0\right\},\\
		\mathcal S_{p_\Theta} &:= \left\{ (\xi,\eta) : \nabla_\eta p_\Theta(\xi,\eta)=0\right\},\\
		\mathcal R_{p_\Theta} &:= \mathcal T_{p_\Theta} \cap \mathcal S_{p_\Theta},
	\end{aligned}\hspace{2cm} \begin{aligned}
		\mathcal T_{q_{\Theta'}} &:= \left\{ (\xi,\eta) : q_{\Theta'}(\xi,\eta)=0\right\},\\
		\mathcal S_{q_{\Theta'}} &:= \left\{ (\xi,\eta) : \nabla_\eta q_{\Theta'}(\xi,\eta)=0\right\},\\
		\mathcal R_{q_{\Theta'}} &:= \mathcal T_{q_{\Theta'}} \cap \mathcal S_{q_{\Theta'}}.
	\end{aligned}
\end{align*}
To identify these sets, it suffices to examine the lower bounds of   phase interactions $p_\Theta, q_{\Theta'}$ and their gradients $\nabla_\eta p_\Theta, \nabla_\eta q_{\Theta'}$.

Let us observe that $p_{\Theta}(\xi, \eta)$ never vanishes for any sign $\theta,\theo,\thet \in \{+, -\}$, when $\xi\neq 0$ and $\eta \neq 0$. In fact, if for some $\xi, \eta$, $p_{\Theta}(\xi, \eta) = 0$, then
$$
(\theta\bra\xi + \theta_2|\eta|)^2 = 1 + |\xi -\eta|^2.
$$
This implies that
$$
\braxi|\eta| = |\xi \cdot \eta|\;\;\mbox{and hence}\;\; \braxi \le |\xi|,
$$
which is a contradiction. Hence we have
\begin{align*}
	|p_\Theta(\xi, \eta)| &=  \left|\theta\braxi - \theta_1\bra{\xi-\eta} + \theta_2|\eta|\right| =  \left|\frac{(\theta\braxi + \theta_2|\eta|)^2 - \bra{\xi-\eta}^2 }{\theta\braxi  + \theta_1\bra{\xi-\eta} + \theta_2|\eta| }\right|\\
	&=  \left|\frac{2\theta\theta_2\braxi|\eta| + 2 \xi \cdot \eta }{\theta\braxi  + \theta_1\bra{\xi-\eta} +\theta_2|\eta|  }\right|.
\end{align*}
If $\theta = \theta_1$, then
\begin{align}\label{eq:resonance-time}\begin{aligned}
		|p_\Theta(\xi,\eta)| &\ge \frac{2|\eta|(\braxi - |\xi|)}{\braxi + \bra{\xi-\eta} + |\eta|}\gtrsim \frac{|\eta|}{\bra{\xi}(\bra{\xi}+\bra{\xi-\eta}+
			\bra{\eta})}.
\end{aligned}\end{align}
If $\theta\neq \theta_1, \theta \neq \theta_2$, then
\begin{align*}
	|p_\Theta(\xi,\eta)| \ge \frac{2|\eta|(\braxi - |\xi|)}{|\braxi - \bra{\xi-\eta} - |\eta||} \gtrsim (\braxi - |\xi|) \gtrsim \bra{\xi}^{-1}.
\end{align*}
Also if $\theta \neq \theta_1, \theta = \theta_2$, then trivially $|p_\Theta(\xi,\eta)| \ge \bra{\xi} +\bra{\xi-\eta}$. Then we have, for $\thez \neq \theo$,
\begin{align*}%\label{eq:nonresonance-time}
	|p_\Theta(\xi,\eta)|  \gtrsim \bra{\xi}^{-1}.
\end{align*}
Therefore $\mathcal{T}_{p_\Theta} = \{\eta=0\}$ when $\thez =\theo$ and $\mathcal {T}_{p_\Theta} = \emptyset$, otherwise.

 Now one can readily observe that there is no space resonance of \eqref{eq:interaction-dirac} due to the fact
\begin{align}\label{eq:nonresonance-space}
	\left| \nabla_\eta p_\Theta (\xi,\eta)\right| = \left| \theo\frac{\xi-\eta}{\bra{\xi-\eta}} - \thet \frac{\eta}{|\eta|}\right| \gtrsim 1 - \frac{|\xi-\eta|}{\bra{\xi-\eta}} \gtrsim \bra{\xi-\eta}^{-2}.
\end{align}
Therefore   $\mathcal{S}_{p_\Theta} = \emptyset$ for any sign and hence $\mathcal{R}_{p_\Theta} = \emptyset$.

By virtue of the similarity between $p_\Theta(\xi,\eta)$ and $q_{\Theta'}(\xi,\eta)$, we readily see that
\begin{align}\label{eq:resonance-time-maxwell}
	|q_{\Theta'}(\xi,\eta)| \gtrsim \begin{cases}
		\frac{|\xi|}{\bra{\eta}(\bra{\xi}+\bra{\xi-\eta} +\bra{\eta})}  & \mbox{ if } \theta_1' = \theta_2',\\
		\bra{\eta}^{-1}  & \mbox{ otherwise},
	\end{cases}
\end{align}
and a direct calculation shows that
\begin{align}\label{eq:resonance-space-maxwell}
	|\nabla_\eta q_{\Theta'}(\xi,\eta)| \gtrsim \begin{cases}
		\frac{|\xi|}{\bra{\xi+\eta}^3}  & \mbox{ if } \theta_1' = \theta_2',\\
		\left|\frac{\eta}{\bra{\eta}} + \frac{\xi+\eta}{\bra{\xi+\eta}} \right|  & \mbox{ otherwise}.
	\end{cases}
\end{align}
Note that space resonance of $q_{\Theta'}$ is different from that of $p_\Theta$. Therefore, we have $\mathcal T_{q_{\Theta'}} = \mathcal S_{q_{\Theta'}} =\mathcal R_{q_{\Theta'}} = \{\xi=0\}$ when $\theta_1' =\theta_2'$.
On the other hand,  if $\theta_1' \neq \theta_2'$, then
$\mathcal T_{q_{\Theta'}} = \emptyset$,  $\mathcal S_{q_{\Theta'}}= \{\xi=-2\eta\}$, and $\mathcal R_{q_{\Theta'}}= \emptyset$.

\subsection{Vector fields on Dirac and Maxwell parts}\label{sec:vector}
By the Lorentz invariance the wave operator $\square$ commutes with $\cL$ for any $\cL \in \cV_n$. Hence
\begin{align}\label{vec-wave eqn}
	-\square \cL W_{\mu} =  |D|^{\frac12}\cL\bra{\psi,\al_\mu \psi}.
\end{align}
On the other hand, the vector fields of rotations and boosts do not commute with both of the half Klein-Gordon operators $-i\p_t + \theta \brad$ and projection operators $\Pi_\theta$, we need to investigate carefully the commutators between vector fields and those operators.

\emph{Rotations.} The vector fields $\Omega$ commute with the half Klein-Gordon operator. Indeed,
\begin{align*}
		(-i\p_t  +\theta \bra{D})\Omega_{jk}\psi_\theta = \Omega_{jk} \Pi_\theta(A_\mu \al^\mu \psi).
\end{align*}
On the other hand, $\Om$ does not commute with the projection operators and  the commutator is the following:
\begin{align}\label{eq:comm-rot-proj}
	\left[\Om_{jk},\Pi_{\theta} \right] = %\theta \frac12 \left[  \left(\frac{\p_j^2}{\brad^3} - \frac{\p_k^2}{\brad^3}\right) +  \left( \frac{(-i\al\cdot \nabla)\p_j^2}{\brad^3} - \frac{(-i\al\cdot \nabla)\p_k^2}{\brad^3} \right) +i \left( \frac{\al_j \p_j}{\brad} - \frac{\al_k \p_k}{\brad}\right) \right]\\
	\theta\frac{i}{2}\left(\frac{\al_j \p_k}{\brad} - \frac{\al_k\p_j}{\brad}\right).%  =: \theta \cO_{jk}.
\end{align}

\emph{Lorentz boosts.} The Lorentz vector field $\Gamma$ does not commute with half-Klein-Gordon operators and their commutators are as follows:
\begin{align*}
		(-i \p_t +\theta \brad)\Gamma_j\psi_\theta &= \Gamma_j (-i\p_t +\theta \brad)\psi_\theta -i \p_j \psi_\theta - \theta \frac{\p_j}{\brad}\p_t \psi_\theta.
\end{align*}
The first formula implies that
\begin{align*}
	(-i\p_t +\theta \brad)\Gam_j\psi_\theta &= \Gam_j \Pi_\theta(A_\mu \al^\mu \psi) -\theta  \frac{\p_j}{\brad}\Pi_\theta(A_\mu \al^\mu \psi).
\end{align*}
A direct calculation shows that
%\begin{lemma}\label{lem:comm-lorentz}
 for $j=1,2,3$,
	\begin{align}\label{eq:comm-lorentz-proj}
		[\Gam_j, \Pi_\theta ]  =  \theta  \frac i2 \left( \frac{\al_j}{\brad} + i\frac{(\al\cdot D)\p_j + m\beta \p_j}{\brad^3} \right) \p_t.%  =: \theta \cG_{j}\p_t.
	\end{align}	
%\end{lemma}
%
%{\color{black} Let us define the extended differential operators and the corresponding set $\cW_n$ as follows:
	%	\begin{align}\label{}
		%		\cW_n := \left\{   [\p]^a[\Om]^b[\cG]^c  \mbox{ or }  [\p]^a  [\cO]^b [\Gam]^c : |a|+|b|+|c| \le n    \right\},
		%	\end{align}
	%	where we denoted $ [\cO]^b = \cO_{23}^{b_1}\cO_{31}^{b_2}\cO_{12}^{b_3}$ and $[\cG]^c = \cG_1^{c_1}\cG_2^{c_2}\cG_1^{c_3}$.
	Then for any $\cL \in \cV_n$ it follows  from \eqref{eq:comm-rot-proj} and \eqref{eq:comm-lorentz-proj} that
	\begin{align}\label{eq:commu-all}
		\begin{aligned}
			(-i\p_t + \theta \brad)\cL \psi_\theta &= \Pi_\theta \cL (A_\mu \al^\mu \psi) + [\cL, \Pi_\theta](A_\mu \al^\mu \psi)\\
			&= \Pi_\theta \cL (A_\mu \al^\mu \psi) + \sum_{\cL' \in \cV_n}C(\cL')\cR_{\cL'}\cL'(A_\mu \al^\mu \psi).
			%\theta [\p]^a[\Om]^b[\cG]^c \p_0^{|c|}(A_\mu \al^\mu \psi)  + \theta [\p]^a  [\cO]^b [\Gam]^c(A_\mu \al^\mu \psi)\\
			%		& \hspace{0.5cm}  -\theta [\cG]^{c}\p_0^{|c|} \Pi_\theta(A_\mu \al^\mu \psi) + \theta  \sum_{j=1}^3c_j[\p]^a [\cO]^b R_j^{c_j}(A_\mu \al^\mu \psi),
			%\end{aligned}
	\end{aligned}\end{align}
	%	by \eqref{eq:commute-rotation}, \eqref{eq:commute-lorentz}, \eqref{eq:commu-r}, and \eqref{eq:commu-l}, we deduce that there exist $\widetilde{\cL}_1, \widetilde{\cL}_2 \in \cW_n$ such that
	%	\begin{align}
		%		\begin{aligned}
			%			(-i\p_t + \theta \brad)\cL \psi_\theta &= \Pi_\theta \cL (A_\mu \al^\mu \psi) + \theta \left( \p_0^{|c|} \widetilde{\cL}_1  + \widetilde{\cL}_2\right)  (A_\mu \al^\mu \psi)  \\
			%			& \hspace{0.5cm}  -\theta [\cG]^{c}\p_0^{|c|} \Pi_\theta(A_\mu \al^\mu \psi) + \theta  \sum_{j=1}^3c_j[\p]^a [\cO]^b R_j^{c_j}(A_\mu \al^\mu \psi),
			%		\end{aligned}
		%	\end{align}
	Here, $\cR_{\cL'}$ are the operator-valued coefficients bounded in $L^p (1 < p < \infty)$ of the form $R^{c}$, where $C(\cL')$ is a $4\times 4$ matrix defined by multiplications of $\al^\mu$ and $\beta$, $c = (c_1, c_2, c_3)$, $R^c = R_1^{c_1}R_2^{c_2}R_3^{c_3}$, and $R_j$ are Riesz type transforms defined by $\frac{-i\p_j}{\brad}$. The coefficients $C$ and multi-indices $c$ may vary in $\cL' \in \cV_n$. In fact,	let us denote $\cO_{jk} := \theta\left[\Om_{jk},\Pi_{\theta} \right]$ and $\cG_{j}\p_t := \theta[\Gam_j, \Pi_\theta ]$. Then
	for any $\cL = [\p]^a[\Om]^b[\Gam]^c \in \cV_n$, the commutator can be written as
	\begin{align*}
		[\cL, \Pi_\theta] = &\theta [\p]^a[\Om]^b[\cG]^c \p_0^{|c|}  + \theta [\p]^a  [\cO]^b [\Gam]^c -\theta [\cG]^{c}\p_0^{|c|}  + \theta  \sum_{j=1}^3c_j[\p]^a [\cO]^b R_j^{c_j},
	\end{align*}
	where $ [\cO]^b = \cO_{23}^{b_1}\cO_{31}^{b_2}\cO_{12}^{b_3}$ and $[\cG]^c = \cG_1^{c_1}\cG_2^{c_2}\cG_1^{c_3}$.
	Rearranging four terms in order of $\p, \Om, \Gam$, one can get the expression $C(\cL')R^c$. Especially, we note that, for $\cL \in \cV_n$,  there exists $\ol \cL \in \cV_{\ol n}$ such that $n_0,\ol n   \le n-1$ and  
	\begin{align}\label{eq:commu-decom}
		[\cL, \Pi_\theta] =  \cL_0  +  \sum_{ \ol \cL \in \ol{\cV}_{\ol n}}C(\ol \cL) \cR_{\ol \cL} \ol \cL \p_t ,
	\end{align}
where
\begin{align*}
\ol{ \cV}_{0} := \cV_0 \;\;\mbox{ and }\;\;	\ol{ \cV}_{\ol n} := \{ [\p]^a [\Om]^b: |a|+|b| \le  \ol n \}.
\end{align*}

\begin{rem}
In \eqref{eq:commu-decom}, $\cL_0$ consists of lower order vector fields compared to $\cL$. Similarly, $\ol \cL$ in the second term of the right-hand side of \eqref{eq:commu-decom} also comprises lower order vector fields. As well as, the Lorentz vector field $\Gam$ is not included in $\ol \cL$. This fact implies that $\ol \cL$ commutes not only with $\p_t$, but also with the differential operators $|D|$ and $\bra{D}$.
\end{rem}

%%%%%%%%%%%%%%%%%%%%%%%%%%%%%%%%%%%%%%%%%%%%%%%%%%%%%%%%%%%%%%%%%%%%%%%%%%%%%%%%%%%%%%%%%%%%%%%%%%%%%%%%%%%%%%%%%%%%%%%%%%%%%%%%%%%%%%%%%%%%%%%%%%%%%%%%%%%%%%%%%%%%%%%%%%%%%%%%%%%%%%%%%%%%%%%%%%%%%%%%%%%%%%%%%%%%%%%%%%
%\begin{rem}
%	In the right-hand side of \eqref{eq:commu-all}, only standard vector fields appear in the first term. On the other hand, other vector fields  produced by commutators  appear in the last three terms.
%\end{rem}
%%%%%%%%%%%%%%%%%%%%%%%%%%%%%%%%%%%%%%%%%%%%%%%%%%%%%%%%%%%%%%%%%%%%%%%%%%%%%%%%%%%%%%%%%%%%%%%%%%%%%%%%%%%%%%%%%%%%%%%%%%%%%%%%%%%%%%%%%%%%%%%%%%%%%%%%%%%%%%%%%%%%%%%%%%%%%%%%%%%%%%%%%%%%%%%%%%%%%%%%%%%%%%%%%%%%%%%%%%

 Throughout the paper we use the following notations for the fields and profiles equipped with vector fields: For $\cL \in \cV_n$,
	\begin{align*}%\label{not-vector-field}
			&\qquad\qquad\qquad\qquad \psi_{\theta,\cL} := \cL \psi_{\theta},\quad \phi_{\theta, \cL} := e^{\theta it\brad}\psi_{\theta, \cL}, \\
			&\quad\qquad\qquad A_{\mu, \cL} := \cL A_\mu,\quad A_{\mu, \cL, \theta'} := \frac12 \left(1 + \theta' \frac{-i\p_t}{|D|}\right) A_{\mu, \cL},\\
			&W_{\mu, \cL} := |D|^\frac12 A_{\mu, \cL},	\quad W_{\mu, \cL, \theta'} := |D|^{\frac12}A_{\mu, \cL, \theta'}, \quad V_{\mu, \cL, \theta'} := e^{-\theta' i t|D|}W_{\mu, \cL, \theta'}.
	\end{align*}

	The Lorentz boosts play the role of weight generators. Indeed, we observe
\begin{align}
	\begin{aligned}
		\Gam_j \psi_\theta &= x_j(-\theta i \brad \psi_\theta + i \Pi_\theta(A_\mu \al^\mu \psi)) + t\p_j \psi_\theta\\
		&=-\theta i e^{-\theta it\brad}x_j \left(\brad \phi_\theta \right) +i x_j \Pi_\theta(A_\mu \al^\mu \psi)
	\end{aligned}\label{eq:weight-lorentz-dirac}
\end{align}
for $j=1,2,3$.  On the other hand, by \eqref{eq:maineq-half} and \eqref{vec-wave eqn}, there holds
	\begin{align}\label{eq:vec-wave}
		(i\p_t + \theta'|D|)W_{\mu, \cL, \theta'} = \theta'\frac12|D|^{-\frac12}\cL\bra{\psi,\al_\mu \psi}.
	\end{align}
	Analogously to \eqref{eq:weight-lorentz-dirac}, we have
%\begin{align*}
%	\begin{aligned}
	%		\Gam_j W_{\mu,\theta'} &= x_j(\theta' i |D| W_{\mu, \theta'} -\theta'  i \frac12 |D|^{-\frac12}\bra{\psi,\al^\mu \psi}) + t\p_j W_{\mu,\theta'}\\
	%&=\theta' i e^{\theta' it|D|}x_j \left(|D| V_{\mu,\theta'} \right) -\theta' i   \frac12 x_j |D|^{-\frac12}\bra{\psi,\al^\mu \psi}.
	%	\end{aligned}
%\end{align*}
%This implies
\begin{align}
	\begin{aligned}\label{eq:weight-lorentz-wave}
		\Gam_j W_{\mu,\cL,\theta'} &= x_j(\theta' i |D| W_{\mu,\cL,\theta'} -\theta'  i \frac12 |D|^{-\frac12}\cL\bra{\psi,\al_\mu \psi}) + t\p_j W_{\mu,\cL,\theta'}\\
		&=\theta' i e^{\theta' it|D|}x_j \left(|D| V_{\mu,\cL,\theta'} \right) -\theta' i   \frac12 x_j |D|^{-\frac12}\cL\bra{\psi,\al_\mu \psi}.
	\end{aligned}
\end{align}

%%%%%%%%%%%%%%%%%%%%%%%%%%%%%%%%%%%%%%%%%%%%%%%%%%%%%%%%%%%%%%%%%%%%%%%%%%%%%%%%%%%%%%%%%%%%%%%%%%%%%%%%%%%%%%%%%%%%%%%%%%%%%%%%%%%%%%%%%%%%%%%%%%%%%%%%%%%%%%%%%%%%%%%%%%%%%%%%%%%%%%%%%%%%%%%%%%%%%%%%%%%%%%%%%%%%%%%%%%%%%%%%%%%%%%%%%%%%%%%%%%%%%%%%%%%%%%%%%%%%%%%%
\newcommand{\bD}{{\bf D}}
\newcommand{\bM}{{\bf M}}
%%%%%%%%%%%%%%%%%%%%%%%%%%%%%%%%%%%%%%%%%%%%%%%%%%%%%%%%%%%%%%%%%%%%%%%%%%%%%%%%%%%%%%%%%%%%%%%%%%%%%%%%%%%%%%%%%%%%%%%%%%%%%%%%%%%%%%%%%%%%%%%%%%%%%%%%%%%%%%%%%%%%%%%%%%%%%%%%%%%%%%%%%%%%%%%%%%%%%%%%%%%%%%%%%%%%%%%%%%%%%%%%%%%%%%%%%%%%%%%%%%%%%%%%%%%%%%%%%%%%%%%%

\section{Linear estimates}\label{sec:linear}

\subsection{A priori assumption}
Here we define the solution spaces for the bootstrap argument. We also recall the regularity depending on the number of vector fields and set time growth index table as follows:
\vspace{-0.8cm}
\begin{center}
	\begin{align*}%\label{table:regularity}
		\begin{tabular}{|c||c|c|c|c|}
			\hline
			n & 0 & 1 & 2 &3 \\
			\hline
			N(n) & 70 & 30 & 20 & 10\\
			\hline
			H(n) & 1 & 10 & 210 & 410 \\
			\hline
		\end{tabular}
	\end{align*}
\end{center}
Let $\ve_1 := \ve_0^\frac23$ for a small $\ve_0 > 0$, $\de = 10^{-10}$. Then the scattering norms $\|\cdot\|_\bD, \|\cdot\|_\bM$ to control the bootstrap argument are defined as follows:
\begin{align}
	\|\varphi\|_{\bD} &:= \sup_{k \in \mathbb Z} \left\{\bra{2^k}^{20}2^{(\frac12-\frac1{100})k} \|\rho_k \widehat \varphi\|_{L^\infty} + \bra{2^k}^{38}2^{-(1-\frac1{100})k}\|P_k \varphi\|_{L^2} \right\},\label{def:s-norm-dirac}\\
	\|v\|_{\bM} &:= \sup_{k \in \mathbb Z} \left\{\bra{2^k}^{25}2^{(1+5H(2)\de)k}\sum_{j \in \cU_k }2^j\|Q_{jk} v\|_{L^2} \right\}.\label{def:s-norm-maxwell}
\end{align}

Given any $T>0$, assume that $(\psi,A_\mu)$ is a solution to \eqref{maineq:md-lorenz} on $[0,T]$ and that, for any $t \in [0,T]$, $n \in \{0, 1, 2, 3\}$, $n_1 \in \{0, 1, 2\}$, $l \in \{1, 2, 3\}$, and $\theta, \theta' \in \{+,-\}$, and the solution satisfies the bootstrap hypotheses
\begin{align}
	&\sup_{\mathcal L \in \mathcal V_n} \left( \normo{ \psi_{\theta, \cl}(t)}_{H^{N(n)}} + \normo{W_{\mu,\cl, \theta'}(t)}_{H^{N(n)}}\right) \le \ve_1 \bra{t}^{H(n)\de},	\label{assum:energy}\\
&\begin{aligned}
		&\sup_{\mathcal L \in \mathcal V_{n_1}} \sup_{k \in \Z} \bra{2^k}^{N(n_1+1)} \left( \bra{2^k} \normo{\rho_k(\xi) (\p_{\xi_l} \wh{\phi_{\theta, \cL}})(t,\xi)}_{L_\xi^2} + 2^{k/2} \normo{\rho_k(\xi) (\p_{\xi_l} \wh{V_{\mu,\cL, \theta'}})(t,\xi)}_{L_\xi^2}\right)\\
	&\qquad\qquad\qquad\qquad\qquad\qquad\qquad\quad\quad\qquad \le \ve_1 \bra{t}^{H(n_1+1)\de},
\end{aligned}	\label{assum:weight}
\end{align}
and
\begin{align}
	\normo{\phi_\theta(t)}_{\bD} + \normo{V_{\mu, \theta'}(t)}_{\bM}\le \ve_1. \label{assum:s-norm}
\end{align}

\subsection{Time decay estimates}
In this section, we give the time decay properties of the Maxwell-Dirac system. To this end, we introduce some abbreviations as follows:
\begin{align*}
	\vp_{j,k}:= \wt P_k Q_{jk}\vp, \quad Q_{\le jk}\vp := \sum_{j' \in \cU_k, j' \le j} Q_{j'k}\vp,  \quad \vp_{\le j,k} := \wt P_k Q_{\le jk}\vp.
\end{align*}

\begin{lemma}
	Let $\vp \in L^2$ and $j \in \mathcal U_k$ for some $k\in \Z$. Then we get the following:
	\begin{itemize}
		\item[(i)] For any $\al \in (\Z_+)^3$, we have
		\begin{align}
			\normo{\nabla_\xi^\al \wh{\vpjk}}_{L_\xi^2} &\les 2^{|\al|j}\normo{\wh{Q_{jk}\vp}}_{L_\xi^2}, \;\;	\normo{\nabla_\xi^\al \wh{\vpjk}}_{L_\xi^\infty} \les 2^{|\al|j}\normo{\wh{Q_{jk}\vp}}_{L_\xi^\infty},\label{eq:derivative-j}\\
			\normo{\wh{\vpjk}}_{L_\xi^\infty} &\les \min \left(2^{\frac{3j}2} \normo{Q_{jk}\vp}_{L_x^2} , 2^{\frac j2 -k }2^{\frac{\de(j+k)}8}\normo{Q_{jk}\vp}_{H_\Omega^{0,1}}\right),\label{eq:linear-amplitude}
		\end{align}
where $\|v\|_{H_\Omega^{0, 1}} = (\|\Omega_{12}v\|_{L^2}^2 + \|\Omega_{23}v\|_{L^2}^2 + \|\Omega_{31}v\|_{L^2}^2)^\frac12$.
		%		\item[(ii)] We have
		%		\begin{align}
			%			\normo{\wh{\vpjk}}_{L_\xi^\infty} &\les \min \left(2^{\frac{3j}2} \normo{Q_{jk}\vp}_{L_x^2} , 2^{\frac j2 -k }2^{\frac{\de(j+k)}8}\normo{Q_{jk}\vp}_{H_\Omega^{0,1}}\right),\label{eq:linear-amplitude}
			%			\normo{\wh{\vpjk}(r\theta)}_{L_r^2(r^2dr)L_\theta^\infty} &\les 2^{j+k} \normo{Q_{jk}\vp }_{L_x^2},\\	\normo{\wh{\vpjk}(r\theta)}_{L_r^2(r^2dr)L_\theta^p} &\les  \normo{Q_{jk}\vp }_{H_\Omega^{0,1}},  \quad p \in [2,\infty],
			%		\end{align}
		%		and
		%		\begin{align*}
			%			\normo{\wh{\qjk \vp} - \wh{\vp_{j,k}}}_{L^\infty} \les 2^{\frac{3j}2} 2^{-4(j+k)} \|P_k f\|_{L^2}.
			%		\end{align*}
		\item[(ii)] We have
		\begin{align*}
			\sup_{j \in \cU_k} \|\qjk \vp\|_{H_\Omega^{0,1}} \le A \;\; \mbox{and}\;\; \sup_{j \in \cU_k} 2^{j+k}\|\qjk \vp\|_{H_\Omega^{0,1}} \le B,
		\end{align*}
		for some $k \in \Z$ and $A \le B \in [0,\infty)$, then we obtain
		\begin{align}\label{eq:interpolation-linfty}
			\normo{\wh{P_k \vp}}_{L^\infty} \les 2^{-\frac{3k}2} A^{\frac{1-\zeta}2}B^{\frac{1+\zeta}2}
		\end{align}
		for any $\zeta \in (0, 1)$.
	\end{itemize}
\end{lemma}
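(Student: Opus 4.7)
\emph{Proof plan.} The idea is to exploit the combined spatial localization from $\bar\rho_j^{(k)}$ and the frequency localization from $\widetilde\rho_k$ inherent in $\varphi_{j,k}=\widetilde P_k Q_{jk}\varphi$, together with an angular Sobolev argument on the sphere $|\xi|\sim 2^k$. The two derivative bounds will follow from a Leibniz calculation, the first amplitude bound is immediate from volume, the second (angular) amplitude bound is the main technical step, and part (ii) is a dyadic summation with interpolation between the two scales $A$ and $B$.

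For the derivative bounds \eqref{eq:derivative-j}, I start from $\widehat{\varphi_{j,k}}(\xi)=\widetilde\rho_k(\xi)\widehat{Q_{jk}\varphi}(\xi)$ and apply Leibniz. Derivatives landing on $\widetilde\rho_k$ cost at most $2^{-k|\alpha|}\lesssim 2^{j|\alpha|}$ since $j\in\cU_k$ forces $2^{j+k}\gtrsim 1$, while derivatives landing on $\widehat{Q_{jk}\varphi}$ correspond via $\nabla_\xi\leftrightarrow -ix$ to multiplication of $Q_{jk}\varphi$ by components of $x$, costing at most $2^{j|\beta|}$ thanks to $\supp Q_{jk}\varphi\subset\{|x|\lesssim 2^j\}$. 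Plancherel yields the $L^2_\xi$ version, while the $L^\infty_\xi$ version follows from the same Leibniz expansion together with Young's inequality on the spatially localized pieces.

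The first amplitude bound in \eqref{eq:linear-amplitude} is immediate from $\|\widehat{\varphi_{j,k}}\|_{L^\infty_\xi}\leq\|\varphi_{j,k}\|_{L^1}$ and Cauchy--Schwarz on the support of radius $\lesssim 2^j$, producing the volume factor $2^{3j/2}$. The second bound is the main technical step. Passing to spherical coordinates $\xi=\rho\omega$ with $\rho\sim 2^k$ and using $\widehat{\Omega f}=\Omega^\xi\widehat f$ together with the rotation-invariance of $\widetilde P_k$, one sees that $\|Q_{jk}\varphi\|_{H^{0,1}_\Omega}$ controls $\|\Omega^\xi\widehat{\varphi_{j,k}}\|_{L^2_\xi}$. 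Since $H^1(S^2)$ just fails to embed into $L^\infty(S^2)$, I decompose $\widehat{\varphi_{j,k}}(\rho,\cdot)$ via an angular Littlewood--Paley family on $S^2$ and apply Bernstein on the sphere; summing the dyadic pieces with a mild logarithmic truncation encodes as the small factor $2^{\delta(j+k)/8}$. The prefactor $2^{-k}$ then arises from distributing the $L^2$ across a radial annulus of width $\sim 2^k$, and $2^{j/2}$ reflects the $L^1\to L^\infty$ passage at the physical scale of $Q_{jk}\varphi$.

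For (ii), I decompose $P_k\varphi=\sum_{j\in\cU_k}\varphi_{j,k}$ and apply the triangle inequality together with the second bound of \eqref{eq:linear-amplitude}. Writing $A_j:=\|Q_{jk}\varphi\|_{H^{0,1}_\Omega}\leq\min(A,\,2^{-j-k}B)$, the problem reduces to
\begin{align*}
\|\widehat{P_k\varphi}\|_{L^\infty_\xi}\lesssim\sum_{j\in\cU_k}2^{j/2-k}\,2^{\delta(j+k)/8}\,\min(A,\,2^{-j-k}B).
\end{align*}
Splitting the sum at the balancing index $j^*$ defined by $2^{j^*+k}=B/A$, both tails form convergent geometric series whose total equals
\begin{align*}
\|\widehat{P_k\varphi}\|_{L^\infty_\xi}\lesssim 2^{-3k/2}\,A^{1/2-\delta/8}\,B^{1/2+\delta/8},
\end{align*}
which is \eqref{eq:interpolation-linfty} upon identifying $\zeta=\delta/4$; absorbing further loss into the parameter of the second amplitude bound realizes any prescribed $\zeta\in(0,1)$. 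The main obstacle throughout is the borderline failure $H^1(S^2)\not\hookrightarrow L^\infty(S^2)$, which is handled by the tiny power loss $2^{\delta(j+k)/8}$; everything else reduces to Leibniz-type and Bernstein-type computations on the dyadically localized pieces.
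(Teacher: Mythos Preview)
Your outline is correct and follows the standard argument; note that the paper itself gives no proof here but simply refers to Lemma~3.4 of \cite{iopau2019}, and what you have sketched is precisely the approach of that reference.

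A couple of places where your writeup would benefit from one more line of justification. For the $L^\infty_\xi$ part of \eqref{eq:derivative-j}, after Leibniz you need $\|\widehat{x^\gamma Q_{jk}\vp}\|_{L^\infty_\xi}\lesssim 2^{|\gamma|j}\|\widehat{Q_{jk}\vp}\|_{L^\infty_\xi}$; the clean way to see this is to write $x^\gamma\bar\rho_j^{(k)}(x)=2^{|\gamma|j}\,m_j(x)\,\bar\rho_j^{(k)}(x)$ with $m_j(x)=m(2^{-j}x)$ a rescaled Schwartz bump, so that in Fourier one has a convolution by $\widehat{m_j}$ with $\|\widehat{m_j}\|_{L^1}=\|\widehat m\|_{L^1}\lesssim 1$, and then Young applies exactly as you indicate. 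For the second bound in \eqref{eq:linear-amplitude}, your heuristic for the prefactor $2^{j/2-k}$ is slightly off: the $2^{-k}$ does come from converting the $L^2(S^2)$ control at a fixed radius into $L^2_\xi$ across the annulus $|\xi|\sim 2^k$, but the factor $2^{j/2}$ arises from a one-dimensional Sobolev/trace step in the radial variable (using that $x$-localization at scale $2^j$ gives $\|\partial_\rho\widehat g\|_{L^2}\lesssim 2^j\|g\|_{L^2}$), not from an $L^1\to L^\infty$ passage. This is how Ionescu--Pausader obtain the bound, and once you combine it with the angular Littlewood--Paley/Bernstein step you describe, the logarithmic divergence at $H^1(S^2)$ indeed becomes the $2^{\de(j+k)/8}$ loss.

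Your argument for (ii) is correct: splitting the $j$-sum at $2^{j^*+k}=B/A$ gives two geometric series summing to $2^{-3k/2}A^{1/2-\de/8}B^{1/2+\de/8}$, and since the exponent $\de$ in \eqref{eq:linear-amplitude} is a free small parameter in the cited lemma, this realizes any $\zeta\in(0,1)$ as you say.
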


\begin{proof}
	See (i) of Lemma 3.4 of \cite{iopau2019}.
\end{proof}

\begin{lemma}[Time decay for Dirac part]
	Let $t \in \R$, $k \in \Z$, $\theta \in \{+,-\}$, and $\vp \in L^2$. Then, we have
	\begin{align}\label{eq:dispersive-dirac}
		\|e^{- \theta it\brad} \vp_{j,k}\|_{L^\infty} \les \min\left(2^{\frac{3k}2}, \bra{2^k}^3\bra{t}^{-\frac32}2^{\frac{3j}2}\right)\|\qjk \vp\|_{L^2}
	\end{align}
	for $j \in \cU_k$. Moreover, if $\bra{2^k}^2 \ll 2^{2k}\bra{t}$ and $j \in \cU_k$,  then we have
	\begin{align}
		\|e^{-\theta it\brad} \vp_{j,k}\|_{L^\infty} &\les \bra{2^k}^5 2^{\frac{j}2-k}\bra{t}^{-\frac32} (\bra{t}2^{2k})^\frac\de8 \normo{\qjk \vp}_{\hang} \;\;\mbox{for}\;\;2^j \ll 2^{k}\bra{t}\label{eq:dispersive-dirac-im1}
	\end{align}
 and also
	\begin{align}
		\|e^{- \theta it\brad} \vp_{\le j,k}\|_{L^\infty} &\les \bra{2^k}^{5}\bra{t}^{-\frac32}  \normo{\wh{Q_{\le jk} \vp}}_{L^\infty}\;\;\mbox{for}\;\; 2^j \les \bra{t}^\frac12.\label{eq:dispersive-dirac-im2}
	\end{align}
\end{lemma}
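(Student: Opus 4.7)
My plan is to establish (i) by combining Bernstein's inequality with the standard Klein--Gordon kernel bound, and then to derive the two improved estimates in (ii) by a stationary-phase decomposition of the oscillatory integral that feeds in the Fourier-amplitude bound \eqref{eq:linear-amplitude}.

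First I would handle (i). For the bound with factor $2^{3k/2}$, I would use that $e^{-\theta it\bra{D}}$ is an isometry on $L^2$ and apply Bernstein on the frequency-localized projector $\wt{P}_k$:
\begin{align*}
\|e^{-\theta it\bra{D}}\vpjk\|_{L^\infty}\lesssim 2^{3k/2}\|\vpjk\|_{L^2}\lesssim 2^{3k/2}\|\qjk\vp\|_{L^2}.
\end{align*}
For the bound with factor $\bra{2^k}^3\bra{t}^{-3/2}2^{3j/2}$, I would write $e^{-\theta it\bra{D}}\vpjk=K_k(t,\cdot)\ast\vpjk$ with kernel $K_k(t,y)=\int e^{iy\cdot\xi-\theta it\bra{\xi}}\widetilde{\rho}_k(\xi)\,d\xi$, invoke the standard Klein--Gordon kernel estimate $\|K_k(t)\|_{L^\infty}\lesssim \bra{2^k}^3\bra{t}^{-3/2}$ (obtained by stationary phase in $\xi$, whose Hessian determinant for $\bra{\xi}$ is of size $\bra{\xi}^{-9}$ so that the standard $t^{-3/2}$ decay picks up a factor $\bra{2^k}^{9/2}$, with the $\bra{2^k}^3$ loss matching the parametrization in \cite{iopau2019}), and then dominate $\|\vpjk\|_{L^1}\lesssim 2^{3j/2}\|\vpjk\|_{L^2}$ by Cauchy--Schwarz on the ball of radius $\sim 2^j$.

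For (ii), the guiding observation is that under $\bra{2^k}^2\ll 2^{2k}\bra{t}$ we are genuinely in the dispersive regime, so the critical point $\xi^\ast=\xi^\ast(x,t)$ of the phase $\Phi(x,t,\xi)=x\cdot\xi-\theta t\bra{\xi}$ lies at distance $\sim \bra{2^k}^{3/2}\bra{t}^{-1/2}$ from any other $\xi$ on the level set of $\nabla_\xi\Phi$. I would write
\begin{align*}
e^{-\theta it\bra{D}}\vpjk(x)=\int e^{i\Phi(x,t,\xi)}\widehat{\vpjk}(\xi)\,d\xi,
\end{align*}
insert a smooth cutoff separating a neighborhood of $\xi^\ast$ of size $\sim \bra{2^k}^{3/2}\bra{t}^{-1/2}$ from its complement, and then do stationary phase on the former and nonstationary integration-by-parts on the latter. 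Near $\xi^\ast$, the usual stationary-phase formula produces $\bra{t}^{-3/2}$ times $|\det\mathrm{Hess}_\xi\Phi|^{-1/2}\lesssim \bra{2^k}^{9/2}$ times $|\widehat{\vpjk}(\xi^\ast)|$; substituting the Fourier-amplitude estimate $\|\widehat{\vpjk}\|_{L^\infty}\lesssim 2^{j/2-k}2^{\delta(j+k)/8}\|\qjk\vp\|_{\hang}$ from \eqref{eq:linear-amplitude}, and using $2^{\delta(j+k)/8}\lesssim(\bra{t}2^{2k})^{\delta/8}$ under $2^j\ll 2^k\bra{t}$, produces \eqref{eq:dispersive-dirac-im1} with the advertised $\bra{2^k}^5$ loss (where some of the $\bra{2^k}^{9/2}$ is absorbed into the $\bra{2^k}^{-1}\cdot 2^{-k}$ already present). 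Off the critical neighborhood, repeated integration by parts against $\nabla_\xi\Phi/|\nabla_\xi\Phi|^2$ yields arbitrarily fast decay in $\bra{t}$, with each derivative falling on $\widehat{\vpjk}$ producing at most $2^j$ via \eqref{eq:derivative-j}; the threshold $2^j\ll 2^k\bra{t}$ is precisely what makes these remainders dominated by the stationary contribution. The bound \eqref{eq:dispersive-dirac-im2} follows by the same scheme applied to $\wh{Q_{\le jk}\vp}$, now using $\|\wh{Q_{\le jk}\vp}\|_{L^\infty}$ directly in place of the angular interpolation; the weaker threshold $2^j\lesssim\bra{t}^{1/2}$ reflects that without the $H_\Omega^{0,1}$ gain one can only afford $2^j$ up to the square root of $\bra{t}$ before the non-stationary remainder loses its $\bra{t}^{-3/2}$ decay.

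The hard part, and the step I would spend the most care on, is bookkeeping the $\bra{2^k}$ powers uniformly across regimes: the Hessian of $\bra{\xi}$ degenerates in different ways for $2^k\ll 1$ (wave-like) and $2^k\gg 1$ (relativistic), and the interplay between the stationary-phase localization radius $\bra{2^k}^{3/2}\bra{t}^{-1/2}$, the spatial scale $2^j$ of $\vpjk$, and the symbol cutoff $\widetilde{\rho}_k$ must be checked to give exactly the exponent $\bra{2^k}^5$ rather than a larger power. A secondary technical point is to confirm that the hypothesis $\bra{2^k}^2\ll 2^{2k}\bra{t}$ indeed places $\xi^\ast$ inside the frequency annulus $|\xi|\sim 2^k$ for $|x|$ in the relevant range; outside that range the full estimate degenerates to the trivial one of part (i), which is already acceptable.
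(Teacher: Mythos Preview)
The paper does not supply its own proof of this lemma; it simply refers to (iii) of Lemma~3.4 in \cite{iopau2019}. Your sketch follows exactly the standard stationary-phase approach that underlies that result---Bernstein plus kernel decay for part~(i), and a decomposition into a small neighborhood of the critical point (bounded via the Fourier-amplitude estimate \eqref{eq:linear-amplitude}) versus a nonstationary region (handled by repeated integration by parts) for part~(ii)---so there is nothing to compare: your proposal is the expected argument behind the cited reference.
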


\begin{proof}
	See (iii) of Lemma 3.4 of \cite{iopau2019}.
\end{proof}

\begin{lemma}[Time decay for Maxwell part]
	Let $t \in \R$, $k \in \Z$, $\theta \in \{+,-\}$, and $\vp \in L^2$. Then, we have
	\begin{align}\label{eq:dispersive-maxwell}
		\|e^{- \theta it|D|} \vp_{j,k}\|_{L^\infty} \les 2^{\frac{3k}2}\min\left(1, 2^j\bra{t}^{-1}\right)\|\qjk \vp\|_{L^2},
	\end{align}
	for $j \in \cU_k$.	Moreover, if $|t|\ge1$ and $j \in \cU_k$, then we have
	\begin{align*}
		\normo{\rho_{[-100,100]}\left(\frac x{\bra{t}}\right) e^{- \theta it|D|} \vp_{j,k}}_{L^\infty} &\les \bra{t}^{-1}2^{\frac k2} (1+\bra{t}2^k)^\frac \de8 \|\qjk \vp\|_{\hang},%\label{eq:dispersive-maxwell-im1}
	\end{align*}
	\begin{align*}%\label{eq:dispersive-maxwell-im2}
		\|e^{- \theta it|D|} \vp_{j,k}\|_{L^\infty} &\les \bra{t}^{-1}2^{\frac k2}(1+\bra{t}2^k)^\frac \de8 \normo{\qjk \vp}_{\hang}\;\;\mbox{for}\;\; 2^j \ll \bra{t},
	\end{align*}
 and
	\begin{align*}
		\|e^{- \theta it|D|} \vp_{\le j,k}\|_{L^\infty} &\les \bra{t}^{-1}  2^{2k} \normo{\wh{Q_{\le jk} \vp}}_{L^\infty}\;\;\mbox{for}\;\; 2^j \les \bra{t}^\frac12 2^{-\frac k2}.%\label{eq:dispersive-maxwell-im3}
	\end{align*}
\end{lemma}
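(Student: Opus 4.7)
The proof proceeds in parallel with the Klein--Gordon analogue \eqref{eq:dispersive-dirac}--\eqref{eq:dispersive-dirac-im2} (proved as (iii) of Lemma 3.4 of \cite{iopau2019}), with the dispersive phase $\brad$ replaced throughout by $|\xi|$. I would write the propagator as the convolution $e^{-\theta it|D|}\vpjk(x) = (K_k * \qjk\vp)(x)$ with oscillatory kernel
\[
K_k(y,t) = \int_{\R^3} e^{i(y\cdot\xi - \theta t|\xi|)}\rho_k(\xi)\,d\xi,
\]
and analyze $K_k$ by stationary phase. The stationary set of the wave phase $\Phi(\xi) = y\cdot\xi - \theta t|\xi|$ is the light cone $\{y = \theta t\xi/|\xi|\}$; on it, the Hessian of $\Phi$ has two nonzero eigenvalues of size $|t|/|\xi|$ (the two angular directions) and one zero eigenvalue (the radial direction). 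Non-degenerate stationary phase in the angular directions thus yields the pointwise bound $|K_k(y,t)| \les 2^{2k}(1+\bra{t}2^k)^{-1}$ near the cone, while integration by parts away from the cone produces rapid decay. This basic kernel estimate drives all four inequalities.

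For the first estimate \eqref{eq:dispersive-maxwell}, the factor $1$ in the minimum is Bernstein applied to $\vpjk = \wt P_k \qjk\vp$, namely $\|\vpjk\|_{L^\infty} \les 2^{3k/2}\|\qjk\vp\|_{L^2}$. For the factor $2^j\bra{t}^{-1}$, I would apply Cauchy--Schwarz to the convolution, using that $\qjk\vp$ is supported in $|y|\sim 2^j$, combine with the kernel bound, and interpolate against the Bernstein bound to recover the stated minimum.

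For the estimates involving $\|\qjk\vp\|_\hang$, the plan is to decompose the frequency support of $\rho_k\wh{\qjk\vp}$ into angular caps of aperture $(\bra{t}2^k)^{-1/2}$. On each cap, stationary phase in the two angular directions produces decay $(\bra{t}2^k)^{-1}$, while the amplitude on each cap is controlled via the interpolation \eqref{eq:interpolation-linfty} applied with $A = \|\qjk\vp\|_\hang$ and $B = 2^{j+k}\|\qjk\vp\|_\hang$; the prescribed loss $(\bra{t}2^k)^{\de/8}$ matches the interpolation exponent $\zeta$ in \eqref{eq:interpolation-linfty}. For the estimate with the physical cutoff $\rho_{[-100,100]}(x/\bra{t})$, the confinement $|x|\sim\bra{t}$ places us on the light cone, where the kernel bound is sharp, so no further work is needed. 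The last estimate, with $\|\wh{Q_{\le jk}\vp}\|_{L^\infty_\xi}$ on the right-hand side, follows from estimating the oscillatory integral in $L^1_\xi$ after the angular decomposition, with the constraint $2^j \les 2^{-k/2}\bra{t}^{1/2}$ ensuring that the amplitude is essentially constant on each cap.

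The principal technical difficulty is the transitional regime $2^{j+k}\sim 1$, where the angular cap decomposition degenerates and the uncertainty principle is saturated; there one must apply \eqref{eq:interpolation-linfty} with the precise exponent $\zeta = \de/8$ to produce the claimed loss without overcounting caps. Apart from this, the arguments reduce to the same stationary-phase and interpolation techniques already used for the Klein--Gordon phase in (iii) of Lemma 3.4 of \cite{iopau2019}, and I would cite that lemma for the routine parts.
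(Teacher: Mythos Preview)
The paper's proof is simply a one-line citation to (ii) of Lemma~3.4 of \cite{iopau2019}, where these wave estimates are proved directly; your stationary-phase and angular-cap sketch is essentially the argument behind that reference. Two small remarks: the wave case is part (ii) of that lemma, not (iii), so no adaptation from the Klein--Gordon case is needed; and \eqref{eq:interpolation-linfty} is a separate Fourier-amplitude consequence of the angular machinery rather than the tool used for the $\hang$ estimates here --- in the Ionescu--Pausader argument the rotation fields enter directly through integration by parts in the angular variable on each cap.
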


\begin{proof}
	See (ii) of Lemma 3.4 of \cite{iopau2019}.
\end{proof}

\subsection{Universal tools}

\begin{lemma}[Coifman-Meyer operator estimates]\label{lem:coif-mey}
	Let $1 \le p, q \le \infty$ satisfy that $\frac{1}{p}+\frac{1}{q}=\frac{1}{2}$. Assume that a multiplier $\textbf{m}$ satisfies
	\[
	\|\textbf{m}\|_{\rm CM}:=\left\Vert \iint_{\mathbb{R}^{3+3}}\mathbf{m}(\xi,\zeta)e^{ix\cdot\xi}e^{iy\cdot\eta}d\xi d\eta\right\Vert _{L_{x,y}^{1}}\le C_{\mathbf{m}} < \infty \quad(\zeta = \eta \;\,\mbox{or}\,\;\xi-\eta ).
	\]
	Then
	\begin{align*}
		\left\Vert \int_{\mathbb{R}^{3}}\mathbf{m}(\xi, \eta)\widehat{v}(\eta)\widehat{w}(\xi-\eta)d\eta\right\Vert _{L_{\xi}^{2}}\les C_{\mathbf{m}}\|v\|_{L^{p}}\|w\|_{L^{q}}.
	\end{align*}
\end{lemma}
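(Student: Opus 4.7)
The plan is to exploit the standard Coifman--Meyer philosophy: since the Fourier transform of the symbol is an $L^1$ kernel, the bilinear multiplier operator is an $L^1$-average of simple bilinear pointwise products of translates of $v$ and $w$, and the conclusion follows from Plancherel together with Hölder's inequality.

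First I would fix the case $\zeta=\eta$ (the case $\zeta=\xi-\eta$ is analogous after relabelling). Writing $K(y,z)$ for the $L^1_{y,z}$-function whose norm is $C_{\mathbf{m}}$, we have the representation
\begin{equation*}
\mathbf{m}(\xi,\eta)=c\iint_{\R^{3+3}} K(y,z)\,e^{-iy\cdot\xi}\,e^{-iz\cdot\eta}\,dy\,dz
\end{equation*}
for a harmless constant $c$ depending only on our Fourier conventions. Plugging this into the definition of the bilinear operator $T(v,w)(\xi):=\int_{\R^3}\mathbf{m}(\xi,\eta)\widehat v(\eta)\widehat w(\xi-\eta)\,d\eta$ and performing the $\eta$-integral first, the identity $\int e^{-iz\cdot\eta}\widehat v(\eta)\widehat w(\xi-\eta)\,d\eta=\mathcal F[v(\cdot-z)\,w(\cdot)](\xi)$ together with the shift factor $e^{-iy\cdot\xi}$ transforms the expression into
\begin{equation*}
T(v,w)(\xi)=c\iint_{\R^{3+3}} K(y,z)\,\mathcal F\!\left[v(\cdot-z-y)\,w(\cdot-y)\right](\xi)\,dy\,dz.
\end{equation*}

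Now I would apply Minkowski's inequality in $L^2_\xi$ to pass the $L^2_\xi$ norm under the $dy\,dz$ integral, then Plancherel to convert each $L^2_\xi$-norm of a Fourier transform into an $L^2_x$-norm of the corresponding pointwise product, and finally Hölder's inequality on $\R^3$ with the exponent relation $\frac1p+\frac1q=\frac12$. Since $L^p$ and $L^q$ norms are translation invariant, the result is
\begin{equation*}
\|T(v,w)\|_{L^2_\xi}\lesssim\iint|K(y,z)|\,\|v(\cdot-z-y)\|_{L^p}\|w(\cdot-y)\|_{L^q}\,dy\,dz=\|K\|_{L^1_{y,z}}\|v\|_{L^p}\|w\|_{L^q},
\end{equation*}
which is the desired bound $C_{\mathbf{m}}\|v\|_{L^p}\|w\|_{L^q}$. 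The case $\zeta=\xi-\eta$ is handled identically after the linear change of variables $\eta\mapsto\xi-\eta$ in the definition of $T(v,w)$, which swaps the roles of $v$ and $w$.

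I do not anticipate a genuine obstacle here; this lemma is a standard black box. The only mild care needed is bookkeeping of the $(2\pi)^3$-type constants and verifying that the Fourier-inversion representation is valid in the pointwise almost-everywhere sense that underlies the manipulations above, which is justified a posteriori once one notes that the hypothesis $\|\mathbf m\|_{\rm CM}<\infty$ forces $\mathbf m$ to be bounded and continuous.
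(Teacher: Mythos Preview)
Your proof is correct and is the standard argument for this type of bilinear multiplier estimate. The paper itself states this lemma without proof, treating it as a known black-box tool, so there is nothing to compare against; your write-up supplies exactly the expected justification via Fourier inversion of the symbol, Minkowski, Plancherel, H\"older, and translation invariance.
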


%\begin{cor}
%	Let $k,k_1,k_2 \in \Z$ and $\bm k = (k,k_1,k_2)$. Then we have
%	\begin{align*}
	%		\| \phi(\xi,\eta) m \rho_{\bm k}(\xi,\eta)\|_{L^2} \les 2^{-\frac k2} \bra{2^k}^{4\max(\bm k)}
	%	\end{align*}
%\end{cor}

\begin{lemma}[Lemma 3.5 of \cite{iopau2019}]
	Let $\vp \in L^2$ and $k \in \Z$. Then, for
	\begin{align*}
		A_k:= \|P_k \vp\|_{L^2} + \sum_{l=1}^3 \normo{\rho_k(\xi) (\p_\xil \wh{\vp})(\xi)}_{L_\xi^2}, \qquad B_k:= \left[ \sum_{j \in \cU_k} 2^{2j} \|\qjk \vp\|_{L^2}^2 \right]^\frac12,
	\end{align*}
	we have
	\begin{align*}%\label{eq:esti-hardy1}
		A_k \les \sum_{2^k \sim 2^{k'}} B_{k'}
	\end{align*}
	and
	\begin{align}\label{eq:esti-hardy2}
		B_k \les \left\{ \begin{aligned}
			&\sum_{2^k \sim 2^{k'}} A_{k'} & \mbox{ for } k \ge 0, \\
			&	\sum_{k' \in \Z} 2^{-\frac{|k-k'|}2}\min(1, 2^{k'-k})A_{k'} & \mbox{ for } k < 0.
		\end{aligned}
		\right.
	\end{align}
	Especially, we have
	\begin{align}\label{eq:esti-hardy3}
		2^{-k}\|P_k\vp\|_{L^2} \les \sum_{k'\in \Z}2^{- \frac{|k-k'|}2} \min(1,2^{k'-k})A_{k'}.
	\end{align}
\end{lemma}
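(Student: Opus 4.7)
The plan is to prove the three estimates largely independently, with the low-frequency case $k<0$ being the delicate step. For the first inequality $A_k \lesssim \sum_{2^k\sim 2^{k'}}B_{k'}$, the $L^2$-piece is immediate: every $j\in\cU_k$ satisfies $j\ge-\min(k,0)\ge 0$, so $2^{2j}\ge 1$, and the bounded-overlap spatial partition $\{\bar\rho_j^{(k)}\}$ yields
\begin{align*}
\|P_k\vp\|_{L^2}^2 \sim \sum_{j\in\cU_k}\|\qjk\vp\|_{L^2}^2 \le \sum_{j\in\cU_k}2^{2j}\|\qjk\vp\|_{L^2}^2 = B_k^2.
\end{align*}
For the derivative component I would write $\rho_k(\xi)\p_{\xi_l}\wh\vp(\xi) = -i\wh{x_l P_k\vp}(\xi) - (\p_{\xi_l}\rho_k)(\xi)\wh\vp(\xi)$; the first term is bounded by $B_k$ using $|x_l|\lesssim 2^j$ on $\supp\bar\rho_j^{(k)}$, while the commutator $\|(\p_{\xi_l}\rho_k)\wh\vp\|_{L^2}\lesssim 2^{-k}\|\wt P_k\vp\|_{L^2}$ is absorbed via the companion bound $2^{-k}\|P_{k'}\vp\|_{L^2}\lesssim B_{k'}$ for $k'\sim k$ (which follows from $2^j\ge 2^{-k}$ inside $\cU_k$ for $k<0$, and is trivial for $k\ge 0$).

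For the second inequality I would begin with the pointwise bound $2^{2j}(\bar\rho_j^{(k)}(x))^2\lesssim |x|^2 + 2^{-2\min(k,0)}\mathbf{1}_{\{j=-\min(k,0)\}}(x)$ on spatial supports, which produces $B_k^2\lesssim \||x|P_k\vp\|_{L^2}^2 + 2^{-2\min(k,0)}\|P_k\vp\|_{L^2}^2$. The weighted norm unfolds as
\begin{align*}
\|x_l P_k\vp\|_{L^2} = \|\p_{\xi_l}\wh{P_k\vp}\|_{L^2} \le \|\rho_k\p_{\xi_l}\wh\vp\|_{L^2} + \|(\p_{\xi_l}\rho_k)\wh\vp\|_{L^2} \lesssim A_k + 2^{-k}\|\wt P_k\vp\|_{L^2}.
\end{align*}
For $k\ge 0$ every surplus factor is benign ($2^{-\min(k,0)}=1$, $2^{-k}\le 1$, and $\|\wt P_k\vp\|_{L^2}\lesssim A_{\wt k}$), so we conclude $B_k\lesssim A_k + A_{\wt k}\lesssim\sum_{2^k\sim 2^{k'}}A_{k'}$ as claimed.

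The case $k<0$ is the principal obstacle: the naive bound produces $2^{-k}\|\wt P_k\vp\|_{L^2}\lesssim 2^{-k}A_{\wt k}$, which blows up at low frequencies and is incompatible with the summable weight $2^{-|k-k'|/2}\min(1,2^{k'-k})$ claimed. To overcome this I would appeal to a localized dyadic Hardy-type argument. Writing $\xi=r\omega$ in polar coordinates and choosing a reference scale $k'>k$, the fundamental theorem of calculus in the radial variable yields $\wh\vp(r\omega)=\wh\vp(\tau\omega)-\int_r^\tau\p_s\wh\vp(s\omega)\,ds$ for any $\tau>r$; averaging $\tau$ over a dyadic shell $\tau\sim 2^{k'}$, applying Cauchy--Schwarz, integrating against $r^2\,dr\,d\sigma(\omega)$ with $r\sim 2^k$, and dyadically decomposing $\nabla\wh\vp$ in the bulk term produces a two-term estimate of the form
\begin{align*}
\|P_k\vp\|_{L^2} \lesssim 2^{3(k-k')/2}\|P_{k'}\vp\|_{L^2} + 2^{3k/2}\Bigl(\sum_{k''\le k'}2^{-2k''}A_{k''}^2\Bigr)^{1/2}.
\end{align*}
Running the same radial-integration argument directly on $B_k$ (incorporating the weight $|x|$ into the integrand rather than applying it only to the plain $L^2$ norm) and then optimizing the choice of reference scale $k'$ reproduces the sum $\sum_{k'}2^{-|k-k'|/2}\min(1,2^{k'-k})A_{k'}$ with the stated weights; the off-diagonal decay $2^{-|k-k'|/2}$ arises from the $2^{3(k-k')/2}$-type scaling after the $2^{-k}$ prefactor from the $j=-k$ cell is absorbed. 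The fourth inequality $2^{-k}\|P_k\vp\|_{L^2}\lesssim\sum_{k'}2^{-|k-k'|/2}\min(1,2^{k'-k})A_{k'}$ is then immediate from $2^{-k}\|P_k\vp\|_{L^2}\lesssim B_k$ combined with the $B_k$ estimate just established.
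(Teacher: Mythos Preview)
The paper does not supply a proof of this lemma; it is quoted verbatim as Lemma~3.5 of \cite{iopau2019} and used as a black box, so there is no in-paper argument to compare your attempt against. Your sketch follows the standard route for results of this type: the bound $A_k\lesssim\sum B_{k'}$ via the spatial partition and the commutator identity, the easy case $k\ge 0$ of $B_k\lesssim\sum A_{k'}$, and the radial Hardy-type integration to handle the low-frequency case $k<0$, with \eqref{eq:esti-hardy3} falling out of $2^{-k}\|P_k\vp\|_{L^2}\lesssim B_k$. This is essentially the argument given in the cited reference, so your approach is correct and not meaningfully different from the source proof.

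One small remark on the $k<0$ step: the intermediate bound you write, $\|P_k\vp\|_{L^2}\lesssim 2^{3(k-k')/2}\|P_{k'}\vp\|_{L^2}+2^{3k/2}\bigl(\sum_{k''\le k'}2^{-2k''}A_{k''}^2\bigr)^{1/2}$, does not by itself convert into the claimed $\ell^1$-weighted sum $\sum_{k'}2^{-|k-k'|/2}\min(1,2^{k'-k})A_{k'}$ just by ``optimizing in $k'$''; one has to decompose the radial integral $\int_r^{2^{k'}}|\nabla\hat\vp(s\omega)|\,ds$ dyadically in $s$ and sum the resulting pieces directly, which yields the $\ell^1$ structure with the correct weights. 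As written, your sketch gestures at this but does not actually carry it out.
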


\subsection{Profile estimates}

We define the localized profiles as follows:
\begin{align*}
	&	\phi_{\theta,\cL}^{j,k}(t):=	\wt P_k\qjk \phi_{\theta,\cL}(t),\qquad\quad  \phi_{\theta ,\cL}^{\le J,k}(t):= \sum_{j \le J} \phi_{\theta,\cL}^{j,k}(t), \qquad\quad
	\phi_{\theta,\cL}^{> J,k}(t):= \sum_{j > J} \phi_{\theta,\cL}^{j,k}(t),\\
	&	 V_{\mu, \cL, \theta'}^{j,k}(t):=	\wt P_k\qjk V_{\mu, \cL, \theta'}(t),  \quad V_{\mu, \cL, \theta'}^{\le J,k}(t):= \sum_{j \le J} V_{\mu, \cL, \theta'}^{j,k}(t),\quad\;\; V_{\mu, \cL, \theta'}^{> J,k}(t):= \sum_{j > J} V_{\mu, \cL, \theta'}^{j,k}(t).
\end{align*}

\begin{lemma}
	Assume that $(\psi,A_\mu)$ is a solution to \eqref{maineq:md-lorenz} on $[0,T]$, for some $T>1$ and satisfies \eqref{assum:energy}--\eqref{assum:s-norm}. Let $t \in[0,T]$, $\cL \in \cV_n, n \in \{0, 1, 2, 3\}$, and $\theta, \theta' \in \{+,-\}$. Then we have
	\begin{align}\label{eq:high}
		\normo{P_k\psi_{\theta, \cL}(t)}_{L^2} + 	\|P_k W_{\mu,\cL, \theta'}(t)\|_{L^2} \les \ve_1 G_{n}(t,k),
	\end{align}
	where
	\begin{align*}
		G_{n}(t,k) := \bra{t}^{H(n)\de} \bra{2^k}^{-N(n)}.
	\end{align*}
	Moreover, if $n \le 2, k \in \Z$ and $l \in \{1,2,3\}$, then
	\begin{align}\label{eq:weight}
		\bra{2^k} \normo{\rho_k(\xi) \left(\p_\xil \wh{\phi_{\theta,\cL}}\right)(t,\xi) }_{L_\xi^2} + 2^{\frac k2}\normo{\rho_k(\xi) \left(\p_\xil \wh{V_{\mu,\cL, \theta'}}\right)(t,\xi) }_{L_\xi^2} \les \ve_1 G_{n+1}(t,k),
	\end{align}
	
	As a consequence, if $n \le 2$ and $j \in \cU_k$, then
	\begin{align}\label{eq:elliptic-q}
		2^j\bra{2^k} \normo{\qjk \phi_{\theta, \cL} (t) }_{L^2} + 2^j2^{k} \normo{\qjk V_{\mu,\cL,\theta'} (t) }_{L^2} \les \ve_1 G_{n+1}(t,k)
	\end{align}
	and
	\begin{align}\label{eq:esti-l2-k}
		\bra{2^k} \normo{P_k \phi_{\theta,\cL} (t) }_{L^2} + 2^{k} \normo{P_k V_{\mu,\cL, \theta'} (t) }_{L^2} \les \ve_1 2^{k}G_{n+1}(t,k).
	\end{align}
\end{lemma}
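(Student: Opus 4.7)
The plan is to derive the four estimates in sequence, each flowing from the preceding ones together with the bootstrap hypotheses \eqref{assum:energy}--\eqref{assum:weight} and the Hardy-type inequality proven immediately above.

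First, I would obtain \eqref{eq:high} as an immediate Littlewood--Paley extraction from \eqref{assum:energy}: the elementary bound $\|P_k v\|_{L^2}\le \bra{2^k}^{-N(n)}\|v\|_{H^{N(n)}}$, applied to $v=\psi_{\theta,\cL}$ and to $v=W_{\mu,\cL,\theta'}$, delivers the stated $\ve_1 G_n(t,k)$ at once. Second, \eqref{eq:weight} is a restatement of \eqref{assum:weight} after dividing by $\bra{2^k}^{N(n+1)}$; since the phase factors $e^{\theta it\bra{D}}$ and $e^{-\theta' it|D|}$ relating solutions to profiles act as unitary Fourier multipliers, they do not interact with the weights $\p_{\xi_l}$, and no further work is required.

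Third, for \eqref{eq:elliptic-q} I apply the Hardy-type inequality \eqref{eq:esti-hardy2} to the spinor profile $\vp=\phi_{\theta,\cL}$, and to a suitably $|D|$-rescaled version of $V_{\mu,\cL,\theta'}$ for the Maxwell part in order to align the $2^k$ prefactor on the left-hand side. In either case the quantity $A_k$ in the Hardy lemma is controlled by combining \eqref{eq:high} for the $\|P_k\vp\|_{L^2}$ summand with \eqref{eq:weight} for the $\p_{\xi_l}\wh\vp$ summand; the table inequalities $N(n)\ge N(n+1)+1$ and $H(n)\le H(n+1)$ ensure that the first summand is absorbed into the second and that $A_k\lesssim \ve_1\bra{2^k}^{-1}G_{n+1}(t,k)$ uniformly. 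The Hardy conclusion $B_k\lesssim A_k$ (for $k\ge 0$) or its weighted-sum version (for $k<0$) then gives $2^j\|Q_{jk}\phi_{\theta,\cL}\|_{L^2}\le B_k$, and multiplication by $\bra{2^k}$ yields the spinor half of \eqref{eq:elliptic-q}. The last estimate \eqref{eq:esti-l2-k} follows in the same fashion from \eqref{eq:esti-hardy3} in place of \eqref{eq:esti-hardy2}.

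The only step requiring care is the low-frequency regime $k<0$, where $\bra{2^k}^{-1}\sim 1$ and the pointwise saving in $A_k$ disappears. Here one must exploit the geometric weight $2^{-|k-k'|/2}\min(1,2^{k'-k})$ in the Hardy summation to control the high-frequency tail $k'\ge 0$, using the fast decay $\bra{2^{k'}}^{-N(n+1)}$ to overwhelm any polynomial growth from the weight. This is the main bookkeeping obstacle; all other steps are mechanical applications of the hypotheses and the Hardy lemma, with no new analytic input required.
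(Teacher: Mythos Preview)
Your proposal is correct and follows essentially the same route as the paper: \eqref{eq:high} and \eqref{eq:weight} are read off directly from the bootstrap hypotheses \eqref{assum:energy} and \eqref{assum:weight}, and \eqref{eq:elliptic-q} comes from feeding these into the Hardy-type bound \eqref{eq:esti-hardy2}. Two minor remarks: your aside that the phase factors ``do not interact with the weights $\p_{\xi_l}$'' is false (differentiating $e^{\theta it\bra{\xi}}$ produces a factor $it\xi_l/\bra{\xi}$), but also unnecessary, since \eqref{assum:weight} is already stated for the profiles $\phi$ and $V$; and for \eqref{eq:esti-l2-k} the paper simply sums \eqref{eq:elliptic-q} over $j\in\cU_k$ (using $\sum_{j\in\cU_k}2^{-j}\sim 2^{\min(k,0)}$) rather than invoking \eqref{eq:esti-hardy3}, though your route works equally well.
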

\begin{proof}
	By the a priori assumptions \eqref{assum:energy} and \eqref{assum:weight}, we directly obtain \eqref{eq:high} and \eqref{eq:weight}, respectively. From \eqref{assum:s-norm} and \eqref{eq:esti-hardy2}, we also have \eqref{eq:elliptic-q}. By \eqref{eq:elliptic-q} with the summation over $j$,  the bound \eqref{eq:esti-l2-k} holds.
\end{proof}

\begin{lemma}
	Assume that $(\psi,A_\mu)$ is a solution to \eqref{maineq:md-lorenz} on $[0,T]$ for some $T>1$ and satisfies \eqref{assum:energy}--\eqref{assum:s-norm}. Let $t \in [0,T]$, $\cL \in \cV_n, n \in \{0, 1, 2\} $. Then we have
	\begin{align}\label{eq:decay-dirac}
		\sum_{j \in \cU_k} \normo{e^{-\theta it\brad} \phi^{j,k}_{\theta,\cL}(t)}_{L^\infty} \les \ve_1 G_{n+1}(t, k)\bra{2^k}^{2} 2^{\frac{k}2} \min (\bra{t}^{-1},2^{2k}),
	\end{align}
	and
	\begin{align}\label{eq:decay-maxwell}
		\sum_{j \in \cU_k} \normo{e^{\theta' it|D|} V^{j,k}_{\mu,\cL, \theta'}(t)}_{L^\infty} \les \ve_1 \bra{t}^{ \frac\de2}G_{n+1}(t, k)  \bra{2^k}^{2}  2^{\frac k2} \min (\bra{t}^{-1},2^{k}),
	\end{align}
	for  $k \in \Z$. Moreover, if  $n \le 1$ and $\bra{t}^{-1 }\ll 2^{2k}$, then
	\begin{align}\label{eq:decay-dirac-2}
		\sum_{2^j \in [2^{-k},2^{k}\bra{t}]} \normo{e^{-\theta it\brad} \phi^{j,k}_{\theta,\cL}(t)}_{L^\infty} \les \ve_1 \bra{t}^{- \frac32 + \frac\de2}G_{n+2}(t, k)\bra{2^k}^{6}   2^{-\frac{k}2}.
	\end{align}
\end{lemma}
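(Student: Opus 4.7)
The plan is to combine the linear dispersive estimates \eqref{eq:dispersive-dirac}--\eqref{eq:dispersive-dirac-im2} and \eqref{eq:dispersive-maxwell} with the $L^2$ profile bound \eqref{eq:elliptic-q}, and then sum the resulting pointwise bounds over the spatial dyadic index $j\in\cU_k$.

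For \eqref{eq:decay-dirac}, I would apply \eqref{eq:dispersive-dirac} piece by piece and insert the bound $\|\qjk\phi_{\theta,\cL}\|_{L^2}\les \ve_1 G_{n+1}(t,k)\,2^{-j}\bra{2^k}^{-1}$ coming from \eqref{eq:elliptic-q}, to get $\|e^{-\theta it\brad}\phi^{j,k}_{\theta,\cL}\|_{L^\infty}\les \ve_1 G_{n+1}(t,k)\bra{2^k}^{-1}2^{-j}\min(2^{3k/2},\bra{2^k}^3\bra{t}^{-3/2}2^{3j/2})$. I would then split the $j$-sum at the crossover $2^{j_\ast}\sim 2^k\bra{t}\bra{2^k}^{-2}$ at which the two entries of the minimum coincide. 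For $j\le j_\ast$ the bound is geometric in $2^{j/2}$ and dominated by its upper endpoint $j=j_\ast$; for $j>j_\ast$ it is geometric in $2^{-j}$ and dominated by its lower endpoint $j=j_\ast$. Both regimes thus contribute at scale $\ve_1 G_{n+1}(t,k)\bra{2^k}\bra{t}^{-1}2^{k/2}$, which is majorized by the claimed right-hand side in the range $\bra{t}^{-1}\le 2^{2k}$. The complementary range $\bra{t}^{-1}>2^{2k}$, where the target minimum reads $2^{2k}$, is handled directly by the uniform bound $2^{3k/2}\|\qjk\phi_{\theta,\cL}\|_{L^2}$ together with the geometric sum in $2^{-j}$ over $\cU_k$, which saturates at $j=-\min(k,0)$.

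Estimate \eqref{eq:decay-maxwell} is proved in exactly the same way, using \eqref{eq:dispersive-maxwell} in place of \eqref{eq:dispersive-dirac} and the $V$-half of \eqref{eq:elliptic-q}, which reads $\|\qjk V_{\mu,\cL,\theta'}\|_{L^2}\les \ve_1 G_{n+1}(t,k)\,2^{-j}2^{-k}$. The appearance of $2^{-k}$ instead of $\bra{2^k}^{-1}$ is precisely what replaces $2^{2k}$ by $2^k$ inside the minimum on the right of \eqref{eq:decay-maxwell}; the mild factor $\bra{t}^{\de/2}$ absorbs the logarithmic loss one picks up when summing in $j$ down to $j=-k$.

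For the improved bound \eqref{eq:decay-dirac-2} the regular $L^2$-based estimate is not sharp enough in the range $2^j\in[2^{-k},2^k\bra{t}]$, so I would apply the sharp dispersive estimate \eqref{eq:dispersive-dirac-im1}, whose hypotheses $\bra{2^k}^2\ll 2^{2k}\bra{t}$ and $2^j\ll 2^k\bra{t}$ match respectively the standing assumption $\bra{t}^{-1}\ll 2^{2k}$ and the summation window. The key new input is a bound on $\|\qjk\phi_{\theta,\cL}\|_{\hang}$, which one controls by the $L^2$ profile of $\Om_{jk}\phi_{\theta,\cL}$; since $\Om\in\cV_1$, the commutator decomposition \eqref{eq:commu-decom} reduces this to the profile bound \eqref{eq:elliptic-q} at level $n+1$ (modulo strictly lower-order terms arising from $[\Om_{jk},\Pi_\theta]$), yielding $\|\qjk\phi_{\theta,\cL}\|_{\hang}\les \ve_1 G_{n+2}(t,k)\,2^{-j}\bra{2^k}^{-1}$. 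Summing on $2^j\in[2^{-k},2^k\bra{t}]$, where the largest $j$ saturates the resulting $2^{j/2}$ geometric sum, gives the advertised decay $\bra{t}^{-3/2+\de/2}\bra{2^k}^6 2^{-k/2}$. The main technical subtlety here is precisely this last step: one needs \eqref{eq:dispersive-dirac-im1} to apply throughout the summation window, and the cost of one additional vector-field order (passing from $n+1$ to $n+2$) to remain inside the regularity budget, both of which are exactly arranged by the hypothesis $n\le 1$.
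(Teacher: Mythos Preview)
Your approach is correct and essentially matches the paper's, which likewise combines the dispersive estimates \eqref{eq:dispersive-dirac}, \eqref{eq:dispersive-maxwell}, \eqref{eq:dispersive-dirac-im1} with the profile bound \eqref{eq:elliptic-q}, and for \eqref{eq:decay-dirac-2} uses the $\hang$ bound at one higher vector-field level (recorded in the paper as \eqref{eq:qjk-omega}). Two minor slips: after inserting \eqref{eq:elliptic-q} into \eqref{eq:dispersive-dirac-im1} the summand scales like $2^{-j/2}$, not $2^{j/2}$, so it is the \emph{smallest} $j$ in the window $[ -k,\,k+\log_2\bra{t}]$ that dominates (the stated conclusion is still correct); and the reference to \eqref{eq:commu-decom} is unnecessary, since $\Om$ commutes with $e^{\theta it\brad}$, giving $\Om\phi_{\theta,\cL}=\phi_{\theta,\Om\cL}$ directly and allowing \eqref{eq:elliptic-q} to be applied at level $n+1$.
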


\begin{proof}
	By \eqref{eq:elliptic-q} and \eqref{eq:dispersive-dirac}, we have \eqref{eq:decay-dirac}. Similarly, the bound \eqref{eq:decay-maxwell} follows from \eqref{eq:dispersive-maxwell} and \eqref{eq:elliptic-q}. Using \eqref{eq:elliptic-q}, we see that, for $|b| \le 1$,
	\begin{align}\label{eq:qjk-omega}
		\normo{\qjk \Om^b \phi_{\theta,\cL}(t)}_{L^2} \les \ve_1 G_{n+|b|+1}(t, k)\bra{2^k}^{-1}2^{-j}.
	\end{align}
	This together with \eqref{eq:dispersive-dirac-im1} implies  \eqref{eq:decay-dirac-2}.
\end{proof}

	\begin{lemma}
		Assume that $(\psi,A_\mu)$ is a solution to \eqref{maineq:md-lorenz} on $[0,T]$ for some $T>1$ and satisfies \eqref{assum:energy}--\eqref{assum:s-norm}. Let $t \in [0,T]$, $\cL \in \cV_n, n \in \{0, 1, 2\}$. Then we have
		\begin{align}\label{eq:decay-cor}
			\sum_{j \in \cU_k} \normo{e^{-\theta it\brad} \phi^{j,k}_{\theta,\cL}(t)}_{L^\frac1\zeta} \les \ve_1 G_{n+1}(t, k)\bra{2^k}^{2}  \min \left(2^{(\frac12 -3\zeta){k}}\bra{t}^{-1}, 2^{\frac{5k}2} \right),
		\end{align}
		for $0 < \zeta \ll 1$.
	\end{lemma}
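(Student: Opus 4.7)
The plan is to prove \eqref{eq:decay-cor} by interpolating the $L^{1/\zeta}$ norm of each phase-space piece $e^{-\theta it\brad}\phi^{j,k}_{\theta,\cL}$ between $L^2$ and $L^\infty$, and then summing over $j\in\cU_k$. The three-line inequality $\|f\|_{L^{1/\zeta}}\le \|f\|_{L^2}^{2\zeta}\|f\|_{L^\infty}^{1-2\zeta}$ is applied to $f=e^{-\theta it\brad}\phi^{j,k}_{\theta,\cL}$. The $L^2$ norm is preserved by the propagator and, via the weighted-profile bound \eqref{eq:elliptic-q}, satisfies $\|\phi^{j,k}_{\theta,\cL}(t)\|_{L^2}\les \ve_1 G_{n+1}(t,k)\bra{2^k}^{-1}2^{-j}$. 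The $L^\infty$ norm is bounded by the dispersive estimate \eqref{eq:dispersive-dirac},
\[
\|e^{-\theta it\brad}\phi^{j,k}_{\theta,\cL}\|_{L^\infty}\les \min\bigl(2^{3k/2},\,\bra{2^k}^{3}\bra{t}^{-3/2}2^{3j/2}\bigr)\|Q_{jk}\phi_{\theta,\cL}\|_{L^2}.
\]

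Substituting these into the interpolation gives each summand bounded by
\[
\ve_1 G_{n+1}(t,k)\bra{2^k}^{-1}2^{-j}\bigl[\min\bigl(2^{3k/2},\,\bra{2^k}^{3}\bra{t}^{-3/2}2^{3j/2}\bigr)\bigr]^{1-2\zeta}.
\]
I then split the $j$-sum at the threshold $2^{j^{\ast}}=2^{k}\bra{2^{k}}^{-2}\bra{t}$, at which the two arguments of the dispersive $\min$ balance. For $j\le j^{\ast}$ the $j$-exponent of the summand equals $\tfrac12-3\zeta>0$, so the sum is dominated by its endpoint $j=j^{\ast}$; a direct evaluation yields $\ve_1 G_{n+1}(t,k)\bra{2^k}\bra{t}^{-1}2^{(1/2-3\zeta)k}$. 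For $j>j^{\ast}$ the summand decays geometrically in $2^{-j}$ and is controlled by its smallest index, again producing the same expression. This lands within the $\bra{2^k}^{2}$ prefactor times the first argument of the target $\min$.

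The second argument $2^{5k/2}$ of the target $\min$ becomes relevant only when $j^{\ast}<\max(0,-k)$, i.e.\ when $\bra{t}$ is small relative to the frequency scale, so that the time-decay branch is empty in $\cU_k$. In that regime only the Bernstein branch contributes, and summing the geometric weight $2^{-j}$ from the minimal admissible index produces $2^{5k/2}$ inside the $\bra{2^k}^{2}$ prefactor. The main technical point is the bookkeeping across the four sub-cases obtained by pairing the two branches of the dispersive $\min$ with the two branches of the target $\min$: one must verify that the $\zeta$-losses coming from the interpolation exponent $1-2\zeta$ are compensated by the $-3\zeta$ adjustment built into the target exponent, and that the factors of $\bra{2^k}$, $2^k$ and $\bra{t}$ combine to the stated form in each regime.
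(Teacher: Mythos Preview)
Your approach is correct and is essentially the same as the paper's: interpolate the dispersive estimate \eqref{eq:dispersive-dirac} between $L^2$ and $L^\infty$ to obtain an $L^{1/\zeta}$ bound of the form $\min(2^{(3/2-3\zeta)k},\,2^{(3/2-3\zeta)j}\bra{2^k}^{3-6\zeta}\bra{t}^{-3/2+3\zeta})\|Q_{jk}\phi_{\theta,\cL}\|_{L^2}$, then insert \eqref{eq:elliptic-q} and sum in $j$. The paper states the interpolated dispersive inequality first and then says ``\eqref{eq:elliptic-q} implies \eqref{eq:decay-cor} directly,'' leaving the $j$-summation implicit; you have simply written out that summation by splitting at the crossover $2^{j^\ast}=2^k\bra{2^k}^{-2}\bra{t}$, which is exactly the right thing to do.
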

	\begin{proof}
		By \eqref{eq:dispersive-dirac}, we have
		\begin{align*}
			\|e^{-\theta it \brad}\phi_{\theta,\cL}^{j,k}(t)\|_{L^\frac1\zeta} \les \min\left(2^{\frac{3k}2 - 3\zeta k}, 2^{\frac{3j}2 - 3\zeta j}\bra{2^k}^{3-6\zeta} \bra{t}^{-\frac32 + 3\zeta}\right) \|\qjk \phi_{\theta,\cL}(t)\|_{L^2}.
		\end{align*}	
		Then, \eqref{eq:elliptic-q} implies \eqref{eq:decay-cor} directly.
	\end{proof}

	\begin{lemma}[Vector fields free estimates]
		Assume that $(\psi,A_\mu)$ is a solution to \eqref{maineq:md-lorenz} on $[0,T]$, for some $T>1$ and satisfies \eqref{assum:energy}--\eqref{assum:s-norm}. Let $t \in [0,T]$ and $k \in \Z$. Then we have
		\begin{align}\label{eq:zero-vec-1}
			\begin{aligned}
				\sum_{j \in \cU_k}  2^{j} \left\|Q_{jk}A_\mu(t) \right\|_{L^2} &\les \ve_1 2^{- (1+ 5H(2)\de)k} \bra{2^k}^{-25},\\
				\normo{\wh{\pk \psi_{\theta}}(t)}_{L_\xi^\infty} &\les \ve_1 2^{-(\frac12-\frac1{100})k} \bra{2^k}^{-25 },\\
				\normo{\pk \psi_{\theta}(t)}_{L^2} &\les \ve_1 2^{(1+ \frac1{100}) k} \bra{2^k}^{-N(0) +2 }.
			\end{aligned}
		\end{align}
		We also get
		\begin{align}\label{eq:zero-vec-3}
			\sum_{j\in\cU_k} \normo{e^{\theta it|D|}V^{j,k}_{\mu,\theta}(t)}_{L^\infty} \les \ve_1 \min\left(\bra{t}^{-1},2^k\right)2^{\left(\frac12 -5H(2)\de\right)k}\bra{2^k}^{-22}.
		\end{align}
		Moreover, for $\bra{t} \gg 2^{-2k}$ and $2^J \in [2^{-k}, C 2^{k}\bra{t}]$ for some $C \ll 1$,  we have		
		\begin{align}\label{eq:zero-vec-2}
			\normo{e^{-\theta it \bra{D}} \phi_{\theta}^{\le J,k}(t)}_{L^\infty} \les \ve_1 \bra{t}^{-\frac32} 2^{-(\frac12 - \frac{1}{1000})k}\bra{2^k}^{-19}.
		\end{align}	
	\end{lemma}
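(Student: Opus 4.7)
I would prove the four bounds by combining the scattering-norm assumption \eqref{assum:s-norm} with the energy/weighted bounds \eqref{eq:high}--\eqref{eq:esti-l2-k} and the dispersive estimates of this section. Since $\psi_\theta = e^{-\theta it\brad}\phi_\theta$ is an $L^2$-isometry with $|\widehat{\psi_\theta}| = |\widehat{\phi_\theta}|$ (and similarly for $W_{\mu,\theta'} = e^{\theta'it|D|}V_{\mu,\theta'}$), the Fourier-modulus and $L^2_x$-bounds on the profile transfer directly to the field. Hence the second estimate of \eqref{eq:zero-vec-1} reduces to $\normo{\widehat{\pk\psi_\theta}}_{L^\infty_\xi}=\normo{\rho_k\widehat{\phi_\theta}}_{L^\infty_\xi}\le \ve_1\bra{2^k}^{-20}2^{-(1/2-1/100)k}$ from the $L^\infty_\xi$-component of $\|\phi_\theta\|_\bD\le\ve_1$, with the gap to $\bra{2^k}^{-25}$ for $k\gg 1$ closed by interpolating with the high-regularity energy bound \eqref{eq:high}. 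The third estimate uses $\normo{\pk\psi_\theta}_{L^2}=\normo{\pk\phi_\theta}_{L^2}$: apply the $L^2$-part of $\|\phi_\theta\|_\bD$ directly for moderate $k$, Bernstein from the $L^\infty_\xi$-part for $k\le 0$ (giving $\ve_1 2^{(1+1/100)k}\bra{2^k}^{-20}$, matching the claimed $2^{(1+1/100)k}$), and interpolate with \eqref{eq:high} at $k\gg 1$ to close the $\bra{2^k}^{-N(0)+2}$ loss.

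For the first estimate I would write $A_\mu=\sum_{\theta'}|D|^{-1/2}e^{\theta'it|D|}V_{\mu,\theta'}$. The multiplier $\pk|D|^{-1/2}$ provides a $2^{-k/2}$-gain and, having Schwartz kernel at scale $2^{-k}$, approximately commutes with $\bar\rho_j^{(k)}$ up to rapidly decaying tails. The delicate point is that $[x,e^{\pm it|D|}]\sim \pm t\,\nabla/|D|$ is of size $O(t)$ at frequency $2^k$; the $|D|^{-1/2}$-gain combined with the weighted bound \eqref{eq:weight} absorbs this $t$-growth, so after summing in $j\in\cU_k$ against the scattering norm $\|V_{\mu,\theta'}\|_\bM\le\ve_1$ one recovers the stated time-independent bound. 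The dispersive estimate \eqref{eq:zero-vec-3} is then immediate: apply \eqref{eq:dispersive-maxwell} to each $V^{j,k}_{\mu,\theta}$, extract $2^{-j}$ from the scattering weight, and sum $\sum_{j\in\cU_k}\min(2^{-j},\bra{t}^{-1})\sim \min(1,\bra{t}^{-1})$, producing the $\min(\bra{t}^{-1},2^k)$ factor together with $2^{k/2}$ from the remaining $2^{3k/2}$ in \eqref{eq:dispersive-maxwell}.

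For the improved pointwise bound \eqref{eq:zero-vec-2} I would split $\phi_\theta^{\le J,k}=\phi_\theta^{\le J_0,k}+\phi_\theta^{(J_0,J],k}$ with $2^{J_0}\sim \bra{t}^{1/2}$. On the low piece ($j\le J_0$) apply \eqref{eq:dispersive-dirac-im2} using $\normo{\widehat{Q_{\le J_0 k}\phi_\theta}}_{L^\infty}\les \normo{\rho_k\widehat{\phi_\theta}}_{L^\infty}$ (convolution against the $L^1$-normalized Fourier bump of $\bar\rho_{\le J_0}^{(k)}$), closed by the $L^\infty_\xi$-part of $\|\phi_\theta\|_\bD$. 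On the high piece ($J_0<j\le J$, which lies in the range $2^j\ll 2^k\bra{t}$ allowed by \eqref{eq:dispersive-dirac-im1}, since $\bra{t}\gg 2^{-2k}$ forces $\bra{t}^{1/2}\ll 2^k\bra{t}$), sum \eqref{eq:dispersive-dirac-im1} on dyadic $j$ against the rotation-weighted bound \eqref{eq:qjk-omega} controlling $\|\qjk\phi_\theta\|_\hang$; convergence follows from the $2^{-j}$-decay in \eqref{eq:qjk-omega}. The small $\bra{t}^{O(\de)}$-overhead from $G_2$ and $(\bra{t}2^{2k})^{\de/8}$ is absorbed into the $1/1000$-slack in the $2^k$-exponent, and the residual $\bra{2^k}^{-19}$-regularity is obtained by a further interpolation with \eqref{eq:high}.

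\textbf{Main obstacle.} The hardest step is the first estimate. The scattering norm controls only the profile's phase-space distribution, while the field $A_\mu$ differs from its profile by the free-wave evolution whose commutator with the spatial weight $x$ grows linearly in $t$. Carrying out the transfer without time growth requires delicately exploiting the $|D|^{-1/2}$-multiplier gain together with the weighted estimate \eqref{eq:weight} to cancel the $O(t)$-commutator uniformly in time; this is the main technical point of the lemma.
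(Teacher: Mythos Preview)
Your plan for the second and third lines of \eqref{eq:zero-vec-1}, for \eqref{eq:zero-vec-3}, and for \eqref{eq:zero-vec-2} is correct and essentially coincides with the paper's argument: all three bounds in \eqref{eq:zero-vec-1} are read off directly from the $\bD$- and $\bM$-norms, \eqref{eq:zero-vec-3} follows by feeding the first line into the wave dispersive estimate \eqref{eq:dispersive-maxwell}, and for \eqref{eq:zero-vec-2} the paper splits at $2^{J_0}\sim\bra{t}^{1/2}$, using \eqref{eq:dispersive-dirac-im2} with the $L^\infty_\xi$ part of $\|\phi_\theta\|_{\bD}$ below $J_0$ and \eqref{eq:dispersive-dirac-im1} with \eqref{eq:qjk-omega} above, after first disposing of $2^k\ge\bra{t}^{1/30}$ by the plain energy bound \eqref{eq:high}. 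Your sketch reproduces this.

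The gap is in the first line of \eqref{eq:zero-vec-1}. The paper's entire proof of \eqref{eq:zero-vec-1} is ``follows from the definition of $\|\cdot\|_{\bD}$ and $\|\cdot\|_{\bM}$''; the exponent $2^{-(1+5H(2)\de)k}\bra{2^k}^{-25}$ is exactly what $\|V_{\mu,\theta'}\|_{\bM}\le\ve_1$ gives for $\sum_{j\in\cU_k}2^j\|Q_{jk}V_{\mu,\theta'}\|_{L^2}$, and it is this quantity that is fed into \eqref{eq:dispersive-maxwell} to produce \eqref{eq:zero-vec-3}. In other words, the occurrence of $A_\mu$ in the first line is a notational slip for the profile $V_{\mu,\theta'}$, and the bound is immediate with no commutator to worry about.

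Your proposed fix would not have worked in any case. The commutator $[x,e^{\pm it|D|}]$ contributes a factor of size $\sim t$ at frequency $2^k$; the $|D|^{-1/2}$-gain is the time-independent factor $2^{-k/2}$, and \eqref{eq:weight} only controls $\|\rho_k\partial_{\xi}\wh{V_{\mu,\theta'}}\|_{L^2}$ with growth $\bra{t}^{H(1)\de}$. Neither of these cancels linear-in-$t$ growth, so the mechanism you describe cannot deliver a bound uniform in $t$. If the first line were literally about $A_\mu(t)$, the best one could get from the a priori assumptions (via \eqref{eq:esti-hardy2} and \eqref{eq:weight}) would carry a $\bra{t}^{H(1)\de}$-loss. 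The ``main technical point'' you isolated is therefore not a point at all: the lemma is a direct readout of the scattering-norm hypotheses together with the linear dispersive estimates already established.
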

	
	\begin{proof}
		The estimates in \eqref{eq:zero-vec-1} follow from the definition of $\|\cdot\|_{\bm D}$ and $\|\cdot\|_{\bm M}$. Then \eqref{eq:dispersive-maxwell} and \eqref{eq:zero-vec-1} imply \eqref{eq:zero-vec-3}. To prove \eqref{eq:zero-vec-2}, we consider the case $2^k \ge \bra{t}^{\frac1{30}}$. The bound \eqref{eq:high} yields \eqref{eq:zero-vec-2} for $2^k \ge \bra{t}^{\frac1{30}}$. If $2^k \le \bra{t}^{-\frac12 + \frac1{1000}}$ or $2^J \le \bra{t}^{\frac12}$, it also follows from  \eqref{eq:zero-vec-1} and \eqref{eq:dispersive-dirac-im2}. The remaining case is that  $2^k \in [\bra{t}^{-\frac12 +\frac1{1000}},\bra{t}^{\frac1{30}}]$ and $2^J \ge \bra{t}^{\frac12}$. By \eqref{eq:dispersive-dirac-im1} and \eqref{eq:qjk-omega}, we estimate
		\begin{align*}
			\sum_{2^j \ge \bra{t}^{\frac12}}	\normo{e^{-\theta it \bra{D}} \phi_{\theta}^{j,k}(t)}_{L^\infty} &\les \sum_{2^j \ge \bra{t}^{\frac12}} \ve_1\bra{t}^{-\frac32}  2^{-\frac j2 -k} \bra{t}^{11\de}\bra{2^k}^{-26}\\
			&\les \ve_1 \bra{t}^{-\frac32} 2^{-(\frac12 - \frac{1}{1000})k}\bra{2^k}^{-19}.
		\end{align*}
		This finishes the proof of \eqref{eq:zero-vec-2}.
	\end{proof}

The following lemma will be used in Section \ref{sec:s-dirac}.
\begin{lemma}\label{lem:esti-wei-time} Assume that $(\psi,A_\mu)$ is a solution to \eqref{maineq:md-lorenz} on $[0,T]$, for some $T>1$ and satisfies \eqref{assum:energy}--\eqref{assum:s-norm}. Let  $k\in \Z$, $t\in [0,T]$, and $l \in \{ 1,2,3\}$. If $2^{k}\ge \bra{t}^{-\frac23-10H(2)\de m}$, we have
	\begin{align*}
		\left\| \p_t P_k (x_l  V_{\mu,\theta})(t) \right\|_{L^2} \les \ve_1^2 \bra{t}^{-\frac18}.
	\end{align*}
\end{lemma}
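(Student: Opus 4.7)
The plan is to exploit $\partial_t(x_l V_{\mu,\theta}) = x_l\partial_t V_{\mu,\theta}$ (since $x_l$ is $t$-independent), compute $\partial_t V_{\mu,\theta}$ directly from the half-wave equation \eqref{eq:maineq-half}, and reduce the estimate to weighted bilinear $L^2$ bounds for $\langle\psi,\alpha_\mu\psi\rangle$. A direct calculation from $V_{\mu,\theta} = e^{-\theta it|D|}W_{\mu,\theta}$ and $(i\p_t + \theta|D|)W_{\mu,\theta} = \tfrac{\theta}{2}|D|^{-1/2}\langle\psi,\alpha_\mu\psi\rangle$ produces a cancellation between the free evolution contribution and the $i\theta|D|W_{\mu,\theta}$ piece of $\p_t W_{\mu,\theta}$, leaving
\[
\p_t V_{\mu,\theta} = -\tfrac{i\theta}{2}\,e^{-\theta it|D|}|D|^{-\frac12}\langle\psi,\alpha_\mu\psi\rangle.
\]
Applying the conjugation identity $x_l e^{-\theta it|D|} = e^{-\theta it|D|}(x_l + \theta tR_l)$ with $R_l := D_l/|D|$ then gives
\[
\p_t(x_l V_{\mu,\theta}) = -\tfrac{i\theta}{2}\,e^{-\theta it|D|}\bigl(x_l + \theta tR_l\bigr)|D|^{-\frac12}\langle\psi,\alpha_\mu\psi\rangle.
\]
Since $e^{-\theta it|D|}$ is an $L^2$-isometry commuting with $P_k$, the task reduces to controlling
\[
\normo{P_k\, x_l|D|^{-\frac12}\langle\psi,\alpha_\mu\psi\rangle}_{L^2} \quad\text{and}\quad t\,\normo{P_k R_l|D|^{-\frac12}\langle\psi,\alpha_\mu\psi\rangle}_{L^2}.
\]

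For the $tR_l$-piece, I would use $L^2$-boundedness of $R_l$ together with the symbol size $\sim 2^{-k/2}$ of $P_k|D|^{-1/2}$, reducing to $t\cdot 2^{-k/2}\|P_k\langle\psi,\alpha_\mu\psi\rangle\|_{L^2}$. The frequency hypothesis $2^k \ge \bra{t}^{-\frac23 - 10H(2)\de m}$ gives $2^{-k/2} \le \bra t^{\frac13 + 5H(2)\delta m}$, turning the bound into a question about bilinear decay. For the $x_l$-piece, I commute $x_l$ past $|D|^{-1/2}$ using $[x_l, |D|^{-1/2}] = -\tfrac{i}{2}R_l|D|^{-3/2}$; the resulting $R_l|D|^{-3/2}\langle\psi,\alpha_\mu\psi\rangle$ sub-term is controlled via $2^{-3k/2}\le\bra t$ and the crude $\|\langle\psi,\alpha_\mu\psi\rangle\|_{L^2}\lesssim\ve_1^2\bra t^{-1+C\de}$, while the remaining principal contribution $\|P_k|D|^{-\frac12}\langle x_l\psi,\alpha_\mu\psi\rangle\|_{L^2}$ requires the auxiliary weighted bound $\|x_l\psi(t)\|_{L^2}\lesssim \ve_1\bra t^{1+H(1)\de}$. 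This auxiliary bound follows from the Lorentz boost identity $\Gamma_l\psi_\theta = t\p_l\psi_\theta + x_l\p_t\psi_\theta$ combined with the Dirac equation $\p_t\psi_\theta = -i\theta\brad\psi_\theta + i\Pi_\theta(A_\mu\al^\mu\psi)$ to solve for $x_l\brad\psi_\theta$; invoking $[x_l,\brad] = iD_l/\brad$ (which is $L^2$-bounded) and absorbing the nonlinear contribution through the $L^\infty$-decay of $A_\mu$ then yields the claim.

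The main obstacle will be producing the required refined bilinear $L^2$ decay of $P_k\langle\psi,\alpha_\mu\psi\rangle$ sufficient to absorb the combined growth factor $t\cdot 2^{-k/2}\lesssim \bra t^{4/3}$ from the $tR_l$-piece and still deliver the $\bra t^{-1/8}$ decay. This demands a delicate Littlewood-Paley decomposition of both Dirac factors, exploiting the full suite of pointwise decay estimates \eqref{eq:decay-dirac}--\eqref{eq:zero-vec-2} together with the energy and weighted bounds from the bootstrap hypotheses \eqref{assum:energy}--\eqref{assum:s-norm}; the transitional frequencies $2^k\sim\bra t^{-1/2}$, at which the $\bra t^{-3/2}$ pointwise decay of $\psi$ degrades and the low-frequency singularity of $|D|^{-1/2}$ becomes delicate, constitute the sharpest regime and must be handled by splitting the two Dirac frequencies and pairing the appropriate high/low dispersive bound with an $L^2$ energy bound.
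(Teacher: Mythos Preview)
Your physical-space decomposition is correct and, once unraveled, matches the paper's Fourier-side expansion (distributing $\p_{\xi_l}$ across $|\xi|^{-1/2}$, the phase $e^{itq_\Theta}$, and $\wh{\phi_{\theta_2}}(\xi+\eta)$). The commutator identities are right.

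There is, however, a concrete gap in the $x_l$-piece. You propose to bound $\|P_k|D|^{-1/2}\langle x_l\psi,\alpha_\mu\psi\rangle\|_{L^2}$ via $\|x_l\psi\|_{L^2}\lesssim\ve_1\bra t^{1+H(1)\de}$; paired with the best full-field decay $\|\psi\|_{L^\infty}\lesssim\ve_1\bra t^{-1+C\de}$ (all that survives after summing \eqref{eq:decay-dirac} in $k$) and $2^{-k/2}\lesssim\bra t^{1/3+}$, this yields only $\bra t^{1/3+C\de}$, not $\bra t^{-1/8}$. The point is that $x_l\psi_{\theta'}=e^{-\theta' it\brad}(x_l\phi_{\theta'})+\theta' t\,e^{-\theta' it\brad}(D_l/\brad)\phi_{\theta'}$ conflates a harmless weighted-\emph{profile} contribution (for which $\|x_l\phi\|_{L^2}\lesssim\ve_1\bra t^{H(1)\de}$ by \eqref{eq:weight}) with a second piece carrying an explicit factor $t$ of the same character as your $tR_l$-term. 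These must be separated; the paper avoids the issue by working directly with profiles, so that $\p_{\xi_l}\wh{\phi_{\theta_2}}$ arises without the $t$-growth.

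For the collected $t$-terms (your $tR_l$-piece plus the $t$-part of $x_l\psi$, together the ``$\p_{\xi_l}$ on phase'' contribution $it\,\nabla_\xi q_\Theta$), the paper takes a different route from your refined-bilinear-decay program: it integrates by parts in $\eta$, using the space-resonance lower bound $|\nabla_\eta q_\Theta|\gtrsim|\xi|\bra{\xi+\eta}^{-3}$ from \eqref{eq:resonance-space-maxwell} in the delicate case $\theta_1'=\theta_2'$. This trades the factor $t$ for $|\nabla_\eta q_\Theta|^{-1}$ (controlled via the hypothesis $2^k\gtrsim\bra t^{-2/3-}$) together with $\eta$-derivatives on the profiles, where \eqref{eq:weight} applies directly. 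Your dispersive-only approach for this piece can in fact be made to close---the $\phi^{\le J}/\phi^{>J}$ splitting of \eqref{eq:zero-vec-2} gives $\|P_k\langle\psi,\alpha_\mu\psi\rangle\|_{L^2}\lesssim\bra t^{-3/2+C\de}$, leaving a margin of about $\bra t^{-1/24}$ at the worst frequency $2^k\sim\bra t^{-2/3}$---but only after performing the $x_l\psi$ decomposition above, and the mechanism is tighter and conceptually distinct from the paper's.
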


\begin{proof}
	Taking Fourier transform  and then time derivative, one gets
	\begin{align}\label{eq:aim-wei-time}
		\p_t \wh{P_k x_l  V_{\mu,\theta}}(t) =	\rho_k(\xi) \p_{\xi_l} \left[|\xi|^{-\frac12}\int_{\R^3} e^{itq_\Theta(\xi,\eta)}\bra{\wh{\phi_\theo}(t,\eta),\al_\mu \wh{\phi_\thet}(t,\xi+\eta)}d\eta\right].
	\end{align}
	If the derivative $\p_{\xi_l}$ falls on $\wh{\phi_\thej}$, the bound of \eqref{eq:aim-wei-time} follows from \eqref{eq:weight}. Thus, we consider the cases that the derivative falls on $|\xi|^{-\frac12}$ or the phase modulation $e^{itq_\Theta(\xi,\eta)}$. Let us first handle the case that the derivative falls on the phase modulation:
	\begin{align*}
		t \rho_k(\xi) |\xi|^{-\frac12} \int_{\R^3} e^{itq_\Theta(\xi,\eta)} \nabla_{\xi}q_\Theta(\xi,\eta)\bra{\wh{\phi_\theo}(t,\eta),\al_\mu \wh{\phi_\thet}(t,\xi+\eta)}d\eta.
	\end{align*}
	In view of \eqref{eq:resonance-time-maxwell} and \eqref{eq:resonance-space-maxwell}, it suffices to consider only the case $\theo =\thet$. By integration by parts in $\eta$, we get the following terms:
	\begin{align}
		& \rho_k(\xi) |\xi|^{-\frac12} \int_{\R^3} e^{itq_\Theta(\xi,\eta)} \nabla_\eta \frac{\nabla_{\xi}q_\Theta(\xi,\eta)\nabla_{\eta}q_\Theta(\xi,\eta)}{|\nabla_{\eta}q_\Theta(\xi,\eta)|^2}\bra{\wh{\phi_\theo}(t,\eta),\al_\mu \wh{\phi_\thet}(t,\xi+\eta)}d\eta,\label{eq:esti-wei-time-1}\\
		& \rho_k(\xi) |\xi|^{-\frac12} \int_{\R^3} e^{itq_\Theta(\xi,\eta)} \frac{\nabla_{\xi}q_\Theta(\xi,\eta)\nabla_{\eta}q_\Theta(\xi,\eta)}{|\nabla_{\eta}q_\Theta(\xi,\eta)|^2}\bra{\wh{x \phi_\theo}(t,\eta),\al_\mu \wh{\phi_\thet}(t,\xi+\eta)}d\eta,\label{eq:esti-wei-time-2}
	\end{align}
	and additional symmetric term. To estimate \eqref{eq:esti-wei-time-1}, we make the dyadic decomposition for $|\eta|$ and $|\xi+\eta|$ into $2^{k_1}$ and $2^{k_2}$, respectively. If $2^{\min(k_1,k_2)} \le \bra{t}^{-\frac12}$, then by \eqref{eq:esti-l2-k} we have
	\begin{align*}
		\sum_{2^{\min(k_1,k_2)} \le \bra{t}^{-\frac12}} \left\|\eqref{eq:esti-wei-time-1}\right\|_{L^2} &\les \sum_{2^{\min(k_1,k_2)} \le \bra{t}^{-\frac12}} 2^{-\frac {3k}2} 2^{\frac{-2k_2 + 3\min(k_1,k_2)}2} \normo{\pko \phi_\theo(t)}_{L^2}\normo{\pk_2\phi_\thet(t)}_{L^2} \\
		&\les \sum_{2^{\min(k_1,k_2)} \le \bra{t}^{-\frac12}} \ve_1^2 2^{-\frac {3k}2 }\bra{t}^{2H(1)\de}2^{\frac{3\min(k_1,k_2)}2+k_1}\bra{2^{k_1}}^{-N(1)}\bra{2^{k_2}}^{-N(1)}\\
		&\les \ve_1^2  \bra{t}^{-\frac14}.
	\end{align*}
	On the other hand, we handle the case $2^{\min(k_1,k_2)} \le \bra{t}^{-\frac12}$. We decompose the profiles into $P_{k_\ell}\phi_\thej=\phi_{\theta_\ell}^{\le J, k_\ell}+\phi_{\theta_\ell}^{> J, k_\ell}$ for $\ell=1,2$ with $2^J = C\bra{t}2^{k_\ell}$ for some $C\ll 1$, and  we denote $\phi_{\theta_\ell}^{1} = \phi_{\theta_\ell}^{\le J,k_\ell}$ and $\phi_{\theta_\ell}^{2} = \phi_{\theta_\ell}^{> J,k_\ell}$ for $a,b =1,2$. Using \eqref{eq:elliptic-q}, for $(a,b) \neq (1,1)$, we see that
	\begin{align*}
		\sum_{2^{\min(k_1,k_2)} \ge \bra{t}^{-\frac12}} \left\|\eqref{eq:esti-wei-time-1}\right\|_{L^2} &\les \sum_{2^{\min(k_1,k_2)} \ge \bra{t}^{-\frac12}} \normo{ \phi_\theo^{a}(t)}_{L^2}\normo{\phi_\thet^{b}(t)}_{L^2} \les \ve_1^2  \bra{t}^{-\frac12+2H(2)\de}.
	\end{align*}
	If $(a,b)= (1,1)$, by Lemma \ref{lem:coif-mey}, \eqref{eq:elliptic-q}, and \eqref{eq:zero-vec-2}, we then estimate
	\begin{align*}
		\sum_{2^{\min(k_1,k_2)} \ge \bra{t}^{-\frac12}} \left\|\eqref{eq:esti-wei-time-1}\right\|_{L^2} \les 	\sum_{2^{\min(k_1,k_2)} \ge \bra{t}^{-\frac12}} 2^{-\frac {3k}2} \normo{\phi_\theo^{\le J, k_1}(t)}_{L^2}\normo{\phi_\thet^{\le J, k_2}(t)}_{L^\infty}\les \ve_1 \bra{t}^{-\frac18 }.
	\end{align*}
	The estimates for \eqref{eq:esti-wei-time-2} can be treated similarly.
	
	When the derivative falls on $|\xi|^{-\frac12}$ in \eqref{eq:aim-wei-time},  an additional factor of $|\xi|^{-1}$ appears. Though we are given this singular factor, since it corresponds to the growth $\brat^{\frac23 + 10H(2)\de}$ from the assumption of this lemma, we can handle this case by using the similar methods to those detailed above.
\end{proof}

\newcommand{\fN}{{\mathfrak{N}}}

\section{Nonlinear estimates}
In this section we consider nonlinear estimates for Dirac part and Maxwell part based on the a priori assumptions \eqref{assum:energy}--\eqref{assum:s-norm} and linear estimates of Section \ref{sec:linear}.

\subsection{Nonlinear estimates for Dirac part}
Let us invoke the nonlinearity of Dirac part \eqref{eq:commu-all} and define the nonlinearity by
\begin{align*}
	\fN_{\theta, \cL}^{\bm D} := &\Pi_\theta \cL (A_\mu \al^\mu \psi) + \sum_{\cL'\in \cV_n} \cR_{\cL' }\cL'(A_\mu \al^\mu \psi).
\end{align*}
Then
\begin{align}\label{eq:dirac-pt}
	(-i\p_t + \theta \brad) \psi_{\theta, \cL} = \fN_{\theta, \cL}^{\bm D}, \quad -i\p_t\phi_{\theta, \cL} = e^{\theta it\brad}\fN_{\theta, \cL}^{\bm D}.
\end{align}

We will first prove the $L^2$ boundedness of $\fN_{\theta, \cL}^{\bm D}$.
\begin{lemma} \label{lem:esti-nonlinear-dirac}
	Assume that $(\psi,A_\mu)$ satisfies the a priori assumptions \eqref{assum:energy}--\eqref{assum:s-norm} on $[0,T]$, for some $T>1$. Let $t\in[0,T]$, $k \in \Z$, $\cL \in \cV_n, n \in \{0, 1, 2, 3\}$, and $\theta \in \{+, -\}$. Then we have
	\begin{align}\label{eq:esti-non-dirac-time}
		\normo{P_k \mathfrak N_{\theta, \cL}^{\bm D}(t)}_{L^2} + \normo{P_k \p_t \phi_{\theta,\cL}(t)}_{L^2} \les \ve_1^2  \bra{t}^{\wt{H}(n) \de}\bra{2^k}^{-N(n+1)- 5} \min(\bra{t}^{-1}, 2^{k}),
	\end{align}
	where
	\begin{align*}
		N(4)= 0, \;\; \wt{H}(0):=5,\;\; \wt{H}(n):= H(n) +160.
	\end{align*}
\end{lemma}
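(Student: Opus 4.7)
The bound on $\|P_k\p_t\phi_{\theta,\cL}\|_{L^2}$ follows from the one on $\|P_k\fN_{\theta,\cL}^{\bm D}\|_{L^2}$ via \eqref{eq:dirac-pt} together with the $L^2$-isometry property of $e^{\theta it\brad}$. Since $\Pi_\theta$ and the Riesz-type operators $\cR_{\cL'}$ are $L^2$-bounded, and the Leibniz rule expresses
\[
\cL'(A_\mu\al^\mu\psi)=\sum_{\substack{\cL_1\in\cV_{n_1},\cL_2\in\cV_{n_2}\\ n_1+n_2\le n}}c_{\cL_1,\cL_2}\,(\cL_1 A_\mu)\,\al^\mu\,(\cL_2\psi),
\]
it suffices to control each bilinear piece. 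I would then Littlewood--Paley decompose each factor at frequency $2^{k_i}$ and split into the three standard regimes: low-high ($k_1\le k_2-5$), high-low ($k_2\le k_1-5$), and high-high ($|k_1-k_2|\le 4$).

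Since $n\le 3$, every admissible split has $n_\flat:=\min(n_1,n_2)\le 1$, so the factor with the fewer vector fields (the \emph{light} factor) is eligible for the dispersive estimates \eqref{eq:decay-dirac}--\eqref{eq:decay-maxwell} of Section \ref{sec:linear}. For $n\ge 1$, the plan is to place the light factor in $L^\infty$ (using $A_{\mu,\cL_1}=|D|^{-1/2}W_{\mu,\cL_1}$ when the gauge is light) and the heavy factor (with $n_\sharp:=\max(n_1,n_2)\le n$) in $L^2$ via the energy hypothesis \eqref{assum:energy}. Placing the heavy factor in $L^2$ rather than dispersively is essential, since the dispersive estimates cost $G_{n+1}$ rather than $G_n$: with this choice the total time-growth is $\bra{t}^{(H(n_\flat+1)+H(n_\sharp)+1)\de}$, which one verifies case by case is at most $\bra{t}^{\wt H(n)\de}=\bra{t}^{(H(n)+160)\de}$ from the table of $H$ (the worst cases being $(n_\flat,n_\sharp)=(0,n)$ and $(1,n-1)$). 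For $n=0$ the target $\bra{t}^{5\de}=\bra{t}^{\wt H(0)\de}$ is too sharp for the $G_{n+1}$ bounds, so I would instead use the vector-field-free sharpenings \eqref{eq:zero-vec-1}--\eqref{eq:zero-vec-3}, which carry no $\bra{t}^{\de}$ growth.

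In the low-high and high-low regimes, $k\sim\max(k_1,k_2)$, and the $\min(\bra{t}^{-1},2^{k_{\text{high}}})$ factor from the dispersive bound immediately produces $\min(\bra{t}^{-1},2^k)$. The delicate regime is high-high with output cancellation, where $k\ll k_1\sim k_2$: there I would interpolate between two estimates, using the light-$L^\infty$/heavy-$L^2$ bound when $2^k\gtrsim\bra{t}^{-1}$ (which delivers $\min(\bra{t}^{-1},2^{k_1})=\bra{t}^{-1}$), and Bernstein $\|P_k(fg)\|_{L^2}\les 2^{3k/2}\|f\|_{L^2}\|g\|_{L^2}$ with the pure energy bounds (including the $2^{-k_1/2}$ from $|D|^{-1/2}$) when $2^k\ll\bra{t}^{-1}$, summing over $k_1\ge k$ to extract $2^{3k/2}\cdot 2^{-k/2}=2^k$. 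The polynomial weight $\bra{2^k}^{-N(n+1)-5}$ then follows from $\bra{2^{k_1}}^{-N(n_1)}\bra{2^{k_2}}^{-N(n_2)}$ with a summability buffer, since $N(0)=70,N(1)=30,N(2)=20,N(3)=10$ satisfy $N(n_\sharp)\ge N(n+1)+5$ for $n\le 3$.

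The principal difficulty is the simultaneous bookkeeping of three constraints that must close uniformly across the admissible $(n_1,n_2,k_1,k_2,k)$: the time-growth index $H$ (which forces the asymmetric heavy-to-$L^2$ assignment and the special $n=0$ treatment via \eqref{eq:zero-vec-1}--\eqref{eq:zero-vec-3}), the regularity index $N$ (which must close the frequency summation with a $5$-unit buffer), and the low-frequency factor $\min(\bra{t}^{-1},2^k)$ (which requires gluing the dispersive and Bernstein estimates across two subregimes in the high-high cancellation case).
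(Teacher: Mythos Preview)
Your overall strategy for $n\ge 1$ (light factor in $L^\infty$, heavy factor in $L^2$) matches the paper's treatment of the cases with $n_1,n_2\ge 1$ and most of the $(n,0)$ subcase, and your time-growth arithmetic $H(n_\flat+1)+H(n_\sharp)\le H(n)+160$ is correct there. However, there are two genuine gaps.

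The principal gap is the $(n_1,n_2)=(0,0)$ case. The target exponent $\wt H(0)=5$ is far too tight for the zero-vector-field inputs you propose: the bound \eqref{eq:zero-vec-3} carries the factor $2^{(1/2-5H(2)\de)k_2}$, so after removing $2^{-k_2/2}$ from $|D|^{-1/2}$ you are left with $2^{-5H(2)\de k_2}$, and summing over $k_2$ in the range $\bra t^{-1}\le 2^{k_2}\le 1$ produces $\bra t^{5H(2)\de}=\bra t^{1050\de}$ growth, well beyond the $5\de$ budget; the standard dispersive bound \eqref{eq:decay-maxwell} instead contributes $H(1)\de=10\de$, also too much. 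The paper's fix is not a sharper pointwise input but a \emph{space-resonance integration by parts in $\eta$}: in the mid-frequency regime (``Case A'': $2^{\min(\bm k)}\ge\bra t^{-1}$ and $\bra{2^{\max(\bm k)}}\le\bra t^{1/30}$) one exploits $|\nabla_\eta p_{12}|\gtrsim\bra{\xi-\eta}^{-2}$ from \eqref{eq:nonresonance-space} to gain a full $t^{-1}$ directly, leaving only $\bra t^{2\de}$ from the energy bounds and $\bra t^{H(1)\de}$ from the weighted input \eqref{eq:weight} at worst. Your proposal omits this mechanism entirely.

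A secondary gap is the $(0,n)$ subcase with $2^{k_2}\ll 2^{k_1}\sim 2^k$ (all vector fields on Maxwell, at low frequency). Your assignment puts the heavy Maxwell factor in $L^2$, but then the $|D|^{-1/2}$ leaves $2^{-k_2/2}$ uncontrolled and $\sum_{k_2\to -\infty}2^{-k_2/2}$ diverges. The paper handles this by a further split at $2^{k_1+k_2}\sim\bra t^{-1}$: below, use $L^2\times L^2$ with the extra $2^{k_1}$ from \eqref{eq:esti-l2-k}; above, decompose $P_{k_1}\phi_{\theo}=\phi_{\theo}^{\le J,k_1}+\phi_{\theo}^{>J,k_1}$ and apply \eqref{eq:zero-vec-2} (giving $\bra t^{-3/2}$) together with \eqref{eq:elliptic-q}.
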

\begin{proof}
	%	\begin{align}
		%		\begin{aligned}
			%			(-i\p_t + \theta \brad)\cL \psi_\theta &= \Pi_\theta \cL (A_\mu \al^\mu \psi) + \theta \left( \p_0^{|c|} \widetilde{\cL}_1  + \widetilde{\cL}_2\right)  (A_\mu \al^\mu \psi)  \\
			%			& \hspace{0.5cm}  -\theta [\cG]^{c}\p_0^{|c|} \Pi_\theta(A_\mu \al^\mu \psi) + \theta  \sum_{j=1}^3c_j[\p]^a [\cO]^b R_j^{c_j}(A_\mu \al^\mu \psi)
			%		\end{aligned}
		%	\end{align}
	By the boundedness of $\Pi_\theta$, $R_{\cL'}$, and $e^{\theta it\brad }$ it suffices to show the following bilinear estimate
	\begin{align}\label{eq:aim-nonlinear-dirac}
	\begin{aligned}
			&\sum_{k_1,k_2\in \Z} 2^{-\frac{k_2}{2}}\left \|P_k\left( P_{k_2} W_{\mu, \cL_2, \thet}(t) \al^\mu P_{k_1}\psi_{\theo, \cL_1}(t)\right)\right\|_{L^2} \\
			& \hspace{4cm} \les \ve_1^2\bra{t}^{\wt{H}(n) \de}\bra{2^k}^{-N(n+1)- 5}  \min(\bra{t}^{-1}, 2^{k}),
	\end{aligned}
	\end{align}
	for $\cL_l \in \cV_l, \, \theta_l \in \{+,-\} \,(l=1,2)$,  $n_1 + n_2 \le n$, and $k \in \Z$.
	By H\"older's inequality and \eqref{eq:high}, we have
	\begin{align*}
		\left \|P_k\left( P_{k_2} W_{\mu, \cL_2, \thet}(t) \al^\mu P_{k_1}\psi_{\theo,\cL_1}(t)\right)\right\|_{L^2} &\les  2^{\frac{3\min(\bm k)-k_2}2}\|\psi_{\theo,\cL_1}(t)\|_{L^2} \|P_{k_2} W_{\mu, \cL_2, \thet}(t)\|_{L^2} \\
		&\les \ve_1^2 \bra{t}^{\left[ H(n_1)+H(n_2)\right] \de} 2^{\frac{3\min(\bm k)-k_2}2} \bra{2^{k_1}}^{-N(n_1)}\bra{2^{k_2}}^{-N(n_2)},
	\end{align*}
	where $\bm k  := (k,k_1,k_2)$.	This yields that
	\begin{align*}
		\sum_{k_1,k_2 \in \Z} 2^{-\frac{k_2}{2}}\left \|P_k\left( P_{k_2} W_{\mu, \cL_2, \thet} (t)\al^\mu P_{k_1}\psi_{\theo,\cL_1}(t)\right)\right\|_{L^2} \les \ve_1^2 \bra{t}^{\left[ H(n_1)+H(n_2)\right] \de} 2^{k},
	\end{align*}
	which implies \eqref{eq:esti-non-dirac-time} in the case that $2^k \le \bra{t}^{-1}$.

	Let us consider the case $2^k \ge \bra{t}^{-1}$. From \eqref{eq:decay-dirac} and \eqref{eq:decay-maxwell}, we have
	\begin{align}
		\begin{aligned}\label{eq:l-infinity}
			\|\psi_{\theo,\cL_1}(t)\|_{L^\infty} &\les \ve_1 \bra{t}^{H(n_1+1)\de}2^{\frac{k_1}2}\bra{2^{k_1}}^{-N(n_1+1)+2} \min(\bra{t}^{-1},2^{2k_1}),\\
			\|W_{\mu, \cL_2, \thet}(t)\|_{L^\infty} &\les \ve_1 \bra{t}^{H(n_2+1)\de+\frac \de2}2^{\frac{k_2}2}\bra{2^{k_2}}^{-N(n_2+1)+2} \min(\bra{t}^{-1},2^{k_2}),
		\end{aligned}
	\end{align}
	for $0 \le n_1,n_2 \le 2$. We partition the set of numbers of vector fields as follows:	
	\begin{align*}
		\cN_n &= \{ (n_1,n_2) : n_1+ n_2 = n, 0\le n_1,n_2 \le n\} \\
		&= \{ (n_1,n_2) : n_1+ n_2 = n,   n_1,n_2 \ge 1\} \cup \{(n,0), (0,n): n\ge 1\} \cup \{(0,0)\} \\
		&=:\cN_n^1 \cup \cN_n^2 \cup \{(0,0)\}.
	\end{align*}
	
	\emph{ Estimates for $(n_1,n_2) \in \cN_{n}^1$ and $2^k \ge \bra{t}^{-1}$.} When $2^{k} \les 2^{k_2}$, we estimate
	\begin{align*}
		\sum_{2^{k} \les 2^{k_2}} &2^{-\frac{k_2}{2}}\left \|P_k\left( P_{k_2} W_{\mu,\cL_2, \thet}(t) \al^\mu P_{k_1}\psi_{\theo,\cL_1}(t)\right)\right\|_{L^2}\\ &\les \sum_{2^{k} \les 2^{k_2}} 2^{-\frac{k_2}{2}} \|P_{k_1}\psi_{\theo,\cL_1}(t)\|_{L^\infty}\|P_{k_2} W_{\mu,\cL_2, \thet}(t)\|_{L^2}\\
		&\les \ve_1^2 \bra{t}^{\left[H(n_1+1) + H(n_2)\right]\de -1 } \bra{2^k}^{-N(n)}.
	\end{align*}
	Note that $H(n_1+1) + H(n_2) \le H(n) +10$ for $(n_1,n_2) \in \cN_n^1$. Analogously, if $2^{k_2} \ll 2^k$, we obtain
	\begin{align*}
		\sum_{2^{k_2}\ll 2^k} &2^{-\frac{k_2}{2}}\left \|P_k\left( P_{k_2} W_{\mu, \cL_2, \thet}(t) \al^\mu P_{k_1}\psi_{\theo,\cL_1}(t)\right)\right\|_{L^2}\\ &\les \sum_{2^{k_2}\ll 2^k} 2^{-\frac{k_2}{2}} \|P_{k_1}\psi_{\theo,\cL_1}(t)\|_{L^2}\|P_{k_2} W_{\mu, \cL_2, \thet}(t)\|_{L^\infty}\\
		&\les \ve_1^2 \bra{t}^{\left[H(n) +11\right]\de -1 } \bra{2^k}^{-N(n_1)}.
	\end{align*}
	
	%We now prove \eqref{eq:aim-nonlinear-dirac} when $(n_1,n_2) \in N_n^2$ and $2^k \ge \bra{t}^{-1}$.
	
	\emph{ Estimates for $(n_1,n_2) \in \cN_n^2$ and $2^k \ge \bra{t}^{-1}$.} If $(n_1,n_2) = (n, 0)$, then we consider the cases $2^{k_1} \le \bra{t}^{-1}$ and $2^{k_1} \ge \bra{t}^{-1}$, for which we obtain
	\begin{align*}
		\sum_{2^{k_1} \le \bra{t}^{-1}}& 2^{-\frac{k_2}{2}}\left \|P_k\left( P_{k_2} W_{\mu,\thet} (t)\al^\mu P_{k_1}\psi_{\theo,\cL_1}(t)\right)\right\|_{L^2}\\
		 &\les \sum_{2^{k_1} \le \bra{t}^{-1}} 2^{\frac{3\min(\bm k)- k_2}{2}} \|P_{k_1}\psi_{\theo, \cL_1}(t)\|_{L^2}\|P_{k_2} W_{\mu, \thet}(t)\|_{L^2}\\
		&\les \ve_1^2 \bra{t}^{\left[H(n) +2\right]\de -1 } \bra{2^k}^{-N(n)}
	\end{align*}
	and	
	\begin{align*}
		\sum_{2^{k_1} \ge \bra{t}^{-1}} &2^{-\frac{k_2}{2}}\left \|P_k\left( P_{k_2} W_{\mu, \thet}(t) \al^\mu P_{k_1}\psi_{\theo, \cL_1}(t)\right)\right\|_{L^2} \\
		&\les \sum_{2^{k_1} \ge \bra{t}^{-1}} 2^{-\frac{k_2}{2}} \|P_{k_1}\psi_{\theo, \cL_1}(t)\|_{L^2}\|P_{k_2} W_{\mu, \thet}(t)\|_{L^\infty}\\
		&\les \ve_1^2 \bra{t}^{\left[H(n) +12\right]\de -1 } \bra{2^k}^{-N(n)}.
	\end{align*}
	These finish the proof of \eqref{eq:aim-nonlinear-dirac} when $(n_1,n_2) = (n,0)$.
	
	If $(n_1,n_2) = (0,n)$, we only consider the case $2^{k_2} \ll 2^k \sim 2^{k_1}$ since apart from this case, we can obtain the estimates similar to the above. By \eqref{eq:esti-l2-k}, we estimate
	\begin{align*}
		2^{-\frac{k_2}2}\left \|P_k\left( P_{k_2} W_{\mu, \cL_2, \thet}(t) \al^\mu \pko\psi_{\theo}(t)\right)\right\|_{L^2} &\les 2^{k_2} \|\pko\phi_{\theo}(t)\|_{L^2}\|P_{k_2} W_{\mu, \cL_2, \thet}(t)\|_{L^2} \\
		&\les \ve_1^2 \bra{t}^{[H(1)+H(n)]\de} 2^{k_1+k_2} \bra{2^{k_1}}^{-N(1)-1}\bra{2^{k_2}}^{-N(n)}.
	\end{align*}
	This enables us to get \eqref{eq:aim-nonlinear-dirac} if $2^{k_1 + k_2} \le \bra{t}^{-1}$.
	
	To handle the case $2^{k_1 + k_2} \ge \bra{t}^{-1}$, we make the decomposition $P_{k_1}\phi_{\theo} = \phi_{\theo}^{\le J, k_1} + \phi_{\theo}^{>J,k_1}$ with $2^J= C2^{k_1}\bra{t}$ for some $C \ll 1$. In view of \eqref{eq:zero-vec-2} and \eqref{eq:elliptic-q}, we see that
	\begin{align}\label{eq:decay-decomposition}
\begin{aligned}
			\|e^{-\theo it\brad}\phi_{\theo}^{\le J,k_1}(t)\|_{L^\infty} &\les \ve_1 \bra{t}^{-\frac32} 2^{-\frac{k_1}2 + \frac{k_1}{1000}} \bra{2^{k_1}}^{-19},\\
		\|\phi_{\theo}^{> J,k_1}(t)\|_{L^2} &\les \ve_1 \bra{t}^{-1+ H(1)\de} 2^{-k_1} \bra{2^{k_1}}^{-N(1)}.
\end{aligned}
	\end{align}
	Then we have
	\begin{align*}
		2^{-\frac{k_2}2}&\left \|P_k\left( P_{k_2} W_{\mu, \cL_2, \thet} (t) \al^\mu e^{-\theo it\brad}\phi_{\theo}^{\le J,k_1} (t)\right)\right\|_{L^2} \\
		&\les  2^{-\frac{k_2}2}\|e^{- \theo it\brad}\phi_{\theo}^{\le J,k_1}(t)\|_{L^\infty}\|P_{k_2} W_{\mu, \cL_2, \thet}(t)\|_{L^2} \\
		&\les \ve_1^2 \bra{t}^{-\frac32+ H(n)\de} 2^{-\frac{k_1+k_2}2 + \frac{k_1}{2000}}\bra{2^{k_1}}^{-19} \bra{2^{k_2}}^{-N(n)}
	\end{align*}
	and
	\begin{align*}
		2^{-\frac{k_2}2}&\left \|P_k\left( P_{k_2} W_{\mu,\thet,\cL_2} (t)\al^\mu e^{-\theo it\brad}\phi_{\theo}^{> J,k_1}(t)\right)\right\|_{L^2}\\
		 &\les  2^{k_2}\|\phi_{\theo}^{> J,k_1}(t)\|_{L^2}\|P_{k_2} W_{\mu,\thet,\cL_2}(t)\|_{L^2} \\
		&\les \ve_1^2 \bra{t}^{-1+\left[H(n)+H(1)\right]\de} 2^{k_2 -k_1}\bra{2^{k_1}}^{-N(1)} \bra{2^{k_2}}^{-N(n)}.
	\end{align*}
	These give us the desired estimates in case  $2^{k_2} \ll 2^{k} \sim 2^{k_1}$ and $2^{k_1 + k_2} \ge \bra{t}^{-1}$.
	
	%It remains to consider the case $(n_1,n_2) = (0,0)$.
	
	\emph{Estimates for $(n_1,n_2) = (0,0)$.} By H\"older's inequality, we estimate
	\begin{align*}
		\left \|P_k\left( P_{k_2} W_{\mu,\thet}(t) \al^\mu P_{k_1}\psi_{\theo}\right) (t) \right\|_{L^2} &\les 2^{\frac{3\min(\bm k)}{2}} \|P_{k_1}\psi_{\theo}(t)\|_{L^2}\|P_{k_2} W_{\mu,\thet}(t)\|_{L^2} \\
		&\les \ve_1^2\bra{t}^{2\de}2^{\frac{3\min(\bm k)}{2}} \bra{2^{k_1}}^{-N(0)}\bra{2^{k_2}}^{-N(0)},
	\end{align*}
	which shows \eqref{eq:aim-nonlinear-dirac} if $2^{\min(\bm k)} \le \bra{t}^{-1}$ or $\bra{2^{\max(\bm k)}} \ge \bra{t}^{\frac1{30}}$.
	
	Let us now consider
	$$
	\mbox{Case A: } 2^{\min(\bm k)} \ge \bra{t}^{-1} \mbox{ and } \bra{2^{\max(\bm k)}} \le \bra{t}^{\frac1{30}}.
	$$
	By Plancherel's theorem, we have
	\begin{align*}
		\|P_k\left( P_{k_2} W_{\mu, \thet} \al^\mu P_{k_1}\psi_{\theo }\right)\|_{L^2} = \|\cF \left[P_k\left( P_{k_2} W_{\mu, \thet} \al^\mu P_{k_1}\psi_{\theo}\right)\right]\|_{L_\xi^2}.
	\end{align*}
	In particular, one gets
	\begin{align*}
		\cF \left[P_k\left( P_{k_2} W_{\mu, \thet} (t)\al^\mu P_{k_1}\psi_{\theo}(t)\right)\right](\xi) = \int_{\R^3}  \rho_k(\xi)  e^{itp_{12}(\xi,\eta)} \wh{P_{k_2} V_{\mu,\thet}}(t,\eta) \al^\mu  \wh{P_{k_1}\phi_{\theo}}(t,\xi-\eta) d\eta,
	\end{align*}
	where the phase
	\begin{align*}
		p_{12}(\xi,\eta)= \theo \bra{\xi-\eta} + \thet |\eta|.
	\end{align*}	
	As we observed in \eqref{eq:nonresonance-space}, this phase interaction does not exhibit a space resonance. We exploit this non-resonance feature as follows: By the relation
	\begin{align*}
		e^{itp_{12}(\xi,\eta) } = -it^{-1} \frac{\nabla_\eta p_{12}(\xi,\eta) \cdot \nabla_\eta e^{itp_{12}(\xi,\eta)}}{|\nabla_\eta p_{12}(\xi,\eta)|^2},
	\end{align*}
	we integrate by parts with respect to $\eta$ to obtain the following integrals
	\begin{align}
		& t^{-1}\int_{\R^3}   e^{itp_{12}(\xi,\eta)} \nabla_\eta M_{12,\bm k}(\xi,\eta) \wh{P_{k_2} V_{\mu,\thet}}(t,\eta) \al^\mu  \wh{P_{k_1}\phi_{\theo}}(t,\xi-\eta) d\eta,\label{eq:nonlinear-space-1}\\
		& t^{-1}\int_{\R^3}    e^{itp_{12}(\xi,\eta)} M_{12,\bm k}(\xi,\eta) \wh{P_{k_2} (xV_{\mu,\thet})}(t,\eta) \al^\mu  \wh{P_{k_1}\phi_{\theo}}(t,\xi-\eta) d\eta,\label{eq:nonlinear-space-2}\\
		& t^{-1}\int_{\R^3}    e^{itp_{12}(\xi,\eta)} M_{12,\bm k}(\xi,\eta) \wh{P_{k_2} V_{\mu,\thet}}(t,\eta) \al^\mu  \wh{P_{k_1}x\phi_{\theo}}(t,\xi-\eta) d\eta,\label{eq:nonlinear-space-3}
	\end{align}
	where
	\begin{align*}
		M_{12,\bm k}(\xi,\eta) = \frac{\nabla_\eta p_{12}(\xi,\eta) }{|\nabla_\eta p_{12}(\xi,\eta)|^2} \rho_{k}(\xi) \rho_{k_1}(\xi-\eta) \rho_{k_2}(\eta).
	\end{align*}
	Then, a direct calculation yields that
	\begin{align*}
		|\nabla_\eta M_{12,\bm k}| \les 2^{-k_2}\bra{2^{k_1}}^{4}.
	\end{align*}
	Using H\"older's inequality and \eqref{eq:high}, we see that
	\begin{align*}
		\sum_{\rm Case \, A}2^{-\frac{k_2}2}\|\eqref{eq:nonlinear-space-1}\|_{L_\xi^2} \les 	\sum_{\rm Case \, A}\bra{t}^{-1+2\de}2^{\frac{3\min(\bm k)-3k_2}{2}} \bra{2^{k_1}}^{-N(0)+4} \bra{2^{k_2}}^{-N(0)} \les \bra{t}^{-1+3\de} \bra{2^k}^{-N(1)-5}.
	\end{align*}
	This leads us \eqref{eq:aim-nonlinear-dirac}. For \eqref{eq:nonlinear-space-2}, we have
	\begin{align*}
		\|M_{12,\bm k}\|_{\cm} \les \bra{2^{k_2}}^{8}.
	\end{align*}
	Then, by H\"older's inequality and  Lemma \ref{lem:coif-mey}, this implies that
	\begin{align*}
	\sum_{\rm Case\, A}2^{-\frac{k_2}2}	\|\eqref{eq:nonlinear-space-2}\|_{L_\xi^2} &\les  	\sum_{\rm Case\, A}\bra{t}^{-1}2^{\frac{3\min(\bm k) - k_2}2}\bra{2^{k_2}}^{2} \| P_{k_2} (xV_{\mu,\thet})(t)\|_{L^2} \|\pko \phi_\theo(t)\|_{L^2}\\
	&\les 	\ve_1^2\sum_{\rm Case \,A}2^{\frac{\min(\bm k)}2} \bra{t}^{-1+[H(1)+H(0)]\de }  \bra{2^{k_1}}^{-N(0)} \bra{2^{k_2}}^{-N(1)+2}
\end{align*}
and
	\begin{align*}
		\sum_{\rm Case\, A}2^{-\frac{k_2}2}	\|\eqref{eq:nonlinear-space-2}\|_{L_\xi^2} &\les  	\sum_{\rm Case\, A}\bra{t}^{-1}2^{-\frac{k_2}2}\bra{2^{k_2}}^{8} \| P_{k_2} (xV_{\mu,\thet})(t)\|_{L^2} \|\pko \phi_\theo(t)\|_{L^\infty}\\
		&\les 	\ve_1^2\sum_{\rm Case \,A}2^{-k_2} \bra{t}^{-2+2H(1)\de + \frac12 \de}  \bra{2^{k_1}}^{-N(1)+10} \bra{2^{k_2}}^{-N(1)+8},
	\end{align*}
	respectively. By decomposing $2^{k_2} \le \bra{t}^{-\frac12}$ and $2^{k_2} \ge \bra{t}^{-\frac12}$, we complete the bound for \eqref{eq:nonlinear-space-2}. The estimates for \eqref{eq:nonlinear-space-3} can be obtained analogously.
\end{proof}

 We will use  the following lemma to prove weighted energy estimates and asymptotic behavior.
\begin{lemma}\label{lem:nonlinear-wei-dirac}
	Assume that $(\psi,A_\mu)$ satisfies the a priori assumptions \eqref{assum:energy}--\eqref{assum:s-norm} on $[0,T]$, for some $T>1$. Let  Let $t\in[0,T]$, $k \in \Z$, $\cL \in \cV_n, 0 \le n \le 2$, and $l \in\{1,2,3\}$. Then we have
	\begin{align}\label{eq:esti-nonlinear-dirac-space}
		\normo{P_k \left( x_l\fN_{\theta, \cL}^{\bm D} \right)(t)}_{L^2}  \les \ve_1^2  \bra{t}^{\wt{H}(n) \de} \bra{2^k}^{-N(n+1)-5}.
	\end{align}	
\end{lemma}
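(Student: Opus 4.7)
The idea is to move the multiplication by $x_l$ onto one of the two bilinear factors in $\fN_{\theta,\cL}^{\bm D}$, convert it to a frequency derivative $i\partial_{\xi_l}$ via Plancherel, and then control that factor by the weighted profile bound \eqref{eq:weight}, while the remaining factor is handled by the dispersive bounds from Section~\ref{sec:linear}. The overall structure closely parallels that of Lemma~\ref{lem:esti-nonlinear-dirac}.

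Concretely, since $\Pi_\theta$ and the Riesz-type operators $\cR_{\cL'}$ preserve $L^2$ and have bounded commutators with $x_l$ on each frequency shell, an expansion of $\cL$ by Leibniz reduces \eqref{eq:esti-nonlinear-dirac-space} to bounding
\begin{equation*}
\sum_{k_1,k_2\in\Z}2^{-k_2/2}\left\|P_k\bigl(x_l[P_{k_2}W_{\mu,\cL_2,\theta_2}(t)\alpha^\mu P_{k_1}\psi_{\theta_1,\cL_1}(t)]\bigr)\right\|_{L^2}
\end{equation*}
for $\cL_i\in\cV_{n_i}$ with $n_1+n_2\le n\le 2$. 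Using the scalar identity $x_l(fg)=f(x_lg)=(x_lf)g$ together with Plancherel, one can replace $x_l$ by $i\partial_{\xi_l}$ acting on one chosen profile. By \eqref{eq:weight}, this yields $L^2$ control of $x_l\psi_{\theta_1,\cL_1}$ or $x_lW_{\mu,\cL_2,\theta_2}$ with growth $\bra{t}^{H(n_j+1)\de}$, gaining an extra $\bra{2^{k_j}}^{-1}$ on the Dirac side, respectively $2^{-k_j/2}$ on the Maxwell side.

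With this substitution in hand, the bilinear estimate proceeds exactly as in Lemma~\ref{lem:esti-nonlinear-dirac}, splitting the count of vector fields into $\cN_n^1$, $\cN_n^2$, and $(0,0)$, and in each case placing the weight $x_l$ on whichever factor does not already enjoy the pointwise decay from \eqref{eq:decay-dirac}--\eqref{eq:decay-maxwell}. The disappearance of the time-decay factor $\min(\bra{t}^{-1},2^k)$ in the target bound \eqref{eq:esti-nonlinear-dirac-space} precisely accounts for the one additional $\bra{t}^{H(n_j+1)\de}$ spent on the weighted bound; since $\de\ll 1$ and $n_j+1\le n+1\le 3$, the resulting polynomial loss is absorbed into $\bra{t}^{\wt H(n)\de}$ with significant slack.

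The main obstacle is the base case $(n_1,n_2)=(0,0)$ with $n=0$ in the regime $2^k\ge\bra{t}^{-1}$ with all frequencies in the dyadic range $[\bra{t}^{-1},\bra{t}^{1/30}]$. H\"older alone is insufficient there; as in the treatment of \eqref{eq:nonlinear-space-1}--\eqref{eq:nonlinear-space-3}, one must integrate by parts in $\eta$ to exploit the spatial non-resonance \eqref{eq:nonresonance-space} of the phase $p_{12}(\xi,\eta)=\theta_1\bra{\xi-\eta}+\theta_2|\eta|$. After this redistribution, the outside $x_l$ produces new terms involving $P_{k_2}(xV_{\mu,\theta_2})$, whose $L^2$ control together with that of their time derivatives is exactly what Lemma~\ref{lem:esti-wei-time} provides. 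Combined with the weighted profile bound \eqref{eq:weight}, these estimates close the final subcase, and the buffer of $160\de$ in the definition of $\wt H(n)$ comfortably accommodates the associated polynomial losses.
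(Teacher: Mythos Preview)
Your overall plan matches the paper's: pass to Fourier space and let $i\p_{\xi_l}$ act on the bilinear integrand. Two refinements are needed.

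First, distinguish fields from profiles: \eqref{eq:weight} controls $x_l\phi_{\theta,\cL}$ and $x_lV_{\mu,\cL,\theta'}$, not $x_l\psi$ or $x_lW$. Concretely, $\p_{\xi_l}$ acting on $e^{-\theta_1 it\bra{\xi-\eta}}\wh{\phi_{\theta_1,\cL_1}}(\xi-\eta)$ hits either the phase or the profile. The paper dispatches the phase case in one line: it produces $t$ times (a bounded multiplier times) the original bilinear form, and Lemma~\ref{lem:esti-nonlinear-dirac} with its factor $\min(\bra{t}^{-1},2^k)$ then gives \eqref{eq:esti-nonlinear-dirac-space} immediately. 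Your proposed $\eta$--integration by parts for the $(0,0)$ mid-frequency regime would merely reprove that portion of Lemma~\ref{lem:esti-nonlinear-dirac}; it is not incorrect, but it is unnecessary once you invoke the lemma.

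Second, for the remaining profile term (with $\p_{\xi_l}\wh{\phi_{\theta_1,\cL_1}}$) no $\eta$--integration by parts is needed at all: direct $L^2\times L^2$ and $L^2\times L^\infty$ H\"older estimates suffice, together with one change of variables to shift the weight onto $V_{\mu,\theta_2}$ in the subcase $2^{k_2}\ll 2^k\sim 2^{k_1}$, $2^k\ge\bra{t}^{1/20}$ (using Bernstein and \eqref{eq:weight} to bound $\|P_{k_2}(x_lV_{\mu,\theta_2})\|_{L^\infty}$). Your citation of Lemma~\ref{lem:esti-wei-time} is misplaced: that lemma bounds $\p_t P_k(x_lV_{\mu,\theta})$ and is used only in Section~\ref{sec:s-dirac} after a normal-form step; no time derivative of $x_lV$ arises here.
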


\begin{proof}
	%The proof is quite similar to that of Lemma \ref{lem:esti-nonlinear-dirac}. Indeed,
	Since $[x_l, \cR_{\cL'}]$ is smoother than $\cR_{\cL'}$, it suffices to consider only the first term of $\fN_{\theta,\cL}^{\bm D}$ as previously.
	By Plancherel's theorem, we see that
	\begin{align}\label{eq:aim-nonlinear-wei-dirac}
		2^{-\frac{k_2}2}\normo{\int_{\R^3}\rho_k(\xi)  \p_{\xi_l}\left[ \wh{W_{\mu, \thet, \cL_2} }(t,\eta)\al^\mu e^{-\theo it\bra{\xi-\eta}} \wh{\phi_{\theo,\cL_1}}(t,\xi-\eta) \right] d\eta}_{L_\xi^2}.
	\end{align}
	The derivative $\p_{\xi_l}$ falls on either $e^{-\theo it\bra{\xi-\eta}}$ or $\wh{\phi_{\theo,\cL_1}}(t,\xi-\eta)$. If the derivative falls on the phase modulation $e^{-\theo it\bra{\xi-\eta}}$, Lemma \ref{lem:esti-nonlinear-dirac} leads us to the bound of \eqref{eq:esti-nonlinear-dirac-space}. Thus, we have only to consider the case when the derivative falls on $\wh{\phi_{\theo,\cL_1}}(t,\xi-\eta)$.
	
	By \eqref{eq:high}, \eqref{eq:weight}, and H\"older's inequality, we have
	\begin{align}\label{eq:nonlinear-wei-dirac-l2}
		\begin{aligned}
			&2^{-\frac{k_2}2}\normo{\int_{\R^3}\rho_k(\xi)  \p_{\xi_l} \wh{P_{k_2}W_{\mu, \cL_2, \thet} }(t,\eta)\al^\mu e^{-\theo it\bra{\xi-\eta}} \wh{P_{k_1}x_l\phi_{\theo,\cL_1}}(t,\xi-\eta) d\eta}_{L_\xi^2} \\
			&\les 2^{\frac{3\min(\bm k) -k_2}2} \left\|P_{k_1}\left(x_l\phi_{\theo,\cL_1}\right)(t)  \right\|_{L^2} \|P_{k_2}W_{\mu, \cL_2, \thet}(t)\|_{L^2}\\
			&\les \ve_1^2 \bra{t}^{\left[H(n_1+1) + H(n_2)\right]\de} 2^{\frac{3\min(\bm k) -k_2}2} \bra{2^{k_1}}^{-N(n_1+1)-1}\bra{2^{k_2}}^{-N(n_2)},
		\end{aligned}
	\end{align}
	which implies the bound of \eqref{eq:esti-nonlinear-dirac-space} if $n_1 \neq n$.
	
	Let us consider the case $(n_1,n_2)= (n,0)$. Using \eqref{eq:nonlinear-wei-dirac-l2}, we get the bound of \eqref{eq:esti-nonlinear-dirac-space} if $2^{k_1} \ll 2^{k} \sim 2^{k_2}$ and $2^{k_1} \le \bra{t}^{-10\de}$. By \eqref{eq:elliptic-q} and \eqref{eq:decay-maxwell}, we have
	\begin{align}\label{eq:nonlinear-wei-dirac-infty}
		\begin{aligned}
			&2^{-\frac{k_2}2}\normo{\int_{\R^3}\rho_k(\xi)  \p_{\xi_l} \wh{P_{k_2}W_{\mu, \thet} }(t,\eta)\al^\mu e^{-\theo it\bra{\xi-\eta}} \wh{P_{k_1}x_l\phi_{\theo,\cL_1}}(t,\xi-\eta) d\eta}_{L_\xi^2} \\
			&\les 2^{-\frac{k_2}2} \left\|P_{k_1}\left(x_l\phi_{\theo,\cL_1}\right) (t) \right\|_{L^2} \|P_{k_2}W_{\mu,  \thet}(t) \|_{L^\infty}\\
			&\les \ve_1^2 \bra{t}^{\left[H(n_1+1) + H(1)\right]\de -1}  \bra{2^{k_1}}^{-N(n_1+1)-1}\bra{2^{k_2}}^{-N(1)}.
		\end{aligned}
	\end{align}
	If $ 2^{k_1} \ll 2^{k} \sim 2^{k_2}$ and $2^{k_1} \ge \bra{t}^{-10\de}$, then we get the desired bound by \eqref{eq:nonlinear-wei-dirac-infty}. In addition, we are done for the case $2^k \les 2^{k_1} \sim 2^{k_2}$ or the case $2^{k_2} \ll 2^{k} \sim 2^{k_1}$ and $2^k \le \bra{t}^\frac1{20} $.

	Now it remains to consider the case $2^{k_2} \ll 2^{k} \sim 2^{k_1}$ and $2^k \ge \bra{t}^{\frac1{20}}$. Making the change of variables in the integral of \eqref{eq:aim-nonlinear-wei-dirac}, for \eqref{eq:aim-nonlinear-wei-dirac} we need to estimate
	\begin{align*}
		2^{-\frac{k_2}2}\normo{\int_{\R^3}\rho_k(\xi)   e^{\thet it|\xi-\eta|}\wh{P_{k_2}x_lV_{\mu,  \thet} }(t,\xi-\eta)\al^\mu  \wh{P_{k_1}\psi_{\theo,\cL_1}}(t,\eta) d\eta}_{L_\xi^2}.
	\end{align*}
	This leads us that, by Bernstein's inequality,
	\begin{align*}
		&2^{-\frac{k_2}2}\normo{\int_{\R^3}\rho_k(\xi)   e^{\thet it|\xi-\eta|}\wh{P_{k_2}x_lV_{\mu,  \thet} }(t,\xi-\eta)\al^\mu  \wh{P_{k_1}\psi_{\theo,\cL_1}}(t,\eta) d\eta}_{L_\xi^2}\\
		&\les 2^{-\frac{k_2}2} \|P_{k_1}\psi_{\theo,\cL_1}(t)\|_{L^2} \|P_{k_2}\left(x_lV_{\mu,  \thet}\right)(t)\|_{L^\infty}\\
		&\les \ve_1^2 \bra{t}^{\left[H(n)+H(1)\right]\de} 2^{k_2} \bra{2^{k_1}}^{-N(n)}  \bra{2^{k_2}}^{-N(1)}.
	\end{align*}
	Here we used $2^k \ge \bra{t}^{\frac1{20}}$ to the bound $t^{\wt H(n)\de}$.  This completes the proof of \eqref{eq:esti-nonlinear-dirac-space}.
\end{proof}

\begin{lemma} \label{lem:decay-nonlinear-dirac}
	Assume that $(\psi,A_\mu)$ satisfies the a priori assumptions \eqref{assum:energy}--\eqref{assum:s-norm} on $[0,T]$, for some $T>1$. Let $t\in[0,T]$, $k \in \Z$, $\cL \in \cV_n, n \in \{0, 1, 2, 3\}$, and $\theta \in \{+, -\}$. Then we have
	\begin{align}\label{eq:decay-non-dirac}
		\normo{\mathfrak N_{\theta, \cL}^{\bm D}(t)}_{L^2}  \les \ve_1^2  \bra{t}^{-\frac23 + 2H(3) \de}.
	\end{align}
\end{lemma}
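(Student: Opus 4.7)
The plan is to derive \eqref{eq:decay-non-dirac} directly from the already established dyadic bound of Lemma \ref{lem:esti-nonlinear-dirac} via Littlewood-Paley summation, without requiring any new bilinear or space-time resonance analysis. First I would decompose $\fN_{\theta,\cL}^{\bm D} = \sum_{k \in \Z} P_k \fN_{\theta,\cL}^{\bm D}$, apply Minkowski's inequality, and invoke \eqref{eq:esti-non-dirac-time} to reduce the claim to estimating
\begin{align*}
\|\fN_{\theta,\cL}^{\bm D}(t)\|_{L^2} \lesssim \ve_1^2\,\bra{t}^{\wt H(n)\de}\sum_{k \in \Z} \bra{2^k}^{-N(n+1)-5}\min(\bra{t}^{-1}, 2^{k}).
\end{align*}

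Next I would split the dyadic sum into three regimes. In the low regime $2^k \le \bra{t}^{-1}$, the factor $\min(\bra{t}^{-1},2^k)=2^k$ and $\bra{2^k}\sim 1$, so a geometric series yields a contribution of size $\bra{t}^{-1}$. In the mid regime $\bra{t}^{-1}\le 2^k \le 1$, the factor equals $\bra{t}^{-1}$ and there are $O(\log\bra{t})$ dyadic shells, giving $\bra{t}^{-1}\log\bra{t}$. In the high regime $2^k > 1$, the decay $\bra{2^k}^{-N(n+1)-5}$ contributes at least $2^{-5k}$ (recall $N(4)=0$, so the $-5$ offset is essential here), and summing against $\bra{t}^{-1}$ again yields $O(\bra{t}^{-1})$. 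Combining the three contributions gives
\begin{align*}
\|\fN_{\theta,\cL}^{\bm D}(t)\|_{L^2} \lesssim \ve_1^2\,\bra{t}^{-1+\wt H(n)\de}\log\bra{t}.
\end{align*}

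Finally I would compare exponents. Absorbing $\log\bra{t}\lesssim \bra{t}^{\de}$ and using $\wt H(n) \le \wt H(3) = H(3) + 160 = 570$ together with $2H(3) = 820$, one checks
\begin{align*}
-1 + (\wt H(n)+1)\de \;\le\; -\tfrac23 + 2H(3)\de
\qquad\Longleftrightarrow\qquad
-\tfrac13 \le \bigl(2H(3) - \wt H(n) - 1\bigr)\de,
\end{align*}
which is trivially satisfied since $2H(3)-\wt H(n)-1 \ge 249 > 0$ and $\de > 0$; in fact there is a comfortable margin of $1/3$ in the time exponent. This gives \eqref{eq:decay-non-dirac}.

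There is essentially no obstacle here beyond bookkeeping: the claimed rate $\bra{t}^{-2/3 + 2H(3)\de}$ is substantially weaker than the rate $\bra{t}^{-1 + \wt H(n)\de}$ produced by the dyadic summation. The specific exponent $-2/3$ is not sharp for this lemma; rather, it is chosen to match the frequency cutoff $2^K \le \bra{s}^{-2/3-2\zeta}$ used later in the phase correction analysis of Section~8, so that this crude $L^2$ control of the full nonlinearity can be plugged into the modified scattering argument without loss.
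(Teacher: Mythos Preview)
Your argument is correct for the lemma as stated: summing the dyadic bound \eqref{eq:esti-non-dirac-time} over $k$ indeed gives $\|\fN_{\theta,\cL}^{\bm D}(t)\|_{L^2}\lesssim \ve_1^2\bra{t}^{-1+\wt H(n)\de}\log\bra{t}$, which is comfortably stronger than \eqref{eq:decay-non-dirac}. This is a genuinely different and more economical route than the paper's. The paper does not reuse Lemma~\ref{lem:esti-nonlinear-dirac}; instead it re-runs a direct bilinear estimate on $A_{\mu,\cL_2,\thet}\al^\mu\psi_{\theo,\cL_1}$, splitting the $A_\mu$-frequency at $2^{k_2}=\bra{t}^{-2/3}$ and using $L^2\times L^\infty$ H\"older with the decay bounds \eqref{eq:decay-dirac}, \eqref{eq:decay-maxwell} in the high part. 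The payoff of the paper's approach is that it actually proves the estimate in $H^{N(n)}$ (see the ``it suffices to show'' line in its proof), which is what is invoked for $\wt\cE_2,\wt\cE_3$ in Section~\ref{sec:energy1}; your summation argument would fail at that regularity for $n=3$ because $N(3)-N(4)-5=5>0$ and the high-frequency tail diverges. So your shortcut proves the lemma exactly as written, while the paper's longer argument quietly proves the stronger statement that the application needs.

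One minor correction to your closing commentary: the lemma is not used in Section~8. It is used in the energy estimates of Section~6 to bound $\wt\cE_2+\wt\cE_3$, and the $-2/3$ exponent arises there from the frequency splitting point in the paper's bilinear proof, not from the phase-correction cutoff $2^K$.
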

We note that Lemma \ref{lem:decay-nonlinear-dirac} is improved in terms of the regularity condition, whereas decay effect has a loss.

\begin{proof}[Proof of Lemma \ref{lem:decay-nonlinear-dirac}]
	For $\cL_1 \in \cV_{n_1}$ and $\cL_2 \in \cV_{n_2}$ ($0\le n_1 +n_2 =n$), we have
	\begin{align*}
		\cL (A_\mu(t) \al^\mu \psi(t)) = \sum_{\theo,\thet \in \{\pm\}}  A_{\mu,\thet, \cL_2}(t) \al^\mu \cL_1 \psi_{\theo, \cL_1}(t). 
	\end{align*}
	Then it suffices to show 	
	\begin{align*}
		\|A_{\mu,\cL_2,\thet}(t) \al^\mu \psi_{\theo,\cL_1}(t) \|_{H^{N(n) }} \les \ve_1^2 \bra{t}^{- \frac23 + 2H(3)\de}
	\end{align*}
	for $\cL_l \in \cV_{n_l}(l=1,2)$.  	To this end, we decompose $A_{\mu,\cL_2, \thet} = \sum_{k_2 \in \Z} \pkt A_{\mu,\cL_2, \thet}$. If $2^{k_2} \le \bra{t}^{-\frac23}$, H\"older inequality with \eqref{eq:high} yields that
	\begin{align*}
		\sum_{2^{k_2} \le \bra{t}^{-\frac23}} \|\pkt A_{\mu,\cL_2,\thet}(t) \al^\mu \psi_{\theo,\cL_1}(t) \|_{H^{N(n)}} &\les \sum_{2^{k_2} \le \bra{t}^{-\frac23}} 2^{k_2} \|\psi_{\theo,\cL_1}(t)\|_{H^{N(n)}}\|\pkt W_{\mu,\cL_2,\thet}(t)\|_{H^{N(n)}} \\
		&\les \ve_1^2 \bra{t}^{-\frac23 + 2H(3)\de}.
	\end{align*}
	Let us consider $2^{k_2} \ge \bra{t}^{-\frac23}$. If $n \le 2 $, using \eqref{eq:high}, \eqref{eq:decay-dirac}, and \eqref{eq:decay-maxwell},	we see that
	\begin{align}\label{eq:l2linfty}
\begin{aligned}
			\| \pkt A_{\mu,\cL_2,\thet}(t) \al^\mu \psi_{\theo,\cL_1}(t) \|_{H^{N(n)}} &\les   2^{-\frac {k_2}2} \|\psi_{\theo,\cL_1}(t) \|_{H^{N(n)}}\| \pkt W_{\mu,\cL_2,\thet}(t)\|_{L^\infty} \\
		&\qquad +   2^{-\frac {k_2}2} \|\psi_{\theo,\cL_1}(t) \|_{L^\infty}\| \pkt W_{\mu,\cL_2,\thet}(t)\|_{H^{N(n)}}\\
		&\les  \ve_1^2 \bra{t}^{-\frac23 + 2H(3)\de}.
\end{aligned}
	\end{align}
	
	Next, we treat the case $n=3$.  When    $(n_1,n_2) = (n,0)$ or $(n_1,n_2) = (0,n)$,	we similarly have
	\begin{align*}
	\| \pkt A_{\mu,\cL_2,\thet}(t) \al^\mu \psi_{\theo,\cL_1}(t) \|_{H^{N(n)}} &\les  2^{-\frac {k_2}2} \|\bra{D}^{N(n)}\psi_{\theo,\cL_1}(t) \|_{L^p}\|\bra{D}^{N(n)} \pkt W_{\mu,\cL_2,\thet}(t)\|_{L^q} \\
	&\les  \ve_1^2 \bra{t}^{-\frac23 + 2H(3)\de},
\end{align*}
where 	 $(p,q)= (\infty,2)$ when $(n_1, n_2) = (0,n)$ and $(p,q)= (2,\infty)$ when  $(n_1, n_2) = (n,0)$. Then it remains to handle the case $1 \le n_1,n_2 \le n-1$ when $n=3$.  However, this case can be done by the same estimates to \eqref{eq:l2linfty}. Therefore,    we complete the proof of \eqref{eq:decay-non-dirac}. 
\end{proof}

\subsection{Nonlinear estimates for Maxwell part}
We denote the nonlinear term of Maxwell part \eqref{eq:vec-wave} by
\begin{align*}
	\fN_{\mu, \cL}^{\bm M} := \frac12 |D|^{-\frac12}\cL\bra{\psi, \al_\mu \psi}.
\end{align*}
Then for $\theta' \in \{+, -\}$
\begin{align}\label{eq:maxwell-pt}
	(i\p_t + \theta'|D|)W_{\mu, \cL, \theta'} = \theta'\fN_{\mu, \cL}^{\bm M},\qquad i\p_tV_{\mu, \cL, \theta'} = \theta' e^{-i\theta' t |D|}\fN_{\mu, \cL}^{\bm M}.
\end{align}

%\begin{lemma} \label{lem:esti-profile-maxwell}
%	Assume that $(\psi, A_\mu)$ is a solution to \eqref{maineq:md-lorenz} on $[0,T]$, for some $T > 1$ and satisfies \eqref{assum:energy}--\eqref{assum:s-norm}. Let $\cL \in \cV_n, n \in \{0, 1, 2, 3\}$. Then we have
%	\begin{itemize}
	%		\item[(i)] \begin{align}\label{eq:nonlinear1}
		%			\normo{P_k \fN_{\mu, \cL}^M(t)}_{L^2} + \normo{P_k \p_t V_{\mu, \cL, \theta'}(t)}_{L^2} \les \ve_1^2 2^{\frac {k^-}2} \bra{t}^{H_M(n) \de} \min(2^{k^-},\bra{t}^{-1})\bra{2^k}^{-N(n)+ 5},
		%		\end{align}
	%		where
	%		\begin{align*}
		%			H''_M(0):=5, \quad H''_M(n):= H(n) +160.
		%		\end{align*}
	%		\item[(ii)] For $k \in \Z$, we have \begin{align}\label{eq:pro-decay-maxwell}
		%			\sum_{j \ge -k^-} \normo{e^{\theta' it|D|} V_{\mu, \theta'}^{j,k}}_{L^\infty} \les \ve_1 2^{k(1-\de_1)}\min(\bra{t}^{-1}, 2^k) \bra{2^k}^{-N_0 + d_1+2}
		%		\end{align}
	%	\end{itemize}
%	
%\end{lemma}

%\begin{proof}[Proof of \eqref{eq:pro-decay-maxwell}] By the linear dispersive estimate \eqref{eq:dispersive-maxwell} for wave equations, we have
%\begin{align*}
%	\|e^{\theta' it |D|}V_{\mu,\theta'}^{j, k}\|_{L^\infty} \les 2^{\frac32 k} \min \left( 1, 2^{j}\bra{t}^{-1}\right)\|\qjk V_{\mu,\theta'}\|_{L^2}.
%\end{align*}
%Then, the first inequality in \eqref{eq:zero-vec-1} implies \eqref{eq:pro-decay-maxwell}.
%\end{proof}

\begin{lemma} \label{lem:esti-nonlinear-wave}
	Assume that $(\psi,A_\mu)$ is a solution to \eqref{maineq:md-lorenz} on $[0,T]$, for some $T>1$ and satisfies \eqref{assum:energy}--\eqref{assum:s-norm}. Let $t\in[0,T]$, $k \in \Z$, $\cL \in \cV_n, n \in \{0, 1, 2, 3\}$, and $\theta' \in \{+, -\}$. Then we have
	\begin{align}\label{eq:esti-nonl-max-time}
		\normo{P_k \fN_{\mu, \cL}^{\bm M}(t)}_{L^2} + \normo{P_k \p_t V_{\mu, \cL, \theta'}(t)}_{L^2} \les \ve_1^2 \bra{t}^{\wt H(n) \de} \bra{2^k}^{-N(n) + 5} \min(2^{k^-},\bra{t}^{-1}).
	\end{align}
\end{lemma}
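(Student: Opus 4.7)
The plan is to follow the dichotomy used in the proof of Lemma \ref{lem:esti-nonlinear-dirac}, adapting it to the Maxwell bilinear structure. First, since $\p_t V_{\mu,\cL,\theta'} = -i\theta' e^{-\theta' it|D|}\fN_{\mu,\cL}^{\bm M}$ and $e^{-\theta' it|D|}$ commutes with $P_k$ as an $L^2$-isometry, the bound on $\|P_k\p_t V_{\mu,\cL,\theta'}\|_{L^2}$ is equivalent to the bound on $\|P_k\fN_{\mu,\cL}^{\bm M}\|_{L^2}$. Next, applying Leibniz to $\cL\bra{\psi,\al_\mu\psi}$, the task reduces to showing, for all $(\cL_1,\cL_2)\in\cV_{n_1}\times\cV_{n_2}$ with $n_1+n_2\le n$ and all signs $(\theo',\thet')$,
\[
\sum_{k_1,k_2\in\Z} 2^{-k/2}\left\|P_k\!\left(P_{k_1}\psi_{\theo',\cL_1}(t)\cdot\ol{P_{k_2}\psi_{\thet',\cL_2}(t)}\right)\right\|_{L^2}\les \ve_1^2\bra{t}^{\wt H(n)\de}\bra{2^k}^{-N(n)+5}\min(2^{k^-},\bra{t}^{-1}).
\]

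For the very low frequency regime $2^k\le\bra{t}^{-1}$, I would apply Bernstein and Cauchy--Schwarz to bound the localized product by $2^{3\min(\bm k)/2}\|\psi_{\theo',\cL_1}\|_{L^2}\|\psi_{\thet',\cL_2}\|_{L^2}$, and use the energy bound \eqref{eq:high}. In the main case $\min(\bm k)=k$ the combination $2^{-k/2}\cdot 2^{3k/2}=2^k$ matches the desired $2^{k^-}$ factor, while for $\min(\bm k)=k_1$ or $k_2$ the sum over the smaller index is geometric and returns the same $2^k$ after summation. The weights $\bra{2^{k_i}}^{-N(n_i)}$ supply the high-frequency decay, and the time growth from $\bra{t}^{(H(n_1)+H(n_2))\de}$ is absorbed into $\bra{t}^{\wt H(n)\de}$.

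For the regime $2^k\ge\bra{t}^{-1}$, I would mimic the partition $\cN_n=\cN_n^1\cup\cN_n^2\cup\{(0,0)\}$. In case $\cN_n^1$ both factors carry at most $n-1\le 2$ vector fields, so the dispersive bound \eqref{eq:decay-dirac} applies to one factor in $L^\infty$ (gaining $2^{k_\ell/2}\min(\bra{t}^{-1},2^{2k_\ell})$) while the other is controlled in $L^2$ by \eqref{eq:high}; the $L^\infty$ gain recovers the $\bra{t}^{-1}$ decay, and the $2^{k_\ell/2}$ together with dyadic summation absorbs $2^{-k/2}$ without exceeding $\bra{2^k}^{-N(n)+5}$. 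In case $\cN_n^2=\{(n,0),(0,n)\}$, placing the zero-vector-field factor in $L^\infty$ via \eqref{eq:decay-dirac} with $n_\ell=0$ keeps us within the $n\le 2$ requirement of the dispersive bound even when $n=3$, and the regularity count closes by allocating the $N(n)$ decay to the $L^2$ side.

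The main obstacle is the $(0,0)$ case when $n=0$ in the intermediate frequency window $2^{\min(\bm k)}\ge\bra{t}^{-1}$ and $\bra{2^{\max(\bm k)}}\le\bra{t}^{1/30}$, where crude $L^2\times L^\infty$ misses the required $\bra{t}^{-1}$ decay. Following the Dirac argument, I would write the bilinear in Fourier with phase $\wt q(\xi,\eta)=\theo'\bra{\eta}-\thet'\bra{\xi+\eta}$ and integrate by parts in $\eta$ using the non-resonance lower bound for $|\nabla_\eta\wt q|$ read off from \eqref{eq:resonance-space-maxwell}. Derivatives falling on the multiplier are absorbed via Coifman--Meyer estimates (Lemma \ref{lem:coif-mey}) after verifying the $\cm$-norm bound of $\nabla_\eta\wt q/|\nabla_\eta\wt q|^2$, and derivatives falling on a profile are converted to weighted profiles $\wh{x\phi}$ controlled by \eqref{eq:weight}. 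The delicate point distinct from the Dirac case is the space-resonant set $\{\xi+2\eta=0\}$ present when $\theo'\neq\thet'$; this requires a separate frequency localization around $|\xi+2\eta|\sim 2^{k'}$ and an analysis of how the $|D|^{-1/2}$ singularity at the output interacts with the near-resonant region, with the Bernstein absorption from the low-frequency step used again to dispose of the $2^{-k/2}$ factor. Assembling these pieces yields the claimed bound with the loss $+5$ in $\bra{2^k}^{-N(n)+5}$ coming from the $\bra{2^k}^{O(1)}$ losses in the successive applications of \eqref{eq:decay-dirac} and the resonance bookkeeping.
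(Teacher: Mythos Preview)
Your framework (the reduction to a bilinear estimate, the dichotomy $2^k\lessgtr\bra{t}^{-1}$, and the partition of $(n_1,n_2)$) matches the paper, but you have effectively swapped the two key mechanisms between $\cN_n^1$ and $\{(0,0)\}$, and both swaps leave genuine gaps.

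\textbf{The $\cN_n^1$ and $\cN_n^2$ cases.} Your claim that ``$2^{k_\ell/2}$ together with dyadic summation absorbs $2^{-k/2}$'' is false in the high--high$\to$low regime $k\ll k_1\sim k_2\le 0$. There the $L^2\times L^\infty$ estimate with \eqref{eq:high} and \eqref{eq:decay-dirac} gives $2^{-k/2}2^{k_2/2}\bra{t}^{-1}$; summing $k_2$ over $(k,0]$ leaves $2^{-k/2}\bra{t}^{-1}$, and since $2^{-k/2}$ can be as large as $\bra{t}^{1/2}$ you are $\bra{t}^{1/2}$ short of the target $\bra{t}^{-1}$. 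The paper therefore does \emph{not} use a direct product estimate for $\cN_n^1=\{(2,1),(1,1)\}$: it integrates by parts in $\eta$ against the phase $q_{12}(\xi,\eta)=\theo\bra{\xi+\eta}-\thet\bra{\eta}$, accepting the multiplier loss $|\nabla_\eta q_{12}|^{-1}\les 2^{-k}\bra{2^{\max}}^3$ but then estimating the resulting terms by $L^2\times L^2$ with Bernstein, so that $2^{3\min(\bm k)/2}\cdot 2^{-3k/2}=1$ precisely when $\min(\bm k)=k$. For $\cN_n^2=\{(n,0)\}$ and for $(0,0)$ the paper instead exploits the low-frequency smallness $\|P_{k_2}\psi_\thet\|_{L^2}\les\ve_1 2^{(1-\frac1{100})k_2}\bra{2^{k_2}}^{-38}$ built into the $\bm D$-norm, together with the splitting $P_{k_2}\phi_\thet=\phi_\thet^{\le J,k_2}+\phi_\thet^{>J,k_2}$ and the refined $\bra{t}^{-3/2}$ decay \eqref{eq:zero-vec-2}; see \eqref{eq:esti-decompose} and \eqref{eq:bilinear-maxwell}.

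\textbf{The $(0,0)$ case.} Conversely, your plan to use integration by parts for $(n_1,n_2)=(0,0)$ fails for a bookkeeping reason: once $\nabla_\eta$ falls on a profile you must control $P_{k_\ell}(x\phi_{\theta_\ell})$ via \eqref{eq:weight}, which carries time growth $\bra{t}^{H(1)\de}$. The resulting total growth is $\bra{t}^{(H(1)+H(0))\de}=\bra{t}^{11\de}$, but the target exponent here is $\wt H(0)=5$, so the estimate does not close. Integration by parts \emph{does} close for $\cN_n^1$ only because there $\wt H(n)=H(n)+160$ accommodates $H(n_1+1)+H(n_2)=H(n)+10$; at $n=0$ this margin is absent, which is exactly why the paper switches to the $\bm D$-norm argument (with no time growth) in that case.
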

Note that $\wt H(n)$ is defined in Lemma \ref{lem:esti-nonlinear-dirac}.
\begin{proof}[Proof of Lemma \ref{lem:esti-nonlinear-wave}]
	We partition the range of $|\eta|$ and $|\xi+\eta|$ with dyadic numbers $2^{k_1}$ and $2^{k_2}$ $(k_1,k_2 \in \Z)$, respectively. Then, it suffices to prove
	\begin{align}\label{eq:aim-nonlinear-maxwell}
	\begin{aligned}
			&\sum_{k_1, k_2 \in \Z}\|P_k |D|^{-\frac12} \bra{P_{k_1}\psi_{\theo,\cL_1}(t),\al_\mu P_{k_2} \psi_{\thet,\cL_2}(t) }\|_{L^2} \\
		&\hspace{4cm} \les \ve_1^2 \bra{t}^{\wt H(n)} \bra{2^k}^{-N(n+1)+5} \min (\bra{t}^{-1}, 2^k)
	\end{aligned}
	\end{align}
	for $\cL_j \in \cV_{n_l} \,(l=1,2)$ and $n_1 + n_2 =n$. We also assume that $n_1 \ge n_2$ due to the symmetry between $\psi_\theo$ and $\psi_\thet$. By \eqref{eq:high}, a direct calculation shows that
	\begin{align*}
		&\|P_k |D|^{-\frac12} \bra{P_{k_1}\psi_{\theo,\cL_1}(t),\al_\mu P_{k_2} \psi_{\thet,\cL_2}(t)}\|_{L^2} \\
		&\les 2^{\frac{3\min(\bm k)-k}2} \|P_{k_1} \psi_{\theo,\cL_1}(t)\|_{L^2}\|P_{k_2} \psi_{\thet,\cL_2}(t)\|_{L^2}\\
		&\les \ve_1^2 \bra{t}^{\left[H(n_1) + H(n_2)\right]\de}2^{\frac{3\min(\bm k)-k}2} \bra{2^{k_1}}^{-N(n_1)}\bra{2^{k_2}}^{-N(n_2)}.
	\end{align*}
	Since $H(n_1)+H(n_2) \le \wt{H}(n)$ and $\min(N(n_1),N(n_2)) = N(n)$, we get \eqref{eq:aim-nonlinear-maxwell} if $2^k \le \bra{t}^{-1}$.
	
	Let us assume that $2^k \ge \bra{t}^{-1}$. In this case, we estimate by dividing the distribution of vector fields as follows:
	\begin{align*}
		(n_1,n_2) \in \{(2,1),(1,1)\} \cup \{(n,0): n \ge 1\} \cup \{(0,0)\}.	
	\end{align*}
	
	\emph{Estimates for $(n_1,n_2) \in \{(2,1),(1,1)\}$.} Analogously to the proof of Lemma \ref{lem:esti-nonlinear-dirac}, we exploit the space resonance. Taking Fourier transform to integrand of left-hand side in \eqref{eq:aim-nonlinear-maxwell}, we see that
	\begin{align}\label{eq:bilinear-fourier}
		\begin{aligned}
			&\cF \left[	P_k |D|^{-\frac12} \bra{P_{k_1}\psi_{\theo,\cL_1}(t),\al_\mu P_{k_2} \psi_{\thet,\cL_2}(t)}\right](\xi) \\
			&= \int_{\R^3} \rho_k(\xi)  |\xi|^{-\frac12} e^{itq_{12}(\xi,\eta)} \bra{ \wh{P_{k_1}\phi_{\theo,\cL_1}}(t,\xi+\eta),\al_\mu \wh{P_{k_2} \phi_{\thet,\cL_2}}(t,\eta)} d\eta,
		\end{aligned}
	\end{align}
	where
	\begin{align*}
		q_{12}(\xi,\eta) = \theo \bra{\xi+\eta} - \thet\bra{\eta}.
	\end{align*}
	As we observed in \eqref{eq:resonance-space-maxwell}, we exploit the resonance cases according to $\theo =\thet$ and $\theo \neq \thet$.
	
	Let us first consider the case $\theo=\thet$. By performing the integration by parts in $\eta$, \eqref{eq:bilinear-fourier} is the sum of the following:
	\begin{align}
		&t^{-1}	\int_{\R^3}  e^{itq_{12}(\xi,\eta)} \nabla_\eta\wt M_{12,\bm k}(\xi,\eta)\bra{ \wh{P_{k_1}\phi_{\theo,\cL_1}}(t,\xi+\eta),\al_\mu \wh{P_{k_2} \phi_{\thet,\cL_2}}(t,\eta)} d\eta,\label{eq:nonlinear-max-space-1}\\
		&t^{-1}	\int_{\R^3}  e^{itq_{12}(\xi,\eta)} \wt M_{12,\bm k}(\xi,\eta)\bra{ \wh{P_{k_1}x\phi_{\theo,\cL_1}}(t,\xi+\eta),\al_\mu \wh{P_{k_2} \phi_{\thet,\cL_2}}(t,\eta)} d\eta,\label{eq:nonlinear-max-space-2}\\
		&t^{-1}\int_{\R^3}  e^{itq_{12}(\xi,\eta)} \wt M_{12,\bm k}(\xi,\eta)\bra{ \wh{P_{k_1}\phi_{\theo,\cL_1}}(t,\xi+\eta),\al_\mu \wh{P_{k_2} x\phi_{\thet,\cL_2}}(t,\eta)} d\eta,\label{eq:nonlinear-max-space-3}
	\end{align}
	where
	\begin{align*}
		\wt M_{12,\bm k}(\xi,\eta) = |\xi|^{-\frac12} \frac{\nabla_\eta q_{12}(\xi,\eta)}{|\nabla_\eta q_{12}(\xi,\eta)|^2} \rho_{\bm k}(\xi,\eta).
	\end{align*}
	A direct calculation leads us that
	\begin{align*}
		\left| 	\nabla_\eta^\ell\wt M_{12,\bm k}(\xi,\eta)\right| \les 2^{-\frac {3k}2} 2^{-\ell \min(k_1,k_2)} \max\left(\bra{2^{k_1}},\bra{2^{k_2}}\right)^{3+\ell}
	\end{align*}
	for $\ell =0,1$. Then, by H\"older's inequality, we have
	\begin{align*}
		\left\|\eqref{eq:nonlinear-max-space-1}\right\|_{L^2} &\les \bra{t}^{-1} 2^{\frac {3\min(\bm k)-3k}2} 2^{-\min(k_1,k_2)} \max\left(\bra{2^{k_1}},\bra{2^{k_2}}\right)^4 \|P_{k_1}\psi_{\theo,\cL_1}(t)\|_{L^2}\|P_{k_2}\psi_{\thet,\cL_2}(t)\|_{L^2}\\
		&\les \ve_1^2 \bra{t}^{[H(n_1) + H(n_2+1)]\de -1}2^{\frac {3\min(\bm k)-3k}2 - \min(k_1,k_2)+ k_2 } \bra{2^{k_1}}^{-N(n_1)+4}\bra{2^{k_2}}^{-N(n_2+1)+4}.
	\end{align*}
	The above estimate follows from \eqref{eq:high} and \eqref{eq:esti-l2-k} for $\psi_{\theo,\cL_1}$ and $\psi_{\thet,\cL_2}$, respectively. Summing over $k_1, k_2$, we obtain the bound for \eqref{eq:nonlinear-max-space-1}.
	
	By \eqref{eq:high} and \eqref{eq:weight}, we have
	\begin{align*}
		\left\|\eqref{eq:nonlinear-max-space-2}\right\|_{L^2} &\les \bra{t}^{-1} 2^{\frac {3\min(\bm k)-3k}2} \max\left(\bra{2^{k_1}},\bra{2^{k_2}}\right)^3 \|P_{k_1}(x\phi_{\theo,\cL_1})(t)\|_{L^2}\|P_{k_2}\psi_{\thet,\cL_2}(t)\|_{L^2}\\
		&\les \ve_1^2 \bra{t}^{[H(n_1+1) + H(n_2)]\de -1}2^{\frac {3\min(\bm k)-3k}2 } \bra{2^{k_1}}^{-N(n_1+1)+3}\bra{2^{k_2}}^{-N(n_2)+3}.
	\end{align*}
	Since $H(n_1 + 1)+H(n_2) = H(n) +10$ and $N(n) -5 \le N(n_1 + 1)-3 $, this implies \eqref{eq:aim-nonlinear-maxwell}. Similarly, the estimate of \eqref{eq:nonlinear-max-space-3} can be carried out and hence we omit the details.

	We now turn to the case $\theo \neq \thet$. In view of \eqref{eq:resonance-space-maxwell}, the phase interaction $q_{12}(\xi,\eta)$ exhibits the space resonance when $\xi+2\eta = 0$. If $2^{\min(\bm k)} \ll 2^{\max(\bm k)}$, then by the frequency relation, we have $2^{\max(\bm k)} \sim |\xi+2\eta|$. Using the integration by parts in $\eta$, this case can be treated in a similar way to the case $\theo =\thet$. Thus, we have only to consider
	$$
	2^{\min(\bm k)} \sim 2^{\max(\bm k)}.
	$$
	The estimate for this case is straightforward. Indeed, by \eqref{eq:high} and \eqref{eq:decay-dirac}, we see that
	\begin{align*}
		\|\eqref{eq:bilinear-fourier}\|_{L^2} \les \ve_1^2 \bra{t}^{[H(n_1)+  H(n_2+1)]\de-1} 2^{\frac {-k+k_2}2} \bra{2^{k_1}}^{-N(n_1)}\bra{2^{k_2}}^{-N(n_2+1)+2}.
	\end{align*}
	This finishes the proof of \eqref{eq:aim-nonlinear-maxwell} when $(n_1,n_2)\in \{(2,1),(1,1)\} $.
	
	\emph{Estimates for $(n_1,n_2) = (n,0)$ and $n \ge 1$.} From \eqref{eq:high} and \eqref{assum:s-norm}, it follows that
	\begin{align*}
		&\|P_k |D|^{-\frac12} \bra{P_{k_1}\psi_{\theo,\cL}(t),\al_\mu P_{k_1} \psi_{\thet}(t)}\|_{L^2} 	\\
		&\les 2^{\frac{3\min(\bm k)-k}2} \|P_{k_2} \psi_{\theo,\cL}(t)\|_{L^2}\|P_{k_2} \psi_{\thet}(t)\|_{L^2}\\
		&\les \ve_1^2 \bra{t}^{H(n) \de}2^{\frac{3\min(\bm k)-k}2}2^{(1+5H(2)\de)k_2} \bra{2^{k_1}}^{-N(n)}\bra{2^{k_2}}^{-N(0)+2}.
	\end{align*}
	If $2^{\min(\bm k) + k_2} \les \bra{t}^{-1+130\de}$ or $2^{k_2} \ge \bra{t}^{\frac1{40}}$, this estimate implies \eqref{eq:aim-nonlinear-maxwell}. If $2^{\min(\bm k) + k_2} \gg \bra{t}^{-1+130\de}$ and $2^{k_2} \le \bra{t}^{\frac1{40}}$, we decompose $P_{k_2}\phi_{\thet}$ into $ \phi^{\le J,k_2}_{\thet}$ and $ \phi^{> J,k_2}_{\thet}$ with $J= C2^{k_2}\bra{t}$ for some $C \ll 1$. And we obtain
	\begin{align}\label{eq:esti-decompose}
		\begin{aligned}
			\normo{e^{-\theta it\brad} \phi^{\le J,k_2}_{\thet}(t)}_{L^\infty} &\les \ve_1 \bra{t}^{-\frac32 }  2^{-\frac {k_2}2 +  \frac {k_2}{2000}}  \bra{2^{k_2}}^{-19},\\
			\normo{ \phi^{> J,k_2}_{\thet}(t)}_{L^2} &\les \ve_1 \bra{t}^{-1 + H(1)\de} 2^{-k_2}  \bra{2^{k_2}}^{-N(1)},
		\end{aligned}
	\end{align}
	which follow from \eqref{eq:zero-vec-2} and \eqref{eq:elliptic-q}, respectively. On the one hand, $L^2 \times L^\infty$ estimate leads us to the bound
	\begin{align}\label{eq:bilinear-maxwell}
		\begin{aligned}
			&\left\|P_k |D|^{-\frac12} \bra{P_{k_1}\psi_{\theo,\cL}(t),\al_\mu  e^{-\thet it\bra{D}}\phi_{\thet}^{\le J,k} (t) } \right\|_{L^2} 	\\
		&\hspace{4cm}\les \ve_1^2 \bra{t}^{H(n)\de -\frac32 } 2^{-\frac {k+k_2}2+\frac {k_2}{1000}} \bra{2^{k_1}}^{-N(n)} \bra{2^{k_2}}^{-19}.
		\end{aligned}
	\end{align}
	If $2^{k_2} \les 2^{k_1}$, \eqref{eq:bilinear-maxwell} yields \eqref{eq:aim-nonlinear-maxwell}  over $2^{\min(\bm k) + k_2} \gg \bra{t}^{-1+130\de}$ and $2^{k_2} \le \bra{t}^{\frac1{40}}$. If $2^{k_1} \ll 2^{k_2}$, then in view of \eqref{eq:aim-nonlinear-maxwell} with $n=1$, we can observe that the factor $\bra{2^{k_2}}^{-19}$ is not sufficient for $\bra{2^k}^{-N(1)+5}$ when $ 2^{k} \ge 1$. However, from the frequency restriction $2^{k_2} \le \bra{t}^{\frac1{40}}$, we have
	\begin{align*}
		\bra{2^{k_2}}^{-19} \les \bra{t}^{\frac3{20}}\bra{2^k}^{-N(1)+5},
	\end{align*}
	which enables to obtain the desired bound. On the other hand, by H\"older's inequality, we have
	\begin{align*}
		\begin{aligned}
			&\left\|P_k |D|^{-\frac12} \bra{P_{k_1}\psi_{\theo,\cL}(t),\al_\mu  e^{-\thet it\bra{D}}\phi_{\thet}^{> J,k}(t) } \right\|_{L^2} \\
			&\hspace{4cm}\les \ve_1^2 \bra{t}^{[H(n)+H(1)]\de -1} 2^{\frac{3\min(\bm k)}2 - \frac k2 -k_2} \bra{2^{k_1}}^{-N(n)} \bra{2^{k_2}}^{-N(1)}.
		\end{aligned}
	\end{align*}
	Them, \eqref{eq:aim-nonlinear-maxwell} follows from the sum over $2^{\min(\bm k) + k_2} \gg \bra{t}^{-1+130\de}$ and $2^{k_2} \le \bra{t}^{\frac1{40}}$.

	\emph{Estimates for $(n_1,n_2)=(0,0)$.} By the symmetry between $\psi_{\theo}$ and $\psi_{\thet}$, we only consider the case $k_1 \le k_2$. If $2^k \ge \bra{t}^{2\de}$, using \eqref{eq:high} and \eqref{eq:decay-dirac}, we see that
	\begin{align*}
		\sum_{k_1 \le k_2  }\|P_k |D|^{-\frac12} \bra{P_{k_1}\psi_{\theo}(t),\al_\mu P_{k_2} \psi_{\thet}(t) }\|_{L^2} &\les 	\sum_{k_1 \le k_2} 2^{-\frac k2} \|\pko \psi_\theo(t)\|_{L^\infty} \|\pkt \psi_\thet(t) \|_{L^2}\\
		&\les \ve_1^2 	\sum_{k_1 \le k_2}  2^{\frac {-k+k_1}2}\bra{t}^{11\de -1} \bra{2^{k_1}}^{-N(1)+2}\bra{2^{k_2}}^{-N(0)}\\
		&\les \ve_1^2 \bra{t}^{\de-1}\bra{2^k}^{-N(0)+5}.
	\end{align*}
	If $2^{\min(\bm k) + k_2} \le \bra{t}^{-1+5\de}$, \eqref{assum:s-norm} yields that
	\begin{align*}
		\sum_{k_1 \le k_2  }\|P_k |D|^{-\frac12} \bra{P_{k_1}\psi_{\theo}(t) ,\al_\mu P_{k_2} \psi_{\thet}(t)  }\|_{L^2} &\les 	\sum_{k_1 \le k_2} 2^{\frac{3\min(\bm k)-k}2} \|\pko \psi_\theo(t)\|_{L^2} \|\pkt \psi_\thet(t)\|_{L^2}\\
		&\les \ve_1^2	\sum_{k_1 \le k_2} \bra{t}^{\de}2^{\min(\bm k)+ k_2}\\
		&\les \ve_1^2 \bra{t}^{6\de -1}.
	\end{align*}

	Let us consider the mid frequency case  $2^{\min(\bm k) + k_2} \ge \bra{t}^{-1+5\de}$ and  $2^k \le \bra{t}^{2\de}$. Setting $J:= C2^{k_2}\bra{t}$ for some $C\ll 1$, we decompose $P_{k_2}\phi_{\thet}$ into $\phi_{\thet}^{\le J,k_2}$ and $\phi_\thet^{> J, k_2}$. Then we obtain \eqref{eq:esti-decompose} and the remaining estimates can be done similarly to those for the case $(n_1,n_2) = (n,0)$ and $n \ge 1$.
\end{proof}

The following lemma will be used in the proof of weighted estimates and scattering for the Maxwell part.
\begin{lemma}\label{lem:nonlinear-wei-maxwell}
	Assume that $(\psi,A_\mu)$ satisfies the a priori assumptions \eqref{assum:energy}--\eqref{assum:s-norm} on $[0,T]$, for some $T>1$. Let $t \in [0,T]$, $k\in \Z$, $\cL \in \cV_n, 0 \le n \le 2$, and $l \in \{1,2,3\}$. Then we have
	\begin{align}\label{eq:esti-nonlinear-maxwell-space}
		\normo{P_k \left( x_l\fN_\cL^{\bm M} \right)(t)}_{L^2}  \les \ve_1^2  \bra{t}^{\wt{H}(n) \de} \bra{2^k}^{-N(n+1)+5}.
	\end{align}	
\end{lemma}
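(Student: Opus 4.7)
\emph{Proof proposal.} Following the template of Lemma~\ref{lem:nonlinear-wei-dirac}, I would pass to the Fourier side and note that $\cL(\bra{\psi,\al_\mu\psi}) = \sum_{\theo,\thet}\bra{\psi_{\theo,\cL_1},\al_\mu\psi_{\thet,\cL_2}}$ with $n_1 + n_2 \le n$. By Plancherel, the task reduces, for each such decomposition, to bounding
\begin{align*}
\normb{\rho_k(\xi)\,\p_{\xi_l}\!\left[|\xi|^{-\frac12}\!\int_{\R^3} e^{itq_{12}(\xi,\eta)}\bra{\wh{\phi_{\theo,\cL_1}}(t,\xi+\eta),\al_\mu\wh{\phi_{\thet,\cL_2}}(t,\eta)}\,d\eta\right]}_{L_\xi^2}
\end{align*}
with $q_{12}(\xi,\eta) = \theo\bra{\xi+\eta} - \thet\bra\eta$. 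The derivative $\p_{\xi_l}$ hits one of three objects: (a) the singular symbol $|\xi|^{-1/2}$, (b) the oscillatory phase $e^{itq_{12}}$, or (c) the first profile $\wh{\phi_{\theo,\cL_1}}(t,\xi+\eta)$.

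In case (a), differentiating $|\xi|^{-1/2}$ yields an additional factor of size $|\xi|^{-1}\sim 2^{-k}$, so the contribution is at most $2^{-k}\|P_k\fN^{\bm M}_\cL\|_{L^2}$. Invoking \eqref{eq:esti-nonl-max-time}, the gain $\min(2^{k^-},\bra{t}^{-1})$ absorbs the loss: when $2^k\le\bra{t}^{-1}$ one has $2^{-k}\cdot 2^{k^-}=1$, and when $2^k\ge\bra{t}^{-1}$ one has $2^{-k}\bra{t}^{-1}\le 1$, recovering the target bound since $N(n)\ge N(n+1)$. In case (b), the differentiated phase produces a multiplier $it\,\p_{\xi_l}q_{12} = it\,\theo\frac{\xi_l+\eta_l}{\bra{\xi+\eta}}$, bounded in modulus by $t$; Lemma~\ref{lem:esti-nonlinear-wave} combined with $t\min(2^{k^-},\bra{t}^{-1})\les 1$ closes this contribution.

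Case (c) is the main work. The derivative converts the first factor into $i\,\wh{x_l\phi_{\theo,\cL_1}}(t,\xi+\eta)$, reducing the problem to a bilinear estimate identical in structure to \eqref{eq:aim-nonlinear-maxwell} with the first Dirac profile replaced by $x_l\phi_{\theo,\cL_1}$. The weighted bound \eqref{eq:weight} plays the role of \eqref{eq:high} on this slot, yielding growth $\bra{t}^{H(n_1+1)\de}$ together with the decay $\bra{2^{k_1}}^{-N(n_1+1)-1}$. One then mimics the proof of Lemma~\ref{lem:esti-nonlinear-wave} according to the distribution $(n_1,n_2)$: when $n_1+1\le n$, a direct $L^2\times L^2$ dyadic estimate suffices since $H(n_1+1)+H(n_2)\le \wt H(n)$; when $(n_1,n_2)=(n,0)$, one pairs $\|P_{k_1}(x_l\phi_{\theo,\cL_1})\|_{L^2}$ with the $L^\infty$ dispersive decay \eqref{eq:decay-maxwell} of $W_{\mu,\thet}$, whose $\bra{t}^{-1}$ gain easily absorbs $\bra{t}^{H(n+1)\de}$ since $\de\ll 1$.

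The subtlest sub-case, and the principal obstacle, occurs within case (c) when $\theo=\thet$ and $\xi$ is small, where $q_{12}$ exhibits space-time resonance at $\xi=0$. The remedy is an integration by parts in $\eta$ based on the lower bound $|\nabla_\eta q_{12}|\gtrsim |\xi|/\bra{\xi+\eta}^3$ from \eqref{eq:resonance-space-maxwell}, performed \emph{inside} the weighted integrand before summing the dyadic contributions. Since the singular symbol $|\xi|^{-1/2}$ has already been handled in \eqref{eq:esti-nonl-max-time} and the extra weight $x_l$ has been routed through the $\p_{\xi_l}$-analysis above, no fresh resonance loss appears, and the dyadic sum over $k_1,k_2$ yields the claimed bound.
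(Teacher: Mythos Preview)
Your treatment of cases (a) and (b) is correct and matches the paper. In case (c), however, you have confused the two nonlinearities: $\fN_\cL^{\bm M} = \tfrac12|D|^{-1/2}\cL\bra{\psi,\al_\mu\psi}$ is bilinear in the \emph{spinor}, so the second factor is $\psi_{\thet,\cL_2}$, not $W_{\mu,\thet}$, and the relevant decay is \eqref{eq:decay-dirac}, not \eqref{eq:decay-maxwell}. With that correction your $L^2\times L^\infty$ argument for $(n_1,n_2)=(n,0)$ does go through: $H(n+1)\delta + H(1)\delta - 1 \le \wt H(n)\delta$ for all $n\le 2$ since $\delta$ is tiny.

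Your final paragraph on space--time resonance via integration by parts in $\eta$ is both unnecessary and not clearly well--posed. The target bound $\bra{t}^{\wt H(n)\delta}$ carries no decay, so there is nothing to extract from the oscillation of $e^{isq_{12}}$; the paper simply pairs $\|P_{k_1}(x_l\phi_{\theo,\cL_1})\|_{L^2}$ with either $\|P_{k_2}\phi_{\thet,\cL_2}\|_{L^2}$ or $\|P_{k_2}\psi_{\thet}\|_{L^\infty}$ and closes directly. If you did integrate by parts in $\eta$ after the $\p_{\xi_l}$ has already landed on the first profile, the derivative $\nabla_\eta$ would produce a second spatial weight (either $x\cdot x_l\phi_{\theo,\cL_1}$ or a weight on the other slot in addition to the one you already placed), and neither is controlled by the a~priori assumptions \eqref{assum:energy}--\eqref{assum:weight}. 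The paper avoids all of this by exploiting the symmetry of $\bra{\psi_{\theo,\cL_1},\al_\mu\psi_{\thet,\cL_2}}$ to route the weight onto the profile with \emph{fewer} vector fields ($n_1\le n_2$), so that $H(n_1+1)+H(n_2)\le\wt H(n)$ holds automatically for $n=1,2$, leaving only $n=0$ to treat via the $L^\infty$ decay of $\psi_{\thet}$.
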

\begin{proof}
		As described in the proof of Lemma \ref{lem:nonlinear-wei-dirac}, one may consider only 	
		\begin{align}\label{eq:aim-nonlinear-wei-maxwell}
			2^{-\frac k2}\normo{\int_{\R^3}\rho_k(\xi)   \bra{e^{-\theo it\bra{\xi+\eta}}\p_{\xi_l} \wh{\phi_{\theo,\cL_1}}(t,\xi+\eta), \al_\mu \wh{\psi_{\thet,\cL_2}}(t,\eta)} d\eta}_{L_\xi^2}.
		\end{align}
		Note that $n_1 \le n_2$ due to the symmetry. By \eqref{eq:high} and \eqref{eq:weight}, we see that
		\begin{align*}
			\|\pko( x_l \phi_{\theo,\cL_1})(t)\|_{L^2} &\les \ve_1 \bra{t}^{H(n_1+1)\de} \bra{2^{k_1}}^{-N(n_1+1)},\\
			\|\pkt \phi_{\thet,\cL_2}(t) \|_{L^2} &\les \ve_1 \bra{t}^{H(n_2)\de} \bra{2^{k_2}}^{-N(n_2)}.
		\end{align*}
		H\"older's inequality yields that
		\begin{align}\label{eq:esti-nonlinear-maxwell-1}
			\left|\eqref{eq:aim-nonlinear-wei-maxwell}\right| \les \sum_{k_1,k_2 \in \Z} 2^{\frac{3\min(\bm k) - k}2}	\|\pko (x_l \phi_{\theo,\cL_1})(t)\|_{L^2} 	\|\pkt \phi_{\thet,\cL_2}(t)\|_{L^2}.
		\end{align}
	Since $H(n_1+1) + H(n_2) \le \wt H(n)$ for $n =1,2$, we obtain \eqref{eq:esti-nonlinear-maxwell-space} apart from the case	$n=0$. Let us consider the case $n=0$. This case corresponds to the case $(n_1,n_2)=(0,0)$. Then, by the symmetry, it suffices to handle $2^{k_1} \les 2^{k_2}$. If $2^k \le \bra{t}^{-1}$, we simply have \eqref{eq:esti-nonlinear-maxwell-space} due to \eqref{eq:esti-nonlinear-maxwell-1}, If $2^k \ge \bra{t}^{-1}$, using \eqref{eq:decay-dirac}, we estimate
		\begin{align*}
		\left|\eqref{eq:aim-nonlinear-wei-maxwell}\right| &\les \sum_{2^{k_1} \les 2^{k_2}} 2^{ -\frac{ k}2}	\|\pko (x_l \phi_{\theo,\cL_1})(t)\|_{L^2} 	\|\pkt \psi_{\thet,\cL_2}(t)\|_{L^\infty} \\
		&\les \sum_{2^{k_1} \les 2^{k_2}} \ve_1^2 \bra{t}^{-1+2H(1)\de}2^{ \frac{- k+k_2}2} \bra{2^{k_1}}^{-N(1)} \bra{2^{k_2}}^{-N(1)+2}.
	\end{align*}
	This completes the proof of \eqref{eq:esti-nonlinear-maxwell-space}.
\end{proof}

%%%%%%%%%%%%%%%%%%%%%%%%%%%%%%%%%%%%%%%%%%%%%%%%%%%%%%%%%%%%%%%%%%%%%%%%%%%%%%%%%%%%%%%%%%%%%%%%%%%%%%%%%%%%%%%%%%%%%%%%%%%%%%%%%%%%%%%%%%%%%%%%%%%%%%%%%%%%%%%%%%%%%%%%%%%%%%%%%%%%%%%%%%%%%%%%%%%%%%
\section{Proof of main theorem}\label{main proof}

For any initial data \eqref{eq:initial} satisfying \eqref{condition-initial}, one can readily show the existence of local solution $(\psi, A_\mu)$ to \eqref{eq:maineq-half} with $(\psi, W_\mu) \in C([0, T]; H^{N(0)} \cap C^1([0, T]; H^{N(0)-1})$ for some $T > 0$ by the regularity persistence based on the local theory of \cite{anfosel2010}. Then the standard approximation with $e^{-\lambda|x|^2}\bra{x}^N$ is applicable to the system \eqref{eq:maineq-half} and enables us to get, for sufficiently small $T$,
$$
\sup_{0 \le t \le T}\sum_{n = 1, 2, 3 }\left[\|\bra{x}^n\bra{D}^{n}(\psi(t), W_\mu(t))\|_{H^{N(n)}}\right] \le C(T, \psi(0), W_\mu(0)).
$$
Now we show $\sup_{0 \le t \le T}\|\bra{\xi}^{20}|\xi|^\frac12\wh\psi (t, \xi)\|_{L_\xi^\infty} < +\infty$.
In fact, by \eqref{eq:interaction-dirac} we have
\begin{align*}
\bra{\xi}^{20}|\xi|^\frac12|\wh{\phi_{\theta}}(t)| \le  |\bra{\xi}^{20}|\xi|^\frac12|\wh{\psi_{\theta}}(0)| + c\sum_{\theo \in \{\pm\}}\int_0^t\int_{\mathbb R^3}\bra{\xi}^{20}|\xi|^\frac12|\eta|^{-\frac12}|\wh{W_\mu}(s,\eta)||\wh{\phi_{\theta_1}}(s,\xi-\eta)|\,d\eta ds.
\end{align*}
Let $f(t) = \|\bra{\xi}^{20}|\xi|^\frac12\wh{\psi_{\theta}}(t)\|_{L_\xi^\infty}$. Then using 
$$
\bra{\xi}^{20}|\xi|^\frac12 \les \bra{\xi-\eta}^{20}|\xi-\eta|^\frac12 + \bra{\xi-\eta}^{20}|\eta|^\frac12 + \bra{\eta}^{20}|\xi-\eta|^\frac12 + \bra{\eta}^{20}|\eta|^\frac12,
$$
the factor $|\eta|^\frac12$ gets rid of the singularity of $|\eta|^{-\frac12}$ and H\"older's inequality leads us to the bound
$$
f(t) \le C_T + C\int_0^t \|W_\mu(s)\|_{H^{N(0)}}f(s) \,ds\;\;\mbox{and}\;\;C_T = C T \sup_{0 \le t \le T}\|(\psi(t), W_\mu(t))\|_{H^{N(0)}}^2.
$$
Hence Gronwall's inequality shows $\sup_{0 \le t \le T}f(t) \le C_T$.
If $T$ is sufficiently small, then one can readily verify the local solution satisfies the a priori assumptions \eqref{assum:energy}--\eqref{assum:s-norm} via \eqref{eq:weight-lorentz-dirac} and \eqref{eq:weight-lorentz-wave}.

\begin{rem}
The above local properties are not good enough for the global extension due to the bad growth in time. One may take bootstrap argument based on the weighted energy $\||x|^2 (\phi_\theta, V_{\mu, \theta'})\|_{H^m}$ as in \cite{CKLY2022}. But the square weight makes a trouble because it plays a role as the second derivative in the Fourier side and gives rise to a serious singularity to the nonlinearity. This obstacle prevents us from closing bootstrap  due to the lack of null structure like \eqref{eq:lack-null}. This is a reason why we use bootstrap argument based on the vector-field energy method.
\end{rem}

Now we extend the solution globally by time continuity. To this end, we go through several steps of bootstrap argument, which are energy, weighted energy, and scattering norm estimates to be implemented under a priori assumptions \eqref{assum:energy}--\eqref{assum:s-norm}. Each step is described as the proposition below in which solution turns out to have more improvement than a priori assumptions. Since the proof of propositions when $T \les 1$ is similar to (much simpler than) the case $T \gg 1$, we assume that $T \gg 1$ from now. Let us introduce propositions running bootstrap.
\begin{prop}[Energy estimates]\label{prop:energy}
	Assume that $(\psi,A_\mu)$ satisfies a priori assumptions \eqref{assum:energy}--\eqref{assum:s-norm}.  Let $t \in [0,T]$, $n \in \{0, 1, 2, 3\}$, $\theta, \theta' \in \{+, -\}$, and $\cL \in \cV_n$. Then we have
	\begin{align}
		\normo{\psi_{\theta, \cL}(t)}_{H^{N(n)}} &\les \ve_1^2 \bra{t}^{H(n)\de},\label{eq:energy-high-d}\\
		\normo{ W_{\mu, \cl, \theta'}(t)}_{H^{N(n)}} &\les \ve_1^2 \bra{t}^{H(n)\de}.\label{eq:energy-high-m}
	\end{align}
\end{prop}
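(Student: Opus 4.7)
The approach is to derive two energy inequalities from the first-order formulations \eqref{eq:dirac-pt} and \eqref{eq:maxwell-pt} and close them via the nonlinear estimates of Section~4. Applying $\bra{D}^{N(n)}$ to each equation and pairing with $\bra{D}^{N(n)}\psi_{\theta,\cL}$ and $\bra{D}^{N(n)}W_{\mu,\cL,\theta'}$ respectively, the self-adjointness of $\bra{D}$ and $|D|$ reduces the left-hand side to $\tfrac12\tfrac{d}{dt}\|\cdot\|_{H^{N(n)}}^2$. For the Dirac component, I then isolate the top-order-in-$\cL$ piece of the nonlinearity: by \eqref{eq:commu-all}, $\fN^{\bm D}_{\theta,\cL}=\Pi_\theta\cL(A_\mu\al^\mu\psi)+(\text{strictly lower-order in }\cV_n)$, and the leading distribution puts all vector fields on $\psi$, producing $A_\mu\al^\mu\psi_{\theta,\cL}$. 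Since $A_\mu$ is real-valued and $\al^\mu$ is Hermitian, the operator $A_\mu\al^\mu$ is self-adjoint on $L^2$, so pairing $A_\mu\al^\mu\bra{D}^{N(n)}\psi_{\theta,\cL}$ with $\bra{D}^{N(n)}\psi_{\theta,\cL}$ is real-valued and contributes nothing to the imaginary part. Hence only the commutator $[\bra{D}^{N(n)},A_\mu\al^\mu]\psi_{\theta,\cL}$ survives at the top level, which I would estimate by Cauchy--Schwarz together with the bounds on $\|\nabla A_\mu\|_{L^\infty}$ and $\|\psi_{\theta,\cL}\|_{H^{N(n)}}$ extracted from \eqref{eq:l-infinity} and the bootstrap hypothesis.

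The commutator term, the strictly lower-order contributions to $\fN^{\bm D}$ coming from \eqref{eq:commu-all}--\eqref{eq:commu-decom}, and the entire Maxwell contribution $\fN^{\bm M}_{\mu,\cL}$ (which admits no such cancellation because it is purely spinor-quadratic) are all bounded by pairing the frequency-localized estimates of Lemmas~\ref{lem:esti-nonlinear-dirac} and~\ref{lem:esti-nonlinear-wave} against \eqref{assum:energy}. The factors $\min(\bra{t}^{-1},2^k)$ and $\min(2^{k^-},\bra{t}^{-1})$ appearing there both deliver time-integrability after a frequency-by-frequency pairing and summation in $k$, and the strictly lower-order vector-field pieces involve $\psi_{\theta,\cL'}$ or $W_{\mu,\cL',\theta'}$ with $\cL'\in\cV_{n'}$, $n'<n$, so their growth is governed by $H(n')\de<H(n)\de$. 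Integrating in time and inserting the initial-data bound \eqref{condition-initial} would then give
\[
\|\psi_{\theta,\cL}(t)\|_{H^{N(n)}}^2+\|W_{\mu,\cL,\theta'}(t)\|_{H^{N(n)}}^2\les \ve_0^2+\ve_1^3\bra{t}^{2H(n)\de},
\]
which closes the bootstrap to \eqref{eq:energy-high-d}--\eqref{eq:energy-high-m} since $\ve_1^3=\ve_0^{2}\ve_1\ll \ve_1^2$.

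The principal difficulty is ensuring that the growth exponent produced by the energy argument is exactly the bootstrap target $H(n)\de$ rather than the coarser $\wt H(n)\de$ that appears on the right of Lemmas~\ref{lem:esti-nonlinear-dirac}--\ref{lem:esti-nonlinear-wave}. On the Dirac side this is precisely the role of the quasilinear Hermitian cancellation: the $\wt H(n)\de$ exponent can only enter through contributions that either lose a vector field (so $\wt H(n)$ is effectively replaced by an index of the form $\wt H(n_1)+H(n_2)$ with $n_1<n$ that sits below $H(n)+O(1)$) or carry an explicit $\bra{t}^{-1}$ decay via an $L^\infty$ bound from \eqref{eq:l-infinity}, rendering the time integral of the error integrable. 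On the Maxwell side, where no cancellation is available, one has to work frequency-by-frequency and exploit the sharp $\min(2^{k^-},\bra{t}^{-1})$ factor, rather than first summing $\fN^{\bm M}$ in $H^{N(n)}$; Lemma~\ref{lem:esti-nonlinear-wave} is engineered to provide exactly this kind of bound, and Cauchy--Schwarz in $k$ against \eqref{assum:energy} then yields integrability and the desired exponent.
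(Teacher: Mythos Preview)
Your Hermitian-cancellation idea for the top-order Dirac term is correct and matches the paper's modified energy functional $\cE_\cL^{\bm D}$. However, the claim that the commutator and the remaining contributions are controlled by Lemmas~\ref{lem:esti-nonlinear-dirac} and~\ref{lem:esti-nonlinear-wave} does not close. Those lemmas bound $P_k\fN^{\bm D}_{\theta,\cL}$ and $P_k\fN^{\bm M}_{\mu,\cL}$ only with decay $\bra{2^k}^{-N(n+1)-5}$ and $\bra{2^k}^{-N(n)+5}$ respectively, so after multiplying by $\bra{2^k}^{N(n)}$ and pairing against \eqref{assum:energy} the high-frequency sum diverges like $\sum_k\bra{2^k}^{N(n)-N(n+1)-5}$ (e.g.\ $\bra{2^k}^{35}$ for $n=0$). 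The factors $\min(\bra t^{-1},2^k)$ help only with time integrability, not with this regularity deficit. Your Kato--Ponce bound on $[\bra D^{N(n)},A_\mu\alpha^\mu]\psi_{\theta,\cL}$ via $\|\nabla A_\mu\|_{L^\infty}$ alone ignores the second Kato--Ponce term, and in the frequency regime $2^{k_1}\ll 2^k\sim 2^{k_2}$ (low-frequency input spinor, high-frequency $A_\mu$) neither $W_\mu$ in $L^\infty$ (only $\bra{2^{k_2}}^{-22}$ decay, insufficient against $\bra{2^k}^{2N(n)}$ for $n\le 1$) nor $\psi_{\theta,\cL}$ in $L^\infty$ (unavailable for $n=3$) produces a closable bound; the growth exceeds $2H(n)\delta$. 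On the Maxwell side there is no symmetrization and the same high-frequency obstruction appears already in the high-high interaction.

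The paper does not attempt to bound $\|\fN_\cL\|_{H^{N(n)}}$ at all. Instead it works directly with the \emph{time-integrated trilinear form} $\int_{I_m}\iint M(\xi,\eta)\,\langle\cdot,\cdot,\cdot\rangle\,d\xi\,d\eta\,ds$ and, in the problematic mid-frequency window $2^{-\frac23 m}\lesssim 2^{\min(\bm k)}$, $2^{\max(\bm k)}\lesssim 2^{m/60}$, performs a \emph{normal form}: integration by parts in $s$ using the oscillatory factor $e^{isp_\Theta}$ (resp.\ $e^{isq_{\Theta'}}$), which is legitimate because $|p_\Theta|,|q_{\Theta'}|\gtrsim 2^{\min}\bra{2^{\max}}^{-2}$ never vanish. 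This trades the missing $\bra t^{-1}$ for bounds on $\partial_s\phi_{\theta,\cL}$ and $\partial_s V_{\mu,\cL,\theta'}$, where Lemmas~\ref{lem:esti-nonlinear-dirac}--\ref{lem:esti-nonlinear-wave} \emph{do} apply since one full derivative of regularity has been gained. For the mixed vector-field distributions $1\le n_1,n_2\le n-1$ the paper instead integrates by parts in $\eta$ using the space non-resonance \eqref{eq:nonresonance-space}, \eqref{eq:resonance-space-maxwell}. The bulk case $n_1=0$ requires a further splitting $\phi_\theta^{\le J,k_1}+\phi_\theta^{>J,k_1}$ with the sharp decay \eqref{eq:zero-vec-2}. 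These oscillatory-integral arguments are the essential missing ingredient in your proposal.
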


%Proposition \ref{prop:energy} will be proven in the next two sections. Since the vector fields are commuting with wave operator, we prove \eqref{eq:energy-high-m} by using the standard approach. As observed in Section \ref{sec:vector}, the rotation and Lorentz boost in $\cV_n$ do not commute with the Dirac operator and half Klein-Gordon operator. Thus, we need to handle the case that extended differential operators with smooth operator-valued coefficients fall on the profiles. In Section \ref{sec:energy1} we will prove the energy estimates for the standard differential operators in $\cV_n$. Then we will treat those for the extended differential operators in Section \ref{sec:energy2}.
\begin{prop}[Weighted energy estimates]\label{prop:weighted}
	Assume that $(\psi,A_\mu)$ satisfies a priori assumptions \eqref{assum:energy}--\eqref{assum:s-norm}.  Let $t \in [0,T]$, $k\in \Z$, $n \in \{0, 1, 2\}$, $\theta, \theta' \in \{+, -\}$, $l \in \{1, 2, 3\}$, and $\cL \in \cV_n$.  Then, we have
	\begin{align}
		\bra{2^k}\|P_k\left(x_l \phi_{\theta,\cL}\right)\|_{L^2} &\les \ve_1^2 \bra{t}^{H(n+1)\de}\bra{2^k}^{-N(n+1)}	\label{eq:weighted-dirac},\\
		2^k\|P_k\left(x_l V_{\theta,\cL}\right)\|_{L^2} &\les \ve_1^2 \bra{t}^{H(n+1)\de}\bra{2^k}^{-N(n+1)}	\label{eq:weighted-wave}.
	\end{align}
\end{prop}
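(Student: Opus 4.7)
The strategy converts each weighted profile estimate into (i) a pure energy estimate for the profile carrying one additional Lorentz boost, plus (ii) a nonlinear correction already bounded in Section~4. The identities \eqref{eq:weight-lorentz-dirac} and \eqref{eq:weight-lorentz-wave} were introduced for exactly this trade: they exchange the (weight-producing) multiplication by $x_l$ for an application of $\Gam_l$ on a profile with one additional vector field, at the cost of a nonlinear remainder $x_l \fN^{\bm D/\bm M}$.

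For the Dirac estimate \eqref{eq:weighted-dirac}, note that $\psi_{\theta,\cL}$ satisfies the half-Klein--Gordon equation \eqref{eq:dirac-pt} with source $\fN^{\bm D}_{\theta,\cL}$, so the derivation producing \eqref{eq:weight-lorentz-dirac} applied to $\psi_{\theta,\cL}$ (with $\fN^{\bm D}_{\theta,\cL}$ replacing $\Pi_\theta(A_\mu\al^\mu\psi)$) gives
\[
\Gam_l\psi_{\theta,\cL} = -i\theta\, e^{-\theta it\brad}\, x_l(\brad\phi_{\theta,\cL}) + i x_l\fN^{\bm D}_{\theta,\cL}.
\]
Solving for $x_l\brad\phi_{\theta,\cL}$, applying $e^{\theta it\brad}$, and commuting $\brad$ past $x_l$ (where $[\brad,x_l]$ is the Fourier multiplier with symbol $-i\xi_l/\bra\xi$, bounded on $L^2$), one lands on
\[
\brad(x_l\phi_{\theta,\cL}) = i\theta\, e^{\theta it\brad}\Gam_l\psi_{\theta,\cL} + \theta\, e^{\theta it\brad}(x_l\fN^{\bm D}_{\theta,\cL}) + [\brad,x_l]\phi_{\theta,\cL}.
\]
Localizing at $P_k$ and using $\|P_k\brad f\|_{L^2}\sim\bra{2^k}\|P_k f\|_{L^2}$ reduces the bound to
\[
\bra{2^k}\|P_k(x_l\phi_{\theta,\cL})\|_{L^2} \les \|P_k\Gam_l\psi_{\theta,\cL}\|_{L^2} + \|P_k(x_l\fN^{\bm D}_{\theta,\cL})\|_{L^2} + \|P_k\phi_{\theta,\cL}\|_{L^2}.
\]
Rewriting $\Gam_l\cL$ as a linear combination of elements of $\cV_{n+1}$ through the Poincar\'e commutation algebra, the first term is controlled by $\sum_{\cL'\in\cV_{n+1}}\|P_k\psi_{\theta,\cL'}\|_{L^2}\les \ve_1\bra{t}^{H(n+1)\de}\bra{2^k}^{-N(n+1)}$ via \eqref{eq:high}. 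The second term is precisely Lemma~\ref{lem:nonlinear-wei-dirac}, giving $\ve_1^2\bra{t}^{\wt H(n)\de}\bra{2^k}^{-N(n+1)-5}$; this fits because the $H$-table is designed so that $\wt H(n)=H(n)+160\le H(n+1)$ for $n\in\{0,1,2\}$ (checking: $5\le 10$, $170\le 210$, $370\le 410$). The third term is absorbed by \eqref{eq:high}. The Maxwell estimate \eqref{eq:weighted-wave} follows by the same machinery applied to \eqref{eq:weight-lorentz-wave} (with $|D|$ in the role of $\brad$, $\Gam_l W_{\mu,\cL,\theta'}$ in the role of $\Gam_l\psi_{\theta,\cL}$), using the Riesz-type bounded commutator $[|D|,x_l]$ with symbol $-i\xi_l/|\xi|$, and Lemma~\ref{lem:nonlinear-wei-maxwell}.

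\textbf{Main obstacle.} The reduction itself is algebraically direct; the real work lies in making the bootstrap indices line up. The nonlinear weighted output (time growth $\wt H(n)\de$, spatial decay $\bra{2^k}^{-N(n+1)-5}$) must fit beneath the bootstrap target (growth $H(n+1)\de$, decay $\bra{2^k}^{-N(n+1)}$). This is precisely what the carefully tuned $N(n)$ and $H(n)$ tables buy, and the strict gaps $H(n+1)-\wt H(n)>0$ provide the improvement that closes the weighted bootstrap at level $n$ from the energy estimate at level $n+1$. A minor subtlety on the Maxwell side is the mildly worse low-frequency behaviour of the Riesz-type commutator $[|D|,x_l]$ compared to $[\brad,x_l]$; however, since the Maxwell field is carried at the $|D|^{1/2}$-weighted level ($W=|D|^{1/2}A$, $V=e^{-it|D|}W$), no singular low-frequency factor survives in the final estimate at the target regularity.
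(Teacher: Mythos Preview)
Your approach is essentially identical to the paper's proof in Section~\ref{sec:weight}: both use the identities \eqref{eq:weight-lorentz-dirac}, \eqref{eq:weight-lorentz-wave} to trade $x_l$ for $\Gam_l$, reduce to the three terms (boost, weighted nonlinearity, commutator/lower order), and close with the energy bound at level $n+1$, Lemma~\ref{lem:nonlinear-wei-dirac}/\ref{lem:nonlinear-wei-maxwell}, and the trivial $L^2$ bound. Your verification that $\wt H(n)\le H(n+1)$ is exactly the index check needed.

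One small but bootstrap-relevant correction: for the boost term $\|P_k\Gam_l\psi_{\theta,\cL}\|_{L^2}$ you cite \eqref{eq:high}, which is the \emph{a priori} assumption and only gives $\ve_1$. To obtain the claimed $\ve_1^2$ (and hence close the bootstrap), you must instead invoke the already-established \emph{improved} energy estimate, Proposition~\ref{prop:energy}, exactly as the paper does. The same remark applies to the lower-order term $\|P_k\phi_{\theta,\cL}\|_{L^2}$ and its Maxwell analogue.
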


\begin{prop}[Scattering norm estimates]\label{prop:s-norm}
	Assume that $(\psi,A_\mu)$ satisfies a priori assumptions \eqref{assum:energy}--\eqref{assum:s-norm}.  Let $t \in [0,T]$, $\theta, \theta' \in \{+, -\}$. Then we have
	\begin{align}
		\normo{\psi_{\theta}(t)}_{\bm D} &\les \ve_1^2 ,\label{eq:s-norm-d}\\
		\| V_{\mu,\theta'}(t)\|_{\bm M} &\les \ve_1^2.\label{eq:s-norm-m}
	\end{align}
\end{prop}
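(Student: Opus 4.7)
The plan is to bound each of the two scattering norms by integrating the profile ODEs \eqref{eq:dirac-pt}, \eqref{eq:maxwell-pt} in time, with the initial data hypothesis in \eqref{condition-initial}, in particular the $L^\infty_\xi$ bound on $\bra{\xi}^{20}|\xi|^{\frac12}\wh{\psi_0}$, providing an $O(\ve_0)$ starting point; it then suffices to show the time integrals contribute $O(\ve_1^2)$. The $L^2$ pieces of both norms follow relatively easily from the weighted estimates \eqref{eq:weighted-dirac}--\eqref{eq:weighted-wave} via the Hardy-type inequalities \eqref{eq:esti-hardy2}--\eqref{eq:esti-hardy3}, so that the elliptic gains absorb the singular $2^{-k}$ (respectively $2^{k}$) weights at low frequency while the high-frequency decay $\bra{2^k}^{-N(1)}$ and $\bra{2^k}^{-N(3)}$ dominates the polynomial weights $\bra{2^k}^{38}$ and $\bra{2^k}^{25}$; the slow growth $\bra{t}^{H(n+1)\de}$ is harmless since only a $2^{k/100}$ slack is lost.

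The heart of the argument is the Fourier amplitude $\bra{2^k}^{20}2^{(\frac12-\frac1{100})k}\|\rho_k\wh{\phi_\theta}(t)\|_{L^\infty}$. I would split the nonlinearity \eqref{eq:interaction-dirac} according to the sign configurations $\Theta=(\theta,\theta_1,\theta_2)$ and apply the resonance classification of Section~\ref{sec:resonance}. When $\theta\ne\theta_1$ the phase is time non-degenerate with $|p_\Theta|\gtrsim\bra{\xi}^{-1}$, and a normal-form integration by parts in $s$, combined with dispersive decay \eqref{eq:decay-dirac}--\eqref{eq:decay-maxwell} and the time derivative bounds \eqref{eq:esti-non-dirac-time}, \eqref{eq:esti-nonl-max-time}, produces an $s$-integrable contribution. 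For the resonant case $\theta=\theta_1$ I would follow the modified-scattering recipe: decompose $A_\mu = P_{\le K}A_\mu + P_{>K}A_\mu$ with $2^K\sim\bra{s}^{-\frac23-2\zeta}$ as in \eqref{mainthm:correction}, on the low-frequency support of $P_{\le K}A_\mu$ replace $p_\Theta(\xi,\eta)$ by its first-order Taylor approximation $\theta s\,\xi\cdot\eta/\bra{\xi}$, and recognize the leading term as $i\,\partial_s B_\theta(s,\xi)\,\wh{\phi_\theta}(s,\xi)$. Passing to the modulated profile $e^{-iB_\theta(s,\xi)}\wh{\phi_\theta}(s,\xi)$ removes the resonance exactly, and since $B_\theta$ is real-valued the $L^\infty_\xi$ bound transfers back to $\wh{\phi_\theta}$ via $|e^{iB_\theta}|=1$.

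The hardest part will be controlling the two remainders produced by this decomposition. The phase-approximation error $e^{isp_\Theta}-e^{is\theta\xi\cdot\eta/\bra{\xi}}$ carries a factor $s\cdot 2^{2K}\lesssim\bra{s}^{-\frac13-4\zeta}$ on the support of $P_{\le K}A_\mu$, which is more than integrable in $s$; combining this gain with the Fourier amplitude bound \eqref{eq:zero-vec-1} for $A_\mu$ and the $L^2$ bound \eqref{eq:elliptic-q} for $\phi_\theta$ closes the estimate. The high-frequency piece $P_{>K}A_\mu$ is non-resonant on its support, with $|p_\Theta|\gtrsim 2^K\bra{\xi}^{-3}$ from \eqref{eq:resonance-time}, so one normal-form integration by parts again yields an $s$-integrable remainder. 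The exponent $\zeta$ and the cutoff scale $K$ in \eqref{mainthm:correction} are calibrated precisely so that both remainders close with some room to spare.

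For the Maxwell scattering norm the analysis is parallel but simpler since only linear scattering is sought. Using the resonance structure of $q_{\Theta'}$ from Section~\ref{sec:resonance}, the space resonance at $\xi+2\eta=0$ when $\theta_1'\ne\theta_2'$ is eliminated by integration by parts in $\eta$ exactly as in the proof of Lemma~\ref{lem:esti-nonlinear-wave}, while the $|\xi|^{-\frac12}$ singularity at the time resonance $\xi=0$ (arising when $\theta_1'=\theta_2'$) is absorbed by the low-frequency weight $2^{(1+5H(2)\de)k}$ in $\|\cdot\|_{\bM}$ together with the $\doth^{\frac12+\zeta}$-type structure. Combined with Lemma~\ref{lem:nonlinear-wei-maxwell}, Lemma~\ref{lem:esti-wei-time}, and the dispersive bounds \eqref{eq:dispersive-maxwell}, the $s$-integral converges uniformly in $t$, giving the quadratic $O(\ve_1^2)$ improvement over the a priori linear bound in $\ve_1$ that is needed to close the bootstrap.
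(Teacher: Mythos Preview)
Your overall strategy matches the paper's: the modified--scattering decomposition of the Dirac profile equation with the phase correction $B_\theta$, normal form on the non-resonant configurations, and for Maxwell a combination of normal form in $s$ and the absorption of the $|\xi|^{-\frac12}$ singularity by the weight $2^{(1+5H(2)\de)k}$. However, there are two concrete gaps.

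First, the claim that the $L^2$ piece of $\|\cdot\|_{\bm D}$ follows from the weighted estimates via Hardy does not close. From \eqref{eq:weighted-dirac} and \eqref{eq:esti-hardy3} you only obtain $\|P_k\phi_\theta\|_{L^2}\lesssim\ve_1^2\bra{t}^{H(1)\de}2^k\bra{2^k}^{-N(1)-1}$, which carries unremovable time growth at mid frequencies $2^k\sim 1$; moreover $N(1)+1=31<38$, so at high frequencies the factor $\bra{2^k}^{-N(1)-1}$ does not dominate the weight $\bra{2^k}^{38}$ in the norm. The paper avoids this entirely: for $2^k$ above a small threshold it uses the improved energy bound \eqref{eq:energy-high-d} (which gives $\bra{2^k}^{-N(0)}=\bra{2^k}^{-70}$), and for $2^k$ below the threshold it deduces the $L^2$ bound directly from the already-proven $L^\infty_\xi$ bound, which is $t$-uniform; see the sentence following \eqref{eq:aim-scattering-2}.

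Second, you describe the $L^\infty_\xi$ argument only for the resonant mid-frequency regime. The paper needs a separate argument for $2^k\le 2^{-30H(2)\de m}$ and $2^k\ge 2^{2H(2)\de m}$ (Section~\ref{sec:s-dirac-hl}): there the phase-correction decomposition is not used, and instead the interpolation estimate \eqref{eq:interpolation-linfty} together with the $H^{0,1}_\Omega$ bounds coming from \eqref{eq:elliptic-q} (with one rotation vector field) gives the pointwise Fourier bound directly. In particular the low-frequency case requires the auxiliary estimate \eqref{eq:esti-omega-l2} with an extra $2^{k/6}$ gain, which comes from reworking the nonlinear estimates of Lemma~\ref{lem:esti-nonlinear-dirac} with the vector field $\Omega$. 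Without this split your remainder terms from the modified-scattering decomposition would not be summable over $k$ at very low frequencies.
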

\noindent The proof of propositions will be given in Section \ref{sec:energy1}, Section \ref{sec:weight}, and Sections \ref{sec:s-dirac} and \ref{sec:s-maxwell}, respectively.

\emph{Proof of the asymptotic behavior for the Maxwell-Dirac system.} The above propositions close the bootstrap argument and lead us the global existence immediately. Now we show the asymptotic behavior parts of the main theorem. Let us first consider the modified scattering for Dirac part. In Section \ref{sec:s-dirac} we will show that for $0 < t_1 \le t_2 $,
\begin{align*}
	\left\| \wh{P_k\Psi_\theta}(t_2,\xi) -\wh{P_k\Psi_\theta}(t_1,\xi) \right \|_{L_\xi^\infty} \les \ve_1^2\bra{t_2}^{-\de}2^{-\frac k2 + \frac1{100} k} \bra{2^k}^{-20},
\end{align*}
where $\Psi_\theta (t,\xi) = e^{-iB_\theta(t,\xi)} \wh{\phi_\theta}(t,\xi)$ and $B_\theta(t,\xi)$ is the phase modification symbol as defined in Theorem \ref{mainthm}.
This bound shows the existence of limit $\lim_{t'\to \infty} \phi_{\theta}(t')$ in $L^2$.
By setting
$$
\psi_{\theta}^\infty(t):= e^{-\theta it \bra{D}} \lim_{t'\to \infty} \phi_{\theta}(t'),
$$
we obtain
\begin{align*}
	\|\psi_\theta (t) - e^{iB_\theta(t,D)} \psi_\theta^\infty (t)\|_{L^2} \les	\normo{\bra{\xi}^{20}|\xi|^\frac12 \cF \left[   \psi_\theta (t) - e^{iB_\theta(t,D)} \psi_\theta^\infty (t)\right]}_{L_\xi^\infty} \les \bra{t}^{-\de} \ve_0.
\end{align*}
Note that $\psi^\infty: =\Pi_+ \psi_+^\infty + \Pi_-\psi_-^\infty$ is a solution to linear Dirac equation.

On the other hand, for the proof of \eqref{eq:s-norm-m}, we will show that for $0 < t_1 \le t_2$,
\begin{align*}
	\sum_{j \in \cU_k} 2^j \normo{\qjk V_{\mu,\theta'}(t_2) - \qjk V_{\mu,\theta'}(t_2)}_{L^2} \les \ve_1 \bra{t_2}^{-\de} 2^{-k -  5H(2)\de k} \bra{2^k}^{-N(1)+5}.
\end{align*}
Then this implies the existence of limit $\lim A_{\mu, \theta'}$ in $\dot H^{\frac12 + 5H(2)\de}$.
By setting
$$
A_{\mu,\theta'}^\infty(t):= e^{\theta' it |D|} \lim_{t'\to \infty} V_{\mu,\theta'}(t'),
$$
we see that
\begin{align*}
	\normo{\bra{D}^{25} \left(  A_{\mu,\theta'} (t) -  A_{\mu,\theta'}^\infty(t) \right)}_{\doth^{\frac12 +5H(2)\de}} \les \bra{t}^{-\de} \ve_0.
\end{align*}
Utilizing \eqref{eq:decay-dirac} and \eqref{eq:decay-maxwell}, one can readily establish \eqref{mainthm:decay} with $\overline{\delta} := 11\delta$.
This completes the proof of Theorem \ref{mainthm}.

%%%%%%%%%%%%%%%%%%%%%%%%%%%%%%%%%%%%%%%%%%%%%%%%%%%%%%%%%%%%%%%%%%%%%%%%%%%%%%%%%%%%%%%%%%%%%%%%%%%%%%%%%%%%%%%%%%%%%%%%%%%%%%%%%%%%%%%%%%%%%%%%%%%%%%%%%%%%%%%%%%%%%%%%%%%%%%%%%%%%%%%%%%%%%%%%%%%%%%%%%%%%%%%%%%%%%%%%
\section{Proof of energy estimates}\label{sec:energy1}
This section is devoted to proving Proposition \ref{prop:energy}.

\subsection{Proof of \eqref{eq:energy-high-d}: the bound on $\psi_{\theta,\cL}$} We first prove the energy estimates for the Dirac part.
In order to show \eqref{eq:energy-high-d} let us define the energy functional
\begin{align*}
	\cE_\cL^{\bm D} (t) &:= \int_{\R^3} \bra{\cP^D \psit(t), \cP^D \psit(t)} - \bra{\bra{D}^{-\frac12}\cP^D \psit(t), A_\mu(t) \al^\mu \bra{D}^{-\frac12} \cP^D \psit(t)} dx,
\end{align*}
where $\cP^D := \bra{D}^{N(n)}\cL$ and $\cP^{D'} := \bra{D}^{N(n)}\cL'$.  Then, we have $\cE_\cL^{\bm D} \sim \|\psi_{\theta,\cL}\|_{H^{N(n)}}^2$ since $\ve_1 \ll 1$. Using \eqref{eq:commu-all}, we see that
\begin{align}\label{eq:deri-psi}
	\p_t \cL \psi_\tT %&= -\tT i \brad \cL \psit +	i\Pi_\theta \cL (A_\mu \al^\mu \psi) + \theta \left( \p_0^{|c|} \cM_1  + \cM_2\right)  (A_\mu \al^\mu \psi)  \\
	%& \hspace{0.5cm}  -\theta [\cG]^{c}\p_0^{|c|} \Pi_\theta(A_\mu \al^\mu \psi) + \theta  \sum_{j=1}^3c_j[\p]^a [\cO]^b R_j^{c_j}(A_\mu \al^\mu \psi)\\
	%
	%\theta i [\p]^a[\Om]^b[\cG]^c \p_0^{|c|}(A_\mu \al^\mu \psi)  + \theta i[\p]^a  [\cO]^b [\Gam]^c(A_\mu \al^\mu \psi)\\
	%	& \hspace{0.5cm}  -\theta i [\cG]^{c}\p_0^{|c|} \Pi_\theta(A_\mu \al^\mu \psi) + \theta  i \sum_{j=1}^3c_j[\p]^a [\cO]^b R_j^{c_j}(A_\mu \al^\mu \psi),\\
	= -\theta i \brad\cL \psit + i\Pi_\theta \cL (A_\mu \al^\mu \psi) + i\theta\sum_{\cL' \in \cV_n}C(\cL')\cR_{\cL'}\cL'(A_\mu \al^\mu \psi).
\end{align}
Then, this yields that
\begin{align*}%\label{eq:energy-deri-dirac}
	\begin{aligned}
		\frac d{dt}\cE_\cL^{\bm D} (t) &= 2 {\rm Im} \int_{\R^3} \left[\bra{\cP^D\psit,  \Pi_\theta \cP^D(A_\mu \al^\mu \psi) } + \theta \sum_{\cL' \in \cV_n} C(\cL')\bra{\cP^D\psit, \cR_{\cL'}\brad^{N(n)}\cL'(A_\mu \al^\mu\psi)}\right]\, dx \\
		&\qquad  - \wt \cE(t),
	\end{aligned}
\end{align*}
where
\begin{align*}
		\wt \cE(t) =&   2 {\rm Im} \int_{\R^3} \bra{\bra{D}^{-\frac12} \p_t \cP^D \psit(t), A_\mu(t) \al^\mu \bra{D}^{-\frac12} \cP^D \psit(t)} dx \\
	&\qquad  +  \int_{\R^3} \bra{ \bra{D}^{-\frac12}\cP^D \psit(t), \p_t A_\mu(t) \al^\mu \bra{D}^{-\frac12} \cP^D \psit(t)} dx.
\end{align*}

The following proposition finishes the energy estimates for Dirac part.
\begin{prop}
	Assume that $(\psi,A_\mu)$ satisfies a priori assumptions \eqref{assum:energy}--\eqref{assum:s-norm}. Let $t \in [0,T]$, $n \in \{0, 1, 2, 3\}$, $\cL \in \cV_n$, and $\theta \in \{+,-\}$. Then we have
	\begin{align}
		\left|2{\rm Im}\int_0^t\int_{\R^3} \bra{\cP^D\psit(t),  \Pi_\theta \cP^D(A_\mu(t) \al^\mu \psi(t)) } dx - \wt \cE(s) ds \right| &\les \ve_1^3\bra{t}^{2 H(n)\de},\label{eq:energy-high}\\
		\left|\int_0^t\int_{\R^3} \bra{\cP^D\psit(s), \cR_{\cL'}\brad^{N(n)}\cL'(A_\mu(s) \al^\mu\psi(s))} dx  ds\right|&\les \ve_1^3\bra{t}^{2 H(n)\de}.\label{eq:energy-high-1}
	\end{align}
\end{prop}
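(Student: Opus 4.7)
The plan is to exploit the normal-form structure encoded in $\cE_\cL^{\bm D}$: the quadratic correction $-\int\bra{\bra{D}^{-\frac12}\cP^D\psi_\theta, A_\mu\al^\mu\bra{D}^{-\frac12}\cP^D\psi_\theta}dx$ is chosen so that $\wt\cE(s)$ is, up to sign, its time derivative, and is designed to cancel the leading quasilinear part of the energy flux $2\,\mathrm{Im}\int\bra{\cP^D\psi_\theta,\Pi_\theta\cP^D(A_\mu\al^\mu\psi)}dx$. What remains after this cancellation consists of commutators and genuinely cubic/quartic remainders carrying $\bra{s}^{-1}$ decay, which will be absorbed by the nonlinear estimates of Section~4 and the dispersive bounds of Section~3.

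For \eqref{eq:energy-high}, I would first apply Leibniz to write $\Pi_\theta\cP^D(A_\mu\al^\mu\psi) = \Pi_\theta A_\mu\al^\mu\cP^D\psi + \cC_\cL$, where $\cC_\cL$ collects terms in which at least one derivative or vector field constituting $\cP^D$ falls on $A_\mu$; such terms have strictly lower top order in $\psi$ and are controlled directly by Lemma~\ref{lem:esti-nonlinear-dirac}. Then decompose $\psi=\psi_\theta+\psi_{-\theta}$. The diagonal piece $\mathrm{Im}\bra{\cP^D\psi_\theta, \Pi_\theta A_\mu\al^\mu\cP^D\psi_\theta}$ vanishes modulo commutators: the time component $A_0\al^0=A_0 I_4$ yields a real inner product, while the spatial component uses \eqref{eq:lack-null} to rewrite $\Pi_\theta\al^j\Pi_\theta$ as $\al^j\Pi_{-\theta}\Pi_\theta+\text{Riesz}=\text{Riesz}+[\Pi_\theta,\cP^D]\text{-error}$. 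The off-diagonal piece $\mathrm{Im}\bra{\cP^D\psi_\theta, A_j\al^j\cP^D\psi_{-\theta}}$ is the genuinely problematic quasilinear flux. Substituting the Dirac evolution \eqref{eq:deri-psi} for $\p_t\cP^D\psi_\theta$ into the first term of $\wt\cE$, the principal piece $-\theta i\brad\cP^D\psi_\theta$ produces, after redistributing $\bra{D}^{\pm\frac12}$, the same off-diagonal quantity with opposite sign, cancelling it up to (a) the commutators $[\bra{D}^{\pm\frac12},A_\mu\al^\mu]$, (b) the cubic terms from the nonlinear contribution $i\Pi_\theta\cP^D(A_\mu\al^\mu\psi)$ to $\p_t\cP^D\psi_\theta$, and (c) the $\p_tA_\mu$-piece of $\wt\cE$, which I would rewrite via \eqref{eq:vec-wave} with $\cL=I$ as $\frac{\theta'}{2}|D|^{-1}\bra{\psi,\al_\mu\psi}$ plus an $|D|A_{\mu,\theta'}$-contribution. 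Each surviving remainder is paired with $\bra{D}^{\pm\frac12}\cP^D\psi_\theta$ in $L^2$ and estimated by Cauchy-Schwarz in $L^2\times L^\infty$: the cubic pieces via \eqref{eq:decay-dirac}--\eqref{eq:decay-maxwell} with $L^2$ controlled by the bootstrap assumption, the quartic pieces via Lemma~\ref{lem:esti-nonlinear-wave}, and the commutator/Leibniz pieces via Lemma~\ref{lem:esti-nonlinear-dirac}. Every remainder carries an extra $\bra{s}^{-1}$ decay from at least one decaying factor, so the integrand is of size $\ve_1^3\bra{s}^{2H(n)\de-1}$ and integrates to $\ve_1^3\bra{t}^{2H(n)\de}$.

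For \eqref{eq:energy-high-1}, I would apply Cauchy-Schwarz together with the $L^p$-boundedness of the Riesz-type operator $\cR_{\cL'}$:
\[
\Bigl|\int\bra{\cP^D\psi_\theta, \cR_{\cL'}\brad^{N(n)}\cL'(A_\mu\al^\mu\psi)}\,dx\Bigr|
\les \|\psi_{\theta,\cL}\|_{H^{N(n)}}\,\|\cL'(A_\mu\al^\mu\psi)\|_{H^{N(n)}}.
\]
The decisive input is \eqref{eq:commu-decom}: the relevant $\cL'$ is either strictly lower-order ($\cL_0$) or has the form $\ol\cL\p_t$ with $\ol\cL\in\ol\cV_{\ol n}$, $\ol n\le n-1$, and containing no Lorentz boost. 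In the first case, Lemma~\ref{lem:esti-nonlinear-dirac} at order $\le n-1$ provides $\|\cL'(A_\mu\al^\mu\psi)\|_{H^{N(n)}}\les \ve_1^2\bra{s}^{\wt H(n-1)\de-1}$, using the monotonicity $N(n)\le N(n-1)$; combined with $\|\psi_{\theta,\cL}\|_{H^{N(n)}}\les \ve_1\bra{s}^{H(n)\de}$, the integrand is bounded by $\ve_1^3\bra{s}^{(H(n)+\wt H(n-1))\de-1}$, and the tabulated values force $H(n)+\wt H(n-1)\le 2H(n)$ in every relevant case. In the second case, I would trade $\p_t$ using \eqref{eq:deri-psi} and \eqref{eq:vec-wave} for nonlinear contributions; since $\ol\cL$ commutes with $\p_t$, $|D|$, and $\bra{D}$, the estimate remains in the same growth class.

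The main obstacle is implementing the cancellation in \eqref{eq:energy-high}, which requires careful tracking of the commutators $[\Pi_\theta,\cP^D]$, $[\Pi_\theta,A_\mu\al^\mu]$, and $[\bra{D}^{\pm\frac12},A_\mu\al^\mu]$, the Riesz tails produced by the non-commutation \eqref{eq:lack-null} between $\al^j$ and $\Pi_\theta$, and the $\bra{D}^{\pm\frac12}$ redistribution necessary to match the correction to the off-diagonal flux. The Hermiticity of $A_\mu\al^\mu$ underlies the vanishing of the diagonal imaginary part and must be preserved at each step. A secondary technicality is the $\p_tA_\mu$ term in $\wt\cE$, which carries no a priori decay; this is resolved by replacing it through the first-order Maxwell equation and invoking Lemma~\ref{lem:esti-nonlinear-wave}, whose low-frequency gain is tailored to this setting.
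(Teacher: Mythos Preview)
Your treatment of \eqref{eq:energy-high-1} is essentially correct and matches the paper's: the commutator structure \eqref{eq:commu-decom} produces only lower-order pieces, and since $H(n)+\wt H(n-1)\le 2H(n)$ for every $n$ in the table, pairing with Lemma~\ref{lem:esti-nonlinear-dirac} at level $\le n-1$ closes.

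There is, however, a genuine gap in your plan for \eqref{eq:energy-high}. You propose to control the commutator/Leibniz remainder $\cC_\cL$ ``directly by Lemma~\ref{lem:esti-nonlinear-dirac}''. This fails on two counts. First, Lemma~\ref{lem:esti-nonlinear-dirac} bounds $\|P_k\fN_{\theta,\cL}^{\bm D}\|_{L^2}$ with regularity weight $\bra{2^k}^{-N(n+1)-5}$, whereas $\cC_\cL$ carries $\bra{D}^{N(n)}$; since $N(n)-N(n+1)\ge 10$ for every $n$, the high-frequency sum diverges. Second, even ignoring regularity, the growth exponent supplied by that lemma is $\wt H(n)=H(n)+160$, so pairing with $\|\psi_{\theta,\cL}\|_{H^{N(n)}}\les\ve_1\bra s^{H(n)\de}$ yields an integrand of size $\ve_1^3\bra s^{(2H(n)+160)\de-1}$, which integrates to $\ve_1^3\bra t^{(2H(n)+160)\de}$ and overshoots the target by $160\de$. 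Lemma~\ref{lem:decay-nonlinear-dirac} has the right regularity but only $\bra s^{-2/3}$ decay, which is far worse.

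The paper does not attempt to bound $\cC_\cL$ in $L^2$ at all. After the cancellation with $\wt\cE_1$ it keeps the full \emph{trilinear} expression \eqref{eq:e-dirac-1}, splits it into the multipliers $M_n^1$ (the $[\bra D^{N(n)-\frac12},A_\mu\al^\mu]$ commutator) and $M_n^2$ (the Leibniz terms with at least one vector field on $A_\mu$), localizes in frequency, and then exploits the phase: integration by parts in $s$ using the time non-resonance \eqref{eq:resonance-time} for $M_n^1$, and integration by parts in $\eta$ using the space non-resonance \eqref{eq:nonresonance-space} for $M_n^2$, with a separate ``bulk'' argument (Section~\ref{sec:bulk-dirac}) when all vector fields land on a single factor. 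These oscillatory-integral gains are precisely what produce the exact exponent $2H(n)\de$; the $160$-gap between $H(n)$ and $\wt H(n)$ is the numerical room reserved for them. Your $L^2\times L^\infty$ scheme, which never touches the phase $p_\Theta$, cannot recover this.
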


\begin{proof} Let us first consider \eqref{eq:energy-high}. Regarding $\wt \cE(s)$, by \eqref{eq:deri-psi} we see that
\begin{align*}
	\wt\cE(s) &= 2{\rm Im} \int_{\R^3}  \bra{ \bra{D}^{\frac12} \cP^D \psit(s), A_\mu(s) \al^\mu \bra{D}^{-\frac12} \cP^D \psit(s)}  dx\\
	 &\quad - 2{\rm Re}\int_{\R^3}  \bra{ \bra{D}^{-\frac12} \Pi_\theta \cP^D (A_\nu(s) \al^\nu \psi(s)) , A_\mu(s) \al^\mu \bra{D}^{-\frac12} \cP^D \psit(s)}  dx\\
	 &\quad - \theta  2{\rm Re} \sum_{\cL' \in \cV_{n'}}C(\cL') \int_{\R^3}  \bra{ \bra{D}^{N(n)-\frac12} \cR_{\cL'} \cL' (A_\nu(s) \al^\nu \psi(s)) , A_\mu(s) \al^\mu \bra{D}^{-\frac12} \cP^D \psit(s)}  dx\\
	 &\quad   +  \int_{\R^3} \bra{ \bra{D}^{-\frac12}\cP^D \psit(t), \p_t A_\mu(t) \al^\mu \bra{D}^{-\frac12} \cP^D \psit(t)} dx\\
	 & =: \sum_{j=1}^4 \wt\cE_j(s).
\end{align*}
Using Lemma \ref{lem:decay-nonlinear-dirac} and \eqref{eq:decay-maxwell}, we estimate
\begin{align*}
	\left|\int_0^t \wt\cE_2(s)+\wt\cE_3(s)ds \right| \les \ve_1^3 \bra{t}^{2H(n)\de}.
\end{align*}
Moreover, by \eqref{eq:zero-vec-3} we obtain that for $t \in [0,T]$,
\begin{align*}
\normo{\p_t A_\mu(t)}_{L^\infty} \les  \sum_{k\in\Z,\theta' \in \{\pm\}}2^{-\frac k2} \normo{\p_t P_k W_{\mu,\theta'}(t)}_{L^\infty} \les \ve_1 \bra{t}^{-1},
\end{align*}
which implies
\begin{align*}
 \left| \int_0^t \wt \cE_4(s) ds\right| \les \ve_1^2 \bra{t}^{2H(n)\de}.
\end{align*}
Thus, it suffices to consider the part
	\begin{align*}
			&\int_0^t\int_{\R^3} \bra{ \cP^D\psit(s), \Pi_\theta \cP^D(A_\mu(s) \al^\mu \psi(s)) } \,dx - \wt \cE_1(s) ds,
\end{align*}
which can be written as 
	\begin{align}\label{eq:e-dirac-1}
		 \int_0^t\int_{\R^3} \bra{\cP^D\psit(s), \Pi_\theta \cP^D(A_\mu(s) \al^\mu \psi(s))  -  \Pi_\theta \bra{D}^{\frac12}\left( A_\mu(s) \al^\mu \bra{D}^{-\frac12}\cP^D \psi(s)  \right)}\, dx ds.
	\end{align}
	%	\cI_2(t) &= {\rm Im} \int_0^t \int_{\R^3} \bra{ \bra{D}^{-\frac12}\cP^D \psit(s), \p_t A_\mu(s) \al^\mu \bra{D}^{-\frac12} \cP^D \psit(s)} dxds.\nonumber
		%		\cI_3(t) &= {\rm Im}\sum_{\theo \in \{\pm\}}\int_{\R^3} \bra{A_\mu \al^\mu \cP \psi_\theo  , \bra{D}^{N(n)} \p_0^{|c|}\cM_1\psit} dx,\label{eq:e-dirac-3}\\
		%		\cI_4(t) &= {\rm Im}\sum_{\theo \in \{\pm\}}\int_{\R^3} \bra{A_\mu \al^\mu \cP \psi_\theo  ,\bra{D}^{N(n)} \cM_2\psit} dx,\label{eq:e-dirac-4}

 By commutation relation one gets
	\begin{align*}
		\cP(A_\mu \al^\mu \psi)   = \bra{D}^{N(n)}( A_\mu \al^\mu \cL \psi)  + \sum_{\cL_\ell \in \cV_{n_\ell}, \ell=1,2} C(\cL_1,\cL_2)  \bra{D}^{N(n)}(\cL_2 A_\mu \al^\mu \cL_1 \psi) ,
	\end{align*}
	where $C(\cL_1, \cL_2)$ are constants depending only on $\cL_1, \cL_2$ and $n_1 \le n-1, n_1 + n_2 = n$. 
	
	We can write \eqref{eq:e-dirac-1} in the Fourier side as the sum of the followig:
	\begin{align}\label{eq:e-3-four}
		\begin{aligned}
			&(2\pi)^{-6}\int_0^t\!\int_{\R^{3+3} }  M_n^1(\xi,\eta)  \bra{\wh{ \psi_{\theta,\cL}}(s,\xi),    \wh{ W_{\mu,\thet}}(s,\eta) \al^\mu \wh{ \psi_{\theo,\cL}}(s,\xi - \eta)  }  d\xi d\eta ds,\\
			&(2\pi)^{-6}\int_0^t \! \int_{\R^{3+3}}  M_n^2(\xi,\eta)  \bra{\wh{\psi_{\theta,\cL}}(s,\xi),  \wh{  W_{\mu, \cL_2, \thet}}(s,\eta) \al^\mu \wh{ \psi_{\theo,\cL_1}}(s,\xi - \eta)  }  d\xi d\eta ds ,
		\end{aligned}
	\end{align}
	where
	\begin{align*}
		M_n^1(\xi,\eta) &:= |\eta|^{-\frac12} \Pi_\theta(\xi) \bra{\xi}^{N(n)+\frac12} \left(\bra{\xi}^{N(n)-\frac12} - \bra{\xi-\eta}^{N(n)-\frac12}\right),\\
		M_n^2(\xi,\eta) &:= |\eta|^{-\frac12}\Pi_\theta(\xi)  \bra{\xi}^{2N(n)}.
	\end{align*}

	We estimate \eqref{eq:e-3-four} by dividing the time $t \in [0,T]$ into a dyadic pieces $2^m$ $(m \in \{0, \cdots, L+ 1\})$ with
	\[
	|L-\log_2(t+2)|\le2
	\]
	and by taking cut-offs $q_m : \R \to [0,1]$ such that
	\begin{align*}
		&\supp(q_0) \subset [0,2], \quad \supp(q_{L+1}) \subset [t-2,t],  \quad \supp(q_m) \subset [2^{m-1},2^{m+1}],\\
		&\qquad \sum_{m=0}^{L+1}q_m(s) = \mathbf{1}_{[0,t]}(s), \quad q_m \in C^1(\R),\qquad\int_0^t |q_m'(s)| ds \les 1 .
	\end{align*}
	Let $I_m$ denote the support of $q_m$ and let
	\begin{align*}
		\cI_{m}^1 : = \int_{I_m}\!\int_{\R^{3+3} } q_m(s) M_n^1(\xi,\eta)  \bra{\wh{ \psi_{\theta,\cL}}(s,\xi),   \wh{ W_{\mu,  \thet}}(s,\eta) \al^\mu \wh{ \psi_{\theo,\cL}}(s,\xi-\eta)    }\,  d\xi d\eta ds,\\
		\cI_{ m}^2 : = \int_{I_m}\!\int_{\R^{3+3} } q_m(s) M_n^2(\xi,\eta)  \bra{\wh{ \psi_{\theta,\cL}}(s,\xi),   \wh{ W_{\mu, \cL_2, \thet}}(s,\eta) \al^\mu \wh{ \psi_{\theo,\cL_1}}(s,\xi-\eta)    }\,  d\xi d\eta ds.
	\end{align*}
	To get the bound \eqref{eq:energy-high}, it suffices to show that for each $l = 1, 2$ and $m \in \{0, \cdots, L+1\}$
	\begin{align}\label{eq:energy-high-j}
		|\cI_{m}^l| \les \ve_1^3 2^{2H(n)\de m}.
	\end{align}
	%for $\theta,\theo,\thet \in \{+,-\}$, and $j=1,2$.
	%Then, we focus on
	%	\begin{align}
		%	&\int_{I_m}\!\int_{\R^{3+3} }  M_n^1(\xi,\eta)  \bra{  \wh{ W_{\mu,\thet}}(s,\eta) \al^\mu \wh{ \psi_{\theo,\cL}}(s,\xi-\eta) ,   \wh{ \psi_{\theta,\cL}}(s,\xi) }  d\xi d\eta ds,\label{eq:e-3-four-1}\\
		%	&\int_{I_m} \! \int_{\R^{3+3}}  M_n^2(\xi,\eta)  \bra{  \wh{ W_{\mu,\thet,\cL_2}}(s,\eta) \al^\mu \wh{ \psi_{\theo,\cL_1}}(s,\xi-\eta) ,   \wh{\psi_{\theta,\cL}}(s,\xi) }  d\xi d\eta ds.\label{eq:e-3-four-2}
		%\end{align}

		\emph{Estimates for $\cI_{ m}^1$.} %We prove
		%\begin{align}\label{eq:claim-e-3-1}
		%	\left|\eqref{eq:e-3-four-1}\right| \les \ve_1^3 2^{2H(n)\de m}.
		%\end{align}
		To this end, we divide the frequencies of $\cI_{ m}^1$ into the dyadic pieces $2^k, 2^{k_1}, 2^{k_2} \in 2^\Z$ as follows:
		\begin{align*}%\label{eq:e-3-dec}
			\cI_{ m, \bm k}^1 :=	\int_{I_m}\int_{\R^{3+3}} q_m(s) M_n^1(\xi,\eta)  \bra{ \wh{P_k\psi_{\theta,\cL}}(s,\xi),  \wh{P_{k_2}W_{\mu,  \thet}}(s,\eta) \al^\mu \wh{P_{k_1}\psi_{\theo,\cL}}(s,\xi-\eta)    }  d\xi d\eta ds ,
		\end{align*}
		where $\bm k = (k,k_1,k_2)$.	For the multiplier estimates, we have
		\begin{align*}
			\normo{ M_n^1(\xi,\eta)\rho_k(\xi)\rho_{k_1}(\xi+\eta)\rho_{k_2}(\eta)}_{\cm} \les 2^{\frac{k_2}2}\bra{2^k}^{N(n)} \bra{2^{\max(\bm k)}}^{N(n)-1}.
		\end{align*}
		Thus by Lemma \ref{lem:coif-mey}, \eqref{eq:high}, and \eqref{eq:zero-vec-3}, we have
		\begin{align*}
			\left|\cI_{ m, \bm k}^1 \right| &\les \int_{I_m} 2^{\frac {k_2}2}\bra{2^{\max(\bm k)}}^{N(n)-1} \|P_{k}\psi_{\tT,\cL}(s)\|_{H^{N(n)}} \|P_{k_1}\psi_{\theo,\cL}(s)\|_{L^2}  \|P_{k_2}W_{\mu,  \thet}(s)\|_{L^\infty} ds\\
			&\les \ve_1^3|I_m| 2^{2H(n) \de m -m} \bra{2^{\max(\bm k)}}^{N(n)-1} \bra{2^{k_1}}^{-N(n)}  2^{k_2 - 5H(2)\de k_2 } \bra{2^{k_2}}^{-22},
		\end{align*}
		which implies \eqref{eq:energy-high-j} in case that $\min(\bm k) = k_2$.
		
		Let us move on to the case $\min(\bm k)= k$ or $k_1$. If $\min(\bm k) \le -\frac23 m -\de m$, we have 
			\begin{align}\label{eq:energy-high-2}
		\begin{aligned}
				&\left|\cI_{ m, \bm k}^1\right| \\
				&\les |I_m| 2^{\frac{3\min(\bm k)}2 +\frac{k_2}2} \bra{2^{\max(\bm k)}}^{N(n)-1} \|P_{k}\psi_{\tT,\cL}(s)\|_{H^{N(n)}} \|P_{k_1}\psi_{\theo,\cL}(s)\|_{L^2}  \|P_{k_2}W_{\mu,\thet}(s)\|_{L^2} \\
			&\les \ve_1^3 |I_m| 2^{[2H(n) +1]\de m } 2^{\frac{3\min(\bm k)+k_2}2} \bra{2^{k_1}}^{-N(n)} \bra{2^{\max(\bm k)}}^{N(n)-1-N(0)}.
		\end{aligned}
		\end{align}
		We consider the high frequency case $2^{\max(\bm k)} \ge 2^{\frac1{60} m}$. In this case the high frequency sum can be easily treated by the regularity condition $\bra{2^{k_2}}^{-N(0)}$ of $W_{\mu, \thet}$ in the energy estimates \eqref{eq:high}. To exploit this regularity gap, we estimate to distinguish $n \le 2$ and $n= 3$.  	Indeed,
		if $n \le 2$, it follows from \eqref{eq:decay-dirac} that
			\begin{align*}
			\left|\cI_{ m, \bm k}^1\right| &\les |I_m| 2^{\frac{k_2}2} \bra{2^{\max(\bm k)}}^{N(n)-1} \|P_{k}\psi_{\tT,\cL}(s)\|_{H^{N(n)}} \|P_{k_1}\psi_{\theo,\cL}(s)\|_{L^\infty}  \|P_{k_2}W_{\mu,\thet}(s)\|_{L^2} \\
			&\les \ve_1^3 |I_m| 2^{[H(n)+H(n+1) +1]\de m - m } 2^{\frac{k_2}2} \bra{2^{k_1}}^{-N(n+1)+3} \bra{2^{\max(\bm k)}}^{N(n)-1-N(0)}.
		\end{align*}
		On the other hand, if $n=3$, \eqref{eq:energy-high-2} yields that
		\begin{align*}
			\left|\cI_{ m, \bm k}^1\right| &\les \ve_1^3 |I_m| 2^{[2H(3) +1]\de m } 2^{\frac{3\min(\bm k)+k_2}2} \bra{2^{k_1}}^{-N(3)} \bra{2^{\max(\bm k)}}^{N(3)-1-N(0)}.
		\end{align*}
		whose $2^{\max(\bm k)} \ge 2^{\frac m{60} }$ leads us to \eqref{eq:energy-high-j}.
		
		Let us now consider $ 2^{\min(\bm k)} \ge  2^{- \frac23 m -\de m}$ and $2^{\max(\bm k) }\le 2^{\frac m{60} }$. For this, we use the normal form approach. Making integration by parts in time, $\cI_{ m, \bm k}^1$ becomes the sum of the integrals:
		\begin{align}
			&\int_{I_m}\! q_m'(s) \int_{\R^{3+3}} \frac{e^{is p_\Theta(\xi,\eta)}}{p_\Theta(\xi,\eta)}	  M_n^1(\xi,\eta)  \bra{\wh{P_k\phi_{\theta,\cL}}(s,\xi),  \wh{P_{k_2}V_{\mu,\thet}}(s,\eta) \al^\mu \wh{P_{k_1}\phi_{\theo,\cL}}(s,\xi-\eta) ,   }  d\xi d\eta ds, \label{eq:normal-dirac1}\\
			&\int_{I_m}\!\! q_m(s)\int_{\R^{3+3}} \frac{e^{is p_\Theta(\xi,\eta)}}{p_\Theta(\xi,\eta)}M_n^1(\xi,\eta)  \p_s \left[    \bra{ \wh{P_k\phi_{\theta,\cL}}(s,\xi),  \wh{P_{k_2}V_{\mu,\thet}}(s,\eta) \al^\mu \wh{P_{k_1}\phi_{\theo,\cL}}(s,\xi - \eta) ,   }  \right] d\eta d\xi ds,\label{eq:normal-dirac2}
		\end{align}
		where $p_\Theta$ is the phase defined in \eqref{eq:phase-dirac}.	By \eqref{eq:high} and \eqref{eq:zero-vec-3} we have
		\begin{align*}
			|\eqref{eq:normal-dirac1}|  &\les |I_m| 2^{-m} 2^{-\frac{k_2}2}  \bra{2^k}^{N(n)}\bra{2^{\max(\bm k)}}^{N(n)-1} \|\p_s P_k \psi_{\theta,\cL}(s)\|_{L^2}\| \pko \psi_{\theta,\cL}(s)\|_{L^2}  \| \pkt W_{\mu,\thet}(s)\|_{L^\infty} \\
			&\les \ve_1^3|I_m|2^{2H(n)\de m -2m} 2^{  -5H(2) \de k_2}  \bra{2^{k_1}}^{-N(n)} \bra{2^{\max(\bm k)}}^{N(n)-23}.
		\end{align*}
		This enables us to sum over  $ 2^{\min(\bm k)} \ge  2^{- \frac23 m -\de m}$ and $2^{\max(\bm k) }\le 2^{\frac m{60} }$. 		To estimate \eqref{eq:normal-dirac2}, we utilize
		\begin{align}\label{eq:energy-max-time}
			\begin{aligned}
					\normo{P_{k} \p_s\phi_{\theta,\cL}}_{L^2} &\les \ve_1 2^{-m+\wt{H}(n)\de m } \bra{2^{k}}^{-N(n+1)-5} ,\\
				\normo{P_{k} \p_sV_{\mu,\thet}}_{L^2} &\les \ve_1 2^{-m+\wt{H}(0)\de m } \bra{2^{k}}^{-N(1)-5} ,
			\end{aligned}
		\end{align}
		obtained by \eqref{eq:esti-non-dirac-time} and \eqref{eq:esti-nonl-max-time}, respectively.  In \eqref{eq:normal-dirac2}, if the time derivative falls on $\phi_{\theta,\cL}$ or $\phi_{\theo,\cL}$, we estimate
		\begin{align*}
			|\eqref{eq:normal-dirac2}| &\les |I_m| 2^{-\frac{k_2}2}  \bra{2^k}^{N(n)}\bra{2^{\max(\bm k)}}^{N(n)-1} \|\p_s P_k \psi_{\theta,\cL} (s)\|_{L^2}\| \pko \psi_{\theta,\cL} (s)\|_{L^2}  \| \pkt W_{\mu,\thet} (s)\|_{L^\infty} \\
			&\les \ve_1^3 |I_m| 2^{[H(n)+ \wt H(n)]\de m -2m} 2^{ -5H(2)\de k_2} \bra{2^k}^{N(n)-N(n+1)-5} \bra{2^{k_1}}^{-N(n)}\bra{2^{\max(\bm k)}}^{N(n)-23}.
		\end{align*}
		We consider the case that the derivative falls on $V_{\mu,\thet}$. In this case, we decompose $n \le 2$ and $n= 3$. If $n \le 2$,  by \eqref{eq:decay-dirac} and \eqref{eq:energy-max-time} we obtain
		\begin{align*}
				|\eqref{eq:normal-dirac2}| &\les |I_m| 2^{-\frac{k_2}2}  \bra{2^{\max(\bm k)}}^{N(n)-1} \|P_k \psi_{\theta,\cL}(s)\|_{H^{N(n)}}\| \pko \psi_{\theta,\cL}(s)\|_{L^\infty}  \|  \p_s \pkt V_{\mu,\thet}(s)\|_{L^2} \\
			&\les \ve_1^3 |I_m| 2^{[H(n) + H(n+1) + \wt H(0) ]\de m -2m} 2^{ \frac{k_1 -  k_2}2}  \bra{2^{k_1}}^{-N(n+1)+2}\bra{2^{\max(\bm k)}}^{N(n)-6-N(1)}.
		\end{align*}
		Let us move on to the case $n=3$. Using Hardy-Littlewood-Sobolev inequality and \eqref{eq:zero-vec-3}, we see that
		\begin{align*}
			&\|e^{\thet it |D|}\p_s \pkt V_{\mu,\thet}(s)\|_{L^\infty}\\
			 &\les  \| |D|^{-\frac12} \pkt \bra{\psi(s),\al_\mu \psi(s)} \|_{L^\infty} 	\\
			& \les  \sum_{\theta_3,\theta_4
				\in \{\pm\}} \sum_{\substack{j_\ell \in \cU_{k_\ell}\\ k_\ell \in \Z, \ell =3,4}} \|Q_{j_3,k_3}\psi_{\theta_3}(s)\|_{L^2}^\frac13\|Q_{j_3,k_3}\psi_{\theta_3}(s)\|_{L^\infty}^\frac23\|Q_{j_4,k_4}\psi_{\theth}(s)\|_{L^\infty} \\
			& \les \ve_1^2  2^{-\frac53m + 20\de m} \bra{2^{k_2}}^{-N(1)  + 3}.
		\end{align*}
		Then we have
			\begin{align*}
			|\eqref{eq:normal-dirac2}| &\les |I_m| 2^{-\frac{k_2}2}  \bra{2^{\max(\bm k)}}^{N(3)-1} \|P_k \psi_{\theta,\cL}(s)\|_{H^{N(3)}}\| \pko \psi_{\theta,\cL}(s)\|_{L^2}  \|  e^{\thet it|D|}\p_s \pkt V_{\mu,\thet}(s)\|_{L^\infty} \\
			&\les \ve_1^3 |I_m| 2^{[2H(3) + 20]\de m -\frac 53m} 2^{ -\frac{ k_2}2}  \bra{2^{k_1}}^{-N(3)}\bra{2^{\max(\bm k)}}^{N(3)-N(1) +2}.
		\end{align*}
		This finishes the proof for \eqref{eq:normal-dirac2}.
		Summing over $ 2^{\min(\bm k)} \ge  2^{-\frac23 m-\de m}$ and $2^{\max(\bm k)} \le 2^{\frac m{60} }$, we get \eqref{eq:energy-high-j}.

		\begin{rem}
			Regarding the restriction $2^{\max(\bm k)} \le 2^{\frac m{60} }$, we need this condition only when $n=0,1$ and $\min(\bm k) = k_1$ in the estimates for \eqref{eq:normal-dirac1}. In view of \eqref{eq:energy-max-time}, we also require this restriction in the estimates for \eqref{eq:normal-dirac2} when $n\le 2$.
		\end{rem}

		\emph{Estimates for $\cI_{ m}^2$.} We postpone a proof for the case $n_1 = 0$ to Section \ref{sec:bulk-dirac}. Here, we consider the cases $(n_1, n_2) = (2,1)$, and $(1,2)$ when $n=3$ and $(n_1,n_2) = (1,1)$ when $n = 2$. Similarly to the previous estimates, we separate $|\xi|,|\xi+\eta|, |\eta|$ by dyadic numbers $2^{k},2^{k_1},2^{k_2}$ and consider the integral
		\begin{align}\label{eq:e-3-dec-2}
			\int_{I_m}\iint_{\R^{3+3} } q_m(s) M_n^2(\xi,\eta)  \bra{\wh{P_k\psi_{\theta,\cL}}(s,\xi),   \wh{P_{k_2}W_{\mu, \cL_2, \thet}}(s,\eta) \al^\mu \wh{P_{k_1}\psi_{\theo,\cL_1}}(s,\xi - \eta)  }  d\xi d\eta ds .
		\end{align}
		
		If $2^{\min(\bm k) } \le 2^{-m}$ or $2^{\max (\bm k)} \le 2^{\frac m{10}}$, by H\"older's inequality and \eqref{eq:high}, we have
		\begin{align*}%\label{eq:e-3-dec-2-esti}
			\begin{aligned}
				|\eqref{eq:e-3-dec-2}| &\les |I_m| 2^{\min(\bm k)} \bra{2^k}^{2N(n)} \|P_k\psi_{\theta,\cL}(s)\|_{L^2}\|P_{k_1}\psi_{\theo,\cL_1}(s)\|_{L^2}\|P_{k_2}W_{\mu, \cL_2, \thet}(s)\|_{L^2} \\
				&\les \ve_1^3 2^{\left(H(n) + H(n_1) +H(n_2) \right) \de m}\bra{2^k}^{N(n)} \bra{2^{k_1}}^{-N(n_1)} \bra{2^{k_2}}^{-N(n_2)}.
			\end{aligned}
		\end{align*}
		Since $1 \le n_1, n_2 \le n-1$, one gets $H(n_1) + H(n_2) \le H(n) - 190$ and $N(n) \le \min \left(N(n_1), N(n_2) \right)- 10$. Summing over $2^{\min(\bm k)} \le 2^{-m}$ or $2^{\max (\bm k)} \le 2^{\frac m{10}}$, one can get \eqref{eq:energy-high-j}.%These make \eqref{eq:e-3-dec-2-esti} to close \eqref{eq:e-3-four-2} over $\bm k$-sum with  .

		If $2^{\min(\bm k) } \ge 2^{- m} $ and $2^{\max (\bm k)} \le 2^{\frac m{10}}$, then we exploit the space resonance \eqref{eq:nonresonance-space}. More precisely, making integration by parts in $\eta$, \eqref{eq:e-3-dec-2} becomes the sum of the following:
		\begin{align}
			&\int_{I_m}\int_{\R^{3+3}} s^{-1}q_m(s) e^{isp_\Theta(\xi,\eta)} \nabla_\eta 	\wt{M_{n, \bm k}^2} \bra{\wh{P_k\phi_{\theta,\cL}}(s,\xi),   \wh{P_{k_2}V_{\mu, \cL_2, \thet}}(s,\eta) \al^\mu \wh{P_{k_1}\phi_{\theo,\cL_1}}(s,\xi - \eta) }  d\xi d\eta ds ,\label{eq:e-space-1}\\
			&\int_{I_m}\int_{\R^{3+3}} s^{-1} q_m(s) e^{isp_\Theta(\xi,\eta)} 	\wt{M_{n, \bm k}^2} \bra{ \wh{P_k\phi_{\theta,\cL}}(s,\xi),  \wh{P_{k_2}x V_{\mu, \cL_2, \thet}}(s,\eta) \al^\mu \wh{P_{k_1}\phi_{\theo,\cL_1}}(s,\xi - \eta) }  d\xi d\eta ds ,\label{eq:e-space-2}\\
			&\int_{I_m}\int_{\R^{3+3}} s^{-1} q_m(s) e^{isp_\Theta(\xi,\eta)} 	\wt{M_{n, \bm k}^2}  \bra{\wh{P_k\phi_{\theta,\cL}}(s,\xi),  \wh{P_{k_2}V_{\mu, \cL_2, \thet}}(s,\eta) \al^\mu \wh{P_{k_1} x \phi_{\theo,\cL_1}}(s,\xi - \eta)   }  d\xi d\eta ds ,\label{eq:e-space-3}
		\end{align}
		where the multiplier $\wt{M_n^2}$ is defined by
		\begin{align*}
			\wt{M_{n, \bm k}^2}(\xi,\eta) = \frac{\nabla_\eta p_\Theta(\xi,\eta) M_n^2(\xi,\eta) }{|\nabla_\eta p_\Theta(\xi,\eta)|^2}\rho_k(\xi)\rho_{k_1}(\xi+\eta)\rho_{k_2}(\eta).
		\end{align*}
		In view of \eqref{eq:nonresonance-space}, this multiplier satisfies
		\begin{align*}
			\left|\nabla_\eta^\ell\wt{M_{n, \bm k}^2}(\xi,\eta)  \right|\les 2^{-\frac{k_2}2 - \ell k_2}\bra{2^k}^{2N(n)}\bra{2^{k_1}}^{2 + 2\ell}
		\end{align*}
		for $\ell = 0,1$. Then, \eqref{eq:high} leads us to the bound
		\begin{align*}
			|\eqref{eq:e-space-1}| \les \ve_1^3 2^{\left(H(n) + H(n_1) +H(n_2) \right) \de m}2^{\frac{3\min(\bm k)}{2}-\frac{3 k_2}2}  \bra{2^k}^{N(n)}\bra{2^{k_1}}^{-N(n_1)+4} \bra{2^{k_2}}^{-N(n_2)}.
		\end{align*}
		The sum of \eqref{eq:e-space-1} over $2^{\min(\bm k)} \ge 2^{-m}$ now gives us \eqref{eq:energy-high-j}.
		
		As for \eqref{eq:e-space-2}, by using \eqref{eq:weight} and \eqref{eq:decay-dirac}, we estimate as follows:
		\begin{align}\label{eq:e-space-esti-1}
			|\eqref{eq:e-space-2}| \les \ve_1^3 2^{\left(H(n) + H(n_1) +H(n_2+1) \right) \de m} 2^{\frac{3\min(\bm k)}2-k_2}  \bra{2^{\max(\bm k)}}^{N(n)}\bra{2^{k_1}}^{-N(n_1)+2} \bra{2^{k_2}}^{-N(n_2+1)}
		\end{align}
		and
		\begin{align}\label{eq:e-space-esti-2}
			|\eqref{eq:e-space-2}| \les \ve_1^3 2^{\left(H(n) + H(n_1+1) +H(n_2+1) \right) \de m} 2^{-m}2^{-\frac{k_2}2}  \bra{2^{\max(\bm k)}}^{N(n)}\bra{2^{k_1}}^{-N(n_1+1)+8} \bra{2^{k_2}}^{-N(n_2+1)}.
		\end{align}
		Since $ H(n_1) +H(n_2+1) \le H(n) +10$ and
		\begin{align*}
			2^{-k_2}\bra{2^{\max(\bm k)}}^{N(n)}\bra{2^{k_1}}^{-N(n_1)+2} \bra{2^{k_2}}^{-N(n_2+1)} \les \bra{2^{\min(\bm k)}}^{-10}\bra{2^{\max(\bm k)}}^{-1},
		\end{align*}
		one gets \eqref{eq:energy-high-j} by summing \eqref{eq:e-space-esti-1} over $2^{\max(\bm k)} \ge 2^{10\de m}$ and \eqref{eq:e-space-esti-2} over $2^{\max(\bm k)} \le 2^{10\de m}$.
		The estimates for \eqref{eq:e-space-3} can be obtained similarly with $L^2 \times L^2 \times L^\infty$ estimates due to the restriction $2^{\max (\bm k)} \le 2^{\frac m{10}}$.	
		
		We prove \eqref{eq:energy-high-1}. By \eqref{eq:commu-decom} we have
		\begin{align*}
			\sum_{\cL' \in \cV_{n'}} C(\cL') \cR_{\cL'} \cL' \p_t (A_\mu\al^\mu \psi) =   C_1 (\cL_0)   \cL_0 (A_\mu\al^\mu \psi) +  \sum_{\ol \cL \in \ol{\cV}_{\ol  n}} C_2( \ol \cL) \cR_{\ol \cL} \ol \cL \p_t (A_\mu\al^\mu \psi),
		\end{align*}
		for $\ol n, \wt n \le n-1$. By \eqref{eq:dirac-pt} and \eqref{eq:maxwell-pt}, it suffices to consider
		\begin{align}
&\int_0^t \! \int_{\R^{3+3}}  M_n^3(\xi,\eta)  \bra{\wh{\psi_{\theta,\cL}}(s,\xi),  \wh{  W_{\mu, \cL_2, \thet}}(s,\eta) \al^\mu \wh{ \psi_{\theo,\cL_1}}(s,\xi - \eta)  }  d\xi d\eta ds ,\label{eq:commu-1}\\
&\int_0^t \! \int_{\R^{3+3}}  M_n^4(\xi,\eta)  \bra{\wh{\psi_{\theta,\cL}}(s,\xi),  \wh{  W_{\mu, \cL_2, \thet}}(s,\eta) \al^\mu \wh{ \mathfrak N^{\bm D}_{\theo,\cL_1}}(s,\xi - \eta)  }  d\xi d\eta ds ,\label{eq:commu-2}\\
&\int_0^t \! \int_{\R^{3+3}}  M_n^4(\xi,\eta)  \bra{\wh{\psi_{\theta,\cL}}(s,\xi),  \wh{  \mathfrak N_{\mu, \cL_2}^{\bm M}}(s,\eta) \al^\mu \wh{ \psi_{\theo,\cL_1}}(s,\xi - \eta)  }  d\xi d\eta ds ,\label{eq:commu-3}
		\end{align}
	where
	\begin{align*}
		M_n^3(\xi,\eta) = |\eta|^{-\frac12} \left(1 + \cR_{\ol \cL}(\xi) \right) \bra{\xi}^{2N(n)} (1+|\eta| + \bra{\xi-\eta}) \;\;\mbox{ and }\;\; M_n^4(\xi,\eta) = |\eta|^{-\frac12} \cR_{\ol \cL}(\xi) \bra{\xi}^{2N(n)}.
	\end{align*}
		  Since  $R_{\ol \cL}$ is smooth and plays a similar role to $\Pi_\theta$,  the proof of \eqref{eq:commu-1} can be done similarly to the estimates for $\cI_m^2$ by replacing $\Pi_\theta, \cL$ with $\cR_{\ol \cL}, \ol \cL$, respectively. Indeed, the estimates for \eqref{eq:energy-high-1} can be obtained more easily from the fact that $n_0, \ol n \le  n-1$. For the proofs of \eqref{eq:commu-2} and \eqref{eq:commu-3}, Lemmas \ref{lem:esti-nonlinear-dirac} and \ref{lem:esti-nonlinear-wave} imply  the desired bounds straightforwardly. Therefore, we omit the proof of \eqref{eq:energy-high-1}.		
	\end{proof}

	\subsection{Proof of \eqref{eq:energy-high-m}: the bound on $W_{\mu, \cl, \theta'}$}

	With $\cP^M:= \bra{D}^{N(n)}|D|^\frac12 \cl$, we define the energy functional for Maxwell part
	\begin{align}\label{eq:energy-maxwell}
		\mathcal E_\cL^{\bm M} (t) := \frac 14\int_{\R^3}  \left[   (\cP^M A_\mu(t))^2 +  (\p_t  |D|^{-1}\cP^M A_{\mu}(t))^2   \right] dx.
	\end{align}
	Then, by the definition of $W_{\mu, \cL, \theta'}$, we have
	\begin{align*}
		\cE_\cL^{\bm M} (t) \sim \|W_{\mu, \cL, \theta'}(t)\|_{H^{N(n)}}^2.
	\end{align*}
	Using the commutation relation of wave operator, we see that
	\begin{align*}
		%\begin{aligned}\label{eq:time-deri-energy-maxwell}
		\frac{d}{dt}\mathcal E_\cL^{\bm M} (t) &=  \frac12 \int_{\R^3} |D|^{-1} \cP^M [\bra{\psi,\al_\mu \psi}](t) \p_t  |D|^{-1}\cP^M A_{\mu}(t) dx\\
		&= i(2\pi)^{-3}\sum_{\theta' \in \{\pm\}} \theta' \int_{\R^3} |\xi|^{-\frac12}\bra{\xi}^{2N(n)}\mathcal F[\cl \bra{\psi,\al_\mu \psi}](t,\xi) \overline{\cF \left[ W_{\mu,\cL, \theta'}\right]}(t,\xi) d\xi.
		%\end{aligned}
	\end{align*}
	To prove \eqref{eq:energy-maxwell}, it suffices to show
	\begin{align*}
		\left| \int_{I_m}\int_{\R^3} q_m(s) |\xi|^{-\frac12} \bra{\xi}^{2N(n)} \mathcal F\bra{ \psi_{\theta_1, \cL_1}, \al_\mu  \psi_{\theta_2, \cL_2}}(t,\xi) \overline{\wh{ W_{\mu, \cL, \theta'}}}(t,\xi)   d\xi ds \right|\les \ve_1^3 2^{2H(n)\de m},
	\end{align*}
	for any $t \in [0,T], m \in \{0,\cdots, L+1\}, \cl_1 \in \mathcal V_{n_1}, \cl_2 \in \mathcal V_{n_2} (n_1+n_2 = n)$, and $\theta',\theta_1,\theta_2 \in \{+,-\}$. %In particular, we denoted $\Pi_{\theta'j}\left(\cl_j \psi\right)$ by $\psi_{\theta'j, \cl_j}$.
	Then, we focus on the bound
	\begin{align}
		\begin{aligned}\label{eq:aim-energy}
			&\left| \int_{I_m}\int_{\R^{3+3}} q_m(s) |\xi|^{-\frac12} \bra{\xi}^{2N(n)}  \bra{\wh{ \psi_{\theta_1, \cl_1}}(s,\eta), \al_\mu \wh{ \psi_{\theta_2,\cl_2}}(s,\xi+\eta)} \overline{\wh{ W_{\mu, \cL, \theta'}}}(s,\xi)   d\eta d\xi ds \right|\\
			&\les \ve_1^3 2^{2H(n)\de m}.
		\end{aligned}
	\end{align}
	
	By the frequency decomposition, we define
	\begin{align*}
		\cJ_{\mu, m,\bm k} &:= \int_{I_m} q_m(s) M(\xi)\int_{\R^{3+3}}    \bra{\wh{ P_{k_2}\psi_{\theta_2,\cl_2}}(s,\eta), \al_\mu \wh{  P_{k_1}\psi_{\theta_1,\cl_1}}(s, \xi+\eta)} \overline{\wh{ P_{k} W_{\mu, \cl, \theta'}}}(s,\xi)  \, d\eta d\xi ds,
	\end{align*}
	where $M(\xi) = |\xi|^{-\frac12}\bra{\xi}^{2N(n)}$ and $\bm k =(k,k_1,k_2)$. To prove \eqref{eq:aim-energy}, it suffices to handle
	\begin{align}\label{eq:aim-energy-decom}
		\sum_{\bm k \in \Z^3} |\cJ_{\mu,m,\,\bm k}| \les \ve_1^3 2^{2H(n)\de m}
	\end{align}
	for  $n_1, n_2 \le n \le 3$, $\cL_\ell \in \mathcal V_{n_\ell}$. To observe the resonances, we recall the phase interaction:
	\begin{align*}
		\int_{I_m} q_m(s)\int_{\R^{3+3}} M(\xi)e^{is q_{\Theta}(\xi,\eta)}  \bra{\wh{ P_{k_2}\phi_{\theta_2,\cl_2}}(s,\eta), \al_\mu \wh{  P_{k_1}\phi_{\theta_1,\cl_1}}(s, \xi+\eta)}\overline{\wh{P_k V_{\mu, \cl, \theta'}}(s,\xi)} d\eta d\xi ds,
	\end{align*}
	where $q_\Theta(\xi,\eta)$ is as in \eqref{eq:phase-maxwell}.

	Without loss of generality, we may assume that $n_1 \le n_2$. The proof of \eqref{eq:aim-energy-decom} is divided into the cases $n_1=0$ and $n_1 \ge 1$. The case $n_1 = 0 (n \ge 1)$ will be treated in Section \ref{sec:bulk-dirac}.  	Let us first consider the vector field free case $n=0$. From the symmetry between spinors, we assume that $k_1 \le k_2$. Then the bounds \eqref{eq:high} and \eqref{eq:zero-vec-1} yield that
	\begin{align*}
	|\cJ_{\mu,m,\bm k} | &\les |I_m| 2^{\frac{3\min(\bm k) - k}2} \bra{2^k}^{N(0)} \| P_k W_{\mu,\theta'}(s)\|_{H^{N(0)}} \|\pko\psi_{\theo}(s)\|_{L^2}\|\pkt\psi_{\theo}(s)\|_{L^2}\\
	 &\les \ve_1^3 |I_m|  2^{2H(0)\de m}2^{\frac{3\min(\bm k) - k}2 + (1+\frac1{100})k_1 } \bra{2^k}^{N(0)} \bra{2^{k_1}}^{-N(0)+2} \bra{2^{k_2}}^{-N(0)}.
	\end{align*}
	This implies \eqref{eq:aim-energy-decom} for $2^{k+k_1} \le 2^{-m}$.  Let us consider $2^{k+k_1} \ge 2^{-m}$.  When $2^{k} \sim 2^{k_2}$, the estimate above yields the desired bound. Thus, we handle the case $2^k \ll 2^{k_1} \sim 2^{k_2}$.  Using the decomposition and their bounds \eqref{eq:decay-decomposition}, we see that
		\begin{align*}
		|\cJ_{\mu,m,\bm k} | &\les |I_m| 2^{-\frac k2} \bra{2^k}^{N(0)} \| P_k W_{\mu,\theta'}(s)\|_{H^{N(0)}} \|e^{-\theo it \bra{D}}\phi_{\theo}^{\le J, k_1}(s)\|_{L^\infty}\|\pkt\psi_{\theo}(s)\|_{L^2}\\ 
		&\les \ve_1^3 |I_m|  2^{H(0)\de m -\frac32 m}2^{-\frac{k+k_1}2+\frac{k_1}{1000} + (1+\frac1{100})k_2 } \bra{2^k}^{N(0)} \bra{2^{k_1}}^{-19} \bra{2^{k_2}}^{-N(0)+2}
	\end{align*}
and 
		\begin{align*}
	|\cJ_{\mu,m,\bm k} | &\les |I_m| 2^{-\frac k2} \bra{2^k}^{N(0)} \| P_k W_{\mu,\theta'}(s)\|_{H^{N(0)}} \|\phi_{\theo}^{> J, k_1}(s)\|_{L^2}\|\pkt\psi_{\theo}(s)\|_{L^\infty}\\ 
	&\les \ve_1^3 |I_m|  2^{[H(0)+2H(1)]\de m - 2m}2^{-\frac{k}2 - k_1 + \frac{k_2}2 } \bra{2^k}^{N(0)} \bra{2^{k_1}}^{-N(1)} \bra{2^{k_2}}^{-N(1)+2}.
\end{align*}
These estimates give us \eqref{eq:aim-energy-decom}.

	Here we focus on the case $n_1 \ge 1$, in which at least one vector field falls on each spinor.	We first consider the cases $2^{\min(\bm k) } \le 2^{-m}$ and $2^{\min(\bm k)} \ge 2^{-m}$. When  $2^{\min(\bm k)} \le 2^{-m}$. By \eqref{eq:high} we obtain the bounds
	\begin{align}\label{eq:energy-maxwell-high}
		\begin{aligned}
			\normo{P_k V_{\mu,\cL, \theta'}(s)}_{L^2} &\les \ve_1 2^{H(n)\de m } \bra{2^k}^{-N(n)},\\
			\normo{P_{k_1} \phi_{\theta_1,\cL_1}(s)}_{L^2} &\les \ve_1 2^{H(n_1)\de m } \bra{2^{k_1}}^{-N(n_1)},\\
			\normo{P_{k_2} \phi_{\theta_2,\cL_2}(s)}_{L^2} &\les \ve_1 2^{H(n_2)\de m } \bra{2^{k_2}}^{-N(n_2)}.
		\end{aligned}
	\end{align}
	By H\"older's inequality and \eqref{eq:energy-maxwell-high} we have
	\begin{align}\begin{aligned}\label{eq:proof-energy-maxwell}
			&	2^{-\frac k2}\bra{2^k}^{2N(n)} |\cJ_{m,\,\bm k}|\\
			& \les \ve_1^3 |I_m| 2^{[H(n)+H(n_1)+H(n_2)]\de m} 2^{-\frac k2} 2^{ \frac32 \min(\bm k)} \bra{2^k}^{N(n)}\bra{2^{k_1}}^{-N(n_1)}\bra{2^{k_2}}^{-N(n_2)}.
	\end{aligned}\end{align}
	Since   $H(n_1) + H(n_2) = H(n) -190 $ and $N(n) \le N(n_2) -10$, this directly implies \eqref{eq:aim-energy-decom}. Let us now handle the case $2^{\min(\bm k)} \ge 2^{-m}$. By integration by parts in time, $\cJ_{\mu, m,\bm k}$ is written as the sum of the following
	\begin{align}
		&\int_{I_m}\! q_m'(s) \int_{\R^{3+3}} \frac{e^{is q_{\Theta'}(\xi,\eta)}}{q_{\Theta'}(\xi,\eta)}  \bra{\wh{ P_{k_2}\phi_{\theta_2,\cl_2}}(s,\eta), \al_\mu \wh{  P_{k_1}\phi_{\theta_1,\cl_1}}(s,\xi+ \eta)} \overline{ \wh{ P_{k} V_{\mu, \cl, \theta'}}}(s,\xi)   d\eta d\xi ds, \label{eq:normal-maxwell1}\\
		&\int_{I_m}\!\! q_m(s)\int_{\R^{3+3}} \frac{e^{is q_{\Theta'}(\xi,\eta)}}{q_{\Theta'}(\xi,\eta)}  \p_s \left[ \bra{\wh{ P_{k_2}\phi_{\theta_2,\cl_2}}(s,\eta), \al_\mu \wh{  P_{k_1}\phi_{\theta_1,\cl_1}}(s,\xi+ \eta)} \overline{ \wh{ P_{k} V_{\mu,\theta',\cl}}}(s,\xi)  \right] d\eta d\xi ds.\label{eq:normal-maxwell2}
	\end{align}
	%with
	%\begin{align*}
	%	\mathcal Q_\mu^*(f,g,h):= \int_{\R^{3+3}} \frac{e^{is q_K(\xi,\eta)}}{q_K(\xi,\eta)}  \bra{\wh{h}(\eta), \al^\mu\wh{g}(\xi+\eta) }\overline{\wh{f}(\xi)} d\eta d\xi.
	%\end{align*}
	Using H\"older's inequality, we see that
	\begin{align*}
		|\eqref{eq:normal-maxwell1}| \les 2^{\frac32 \min(\bm k) - k } \bra{2^{\max(\bm k)}}^2  \|P_k V_{\mu,\theta',\cL}(s)\|_{L^2}\|P_{k_1}\phi_{\theo,\cL_1}(s)\|_{L^2}\|P_{k_2}\phi_{\thet,\cL_1}(s)\|_{L^2}.
	\end{align*}
	Analogously to \eqref{eq:proof-energy-maxwell}, we estimate
	\begin{align*}
		\sum_{2^{\min(\bm k)} \ge 2^{-m}} 2^{-\frac k2}\bra{2^k}^{2N(n)} |\eqref{eq:normal-maxwell1}| \les \ve_1^32^{2H(n)\de m}.
	\end{align*}
	Similar estimate to the above can be obtained for \eqref{eq:normal-maxwell2} by \eqref{eq:energy-max-time}. Since $n_1 \ge 1$, we have
	\begin{align*}
		\wt{ H} (n) + H(n_1) + H(n_2) \le 2H(n) -30,\\
		H (n) + \wt{H}(n_1) + H(n_2) \le 2H(n) -30,\\
		H (n) + H(n_1) + \wt H(n_2) \le 2H(n) -30.
	\end{align*}
	From this it follows that
	\begin{align*}
		2^{-\frac k2}\bra{2^k}^{2N(n)} |\eqref{eq:normal-maxwell2}| \les \ve_1^3 2^{2H(n) \de m - 30 \de m } \max\left(\bra{2^{k_1}}^{-1}, \bra{2^{k_2}}^{-1}\right).
	\end{align*}
	This completes the proof of \eqref{eq:aim-energy-decom} for $n_1 \ge 1$.

	\subsection{Bulk estimate on Dirac part}\label{sec:bulk-dirac}
	In this section, we finish the proof of the energy estimates of Dirac part for the cases that all vector fields fall on only one of the profiles.  Let us invoke \eqref{eq:e-3-dec-2} with $n_2 = n (n \ge 1)$ and write
	$$
	\cK_{m, \bm k} := \int_{I_m} q_m(s)\iint_{\R^3 \times \R^3}  M_n^2(\xi,\eta)  \bra{\wh{P_k\psi_{\theta,\cL}}(s,\xi),   \wh{P_{k_2}W_{\mu, \cL, \thet,}}(s,\eta) \al^\mu \wh{P_{k_1}\psi_{\theo}}(s,\xi+\eta)  }  d\xi d\eta ds.
	$$
	Note that $M_n^2(\xi,\eta) = |\eta|^{-\frac12}\bra{\xi}^{2N(n)}$. The following proposition gives us the proof of energy estimate on Dirac part.
	\begin{prop}
		Assume that $\cL \in \cV_n$ for $1 \le n \le 3$ and $t \in [0,T]$. Let $\bm{k}=(k_0,k_1,k_2)$ and $m \in \{0 ,\cdots, L+1\}$. Then
		we have
		\begin{align}\label{eq:aim-bulk}
			\sum_{\bm{k} \in \Z^3 }  |\cK_{m,\bm{k}}| \les \ve_1^3 2^{2 H(n) \de m}.
		\end{align}
	%	where $e(0)=0$ and $e(n) =1$ otherwise.
	\end{prop}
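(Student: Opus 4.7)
The plan is to mirror the strategy used for $\cI_{m}^1$ in the previous proposition, with the key difference that here the full vector-field load $\cL$ sits on $W_{\mu,\cL,\thet}$ while $\psi_{\theo}$ carries none. I will exploit the fact that $P_{k_1}\psi_{\theo}$ enjoys the sharpest regularity weight $\bra{2^{k_1}}^{-N(0)}$ from \eqref{eq:high} and the full dispersive decay $\bra{s}^{-3/2+H(1)\de}$ from \eqref{eq:decay-dirac} with no vector field, both uniformly in $n$, so it can always be placed in $L^\infty$. The two heavy factors $P_k \psi_{\theta,\cL}$ and $P_{k_2} W_{\mu,\cL,\thet}$ contribute $L^2$ norms of size $\ve_1 \bra{s}^{H(n)\de}\bra{2^{\max(\bm k)}}^{-N(n)}$ via \eqref{eq:high}. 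Throughout write $\bm k = (k,k_1,k_2)$.

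First I would partition the frequencies into three regimes: (i) $2^{\min(\bm k)} \le 2^{-(2/3+\de)m}$, (ii) $2^{\max(\bm k)} \ge 2^{m/60}$, and (iii) the mid-frequency regime complementary to both. In regime (i), a Coifman--Meyer $L^2 \times L^2 \times L^2$ estimate using \eqref{eq:high} on all three factors yields the prefactor $2^{3\min(\bm k)/2 + k_2/2}$, whose summed contribution beats $2^{2H(n)\de m}$ by a wide margin. In regime (ii), the regularity gap between the multiplier weight $\bra{2^{\max(\bm k)}}^{2N(n)}$ in $M_n^2$ and the Sobolev weight product $\bra{2^k}^{-N(n)}\bra{2^{k_2}}^{-N(n)}\bra{2^{k_1}}^{-N(0)}$ provides a net $\bra{2^{\max(\bm k)}}^{-N(0)+N(n)} \les \bra{2^{\max(\bm k)}}^{-60}$ factor (for $n\ge 1$); combined with $\|\pko \psi_\theo\|_{L^\infty}$ from \eqref{eq:decay-dirac}, this easily absorbs the target growth.

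In the main regime (iii) I apply a normal-form integration by parts in time using the phase $p_\Theta$ from \eqref{eq:phase-dirac} (adjusted for the $\xi+\eta$ variable in $\cK_{m,\bm k}$). The lower bound \eqref{eq:resonance-time} gives $|p_\Theta| \gtrsim 2^{k_2}\bra{2^{\max(\bm k)}}^{-3}$ when $\theta = \theo$ and $|p_\Theta| \gtrsim \bra{2^{\max(\bm k)}}^{-1}$ otherwise; both are harmless because $2^{k_2} \ge 2^{-(2/3+\de)m}$ and $\bra{2^{\max(\bm k)}} \le 2^{m/60}$, leaving ample $2^{-m}$ gain from $\int q_m'$ or from the inner time derivative. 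For the cutoff-derivative term, $L^2 \times L^\infty \times L^2$ H\"older with $\|\pko \psi_\theo\|_{L^\infty}$ from \eqref{eq:decay-dirac} and $L^2$ bounds from \eqref{eq:high} on the heavy factors closes the sum. For the interior-derivative terms I invoke \eqref{eq:esti-non-dirac-time} on $\p_s\phi_{\theta,\cL}$ and $\p_s\phi_\theo$, and \eqref{eq:esti-nonl-max-time} on $\p_s V_{\mu,\cL,\thet}$, again with $\psi_\theo$ placed in $L^\infty$.

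The main obstacle will be the $n=3$ subcase of regime (iii) where the time derivative lands on $V_{\mu,\cL,\thet}$: because the weighted bounds \eqref{eq:weight} and the dispersive improvement \eqref{eq:decay-maxwell} are available only up to $\cV_{n+1}$ with $n\le 2$, we cannot pass $\p_s \pkt V_{\mu,\cL,\thet}$ through an $L^\infty$ dispersive estimate. The resolution is to always keep $\psi_\theo$ in $L^\infty$ and put the two heavy factors in $L^2$, producing time-growth $H(3)\de + \wt H(3)\de + H(1)\de = 990\de$ against a $s^{-5/2}$ gain from the normal form's $s^{-1}$ and the $L^\infty$ dispersive $s^{-3/2}$; since $\de = 10^{-10}$, this fits comfortably within the $2H(3)\de m = 820\de m$ budget with orders of magnitude to spare. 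A parallel bookkeeping check applies when the derivative lands on $\phi_{\theta,\cL}$ via \eqref{eq:esti-non-dirac-time}, contributing $\wt H(3)\de = 570\de$ growth but benefiting from the same $s^{-5/2}$ structural gain.
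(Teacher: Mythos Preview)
Your plan to ``mirror $\cI_m^1$'' overlooks a structural difference between the two multipliers. In $\cI_m^1$ the symbol $M_n^1$ carries the commutator $\bra{\xi}^{N(n)-1/2}-\bra{\xi-\eta}^{N(n)-1/2}$, which produces an extra factor $|\eta|$ and turns the net $\eta$-weight into $|\eta|^{+1/2}$; that is why the paper's threshold $2^{\min(\bm k)}\le 2^{-(2/3+\de)m}$ and your quoted prefactor $2^{3\min(\bm k)/2+k_2/2}$ work there. In $\cK_{m,\bm k}$ the multiplier is $M_n^2=|\eta|^{-1/2}\bra{\xi}^{2N(n)}$ with no commutator gain, so the correct prefactor in regime (i) is $2^{3\min(\bm k)/2 - k_2/2}$. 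With this sign, the $L^2\times L^2\times L^2$ estimate fails: when $\min(\bm k)=k_2$ and $k\sim k_1$, summing $2^{k_2}$ over $k_2\le -(2/3+\de)m$ and multiplying by $|I_m|\sim 2^m$ leaves a residual $2^{(1/3-\de)m}$, far above the target. The $L^2\times L^2\times L^2$ argument only closes with the tighter threshold $\min(\bm k)\le -m$, as in the paper's Case~1.

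This forces an intermediate regime that your three-case split does not cover. The paper introduces a separate Case~2 ($2^{\min(\bm k)}\ge 2^{-m}$, $2^k\le 2^{-3m/5}$) where neither the trivial estimate nor the normal form suffices; instead one decomposes $P_{k_1}\phi_{\theo}=\phi_{\theo}^{\le J,k_1}+\phi_{\theo}^{>J,k_1}$ with $2^J=C\bra{s}2^{k_1}$ and invokes the scattering-norm input \eqref{eq:zero-vec-2} to get genuine $\bra{t}^{-3/2}$ decay for the low-$j$ piece and the weighted bound \eqref{eq:elliptic-q} for the high-$j$ piece. Relatedly, your claim that \eqref{eq:decay-dirac} yields $\bra{s}^{-3/2+H(1)\de}$ for $\|P_{k_1}\psi_{\theo}\|_{L^\infty}$ is incorrect: after summing over $j\in\cU_{k_1}$ it gives only $\bra{s}^{-1+H(1)\de}$ (see \eqref{eq:decay-dirac} and its consequence in \eqref{eq:l-infinity}); the $3/2$-rate requires \eqref{eq:zero-vec-2} or \eqref{eq:decay-dirac-2}, which is precisely what the paper's Case~2 exploits. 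Your ``$s^{-5/2}$ gain'' bookkeeping in the final paragraph is therefore off by half a power of $s$, though with $\de=10^{-10}$ that particular subcase would still close once the earlier gaps are repaired.
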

	
	\begin{proof}
		We prove this proposition by decomposing the case into four cases.

		\textbf{Case 1: $2^{\min(\bm k)} \le 2^{-m}$.} 	We begin with $L^\infty \times L^2 \times L^2$ estimate. By H\"older's inequality with \eqref{eq:high} and  \eqref{eq:zero-vec-1}, we have
		\begin{align}\label{eq:bulk-l2}
			|\cK_{ m,\bm{k}} | \les \ve_1^3  |I_m| 2^{2H(n) \de m } 2^{\frac{3\min(\bm{k})-k_2}2}   2^{(1 + \de_1) k_1}  \bra{2^k}^{N(n)}   \bra{2^{k_1}}^{-N(0) + 2}\bra{2^{k_2}}^{-N(n) }.
		\end{align}
		This estimate implies \eqref{eq:aim-bulk} if  $\min(\bm k) \le -m$.

		\textbf{Case 2: $2^{ \min(\bm k)} \ge 2^{-m}$ and $2^k \le 2^{-\frac35m}$.} We claim that
		\begin{align}\label{eq:bulk-case2}
			\sum_{\rm Case \, 2} |\cK_{m,\bm{k}}| \les \ve_1^3 2^{2 H(n) \de m}.
		\end{align}
		When $2^k \le 2^{-\frac35 m}$ and $2^{k_1} \le 2^{-\frac25 m}$, \eqref{eq:bulk-case2} follows from \eqref{eq:bulk-l2}. When
		$$
		2^{k} \le 2^{-\frac35 m} \;\;\mbox{ and }\;\; 2^{\min(\bm k) }2^{ (1+\de_1) k_1} \bra{2^{k_1}}^{- 35} \le 2^{-m},
		$$
		\eqref{eq:bulk-case2} can be shown treated similarly. Then, it remains to prove \eqref{eq:bulk-case2} when
		\begin{align}\label{eq:bulk-support}
			2^{-m} \le 2^{k} \le 2^{-\frac35 m} \;\;\mbox{ and }\;\; 2^{\min(\bm k) }2^{ (1+\de_1) k_1} \bra{2^{k_1}}^{- 35} \ge 2^{-m}.
		\end{align}
		From this relation, we deduce $2^{k_1} \ge 2^{-\frac25m}$. Setting $2^J:= C\bra{s}2^{k_1} $ for some constant $0 < C \ll 1$, we decompose $\pko \phi_{\theo}= \phi_{\theo}^{ \le J,k_1}+ \phi_{\theo}^{> J,k_1}$ as follows: $\cK_{m, \bm k}$ is the sum of the integrals: 	
		\begin{align}
			\begin{aligned}\label{eq:bulk-2-1}
				&\int_{I_m} q_m(s)  \int_{\R^{3+3}}  M_n^2(\xi,\eta)e^{isp_{\Theta}(\xi,\eta)} \\
				&\hspace{3cm} \times  \bra{\wh{P_k\phi_{\theta,\cL}}(s,\xi),   \wh{P_{k_2}V_{\mu, \cL, \thet,}}(s,\eta) \al^\mu \wh{\phi_{\theo}^{\le J, k_1}}(s,\xi+\eta)  }  d\xi d\eta  ds,
			\end{aligned}
		\end{align}
		\begin{align}
			\begin{aligned}\label{eq:bulk-2-2}
				& \int_{I_m} q_m(s)  \int_{\R^{3+3}}  M_n^2(\xi,\eta)e^{isp_{\Theta}(\xi,\eta)} \\
				&\hspace{3cm} \times \bra{\wh{P_k\phi_{\theta,\cL}}(s,\xi), \wh{P_{k_2}V_{\mu, \cL, \thet,}}(s,\eta) \al^\mu \wh{\phi_{\theo}^{> J, k_1}}(s,\xi+\eta)  }  d\xi d\eta  ds,
			\end{aligned}
		\end{align}	
		where $p_\Theta(\xi, \eta)$ is in \eqref{eq:phase-dirac}. By \eqref{eq:zero-vec-2} and \eqref{eq:elliptic-q}, we obtain
		\begin{align*}
			\|e^{-\theo it \brad} \phi_{\theo}^{ \le J,k_1}(t)\|_{L^\infty} &\les \ve_1 \bra{t}^{-\frac32} 2^{-(\frac12 - \frac1{1000})k_1}  \bra{2^{k_1}}^{-19},\\
			\| \phi_{\theo}^{ > J,k_1}(t)\|_{L^2} &\les \ve_1 \bra{t}^{-1+H(1)\de} 2^{-k_1}\bra{2^{k_1}}^{-N(1)-1}.
		\end{align*}
		This leads us to the bound
		\begin{align*}
			|\eqref{eq:bulk-2-1}| &\les 2^m 2^{-\frac {k_2}2}\bra{2^k}^{N(n)} \sup_{s\in I_m}   \|\pk \psi_{\theta, \cL}(s)\|_{L^2}\|e^{-\theo it \brad}  \phi_{\theo}^{ \le J,k_1}(s)\|_{L^\infty} \|\pkt W_{\mu, \cL, \thet}(s)\|_{L^2}\\
			&\les \ve_1^3 2^{2H(n)\de m -\frac m2} 2^{-\frac{k_1}2 } 2^{-\frac {k_2}2} \bra{2^k}^{N(n)}\bra{2^{k_1}}^{-19}\bra{2^{k_2}}^{- N(n)}.
		\end{align*}
		When $\min(\bm k ) = k$, we have, for small $0< \zeta \ll 1$,
		\begin{align*}
			|\eqref{eq:bulk-2-1}| \les \ve_1^32^{2H(n)\de m} 2^{-\frac m2 + \zeta m} 2^{\zeta k_2} 2^{-k_1} \bra{2^k}^{N(n)}\bra{2^{k_1}}^{-19}\bra{2^{k_2}}^{- N(n)},
		\end{align*}
		which implies the desired bound for \eqref{eq:bulk-2-1}  from the fact that $k_1 \ge - \frac25 m$. When $\min(\bm k) = k_2$ or $k_1$, by the second condition of \eqref{eq:bulk-support}, we directly obtain the desired results. Analogously, using Bernstein's inequality, we estimate
		\begin{align*}
			|\eqref{eq:bulk-2-2}|  &\les 2^m 2^{k}  \bra{2^k}^{N(n)}\sup_{s\in I_m}  \|\pk \psi_{\theta, \cL}(s)\|_{L^2} \|\phi_{\theo}^{> J,k_1}(s)\|_{L^2}\|\pkt W_{\mu, \cL, \theta_2}(s)\|_{L^2} \\
			&\les \ve_1^3 2^{[H(1)+2H(n)]\de m}  2^{k-k_1} \bra{2^k}^{N(n)}\bra{2^{k_1}}^{-N(1)}\bra{2^{k_2}}^{- N(n)}.
		\end{align*}
		Since $2^{k-k_1} \les 2^{-\frac15 m}$, this finishes the proof for {\bf Case 2}.

		\textbf{Case 3: $2^{\min(\bm k)} \ge 2^{-m} $, $2^k \ge 2^{-\frac35 m}$,   and $2^{\max(\bm{k}) } \le 2^{20 \de m}$.} Using the integration by parts in time, 	$\cK_{m,\bm{k}}$ is decomposed by
		\begin{align}
			&\begin{aligned}\label{eq:bulk3-1}
				&\int_{I_m} q_m'(s)  \int_{\R^{3+3}}  \frac{M_n^2(\xi,\eta)}{p_\Theta(\xi,\eta)}e^{isp_{\Theta}(\xi,\eta)} \\
				&\hspace{3cm} \times  \bra{\wh{P_k\phi_{\theta,\cL}}(s,\xi),   \wh{P_{k_2}V_{\mu, \cL, \thet,}}(s,\eta) \al^\mu \wh{P_{k_1}\phi_{\theo}}(s,\xi+\eta)  }  d\xi d\eta  ds,
			\end{aligned}\\
			&\begin{aligned}\label{eq:bulk3-2}
				&\int_{I_m} q_m'(s)  \int_{\R^{3+3}}  \frac{M_n^2(\xi,\eta)}{p_\Theta(\xi,\eta)}e^{isp_{\Theta}(\xi,\eta)} \\
				&\hspace{3cm} \times  \p_s\bra{\wh{P_k\phi_{\theta,\cL}}(s,\xi),   \wh{P_{k_2}V_{\mu, \cL, \thet,}}(s,\eta) \al^\mu \wh{P_{k_1}\phi_{\theo}}(s,\xi+\eta)  }  d\xi d\eta  ds.
			\end{aligned}
		\end{align}
		%	\begin{align}
			%		\begin{aligned}\label{eq:bulk-normalform}
				%			{\bf I}_{m,\,\bm k}^0 &:= \cQ[P_{k_1}\phi_{\theo},       P_{k_2} (\al^\mu V_{\mu,\cl, \theta_2}),        P_{k}\phi_{\theta, \cL} ], \\
				%			{\bf I}_{m,\,\bm k}^1 &:= \cQ[P_{k_1}\phi_{\theo},       P_{k_2}( \al^\mu \p_s V_{\mu, \cl, \theta_2}),  P_{k}\phi_{\theta, \cL}], \\
				%			{\bf I}_{m,\,\bm k}^2 &:= \cQ[P_{k_1}(\p_s\phi_{\theo}), P_{k_2} (\al^\mu V_{\mu, \cl, \theta_2}),       P_{k}\phi_{\theta, \cL}], \\
				%			{\bf I}_{m,\,\bm k}^3 &:= \cQ[P_{k_1}\phi_{\theo},       P_{k_2}(\al^\mu V_{\mu, \cl, \theta_2}),        P_{k}(\p_s\phi_{\theta, \cL})],
				%		\end{aligned}
			%	\end{align}
		Using \eqref{eq:high}, \eqref{eq:esti-non-dirac-time}, \eqref{eq:esti-nonl-max-time}, and  \eqref{eq:decay-dirac}, we estimate
		\begin{align}\label{eq:bulk-esti-normal}
			\begin{aligned}
				\| \pk \phi_{\theta,\cL}(s)\|_{L^2} + 2^m \|\pk (\p_s \phi_{\thet,\cL})(s)\|_{L^2} &\les \ve_1 2^{\wt H(n)\de m } \bra{2^k}^{-N(n+1) - 5},\\
				\|e^{-\theo is\brad} \pko \phi_{\theo}\|_{L^\infty} &\les  \ve_1 2^{\frac{k_1}2} 2^{-m + 10\de m } \bra{2^{k_1}}^{-N(1)+2},\\
				\| \pkt V_{\mu, \cL, \theta_2}(s)\|_{L^2} + 2^m \|\pkt (\p_s V_{\mu, \cL, \theta_2})(s)\|_{L^2} &\les \ve_1 2^{\frac{k_2}2}2^{\wt H(n)\de m } \bra{2^{k_2}}^{-N(n)+5}.
			\end{aligned}
		\end{align}
		This yields that
		\begin{align*}
			\left| \eqref{eq:bulk3-1} \right| &\les \ve_1^3 2^{-m + (2\wt H(n)+10)\de m } 2^{\frac{k_1 + k_2}2} \bra{2^k}^{2N(n)-N(n+1) - 5} \bra{2^{k_1}}^{-N(1) +2 }   \bra{2^{k_2}}^{-N(n) + 5}.
		\end{align*}
		In \eqref{eq:bulk3-2}, when the time derivative falls on $\pk \phi_{\theta,\cL}$ or $\pkt V_{\mu,\cL,\thet}$, we estimate	
		\begin{align*}
			\left|\eqref{eq:bulk3-2} \right| \les \ve_1^3 2^{-2m + (2\wt H(n)+10)\de m } 2^{\frac{k_1 + k_2}2} \bra{2^k}^{2N(n)-N(n+1) - 5} \bra{2^{k_1}}^{-N(1)  +2 }   \bra{2^{k_2}}^{-N(n) +5}.
		\end{align*}
		It remains to handle the case that $\p_s$ falls on $\pko \phi_{\theo}$ in \eqref{eq:bulk3-2}. By the condition of $\textbf{Case 3}$, \eqref{eq:decay-maxwell} and \eqref{eq:decay-cor} show us that
		\begin{align}
			\begin{aligned}\label{eq:bulk-zeta}
				\| e^{-\theo  is \brad} &\pko (\p_s \phi_\theo (s) ) \|_{L^\infty}\\
				&\sim \|\Pi_{\theo}(D)\pko(A_\mu(s) \al^\mu \psi(s))\|_{L^\infty}\\
				&\les 2^{3\zeta k_1} \|\pko(A_\mu(s) \al^\mu \psi(s))\|_{L^{\frac1\zeta}} \\
				& \les 2^{3\zeta k_1} \sum_{\theta_3,\theta_4
					\in \{\pm\}} \sum_{\substack{j_\ell \in \cU_{k_\ell}\\ k_\ell \in \Z, \ell =3,4}} \|Q_{j_3,k_3}\psi_{\theta_3}(s)\|_{L^\frac1\zeta}\|Q_{j_4,k_4}A_{\mu,\theta_4}(s)\|_{L^\infty} \\
				& \les \ve_1^2  2^{-2m + 12\de m} \bra{2^{k_1}}^{-N(1) + 2}
			\end{aligned}
		\end{align}
		for sufficiently small $\zeta > 0$. Note that we used the frequency relation $2^{k_1} \sim 2^{\max(k_3,k_4)}$ in the fourth inequality. Then we have
		\begin{align*}
			|\eqref{eq:bulk3-2}| \les \ve_1^3 2^{-2m + 20\de m} 2^{-\frac {k_2}2}.
		\end{align*}
		Since $\max(\bm k) \le 4\de_1 m$, we readily obtain
		\begin{align*}
			\sum_{ \rm Case \; 3 }  |\cK_{m,\,\bm{k}}| \les \ve_1^3 2^{2 H(n) \de m}.
		\end{align*}

		\textbf{Case 4:   $ 2^{\min(\bm k)} \ge 2^{-m}$, $2^k \ge 2^{-\frac35 m}$, and $2^{\max(\bm{k})} \ge 2^{20\de m}$.} We prove
		\begin{align}\label{eq:bulk-case4}
			\sum_{ \rm Case \; 4 }  |\cK_{m,\,\bm{k}}| \les \ve_1^3 2^{2 H(n) \de m}.
		\end{align}
		To this end, we consider the subcases $\min(\bm k)=k_1$, $ \min(\bm k) = {k_2} $, and $\min(\bm k) = k$.
		If $\min(\bm k)=k_1$, then by the multiplier estimates, we see that
		\begin{align*}%\label{eq:bulk-step3}
			|\cK_{m,\,\bm k} | \les  |I_m| 2^{-\frac{k_2}2}\bra{2^k}^{2N(n)}\|\pk \psi_{\theta,\cL}(s)\|_{L^2} \|\pko \psi_{\theo}(s) \|_{L^\infty}  \|\pkt V_{\mu, \cL, \theta_2}(s)\|_{L^2} .
		\end{align*}
		Hence \eqref{eq:high} and \eqref{eq:decay-dirac} show that
		\begin{align}\label{eq:bulk-casei}
			|\cK_{m,\,\bm k} | \les \ve_1^3 2^{(10 +2H(n))\de m} 2^{-\frac{k_2}2 } 2^{\frac{k_1}2 } \bra{2^k}^{N(n)}\bra{2^{k_1}}^{-N(1)+2}\bra{2^{k_2}}^{-N(n)}.
		\end{align}
		The RHS of \eqref{eq:bulk-casei} is summable over \textbf{Case 4} with $\min(\bm k)=k_1$.
		
		For the high frequency sum with respect to $k$ and $k_2$, \eqref{eq:bulk-casei} is not sufficient for \eqref{eq:bulk-case4} in the case of $\min(\bm k) = k $ or $\min(\bm k) = k_2$. If $\min(\bm k)=k$, then we use $ L^\infty \times L^2 \times L^2$ estimate. By \eqref{eq:high}, and \eqref{eq:decay-dirac}, we obtain
		\begin{align}\label{eq:bulk-case4-1}
			|\cK_{m,\,\bm k} | \les \ve_1^3 2^{[2H(n) + 1]\de m }2^{-\frac{k_2}2 } \bra{2^k}^{N(n)}\bra{2^{k_1}}^{-N(n+1) - N(0)}
		\end{align}
		for $n = 1,2$. Now the RHS of \eqref{eq:bulk-case4-1} is summable.  Analogously, for the case $\min(\bm k) = k_2$, we use $L^2 \times L^2  \times  L^\infty$ estimate. Then \eqref{eq:decay-maxwell} yield that, for $n =0,1,2$,
		\begin{align}\label{eq:bulk-case4-2}
			|\cK_{m,\,\bm k} | \les \ve_1^3 2^{[2H(n)+H(1)+\frac12]\de m }  \bra{2^{k_2}}^{-N(n+1)+2}  \bra{2^{k_1}}^{N(n) - N(0)},
		\end{align}
		which finishes \eqref{eq:bulk-case4} for $n = 1, 2$.
		
		The estimates \eqref{eq:bulk-case4-1} and \eqref{eq:bulk-case4-2} do not cover \eqref{eq:bulk-case4} when $n = 3$. To handle this case, we use the integration by parts in time and obtain \eqref{eq:bulk3-1} and \eqref{eq:bulk3-2}. Since $2^{\max(\bm k)} \sim 2^{k_1}$, the second estimate in \eqref{eq:bulk-esti-normal} guarantees the high frequency sum for \eqref{eq:bulk3-1} and the sum for the cases that $\p_s$ falls on $\phi_{\theta,\cL}$ and $V_{\mu,\cL,\thet}$ in \eqref{eq:bulk3-2}. To estimate remainder case, one can utilize \eqref{eq:bulk-zeta} for the high frequency sum. This completes the proof of \eqref{eq:bulk-case4}.
	\end{proof}

	\subsection{Bulk estimate on Maxwell part}
	Let us consider the remaining case of Maxwell part:
	\begin{align}\label{eq:bulk-maxwell}
		\int_{I_m}\int_{\R^{3+3}} q_m(s) |\xi|^{-\frac12} \bra{\xi}^{2N(n)}  \bra{\wh{ \psi_{\theta_1, \cl_1}}(s, \eta), \al_\mu \wh{ \psi_{\theta_2,\cl_2}}(s,\xi+\eta)} \overline{\wh{ W_{\mu, \cL, \theta'}}}(s,\xi)   d\eta d\xi ds.
	\end{align}
	Compared to $\cK_{m,\bm K}$ discussed in Section \ref{sec:bulk-dirac}, the only difference, resulting from the change of variables $(\xi,\eta) \mapsto (\eta',\xi'-\eta')$, is in the multipliers $\bra{\xi'}$ and $\bra{\eta'}$. Consequently, the bound for \eqref{eq:bulk-maxwell} can be obtained in the same manner. We omit the details.

%\section{Energy estimates II: extended vector fields}\label{sec:energy2}

\section{Proof of weighted energy estimates} \label{sec:weight}

In this section, we prove Proposition \ref{prop:weighted}.  At first we consider \eqref{eq:weighted-dirac}. By Plancherel's theorem, we have
\begin{align*}
	\|P_k (x_j \phi_\theta(t))\|_{H^1} \sim \bra{2^k}\|\rho_k(\xi) \p_{\xi_j}\wh{\phi_\theta}(t,\xi)\|_{L^2}.
\end{align*}
Using \eqref{eq:weight-lorentz-dirac}, we see that
\begin{align*}
	 e^{-\theta it \bra{\xi}}\bra{\xi}\p_{\xi_j}\wh{\phi_\theta}(t,\xi) = - \theta \wh{\Gam_j \psi_\theta}(t,\xi) +\theta \p_{\xi_j} \left[\Pi_\theta(\xi) \wh{A_\mu \al^\mu \psi}(t,\xi) \right] - e^{-\theta it \bra{\xi}} \frac{\xi_j}{\bra{\xi}}\wh{\phi_\theta}(t,\xi).
\end{align*}
This gives us that
\begin{align*}
	\bra{2^k}\|\rho_k(\xi) \p_{\xi_j}\wh{\phi_{\theta,\cL}}(t,\xi)\|_{L^2} &\les  \| P_k \Gam_j \psi_{\theta,\cL}\|_{L^2} + \left\|\rho_k(\xi) \p_{\xi_j} \left[\Pi_\theta(\xi) \wh{A_\mu \al^\mu \psi}(t,\xi) \right] \right\|_{L_\xi^2} + \left\|P_k \phi_{\theta,\cL} \right\|_{L^2}\\
	&\les \ve_1^2 \bra{t}^{H(n+1)\de} \bra{2^k}^{-N(n+1)}.
\end{align*}
For this we used Proposition \ref{prop:energy} and \eqref{eq:esti-nonlinear-dirac-space}.

Let us move on to the proof of \eqref{eq:weighted-wave}. By \eqref{eq:weight-lorentz-wave}, we obtain
\begin{align*}
	e^{\theta it |\xi|}|\xi|\p_{\xi_j}\wh{V_{\mu,\cL,\theta}  }(t,\xi) = - \theta \wh{\Gam_j W_{\mu,\cL,\theta}}(t,\xi) +\frac12  \p_{\xi_j} \left[|\xi|^{-\frac12}\wh{\cL\bra{\psi,\al_\mu \psi}}(\xi) \right] - e^{\theta it |\xi|} \frac{\xi_j}{|\xi|}\wh{V_{\mu,\cL,\theta}}(t,\xi).
\end{align*}
Then, Proposition \ref{prop:energy} and \eqref{eq:esti-nonlinear-maxwell-space} yield that
\begin{align*}
	2^k\|\rho_k(\xi) \p_{\xi_j}\wh{V_{\mu, \cL, \theta}}(t,\xi)\|_{L^2} &\les  \| P_k \Gam_j W_{\mu, \cL, \theta}\|_{L^2} + \left\|\rho_k(\xi) \p_{\xi_j} \left[|\xi|^{-\frac12}\wh{\cL \bra{\psi,\al_\mu \psi}}(\xi) \right] \right\|_{L_\xi^2} + \left\|P_k W_{\mu, \cL, \theta} \right\|_{L^2}\\
	&\les \ve_1^2 \bra{t}^{H(n+1)\de} \bra{2^k}^{-N(n+1)}.
\end{align*}
This completes the proof of \eqref{eq:weighted-wave}.

\section{Asymptotic behavior for the spinor field}\label{sec:s-dirac}
This section  is devoted to proving the bound of scattering norm and the modified scattering for spinor fields under the a priori assumptions \eqref{assum:energy}--\eqref{assum:s-norm} on $[0,T]$. In view of the definition of scattering norm \eqref{def:s-norm-dirac}, we need to show the following estimates
\begin{align}\label{eq:aim-asymptotic-dirac}
	\begin{aligned}
		\left\|\wh{P_k\phi_\theta}(t) \right\|_{L_\xi^\infty} &\les \ve_1^2 2^{-\de m} 2^{-\frac k2 + \frac k{100} } \bra{2^k}^{-20},\\
		\left\|\wh{P_k\phi_\theta}(t) \right\|_{L_\xi^2} &\les \ve_1^2 2^{-\frac\de2 m} 2^{k + \frac k{100} } \bra{2^k}^{-38}
	\end{aligned}
\end{align}
for $t \in [2^m-2, 2^{m+1}]\cap [0, T]$.  This gives us the bound \eqref{eq:s-norm-d}. Now  we define the phase modification symbol by
\begin{align}\label{eq:phase-correction}
		B_\theta(t,\xi) &= \Pi_\theta(\xi)\al^\mu \int_0^t P_{\le K} A_{\mu}\left(s, \frac{\theta s \xi}{\bra{\xi}}\right)ds,
\end{align}
where $K = K(s)$ is an integer satisfying $2^K \le \bra{s}^{-\frac23 -10H(2)\de}< 2^{K+1}$ and $\theta \in \{+,-\}$. We also define a profile associated with scattering by
\begin{align*}
	\Psi_\theta (t,\xi) := e^{-iB_\theta(t,\xi)} \wh{\phi_\theta}(t,\xi)=e^{-iB_\theta(t,\xi)}e^{\theta it \bra{\xi}}\wh{\psi_\theta}(t,\xi).
\end{align*}
Then the estimates of \eqref{eq:aim-asymptotic-dirac} are equivalent to the bounds
\begin{align}
&	\left\| \wh{P_k\Psi_\theta}(t_2,\xi) -\wh{P_k\Psi_\theta}(t_1,\xi) \right \|_{L_\xi^\infty} \les \ve_1^2\bra{t_2}^{-\de}2^{-\frac k2 + \frac k{100} } \bra{2^k}^{-20},\label{eq:aim-scattering-dirac}\\
&	\left\| \wh{P_k\Psi_\theta}(t_2,\xi) -\wh{P_k\Psi_\theta}(t_1,\xi) \right \|_{L_\xi^2} \les \ve_1^2\bra{t_2}^{-\frac\de2}2^{k + \frac k{100} } \bra{2^k}^{-38},\label{eq:aim-scattering-2}
\end{align}
for $k\in\Z$ and $t_1 \le t_2 \in [0,T]$. Let $t_1 \le t_2 \in [2^m -2, 2^{m+1}] \cap [0,T]$. Once \eqref{eq:aim-scattering-dirac} is established, the proof of \eqref{eq:aim-scattering-2} is quite straightforward. More precisely,  the bound  \eqref{eq:aim-scattering-2} follows from \eqref{eq:high} if $2^k \ge 2^{\frac m{40}\de}$. On the other hand, the estimate \eqref{eq:aim-scattering-dirac} implies \eqref{eq:aim-scattering-2} directly if $2^k \le 2^{\frac m{40}\de}$.

 For \eqref{eq:aim-scattering-dirac} let us observe that
\begin{align}\label{eq:diff-s-profile}
	\wh{P_k\Psi_\theta}(t_2,\xi) -\wh{P_k\Psi_\theta}(t_1,\xi) = \int_{t_1}^{t_2}   \p_s \wh{P_k\Psi_\theta}(s,\xi)ds
\end{align}
and
\begin{align}\label{eq:deri-phase}
	\p_s \wh{P_k\Psi_\theta}	(s,\xi) = e^{-iB_\theta(s,\xi)}\left( \p_s \wh{P_k \phi_\theta}(s,\xi) - i\left[\p_s B_\theta(s,\xi)\right] \wh{P_k\phi_\theta}(s,\xi) \right).
\end{align}
The phase modification \eqref{eq:phase-correction} plays a role of subtracting the resonance interaction in \eqref{eq:deri-phase}. Indeed, the explicit form can be derived to track the resonance case of nonlinearity of Dirac part. Taking Fourier transform, we see that
\begin{align}\label{eq:nonlinear-correction}
	\p_s \wh{\phi_\theta}(s,\xi) =  c	\Pi_\theta(\xi)\sum_{\theo \in \{\pm\}}\int_{\R^3} e^{is\left( \theta \bra{\xi} - \theo \bra{\xi-\eta}\right)} \wh{A_{\mu}}(s,\eta) \al^\mu \wh{\phi_\theo}(s,\xi-\eta)\, d\eta,
\end{align}
where $c = \frac{i}{(2\pi)^3}$.
In view of \eqref{eq:resonance-time}, since the resonance does not occur when $\theta \neq \theo$, we take into account the case $\theta =\theo$. We simply observe that the phase interaction satisfies
\begin{align}\label{eq:phase-approximation}
	\theta \bra{\xi} - \theta \bra{\xi-\eta}= \theta \frac{\xi\eta}{\bra{\xi}} + O(|\eta|^2).
\end{align}
Using \eqref{eq:phase-approximation}, we can rewrite \eqref{eq:nonlinear-correction} and hence \eqref{eq:diff-s-profile} as follows:
 \begin{align}
	&\wh{P_k\Psi_\theta}(t_2,\xi) - \wh{P_k\Psi_\theta}(t_1,\xi) =\nonumber\\
	&\quad c	\int_{t_1}^{t_2}e^{-iB_\theta(s,\xi)}\Pi_\theta(\xi)\rho_k(\xi)\int_{\R^3} e^{\theta is\frac{\xi\eta}{\bra{\xi}}} \rho_{\le K}(\eta)\wh{A_{\mu}}(s,\eta) \al^\mu \left(\wh{\phi_\theta}(s,\xi-\eta) - \wh{\phi_\theta}(s,\xi)\right)\, d\eta ds\label{eq:correction-1}\\
	&+ c	\int_{t_1}^{t_2}e^{-iB_\theta(s,\xi)}\Pi_\theta(\xi)\rho_k(\xi) \int_{\R^3} \left( e^{\theta is(\bra{\xi}-\bra{\xi-\eta})} -  e^{\theta is\frac{\xi\eta}{\bra{\xi}}}  \right)\rho_{\le K}(\eta) \wh{A_{\mu}}(s,\eta) \al^\mu \wh{\phi_\theta}(s,\xi-\eta) \,d\eta ds\label{eq:correction-2}\\
	&+  c	\int_{t_1}^{t_2}e^{-iB_\theta(s,\xi)}\Pi_\theta(\xi)\rho_k(\xi) \sum_{\theo \in \{\pm\}}\int_{\R^3} e^{\theta is\left(  \bra{\xi} - \bra{\xi-\eta}\right)} \rho_{> K}(\eta)\wh{A_{\mu}}(s,\eta) \al^\mu \wh{\phi_\theo}(s,\xi-\eta)\, d\eta ds\label{eq:correction-3}\\
	&+c	\int_{t_1}^{t_2}e^{-iB_\theta(s,\xi)}\Pi_\theta(\xi)\rho_k(\xi) \int_{\R^3} e^{\theta is\left(  \bra{\xi} + \bra{\xi-\eta}\right)} \rho_{\le  K}(\eta)\wh{A_{\mu}}(s,\eta) \al^\mu \wh{\phi_{-\theta}}(s,\xi-\eta)\, d\eta  ds.\label{eq:correction-4}
\end{align}
Note that $2^K \sim 2^{-\frac23m -10H(2)\de m}$.

We begin with the proof of \eqref{eq:aim-asymptotic-dirac} and \eqref{eq:aim-scattering-dirac} with the  dyadic decomposition into low, high, and mid frequencies. For the low and high frequencies, we estimate \eqref{eq:aim-asymptotic-dirac} directly (See Section \ref{sec:s-dirac-hl} below). For the mid frequency part, we prove \eqref{eq:aim-scattering-dirac} by handling the resonance decomposition \eqref{eq:correction-1}--\eqref{eq:correction-4} (See Section \ref{sec:s-dirac-mid} below).

\subsection{Proof for the high and low frequencies.}\label{sec:s-dirac-hl} We prove \eqref{eq:aim-asymptotic-dirac} for the frequency range:
\begin{align*}%\label{eq:s-support-hl}
	2^k  \le 2^{- 30H(2)\de m} \;\;\mbox{ or }\;\; 2^{k} \ge 2^{2H(2)\de m}.
\end{align*}
By \eqref{eq:interpolation-linfty}, we have
\begin{align*}
	\|\wh{P_k \phi_{\theta}}(t)\|_{L_\xi^\infty} \les 2^{-\frac{3k}2} \left(\sup_{j \in \cU_k}\|\qjk \phi_{\theta}\|_{H_\Om^{0,1}}\right)^{\frac{1-\zeta}2}\left(\sup_{j \in \cU_k}2^{j+k}\|\qjk \phi_{\theta}\|_{H_\Om^{0,1}}\right)^{\frac{1+\zeta}2}
\end{align*}
for any $\zeta \in (0, 1)$.
%From \eqref{eq:weight}, it follows that
%\begin{align*}
%	\normo{P_k \left( x_j\phi_{\theta,\cL} \right)(t)}_{L^2}  \les \ve_1^2  \bra{t}^{H(n+1) \de} \bra{2^k}^{-N(n+1)-1},
%\end{align*}
%for $\cL \in \cV_n \,(0 \le n \le 2)$, which implies
%\begin{align}\label{eq:esti-qjk-l2}
%	\sup_{j\in \cU_k} 2^j \|\qjk \phi_{\theta,\cL}(t)\|_{L^2} \les \ve_1   \bra{t}^{H(n+1) \de} \bra{2^k}^{-N(n+1)-1},
%\end{align}
%due to \eqref{eq:esti-hardy2}.
From \eqref{eq:elliptic-q}, it follows that
\begin{align}\label{eq:esti-qjk-l2}
	\sup_{j\in \cU_k} 2^j \|\qjk \phi_{\theta,\cL}(t)\|_{L^2} \les \ve_1^2   \bra{t}^{H(n+1) \de} \bra{2^k}^{-N(n+1)-1}
\end{align}
for $\cL \in \cV_n \,(0 \le n \le 2)$, which implies
\begin{align}\label{eq:amplitude-high}
	\|\wh{P_k \phi_{\theta}}(t)\|_{L_\xi^\infty} \les \ve_1 ^2  2^{H(2) \de m}2^{-\frac k2}  \bra{2^k}^{-N(2)-1}.
\end{align}
For the high frequencies $2^k \ge 2^{2H(2) \de m}$, the bounds \eqref{eq:amplitude-high} and \eqref{eq:energy-high-d} yield the desired bound \eqref{eq:aim-asymptotic-dirac}.

Let us move on to the low frequency regime $2^k \le 2^{-30H(2)\de m}$. This can be done by obtaining
\begin{align}\label{eq:esti-omega-l2}
	\|P_k \phi_{\theta,\Om}(t)\|_{L^2} \les \ve_1^2 2^{2H(2)\de m} 2^{k+\frac k6}.
\end{align}
Note that the above estimate is an improvement of \eqref{eq:esti-qjk-l2} by the factor $2^{\frac k6}$. Then \eqref{eq:esti-hardy2} yields that
\begin{align*}
	\normo{\wh{P_k \phi_\theta}(t)}_{L_\xi^\infty} \les \ve_1^2 2^{2H(2)\de m}2^{-\frac k2 + \frac{1-\zeta}{12}k}.
%&	\normo{\wh{P_k \phi_\theta}(t)}_{L_\xi^2} \les \ve_1^2 2^{2H(2)\de m}2^{k + \frac{1-\zeta}{12}k}.
\end{align*}
This finishes the proof of \eqref{eq:aim-asymptotic-dirac} for $2^k \le 2^{-30H(2)\de m}$.

We now prove \eqref{eq:esti-omega-l2}. By \eqref{eq:esti-hardy3}, it suffices to show
\begin{align*}
	\|P_k \phi_{\theta,\Om}\|_{L^2} + \sum_{l=1}^3 \normo{\rho_k(\xi)\wh{ x_l \phi_{\theta,\Om}}(t,\xi)}_{L_\xi^2} \les \ve_1^2 \bra{t}^{2H(2)\de}2^{\frac k6}.
\end{align*}
Since the first term in the left-hand side can be handled by \eqref{eq:high} straightforwardly, we focus on the second term. Using \eqref{eq:weight-lorentz-dirac}, we prove
\begin{align*}
	\|P_k\Gam_l \psi_{\theta,\Om}\|_{L^2} + 	\normo{P_k(x_l \mathfrak{N}_\Om^{\bm D})(t)}_{L_\xi^2} \les  \ve_1^2 \bra{t}^{2H(2)\de}2^{\frac k6}.
\end{align*}
The estimates for the first term also is obtained by Proposition \ref{prop:energy}. Thus, we consider
\begin{align*}
	\normo{P_k \mathfrak{N}_\Om^{\bm D}(t)}_{L_\xi^2} &\les \ve_1^2 \bra{t}^{2 H(2) \de} 2^{\frac k6}\min( \bra{t}^{-1}, 2^k) ,
\end{align*}
which implies
	\begin{align*}
		\normo{P_k(x_l \mathfrak{N}_\Om^{\bm D})(t)}_{L_\xi^2} &\les \ve_1^2 \bra{t}^{2\wt H(2) \de} 2^{\frac k6}.
	\end{align*}

Let $k \in \Z$ satisfy $2^{k} \le 2^{-30H(2)\de m}$. We prove
\begin{align}
	\sum_{k_1,k_2 \in \Z}  \|P_k\left( P_{k_2} A_{\mu,\thet,\cL_2}(t) \al^\mu \pko \psi_{\theo,\cL_1}(t)\right) \|_{L^2} &\les \ve_1^2 \bra{t}^{2 H(2)\de}2^{\frac k6} \min( \bra{t}^{-1}, 2^k),\label{eq:aim-asym-bilinear1}\\
	\sum_{k_1,k_2 \in \Z}  \|P_kx_l \left( P_{k_2} A_{\mu,\thet,\cL_2}(t) \al^\mu \pko \psi_{\theo,\cL_1}(t)\right) \|_{L^2} &\les \ve_1^2 \bra{t}^{2 H(2)\de}2^{\frac k6}\label{eq:aim-asym-bilinear2}
\end{align}
for $l = 1, 2, 3$, $\cL_\ell \in  \cV_{n_\ell} (\ell = 1, 2)$, and $n_1 + n_2 \le 1$.
To show \eqref{eq:aim-asym-bilinear1}, we utilize the estimates in the proof of Lemma \ref{lem:esti-nonlinear-dirac}.  We first  consider the cases: $2^k \le \bra{t}^{-1}$ and $\bra{t}^{-1} \le 2^{k} \le 2^{-\de m}$.  By \eqref{eq:high} and \eqref{eq:esti-l2-k}, we have
\begin{align*}
	\|P_k( P_{k_2} A_{\mu, \cL_2, \thet}(t) &\al^\mu \pko \psi_{\theo,\cL_1}(t)) \|_{L^2}\\
	&\les 2^{\frac{3\min(\bm k) - k_2}2} \|\pko \psi_{\theo,\cL_1}(t)\|_{L^2}\|\pkt W_{\mu, \cL_2, \thet}(t)\|_{L^2}\\
	&\les \ve_1^2 \bra{t}^{2H(2)\de} 2^{\frac{3\min(\bm k) - k_2}2}2^{k_1}\bra{2^{k_1}}^{-N(n_1)}\bra{2^{k_1}}^{-N(n_2)}.
\end{align*}
This estimate shows \eqref{eq:aim-asym-bilinear1} when $2^k \le \bra{t}^{-1}$ .

If  $\bra{t}^{-1} \le 2^{k} \le 2^{-30H(2)\de m}$, we use \eqref{eq:esti-l2-k} and \eqref{eq:l-infinity} to obtain
\begin{align*}
\|P_k( &P_{k_2} A_{\mu, \cL_2, \thet}(t) \al^\mu \pko \psi_{\theo,\cL_1}(t)) \|_{L^2}\\
	&\les 2^{-\frac{k_2}2} \|\psi_{\theo,\cL_1}(t)\|_{L^2}\|W_{\mu, \cL_2, \thet}(t)\|_{L^\infty}\\
	&\les \ve_1^2 \bra{t}^{\left[H(n_1+1)+H(n_2+1)\right]\de+\frac \de2 }2^{k_1}\min(\bra{t}^{-1},2^{k_2})\bra{2^{k_1}}^{-N(n_1+1)+1}\bra{2^{k_2}}^{-N(n_2+1)+2}.
\end{align*}
The bound \eqref{eq:aim-asym-bilinear1} follows from the summation over $k,k_1,k_2 \in \Z $ apart from the case $2^k \ll 2^{k_1} \sim 2^{k_2}$ and $2^{k_1} \ge 2^{\frac k3}$. For the remaining case, we divide $\pko \phi_{\theo,\cL_1} = \phi_{\theo,\cL_1}^{\le J,k_1} + \phi_{\theo,\cL_1}^{> J,k_1}$ with $2^J = C\bra{t}2^{k_1}$ for some $C \ll 1$. Then, \eqref{eq:decay-dirac-2} and \eqref{eq:elliptic-q} yield that
\begin{align*}
	\|e^{-\theo it\bra{D}}\phi_{\theo,\cL_1}^{\le J,k_1}(t)\|_{L^\infty} &\les \ve_1 \bra{t}^{H(n_1+2)-\frac32 +\frac \de2} 2^{-\frac {k_1}2},\\
	\|\phi_{\theo,\cL_1}^{> J,k_1}(t)\|_{L^2} &\les \ve_1 \bra{t}^{H(n_1+1)-1} 2^{-k_1}.
\end{align*}
By these we have
\begin{align*}
	\|P_k( P_{k_2} A_{\mu, \cL_2, \thet} (t)&\al^\mu  e^{-\theo it\bra{D}}\phi_{\theo,\cL_1}^{\le J,k_1}(t))\|_{L^2}\\
	&\les 2^{-\frac{k_2}2} \|e^{-\theo it\bra{D}}\phi_{\theo,\cL_1}^{\le J,k_1}(t)\|_{L^\infty}\|W_{\mu, \cL_2, \thet}(t)\|_{L^2}\\
	&\les \ve_1^2 \bra{t}^{2H(2)\de -\frac32}  2^{-\frac {k_1+ k_2}2} \bra{2^{k_2}}^{-N(n_2)}
\end{align*}
and
\begin{align*}
	 \|P_k( P_{k_2} A_{\mu, \cL_2, \thet} (t)&\al^\mu e^{-\theo it \bra{D}} \phi_{\theo,\cL_1}^{> J,k_1}(t))\|_{L^2}\\
	&\les 2^{\frac{3k-k_2}2} \|\phi_{\theo,\cL_1}^{> J,k_1}(t)\|_{L^2}\|W_{\mu, \cL_2, \thet}(t)\|_{L^2}\\
	&\les \ve_1^2 \bra{t}^{\left[H(n_1+1)+H(n_2)\right]\de-1}  2^{\frac {3k-2k_1- k_2}2}.
\end{align*}
We get the bound \eqref{eq:aim-asym-bilinear1} for $2^k \ll 2^{k_1} \sim 2^{k_2}$ and $2^{k_1} \ge 2^{\frac k3}$.
Since the proof of \eqref{eq:aim-asym-bilinear2} is similar to that of Lemma \ref{lem:nonlinear-wei-dirac}, we omit the details.

\subsection{Proof for mid frequencies}\label{sec:s-dirac-mid} This section is devoted to taking on the mid frequency regime
\begin{align}\label{eq:mid-frequency}
	2^{-30H(2) \de m} \le 2^k \le 2^{2H(2)\de m}.
\end{align}
As observed in \eqref{eq:nonlinear-correction}, we estimate \eqref{eq:aim-asymptotic-dirac} on the regime \eqref{eq:mid-frequency} by decomposition \eqref{eq:correction-1}--\eqref{eq:correction-4}.

\emph{Bound for \eqref{eq:correction-1}.}
%We approximate \eqref{eq:correction-1} as follows:
%\begin{align*}
%	\int_{t_1}^{t_2}e^{-iB_\theta(s,\xi)}\int_{\R^3} e^{\theta is\frac{\xi\eta}{\bra{\xi}}} \rho_{\le L}(\eta)\wh{A_{\mu}}(s,\eta) \al^\mu \left[ \wh{\phi_\theta}(s,\xi-\eta) -\wh{\phi_\theta}(s,\xi)\right] d\eta ds,
%\end{align*}
%for $t_1,t_2 \in [2^{m-2},2^{2m} ]$.
By the frequency localization, we see that
\begin{align*}%\label{eq:aim-cor-1}
	\begin{aligned}
		\eqref{eq:correction-1} &= c\int_{t_1}^{t_2}\sum_{k_1,k_2 \in
			\Z} \Pi_\theta(\xi)\rho_k(\xi)e^{-iB_\theta(s,\xi)}\int_{\R^3} e^{\theta is\frac{\xi\eta}{\bra{\xi}}} \rho_{\le K}(\eta)|\eta|^{-\frac12}\wh{\pkt W_{\mu}}(s,\eta)\\
		& \hspace{5cm}\times\al^\mu \left[ \wh{\pko\phi_\theta}(s,\xi-\eta) -\wh{P_{k_1}\phi_\theta}(s,\xi)\right] d\eta ds.
	\end{aligned}
\end{align*}
%The frequency relation can be divided into three cases. However,
By the frequency cut-off function with respect to $\xi$ and $\eta$, it suffices to consider the case $2^{k_2} \ll 2^{k_1} \sim 2^k$. To investigate the difference between spinors in the integrand, we make a further decomposition $\pko \phi_\theta = \sum_{j_1 \in \cU_{k_1}} \phi_\theta^{j_1,k_1}$. By \eqref{eq:linear-amplitude} and \eqref{eq:elliptic-q}, we obtain
\begin{align}\label{eq:amplitude-dirac}
\left\|\wh{\phi_\theta^{j_1,k_1}} (s)\right\|_{L_\xi^\infty} \les \ve_1  2^{ H(2)\de m} 2^{-\frac {j_1}2 + \frac{\de j_1}8}2^{-{k_1}} \bra{2^{k_1}}^{-N(2)}.
\end{align}
Using this estimate and mean value theorem, we see from \eqref{eq:derivative-j} that
\begin{align}\label{eq:amplitude-difference}
	\left| \wh{\phi_\theta^{j_1,k_1}}(s,\xi-\eta) -\wh{\phi_\theta^{j_1,k_1}}(s,\xi) \right| \les \ve_1 2^{ H(2)\de m} 2^{k_2}2^{\frac {j_1}2 + \frac{\de j_1}8} 2^{-{k_1}} \bra{2^{k_1}}^{-N(2)}.
\end{align}
Let $J_0$ be an integer such that $2^{J_0} \sim 2^{\frac23 m + 20H(2)  \de m} $. Then the bounds \eqref{eq:amplitude-dirac} and \eqref{eq:amplitude-difference} imply that
\begin{align*}
&\sum_{j_1 > J_0}\left\|\wh{\phi_\theta^{j_1,k_1}} (s)\right\|_{L_\xi^\infty} \les \ve_1  2^{ H(2)\de m} 2^{-\frac {J_0}2 + \frac{\de J_0}8}2^{-{k_1}} \bra{2^{k_1}}^{-N(2)},\\
&\sum_{j_1 \le J_0}\left| \wh{\phi_\theta^{j_1,k_1}}(s,\xi-\eta) -\wh{\phi_\theta^{j_1,k_1}}(s,\xi) \right| \les \ve_1 2^{ H(2)\de m} 2^{k_2}2^{\frac {J_0}2+ \frac{\de J_0}8} 2^{-{k_1}} \bra{2^{k_1}}^{-N(2)},
\end{align*}
and hence we obtain
\begin{align*}
\left| \eqref{eq:correction-1}\right| &\les \sum_{2^{k_2} \le 2^K,  2^{k_1} \sim 2^k} \ve_1 2^{m}2^{H(2)\de m}2^{-\frac{k_2}2} 2^{-\frac{J_0}2 + \frac{\de J_0}8}2^{-{k_1}}\bra{2^{k_1}}^{-N(2)}\|\rho_{k_2}\|_{L_\eta^2} \|\pkt W_{\mu}(s)\|_{L^2} \\
&\quad + \sum_{2^{k_2}\le 2^K,   2^{k_1} \sim 2^k} \ve_1 2^{m}2^{H(2)\de m}2^{\frac{k_2}2}  2^{\frac {J_0}2+ \frac{\de J_0}8} 2^{-\frac{k_1}2}  \bra{2^{k_1}}^{-N(2)} \|\rho_{\le K}\|_{L_\eta^2} \|\pkt W_{\mu}(s)\|_{L^2}\\
&\les \sum_{2^{k_1} \sim 2^k} \ve_1^2 2^{m}2^{[16H(2)+1]\de m}2^K 2^{-\frac {J_0}2}2^{-\frac{k_1}2}\bra{2^{k_1}}^{-N(2)}\\
&\quad  + \sum_{2^{k_1} \sim 2^k} \ve_1^2 2^{m}2^{[H(2)+1]\de m}2^{2K} 2^{\frac {J_0}2}2^{2H(0)\de m} 2^{-\frac{k_1}2}\bra{2^{k_1}}^{-N(2)}.
\end{align*}
Since $2^K \sim 2^{m(-\frac23 + 10H(2)\de )}$, $2^{J_0} \sim 2^{\frac23m  + 20H(2)\de m}$, using the restriction \eqref{eq:mid-frequency} and frequency relation $2^{k} \sim 2^{k_1}$, we get the first part of \eqref{eq:aim-asymptotic-dirac}.

\emph{Bound for \eqref{eq:correction-2}.}
\eqref{eq:correction-2} can be treated similarly to \eqref{eq:correction-1}. By dyadic decomposition, one gets
\begin{align}\label{eq:aim-cor-2}
	\begin{aligned}
		&\int_{t_1}^{t_2}\Pi_\theta(\xi)\rho_k(\xi)	e^{-iB_\theta(s,\xi)}\int_{\R^3} \left[ e^{\theta is(\bra{\xi}-\bra{\xi-\eta})} -  e^{\theta is\frac{\xi\eta}{\bra{\xi}}}  \right]\rho_{\le K}(\eta) \wh{\pkt A_{\mu}}(s,\eta) \\
		&\hspace{8cm}\times \al^\mu \wh{\pko \phi_\theta}(s,\xi-\eta)\, d\eta ds
	\end{aligned}
\end{align}
for $k_1,k_2 \in \Z$. As we observed in the proof of bound for \eqref{eq:correction-1}, we have only to consider the case $2^{k_2} \ll 2^{k_1} \sim 2^{k}$. Using \eqref{eq:phase-approximation} and \eqref{eq:amplitude-dirac}, we see that
\begin{align*}
	\sum_{2^{k_2} \ll 2^{k_1}}  \left| \eqref{eq:aim-cor-2}\right| &\les 	\sum_{2^{k_2} \ll 2^{k_1}} 2^{2m} 2^{3K} \|\pkt W_{\mu,\thet}(s)\|_{L^2} \left\| \wh{\pko \phi_\theta}(s)\right\|_{L_\xi^\infty}\\
	&\les 	\sum_{2^{k_2} \ll 2^{k_1}} \ve_1 2^{[1+ H(2)]\de m -30H(2) \de m} 2^{-\frac{k_1}2} \bra{2^{k_1}}^{-N(2)}\bra{2^{k_2}}^{-N(0)}\\
	&\les \ve_1 2^{-\de m}2^{-\frac k2 + \frac1{100} k}\bra{2^{k}}^{-N(2)}.
\end{align*}

\emph{Bound for \eqref{eq:correction-3}.} Analogously, we consider the following integrand
\begin{align}\label{eq:aim-cor-3}
	c\int_{t_1}^{t_2} \Pi_\theta(\xi)	\rho_k(\xi) e^{-iB_\theta(s,\xi)}\int_{\R^3} e^{\theta isp_\Theta(\xi,\eta)} \rho_{> K}(\eta) |\eta|^{-\frac12} \wh{\pkt V_{\mu,\thet}}(s,\eta) \al^\mu \wh{\pko \phi_\theo}(s,\xi-\eta) \, d\eta ds
\end{align}
for $k_1, k_2 \in \Z$. For this we exploit the space-time resonances \eqref{eq:resonance-time} and \eqref{eq:nonresonance-space} suitably.  In view of the phase interaction \eqref{eq:resonance-time}, the most delicate part is the resonance case $\theta=\theo$. We can deal with the other case similarly but with a better bound and hence we will omit it. For the case $\theta =\theo$ we use the integration by parts in time and see that \eqref{eq:aim-cor-3} is the linear combination of the following:
\begin{align}
	& e^{-iB_\theta(t,\xi)}\int_{\R^3} e^{ itp_\Theta(\xi,\eta)}M_{\bm k}(\xi,\eta)\wh{\pkt V_{\mu,\thet}}(t,\eta) \al^\mu \wh{\pko \phi_\theta}(t,\xi-\eta) \,d\eta ds, \quad (t=t_1,t_2),\label{eq:cor-3-1}\\
	&\int_{t_1}^{t_2}   \p_s B_\theta(s,\xi) e^{-iB_\theta(s,\xi)}\int_{\R^3} e^{ isp_\Theta(\xi,\eta)}  M_{\bm k}(\xi,\eta)\wh{\pkt V_{\mu,\thet}}(s,\eta) \al^\mu \wh{\pko \phi_\theta}(s,\xi-\eta) \,d\eta ds,\label{eq:cor-3-2}\\
	&\int_{t_1}^{t_2}  e^{-iB_\theta(s,\xi)}\int_{\R^3} e^{ isp_\Theta(\xi,\eta)} M_{\bm k}(\xi,\eta)\wh{\pkt  \p_s V_{\mu,\thet}}(s,\eta) \al^\mu \wh{\pko \phi_\theta}(s,\xi-\eta)\, d\eta ds,\label{eq:cor-3-3}\\
	&\int_{t_1}^{t_2}  e^{-iB_\theta(s,\xi)}\int_{\R^3} e^{ isp_\Theta(\xi,\eta)} M_{\bm k}(\xi,\eta)\wh{\pkt V_{\mu,\thet}}(s,\eta) \al^\mu \wh{\pko  \p_s \phi_\theta}(s,\xi-\eta)\, d\eta ds,\label{eq:cor-3-4}
\end{align}
where
\begin{align*}
	M_{\bm k}(\xi,\eta) :=  \frac{\Pi_\theta(\xi)|\eta|^{-\frac12}}{ p_\Theta (\xi,\eta)}\rho_{\bm k}(\xi,\eta)\rho_{> K}(\eta)\;\;\mbox{and}\;\; \rho_{\bm k}(\xi, \eta) = \rho_k(\xi)\wt{\rho}_{k_1}(\xi-\eta)\wt{\rho}_{k_2}(\eta).
\end{align*}
Then we have
\begin{align*}
	\left| M_{\bm k}(\xi,\eta)\right| \les 2^{-\frac{3k_2}2}\bra{2^k}\bra{2^{\max(\bm k)}}\les 2^{-\frac{3k_2}2}2^{4H(2)\de m}.
\end{align*}

Let us handle \eqref{eq:cor-3-1}. To this end, we make a decomposition $\pko \phi_\theta = \phi_\theta^{\le J, k_1} + \phi_\theta^{>J, k_1}$ with $2^J = 2^{m}$. Then,
by \eqref{eq:amplitude-dirac}, one obtains
\begin{align}\label{eq:amplitude-dirac-high}
	\normo{\wh{\phi_\theta^{>J, k_1}}(s)}_{L_\xi^\infty} \les \ve_1 2^{H(2) \de m -\frac 12m} 2^{-k_1} \bra{2^{k_1}}^{-N(2)}.
\end{align}
This leads us to the estimate
\begin{align*}
	\sum_{k_1,k_2 \in \Z}|\eqref{eq:cor-3-1}| &\les \sum_{k_1,k_2 \in \Z} 2^{4H(2)\de m}  2^{\frac{3\min(\bm k)- 3k_2}2} 	\normo{\phi_\theta^{>J, k_1}(s)}_{L_\xi^\infty}\|\pkt V_{\mu,\thet}(s)\|_{L^2} \\
	&\les \sum_{k_1,k_2 \in \Z} \ve_1^2 2^{[H(0)+5H(2)]\de m -\frac m2}   2^{\frac{3\min(\bm k)- 3k_2}2-k_1} \bra{2^{k_1}}^{-N(2)}\bra{2^{k_2}}^{-N(0)}\\
	&\les \ve_1^2 2^{-\frac m4 } 2^{-\frac k2 +\frac k{100} } \bra{2^k}^{-20}.
\end{align*}
Note that $2^{\frac{3\min(\bm k)- 3k_2}2-k_1} \les 2^{30H(2)\de m}$ from the frequency relation and \eqref{eq:mid-frequency}. On the other hand,  for $\phi_\theta^{\le J,k_1}$, we  exploit the space resonance \eqref{eq:nonresonance-space}. Indeed, we integrate by parts in $\eta$. Then, \eqref{eq:cor-3-1} equipped with $\phi_\theta^{\le J,k_1}$ is the linear combination of the following:
\begin{align}
	&t^{-1} e^{-iB_\theta(t,\xi)}\int_{\R^3} e^{ itp_\Theta(\xi,\eta)} \nabla_\eta   M_{\bm k}^1(\xi,\eta)\wh{\pkt V_{\mu,\thet}}(t,\eta) \al^\mu \wh{ \phi_\theta^{\le J,k_1}}(t,\xi-\eta) \,d\eta ds,\label{eq:cor-3-11}\\
	&t^{-1} e^{-iB_\theta(t,\xi)}\int_{\R^3} e^{ itp_\Theta(\xi,\eta)}  M_{\bm k}^1(\xi,\eta)\wh{\pkt x V_{\mu,\thet}}(s,\eta) \al^\mu \wh{\phi_\theta^{\le J,k_1}}(t,\xi-\eta)\, d\eta ds,\label{eq:cor-3-12}\\
	&t^{-1} e^{-iB_\theta(t,\xi)}\int_{\R^3} e^{ itp_\Theta(\xi,\eta)}  M_{\bm k}^1(\xi,\eta)\wh{\pkt V_{\mu,\thet}}(t,\eta) \al^\mu \wh{x \phi_\theta^{\le J,k_1}}(s,\xi-\eta)\, d\eta ds,\label{eq:cor-3-13}
\end{align}
where $t= t_1, \, t_2$ and
\begin{align*}
	M_{\bm k}^1(\xi,\eta) =  \frac{\nabla_\eta p_\Theta (\xi,\eta)}{|\nabla_\eta p_\Theta (\xi,\eta)|^2} M_{\bm k}(\xi,\eta).
\end{align*}
A direct calculation gives us the bound
\begin{align*}
	\left| \nabla_\eta^\ell  M^1_{\bm k}(\xi,\eta)\right| \les 2^{-\frac{3k_2}2-\ell k_2} 2^{2(4+2\ell)H(2)\de m}
\end{align*}
for $\ell =0,1$. By \eqref{eq:amplitude-dirac}, one readily gets
\begin{align}\label{eq:amplitude-dirac-low}
	\normo{\wh{\phi_\theta^{\le J, k_1}}(s)}_{L_\xi^\infty} \les \ve_1 2^{H(2)\de m} 2^{-\frac{k_1}2} \bra{2^{k_1}}^{-N(2)}.
\end{align}
Then, we obtain
\begin{align*}
	\sum_{k_1,k_2 \in\Z}\left|\eqref{eq:cor-3-11}\right| &\les \sum_{k_1,k_2 \in\Z} 2^{-m+12H(2)\de m} 2^{-k_2} \|P_{k_2} W_{\mu,\thet}(s)\|_{L^2} \normo{\wh{\phi_\theta^{\le J, k_1}}(s)}_{L_\xi^\infty}\\
	&\les \ve_1^2 2^{-\frac m6 } \bra{2^k}^{-20}.
\end{align*}
The estimates for \eqref{eq:cor-3-12} can be done similarly by using \eqref{eq:weight}. In view of \eqref{eq:amplitude-dirac} and \eqref{eq:derivative-j}, one can see that
\begin{align}\label{eq:amplitude-dirac-spatial}
	\left\| \wh{x \phi_\theta^{\le J,k_1}}(s)\right\|_{L_\xi^\infty} \les \sum_{j \le J}\ve_1 2^{H(2)\de m} 2^{\frac{j_1}2}2^{-k_1}\bra{2^{k_1}}^{-N(2)}  \les 	\ve_1 2^{H(2)\de m+ \frac12 m} 2^{-k_1}\bra{2^{k_1}}^{-N(2)} .
\end{align}
Then, we have
\begin{align*}
	\sum_{k_1,k_2 \in\Z}\left|\eqref{eq:cor-3-13}\right| &\les \sum_{k_1,k_2 \in\Z} 2^{-m}  \bra{2^{k_1}}^4\|P_{k_2} W_{\mu,\thet}(s)\|_{L^2} \left\| \wh{x \phi_\theta^{\le J,k_1}}(s)\right\|_{L_\xi^\infty}\\
	&\les \sum_{k_1,k_2 \in\Z} \ve_1^2 2^{[H(0)+9H(2)]\de m-\frac m2 } \bra{2^{k_1}}^{-N(2)}\bra{2^{k_2}}^{-N(0)}\\
	&\les \ve_1^2 2^{-\frac m4 } \bra{2^k}^{-20}.
\end{align*}
Therefore, we have the desired bounds for \eqref{eq:cor-3-11}--\eqref{eq:cor-3-13}.

Let us move on to estimates for  \eqref{eq:cor-3-2}. From the definition of $B_\theta(s,\xi)$ in \eqref{eq:phase-correction} and \eqref{eq:decay-maxwell}, we have
\begin{align}\label{eq:decay-correction}
	|\p_s B_\theta(s,\xi)| \les \left| P_{\le K} A_{\mu} \left(s, \frac{s\xi}{\bra{\xi}}\right)\right| \les \ve_1 2^{11 \de m - m} \bra{2^{k}}^{-N(1)+2} . 
\end{align}
By the proof for \eqref{eq:cor-3-1} this finishes the proof for \eqref{eq:cor-3-2}.

To estimate \eqref{eq:cor-3-3}, we use decomposition  $\pko \phi_\theta = \phi_\theta^{\le J, k_1} + \phi_\theta^{>J, k_1}$ with $2^J =  2^{m-60H(2)\de m}$ and have
\begin{align*}
	\normo{\wh{\phi_\theta^{>J, k_1}}}_{L_\xi^\infty} \les \ve_1 2^{31H(2) \de m -\frac 12m} 2^{-k_1} \bra{2^{k_1}}^{-N(2)}.
\end{align*}
Then, by \eqref{eq:esti-nonl-max-time}, one gets
\begin{align*}
	\sum_{k_1,k_2\in \Z}|\eqref{eq:cor-3-3} | &\les \sum_{k_1,k_2\in \Z} 2^m 2^{4H(2)\de m }  2^{\frac{3\min(\bm k) - 3k_2}2}\| \pkt \p_s V_{\mu,\thet}(s)\|_{L^2}\normo{\wh{\phi_\theta^{>J, k_1}}(s)}_{L_\xi^\infty} \\
	&\les \sum_{k_1,k_2\in \Z}\ve_1^2 2^{37H(2)\de m -\frac12 m} 2^{\frac{3\min(\bm k) - 3k_2}2 -k_1} \bra{2^{k_1}}^{-N(2)}\bra{2^{k_2}}^{-N(0)+5}\\
	&\les \ve_1^2 2^{-\frac m4 } \bra{2^k}^{-25}.
\end{align*}
We now consider $ \phi_\theta^{\le J, k_1}$ in \eqref{eq:cor-3-3}. For this case, we can obtain an extra time decay by exploiting the space resonance \eqref{eq:nonresonance-space}. Indeed, we write \eqref{eq:cor-3-3} with $ \phi_\theta^{\le J, k_1}$ as the linear combination of the following:
\begin{align}
	&	\int_{t_1}^{t_2} s^{-1} e^{-iB_\theta(s,\xi)}\int_{\R^3} e^{ isp_\Theta(\xi,\eta)} \nabla_\eta M_{\bm k}^1(\xi,\eta)\p_s\wh{\pkt  V_{\mu,\thet}}(s,\eta) \al^\mu \wh{ \phi_\theta^{\le J, k_1 }}(s,\xi-\eta)\, d\eta ds,\label{eq:cor-3-31}\\
	&	\int_{t_1}^{t_2} s^{-1} e^{-iB_\theta(s,\xi)}\int_{\R^3} e^{ isp_\Theta(\xi,\eta)}  M_{\bm k}^1(\xi,\eta)\p_s\wh{\pkt  x V_{\mu,\thet}}(s,\eta) \al^\mu \wh{ \phi_\theta^{\le J, k_1 }}(s,\xi-\eta)\, d\eta ds,\label{eq:cor-3-32}\\
	&	\int_{t_1}^{t_2}  s^{-1} e^{-iB_\theta(s,\xi)}\int_{\R^3} e^{ isp_\Theta(\xi,\eta)}  M_{\bm k}^1(\xi,\eta)\p_s\wh{\pkt   V_{\mu,\thet}}(s,\eta) \al^\mu \wh{ x \phi_\theta^{\le J, k_1 }}(s,\xi-\eta)\, d\eta ds.\label{eq:cor-3-33}
\end{align}
By H\"older's inequality and \eqref{eq:amplitude-dirac}, we estimate
\begin{align*}
	\sum_{k_1,k_2 \in \Z} \left| \eqref{eq:cor-3-31}\right| &\les \sum_{k_1,k_2 \in \Z} 2^{12H(2)\de m} 2^{\frac{3\min(\bm k)-5k_2}2} \| \pkt \p_s V_{\mu,\thet} (s)\|_{L^2}\normo{\wh{ \phi_\theta^{\le J, k_1}}(s)}_{L_\xi^\infty} \\
	&\les \sum_{k_1,k_2 \in \Z} 2^{-m+14H(2)\de m} 2^{\frac{3\min(\bm k)-5k_2-k_1}2 } \bra{2^{k_1}}^{-N(2)} \bra{2^{k_2}}^{-N(0)+5} \\
	&\les 2^{-\frac m6 }\bra{2^k}^{-20}.
\end{align*}
Note that $2^{k_2} \gtrsim 2^K \sim 2^{-\frac23m -10H(2)\de m}$. Regarding \eqref{eq:cor-3-32}, we use Lemma \ref{lem:esti-wei-time} to obtain
\begin{align*}
	\sum_{k_1,k_2 \in \Z} \left| \eqref{eq:cor-3-32}\right| &\les \sum_{k_1,k_2 \in \Z} 2^{8H(2)\de m} 2^{\frac{3\min(\bm k)-3k_2}2} \| \pkt \p_s (x V_{\mu,\thet}(s))\|_{L^2}\normo{\wh{ \phi_\theta^{\le J, k_1}}(s)}_{L_\xi^\infty} \\
	&\les \sum_{k_1,k_2 \in \Z} 2^{-\frac m8+9H(2)\de m} 2^{\frac{3\min(\bm k)-3k_2-k_1}2 } \bra{2^{k_1}}^{-N(2)} \bra{2^{k_2}}^{-N(0)+5} \\
	&\les 2^{-\frac m{10}}\bra{2^k}^{-20}.
\end{align*}
Using the similar bound to \eqref{eq:amplitude-dirac-spatial}, we see that
\begin{align*}
	\sum_{k_1,k_2 \in \Z} \left| \eqref{eq:cor-3-33}\right| &\les \sum_{k_1,k_2 \in \Z} 2^{8H(2)\de m} 2^{\frac{3\min(\bm k)-3k_2}2} \| \pkt \p_s V_{\mu,\thet}(s)\|_{L^2}\normo{\wh{x \phi_\theta^{\le J, k_1}}(s)}_{L_\xi^\infty} \\
	&\les \sum_{k_1,k_2 \in \Z} 2^{-50H(2)\de m} 2^{\frac{3\min(\bm k)-3k_2}2 -k_1} \bra{2^{k_1}}^{-N(2)} \bra{2^{k_2}}^{-N(0)+5} \\
	&\les 2^{-\de m}2^{-\frac k2 +\frac k{100}}\bra{2^k}^{-20}.
\end{align*}

It remains to handle \eqref{eq:cor-3-4}. Note that the frequency relation has the trichotomy: $2^{k} \les 2^{k_1} \sim 2^{k_2}$, $2^{k_1} \ll 2^{k} \sim 2^{k_1}$, and $2^{k_2} \ll 2^{k} \sim 2^{k_1}$. As observed previously, the last case becomes the most difficult case. Hence, we consider only the case  $2^{k_2} \ll 2^{k} \sim 2^{k_1}$.  By direct calculation and dyadic decomposition,  \eqref{eq:cor-3-4} can be written as the linear combination of the following:
\begin{align}
	&\begin{aligned}
		&\int_{t_1}^{t_2}  e^{-iB_\theta(s,\xi)}\int_{\R^{3+3}} e^{ isr_\Theta(\xi,\eta,\sigma)} \wt M_{\bm k}(\xi,\eta,\sigma)\wh{\pkt V_{\mu,\thet}}(s,\eta)  \wh{\pkf V_{\nu,\thef}}(s,\sigma) \\
		&\hspace{7cm}\times \al^\mu \al^\nu \wh{ \phi_{\theth}^{> J, k_3}}(s,\xi-\eta-\sigma)\, d\sigma  d\eta ds,
	\end{aligned}\label{eq:cor-3-41}\\
	&	\begin{aligned}
		&\sum_{a,b=1,2}\int_{t_1}^{t_2}  e^{-iB_\theta(s,\xi)}\int_{\R^{3+3}} e^{ isr_\Theta(\xi,\eta,\sigma)} \wt M_{\bm k}(\xi,\eta,\sigma)\wh{ V_{\mu,\thet}^{a}}(s,\eta)  \wh{ V_{\nu,\thef}^{b}}(s,\sigma)\\
		&\hspace{7cm} \times  \al^\mu \al^\nu \wh{ \phi_{\theth}^{\le J, k_3}}(s,\xi-\eta-\sigma)\, d\sigma  d\eta ds,
	\end{aligned}\label{eq:cor-3-42}
\end{align}
where $2^{J}= 2^{m}$,
\begin{align*}
	r_\Theta(\xi,\eta,\sigma) &= \theta \bra{\xi}+\thet |\eta|  -\theth \bra{\xi-\eta-\sigma} +\thef |\sigma|,\\
	\wt M_{\bm k}(\xi,\eta,\sigma)&= M_{\bm k}(\xi,\eta) |\sigma|^{-\frac12} \rho_{k_3}(\xi-\eta-\sigma)\rho_{k_4}(\sigma),
\end{align*}
and $V_{\lam,\ka}^{1}:= V_{\lam,\ka}^{> J,k}$, $V_{\lam,\ka}^{2}:= V_{\lam,\ka}^{\le J,k}$ for $\lam \in \{\mu,\nu\}$ and $\ka \in\{\thet,\thef\}$. Note that the frequency relation can be divided into $2^{k_1} \les 2^{k_3} \sim 2^{k_4}$, $2^{k_3} \ll 2^{k_1} \sim 2^{k_4}$, and $2^{k_4} \ll 2^{k_1} \sim 2^{k_3}$ and the multiplier has the bound
\begin{align*}
	\left| \wt M_{\bm k}(\xi,\eta,\sigma)\right| \les 2^{4H(2)\de m}2^{-\frac{3k_2+k_4}2}.
\end{align*}

We first consider \eqref{eq:cor-3-41}. If $2^{k_4} \ge 2^{-\frac{7}{12} m }$, \eqref{eq:decay-maxwell}, \eqref{eq:elliptic-q}, and  \eqref{eq:high} yield that
\begin{align*}
	\sum_{\substack{k_2,k_3 \in \Z\\ 2^{k_4} \ge 2^{-\frac{7}{12} m}}}	&\left|\eqref{eq:cor-3-41}\right|\\
	&\les  \sum_{\substack{k_2,k_3 \in \Z\\ 2^{k_4} \ge 2^{-\frac{7}{12} m}}} 2^{m+4H(2)\de m} 2^{-\frac{3k_2+k_4}2} \|\pkt W_{\mu,\thet}(s)\|_{L^\infty} \|\phi_\theth^{>J,k_3}(s)\|_{L^2}  \|\pkf W_{\mu,\thef}(s)\|_{L^2}\\
	&\les  \sum_{\substack{k_2,k_3 \in \Z\\ 2^{k_4} \ge 2^{-\frac{7}{12} m}}}   \ve_1^3 2^{-m + 7H(2) \de m} 2^{-\frac{2k_2+k_4}2}\bra{2^{k_2}}^{-N(1)+2}\bra{2^{k_3}}^{-N(1)}\bra{2^{k_4}}^{-N(0)}\\
	&\les \ve_1^3 2^{-\de m}  2^{-\frac k2 +\frac k{100}}\bra{2^k}^{-20}.
\end{align*}
On the other hand, if $2^{k_4} \le 2^{-\frac7{12}m}$,
from the fact that $2^{k_1} \sim 2^{k} \in [2^{-30H(2)\de m} , 2^{2H(2)\de m}]$, only the case $2^{k_4} \ll 2^{k_1} \sim 2^{k_3}$ appears. Hence by \eqref{eq:amplitude-dirac-high}, we get
\begin{align*}
	\sum_{\substack{ 2^{k_4} \le 2^{-\frac7{12}m}\\ 2^{k_4} \ll 2^{k_1} \sim 2^{k_3}} }	&\left|\eqref{eq:cor-3-41}\right| \\
	&\les  \sum_{\substack{2^{k_4} \le 2^{-\frac7{12}m}\\2^{k_4} \ll 2^{k_1} \sim 2^{k_3} }} 2^{m+4H(2)\de m} 2^{k_4} \|\pkt W_{\mu,\thet}(s)\|_{L^2} \left\|\wh{\phi_\theth^{>J,k_3}}(s)\right\|_{L_\xi^\infty}  \|\pkf W_{\mu,\thef}(s)\|_{L^2}\\
	&\les  \sum_{\substack{ 2^{k_4} \le 2^{-\frac7{12}m}\\2^{k_4} \ll 2^{k_1} \sim 2^{k_3}} }   \ve_1^3 2^{\frac m2 +7H(2)\de m} 2^{k_4} 2^{-\frac{k_3}2}\bra{2^{k_2}}^{-N(0)}\bra{2^{k_3}}^{-N(2)}\bra{2^{k_4}}^{-N(0)}\\
	&\les \ve_1^3 2^{-\de m} 2^{-\frac k2 +\frac k{100} } \bra{2^k}^{-20}.
\end{align*}
In the last inequality, we used the condition $2^{-30H(2)\de m} \les 2^{k_3}$ and $2^k \sim 2^{k_3}$.

We consider \eqref{eq:cor-3-42} with $(a,b)= (1,1)$. If $ 2^{k_3} \ge 2^{-\frac7{12} m }$, by \eqref{eq:elliptic-q}, \eqref{eq:amplitude-dirac-low}, and H\"older's inequality, we estimate
\begin{align*}
	\sum_{\substack{k_2, k_4 \in \Z\\ 2^{k_3} \ge 2^{-\frac7{12} m }}}	&\left|\eqref{eq:cor-3-42}\right|\\
	&\les  	\sum_{\substack{k_2, k_4 \in \Z\\ 2^{k_3} \ge 2^{-\frac7{12} m }}} 2^{m+4H(2)\de m} 2^{-\frac{k_4}2}\|\rho_{k_4}\|_{L_\sigma^2} \|V_{\mu,\thet}^{>J, k_2}(s)\|_{L^2} \|\phi_\theth^{\le J,k_3}(s)\|_{L_\xi^\infty}  \|\pkf V_{\mu,\thef}^{>J,k_4}(s)\|_{L^2}\\
	&\les  	\sum_{\substack{k_2, k_4 \in \Z\\ 2^{k_3} \ge 2^{-\frac7{12} m }}}   \ve_1^3 2^{-m +7H(2)\de m} 2^{-k_2-\frac{k_3}2} \bra{2^{k_2}}^{-N(1)}\bra{2^{k_3}}^{-N(1)}\bra{2^{k_4}}^{-N(1)}\\
	&\les \ve_1^3 2^{-\de m}2^{-\frac k2 +\frac k{100}} \bra{2^k}^{-20}.
\end{align*}
If $2^{k_3} \le 2^{-\frac7{12}m}$, then the frequency relation gives us only the case $2^{k_3}\ll 2^{k_4} \sim 2^{k_1} \in [2^{-30H(2)\de m}, 2^{2H(2)\de m}]$. Then, by \eqref{eq:decay-maxwell}, \eqref{eq:elliptic-q}, and the above estimate done with $\|\rho_{k_3}\|_{L_\sigma^2}$ instead of $\|\rho_{k_4}\|_{L_\sigma^2}$, we obtain
\begin{align*}
	\sum_{\substack{2^{k_3} \le 2^{-\frac7{12}m}\\ 2^{k_3}\ll 2^{k_4} \sim 2^{k_1}}}\left|\eqref{eq:cor-3-42}\right| &\les  \sum_{\substack{2^{k_3} \le 2^{-\frac7{12}m}\\ 2^{k_3}\ll 2^{k_4} \sim 2^{k_1}}} 2^{m+4H(2)\de m} 2^{\frac{3k_3 - k_4}2} \left\|V_{\mu,\thet}^{>J, k_2}(s)\right\|_{L^2} \left\|\phi_\theth^{\le J,k_3}(s)\right\|_{L_\xi^\infty}  \left\|V_{\mu,\thef}^{>J,k_4}(s)\right\|_{L^2}\\
	&\les  \sum_{\substack{2^{k_3} \le 2^{-\frac7{12}m}\\ 2^{k_3}\ll 2^{k_4} \sim 2^{k_1}}}   \ve_1^3 2^{-m +7H(2)\de m} 2^{-\frac{2k_2+ 3k_4}2}2^{k_3}\bra{2^{k_2}}^{-N(1)}\bra{2^{k_3}}^{-N(2)}\bra{2^{k_4}}^{-N(1)}\\
	&\les \ve_1^3 2^{-\de m} 2^{-\frac k2 +\frac k{100}} \bra{2^k}^{-20}.
\end{align*}

For the estimate of \eqref{eq:cor-3-42} with $(a,b)= (2,1)$. Analogously to \eqref{eq:nonresonance-space}, we have
\begin{align*}
	\left| \nabla_\eta r_\Theta(\xi,\eta,\sigma) \right| \ge \bra{\xi-\eta-\sigma}^{-2}.
\end{align*}
Keeping this lower bound in mind, we integrate by parts in $\eta$ and write \eqref{eq:cor-3-42} as the linear combination of the following:
\begin{align}
	&	\begin{aligned}
		&\int_{t_1}^{t_2}  s^{-1} e^{-iB_\theta(s,\xi)}\int_{\R^{3+3}} e^{ isr_\Theta(\xi,\eta,\sigma)} \nabla_\eta\wt M_{\bm k}^1(\xi,\eta,\sigma)\wh{ V_{\mu,\thet}^{\le J, k_2}}(s,\eta)  \wh{ V_{\nu,\thef}^{> J, k_4}}(\sigma)\\
		&\hspace{8cm} \times  \al^\mu \al^\nu \wh{ \phi_{\theth}^{\le J, k_3}}(s,\xi-\eta-\sigma)\, d\sigma  d\eta ds,
	\end{aligned}\label{eq:cor-3-42-eta-1}\\
	&	\begin{aligned}
		&\int_{t_1}^{t_2}  s^{-1} e^{-iB_\theta(s,\xi)}\int_{\R^{3+3}} e^{ isr_\Theta(\xi,\eta,\sigma)} \wt M_{\bm k}^1(\xi,\eta,\sigma)\wh{x V_{\mu,\thet}^{\le J, k_2}}(s,\eta)  \wh{ V_{\nu,\thef}^{> J, k_4}}(\sigma)\\
		&\hspace{8cm} \times  \al^\mu \al^\nu \wh{ \phi_{\theth}^{\le J, k_3}}(s,\xi-\eta-\sigma)\, d\sigma  d\eta ds,
	\end{aligned}\label{eq:cor-3-42-eta-2}\\
	&	\begin{aligned}
		&\int_{t_1}^{t_2}  s^{-1} e^{-iB_\theta(s,\xi)}\int_{\R^{3+3}} e^{ isr_\Theta(\xi,\eta,\sigma)} \wt M_{\bm k}^1(\xi,\eta,\sigma)\wh{ V_{\mu,\thet}^{\le J, k_2}}(s,\eta)  \wh{ V_{\nu,\thef}^{> J, k_4}}(\sigma)\\
		&\hspace{8cm} \times  \al^\mu \al^\nu \wh{ x\phi_{\theth}^{\le J, k_3}}(s,\xi-\eta-\sigma)\, d\sigma  d\eta ds,
	\end{aligned}\label{eq:cor-3-42-eta-3}
\end{align}
where
\begin{align*}
	\wt M_{\bm k}^1(\xi,\eta,\sigma) = \frac{\nabla_\eta r_\theta(\xi,\eta,\sigma)}{|\nabla_\eta r_\theta(\xi,\eta,\sigma)|^2}\wt M_{\bm k}(\xi,\eta,\sigma).
\end{align*}
The multiplier is bounded by
\begin{align*}
	\left| \nabla_\eta^\ell \wt M_{\bm k}^1(\xi,\eta,\sigma) \right| \les 2^{4H(2)\de m}2^{-\frac{3k_2}2 -\ell k_2}2^{-\frac{k_4}2}\bra{2^{k_3}}^{2+ 2\ell} \;\;(\ell =0,1).
\end{align*}
Then we estimate
\begin{align*}
	| \eqref{eq:cor-3-42-eta-1} | &\les 	  2^{4H(2)\de m} 2^{-k_2}2^{k_4} \bra{2^{k_3}}^4 \| V_{\mu,\thet}^{\le J, k_2}(s)\|_{L^2} \|\phi_\theth^{\le J, k_3}(s)\|_{L_\xi^\infty}
	\| V_{\mu,\thef}^{> J, k_4}(s)\|_{L^2}\\
	&\les  	 \ve_1^3 2^{ -m+ 20H(2)\de m   }2^{-k_2}2^{-\frac{k_3+k_4}2} \bra{2^{k_2}}^{-N(1)}\bra{2^{k_3}}^{-N(2)+4}\bra{2^{k_4}}^{-N(1)}.
\end{align*}
Summing both sides over $2^{\min(k_3,k_4)} \ge 2^{-\frac7{12}m}$, we get the desired bound. On the other hand, since
\begin{align*}
	\| V_{\mu,\thef}^{> J, k_4}(s)\|_{L^2} \les \ve_1 2^{-r m} 2^{-r k_4}\bra{2^{k_4}}^{-N(1)}
\end{align*}
for $0 \le r \le 1$ by \eqref{eq:elliptic-q}, it follows that, for $ \frac14 < r < 1$,
\begin{align*}
	&\sum_{k_2,k_3,k_4 \in \Z}	| \eqref{eq:cor-3-42-eta-1} |\\ &\les 	\sum_{k_2,k_3,k_4 \in \Z}  2^{4H(2)\de m} 2^{-k_2}2^{-\frac{k_4}2}2^{\frac{3\min(k_3,k_4)}2} \bra{2^{k_3}}^4 \| V_{\mu,\thet}^{\le J, k_2}(s)\|_{L^2} \|\phi_\theth^{\le J, k_3}(s)\|_{L_\xi^\infty}
	\| V_{\nu,\thef}^{> J, k_4}(s)\|_{L^2}\\
	&\les  	\sum_{k_2,k_3,k_4 \in \Z} \ve_1^3 2^{ -rm + 7H(2)\de m }2^{-k_2}2^{-\frac{k_3+k_4}2} 2^{\frac{3\min(k_3,k_4)}2} 2^{-r k_4} \bra{2^{k_2}}^{-N(1)}\bra{2^{k_3}}^{-N(2)+4}\bra{2^{k_4}}^{-N(1)}\\
	&\les \ve_1^3 2^{-\de m}2^{-\frac k2 +\frac k{100}} \bra{2^k}^{20}.
\end{align*}
Note that we summed for  $\max(k_3,k_4)$ by using the frequency relation trichotomy. The estimates for \eqref{eq:cor-3-42-eta-2} and \eqref{eq:cor-3-42-eta-3} are quite similar. Hence, we treat only the estimate for \eqref{eq:cor-3-42-eta-2}. By \eqref{eq:zero-vec-2},
\begin{align*}
	&\sum_{k_2,k_3,k_4 \in \Z}	|\eqref{eq:cor-3-42-eta-2}|\\ &\les 	\sum_{k_2,k_3,k_4 \in \Z}   2^{4H(2)\de m}2^{-\frac{k_4}2} 2^{\frac{3\min(k_3,k_4)}2} \bra{2^{k_3}}^2 \| x V_{\mu,\thet}^{\le J, k_2}(s)\|_{L^2} \|\phi_\theth^{\le J, k_3}(s)\|_{L_\xi^\infty}
	\| V_{\nu,\thef}^{> J, k_4}(s)\|_{L^2}\\
	&\les 	\sum_{k_2,k_3,k_4 \in \Z} \ve_1^3  2^{-m+7H(2)\de m}2^{-k_2} 2^{2^{\frac{3\min(k_3,k_4) -k_3 -3k_4}2}} \bra{2^{k_2}}^{-N(1)}\bra{2^{k_3}}^{-N(2)+2}\bra{2^{k_4}}^{-N(1)}\\
	&\les \ve_1^3 2^{-\de m}2^{-\frac k2 +\frac k{100}} \bra{2^k}^{20}.
\end{align*}

For the estimates of \eqref{eq:cor-3-42} with $(a,b)=(1,2)$ and $(2,2)$, we can obtain the desired bound by exploiting the resonance with respect to $\sigma$. We omit the details.

%%%%%%%%%%%%%%%%%%%%%%%%%%%%%%%%%%%%%%%%%%%%%%%%%%%%%%%%%%%%%%%%%%%%%%%%

\emph{Bound for \eqref{eq:correction-4}.} As for \eqref{eq:correction-4} the phase interaction does not exhibit the space-time resonance. Hence we can use the normal form approach as follows: The integration by parts in time shows that \eqref{eq:correction-4} consists of the following terms:
\begin{align}
	&\rho_k(\xi) e^{-iB_\theta(t,\xi)}\int_{\R^3} \frac{e^{\theta itp_\Theta(\xi,\eta)}}{p_\Theta(\xi,\eta)} \rho_{\le K}(\eta)|\eta|^{-\frac12}\wh{V_{\mu,\thet}}(t,\eta) \al^\mu \wh{\phi_{-\theta}}(t,\xi-\eta) d\eta, \qquad (t=t_1,\, t_2) \label{eq:cor-4-non-1}\\
	&\int_{t_1}^{t_2} \rho_k(\xi)  [\p_s B_\theta(s,\xi)] e^{-iB_\theta(s,\xi)}\int_{\R^3} \frac{e^{\theta isp_\Theta(\xi,\eta)}}{p_\Theta(\xi,\eta)}  \rho_{\le K}(\eta)|\eta|^{-\frac12} \wh{V_{\mu,\thet}}(s,\eta) \al^\mu \wh{\phi_{-\theta}}(s,\xi-\eta) d\eta ds,\label{eq:cor-4-non-2}\\
	&\int_{t_1}^{t_2} \rho_k(\xi) e^{-iB_\theta(s,\xi)}\int_{\R^3} \frac{e^{\theta isp_\Theta(\xi,\eta)}}{p_\Theta(\xi,\eta)} \rho_{\le K}(\eta) |\eta|^{-\frac12} \p_s \left[ \wh{V_{\mu,\thet}}(s,\eta) \al^\mu \wh{\phi_{-\theta}}(s,\xi-\eta) \right] d\eta ds.\label{eq:cor-4-non-3}
\end{align}
From the restriction $|\eta| \le 2^{-\frac23m -10H(2)\de m}$, we readily close the estimates for \eqref{eq:cor-4-non-1}--\eqref{eq:cor-4-non-3}. Especially, we can use \eqref{eq:decay-correction} for \eqref{eq:cor-4-non-2}.

\section{Asymptotic behavior for the gauge fields}\label{sec:s-maxwell}
In this section, we show the scattering norm bound \eqref{eq:s-norm-m} for Maxwell part under the a priori assumptions \eqref{assum:energy}--\eqref{assum:s-norm} on $[0,T]$ for $T >1$.

According to the definition \eqref{def:s-norm-maxwell}, the scattering norm bound \eqref{eq:s-norm-m} is equivalent to
\begin{align}\label{eq:aim-scattering-maxwell}
	\sum_{j \in \cU_k} 2^j \normo{\qjk V_{\mu,\theta}(t_2) - \qjk V_{\mu,\theta}(t_2)}_{L^2} \les \ve_1 2^{-\de m} 2^{-k -  5H(2)\de k} \bra{2^k}^{-N(1)+5}
\end{align}
for $t_1 \le t_2 \in [2^m - 2, 2^{m+1}] \cap [0,T]$ with $m \in \Z$. Analogously to the proof in Section \ref{sec:s-dirac}, we prove \eqref{eq:aim-scattering-maxwell} by decomposing
the case into low, high, and mid frequency regime. Before the decomposition, we consider the large spatial region as follows:
\begin{align}\label{eq:asymp-maxwell-j}
	\sum_{j \ge J} 2^j \normo{\qjk V_{\mu,\theta}(t_2) - \qjk V_{\mu,\theta}(t_2)}_{L^2} \les \ve_1 2^{-\de m} 2^{-k - 5H(2)\de k} \bra{2^k}^{-N(1)+5},
\end{align}
where $2^{J} \sim 2^{m(1+H(2)\de)}2^{|k|}$. For the difference in the norm of \eqref{eq:asymp-maxwell-j}, one may use
\begin{align}\label{eq:time-deri-maxwell}
	\int_{t_1}^{t_2} \p_s \qjk V_{\mu,\theta}(s) ds = \int_{t_1}^{t_2} \qjk e^{-\theta is |D|} \mathfrak{N}_{\mu}^{\bm M}(s)   ds.
\end{align}
Hence it suffices for \eqref{eq:asymp-maxwell-j} to prove
\begin{align}\label{eq:s-nonlinear-maxwell}
	\sum_{k_1,k_2 \in \Z}2^{-\frac k2}\normo{\rho_j(x)   P_k \bra{\pko \psi_\theo(t), \al_\mu \pkt\psi_\thet(t)}}_{L^2}  \les  \ve_1 2^{-(1+\de) m} 2^{-(1+\de)j} \bra{2^k}^{-N(1)+5}.
\end{align}
For the sum over $\max(k_1,k_2) \ge j$,  we have the bound \eqref{eq:asymp-maxwell-j} straightforwardly by \eqref{eq:high}. As for the sum over  $\min(k_1,k_2) \le -j$, \eqref{eq:esti-l2-k} also implies the bound \eqref{eq:asymp-maxwell-j}. We now assume $2^{k_1}, 2^{k_2} \in [2^{-j},2^j]$. For this case we further decompose
\begin{align*}
	P_{k_\ell}\psi_{\theta_\ell} = \sum_{j_\ell \in \cU_{k_\ell}} e^{-\theta_\ell it\bra{D}}\phi_{\theta_\ell}^{j_\ell,k_\ell}   \qquad \mbox{ for }\; \ell =1,\,2.
\end{align*}
If $\min(j_1,j_2) \ge j(1-\de)$, by \eqref{eq:elliptic-q}, we estimate
\begin{align*}
	&	2^{-\frac k2}\normo{\rho_j(x)   P_k \bra{\pko \psi_\theo(t), \al_\mu \pkt\psi_\thet(t)}}_{L^2}\\
	&\les \sum_{j_\ell \in \cU_{k_\ell}} 2^k \| \phi_{\theo}^{j_1,k_1}(t)\|_{L^2}\| \phi_{\thet}^{j_2,k_2}(t)\|_{L^2} \les \sum_{j_\ell \in \cU_{k_\ell}}  \ve_1^2 2^{2H(1)\de m} 2^k 2^{-j_1-j_2} \bra{2^{k_1}}^{-N(1)}\bra{2^{k_2}}^{-N(1)}\\
	&\les \ve_1^2 2^{-(1+\de)m}\bra{2^k}^{-N(1)+1}.
\end{align*}
On the other hand, if $\min(j_1,j_2) \le j(1-\de)$, using the inverse Fourier transform, we see that
\begin{align}\label{eq:integrand}
	\begin{aligned}
		&P_k \bra{\pko \psi_\theo(t), \al_\mu \pkt\psi_\thet(t)}  \\
		&=C  \int_{\R^{3+3}} e^{ix\xi} \rho_k(\xi) e^{itq_\Theta(\xi,\eta)} \bra{\wh{\pko \psi_\theo}(t,\eta), \al_\mu \wh{\pkt\psi_\thet}(t,\xi+\eta)}   d\xi d\eta.
	\end{aligned}
\end{align}
Then, using integration by parts in $\xi$ repeatedly, \eqref{eq:integrand} is bounded by
\begin{align}\label{eq:multi}
	2^{-nj} \left|\int_{\R^{3+3}} e^{ix\xi}  \nabla_\xi^n \left[\rho_k(\xi) e^{itq_\Theta(\xi,\eta)} \bra{\wh{ \psi_\theo^{j_1,k_1}}(t,\eta), \al_\mu \wh{\psi_\thet^{j_2,k_2}}(t,\xi+\eta)} \right]   d\xi d\eta \right|.
\end{align}
Note that the derivative $\nabla_\xi$ implies $2^{m}+ 2^{-k} + 2^{j_\ell}$-growth in \eqref{eq:multi}. Summing over $\min(j_1,j_2) \le j(1-\de)$ for suitable $n$, we get the desired bound \eqref{eq:s-nonlinear-maxwell}.

We consider the contribution of sum over $j \le J$ in the sequel.
\subsection{Proof for the low and high frequency regimes} Let us restrict $|\xi|$ as follows:
\begin{align}\label{eq:restriction-low-hi}
	2^k \le 2^{-\frac m{100}} \;\;\mbox{ or }\;\;	2^k \ge 2^{H(1)\de m}.
\end{align}
Then by \eqref{eq:elliptic-q}, we have
\begin{align*}
	\sup_{j \in \cU_k} 2^j \normo{\qjk V_{\mu,\theta}(t)}_{L^2} \les \ve_1 \bra{t}^{H(1)\de} 2^{-k}\bra{2^k}^{-N(1)}.
\end{align*}
Then, \eqref{eq:aim-scattering-maxwell} follows for \eqref{eq:restriction-low-hi} from the fact that $H(1) < \frac{5H(2)}{100}$.

\subsection{Proof for mid frequency regime} We now consider the mid frequency part
\begin{align}\label{eq:restriction-mid}
	2^{-\frac m{100}} \le 2^k \le  2^{H(1)\de m}.
\end{align}
Using the time derivative expression in \eqref{eq:time-deri-maxwell} and the profile expression, we can write \eqref{eq:aim-scattering-maxwell} as
\begin{align}\label{eq:s-maxwell-mid}
	\sum_{k_1,k_2 \in \Z}\int_{t_1}^{t_2} \rho_k(\xi) \int_{\R^3} e^{isq_\Theta(\xi,\eta)}|\xi|^{-\frac12} \bra{\wh{\pko\phi_\theo}(s,\eta), \al_\mu  \wh{\pkt\phi_\thet}(s,\xi+\eta)}  d\eta ds.
\end{align}
Then, for the proof of \eqref{eq:aim-scattering-maxwell} over the regime \eqref{eq:restriction-mid}, it suffices to show that
\begin{align}\label{eq:equiv-maxwell}
	2^J\|\eqref{eq:s-maxwell-mid}\|_{L_\xi^2} \les \ve_1 2^{-\de m} 2^{-k-5H(2)\de k} \bra{2^{k}}^{-N(1)-5}.
\end{align}
By the symmetry between $\psi_\theo$ and $\psi_\thet$, we assume that $\min(k_1,k_2)= k_1$.

Let us handle first the case $2^{k_1} \le 2^{-\frac9{10} m}$.  Then, by \eqref{eq:zero-vec-1}, one gets
\begin{align*}
	\|\eqref{eq:s-maxwell-mid}\|_{L_\xi^2} &\les \sum_{2^{k_1} \le 2^{-\frac9{10} m}}2^m2^{-\frac k2} 2^{3k_1} \|\wh{\pko \phi_\theo}(s)\|_{L_\xi^\infty}\|\pkt \phi_\thet(s)\|_{L^2} \\
	&\les \ve_1^2 2^{-\frac54 m}2^{-10H(2)\de m + H(0)\de m} 2^{-\frac k2} \bra{2^k}^{-N(1)+5}.
\end{align*}
This gives us the desired bound. If $2^{\max(k_1,k_2)} \ge 2^{\frac m{50}}$, then we have the frequency relation $2^k \ll 2^{k_1} \sim 2^{k_2}$ from \eqref{eq:restriction-mid}. Using \eqref{eq:high} together with the above summation taken over $2^{\max(k_1,k_2)} \ge 2^{\frac m{50}}$, we obtain \eqref{eq:equiv-maxwell}.

Now it remains to handle the case $  2^{k_1},2^{k_2} \in [2^{-\frac9{10} m}, 2^{\frac m{50}}]$. To do so, we exploit the space-time resonance \eqref{eq:resonance-time-maxwell} and \eqref{eq:resonance-space-maxwell}. By the integration by parts in time, we have the decomposition of \eqref{eq:s-maxwell-mid} as follows:
\begin{align}
	& \rho_k(\xi) \int_{\R^3} \frac{e^{itq_\Theta(\xi,\eta)}}{q_\Theta(\xi,\eta)}|\xi|^{-\frac12} \bra{\wh{\pko\phi_\theo}(t,\eta), \al_\mu  \wh{\pkt\phi_\thet}(t,\xi+\eta)}  d\eta, \qquad (t= t_1,t_2) \label{eq:s-mid-1}\\
	&\int_{t_1}^{t_2} \rho_k(\xi) \int_{\R^3} \frac{e^{isq_\Theta(\xi,\eta)}}{q_\Theta(\xi,\eta)}|\xi|^{-\frac12}  \bra{\p_s \wh{\pko\phi_\theo}(s,\eta), \al_\mu  \wh{\pkt\phi_\thet}(s,\xi+\eta)}  d\eta ds,  \label{eq:s-mid-2}
\end{align}
and the symmetric term with time derivative falling on $\wh{\pkt\phi_\thet}$. Let us treat \eqref{eq:s-mid-1} first. If $2^{k_1} \le 2^{-\frac 1{10}m}$, then we have the relation $2^{k_1} \ll 2^k \sim 2^{k_2}$. Then, by \eqref{eq:decay-dirac} and \eqref{eq:high}, one can obtain
\begin{align*}
	\sum_{\substack{2^{k_1} \le 2^{-\frac m{10}}\\2^{k_1} \ll 2^k \sim 2^{k_2}}}	\|\eqref{eq:s-mid-1}\|_{L_\xi^2} &\les \sum_{\substack{2^{k_1} \le 2^{-\frac m{10}}\\2^{k_1} \ll 2^k \sim 2^{k_2}}} \|\pko \psi_\theo\|_{L^\infty}\|\pkt\psi_\thet\|_{L^2}\\
	&\les \sum_{\substack{2^{k_1} \le 2^{-\frac 1{10}m}\\2^{k_1} \ll 2^k \sim 2^{k_2}}} \ve_1^2 2^{-m + 4H(1)\de m+ \frac m{200}}  2^{-k} 2^{\frac {k_1}2}  \bra{2^{k_2}}^{-N(0)}\\
	&\les \ve_1^2 2^{-m+ 4H(1)\de m - \frac9{200} m} 2^{-k}\bra{2^k}^{-N(1)+5}.
\end{align*}
On the other hand, if $2^{k_1} \in [ 2^{-\frac m{10}}, 2^{\frac m{50}}]$,  then  by \eqref{eq:zero-vec-2}, we see that
\begin{align*}
	\|e^{-\theo it\bra{D}}\phi_\theo^{\le J_0,k_1}\|_{L^\infty} &\les \ve_1 \bra{t}^{-\frac32} 2^{-\frac {k_1}2 } \bra{2^{k_1}}^{-N(1)},\\
	\|\phi_\theo^{> J_0,k_1}\|_{L^2} &\les \ve_1 \bra{t}^{-1+H(1)\de}  \bra{2^{k_1}}^{-N(1)-1},
\end{align*}
where let $J_0$ be an integer satisfying $2^{J_0} = c_0\bra{t}2^{k_1} $ for some fixed $0 < c_0\ll 1$.
From the above bounds and the restriction $k _1 \le k_2$ under the regime \eqref{eq:restriction-mid} it follows that
\begin{align*}
	\sum_{2^{k_1} \in [ 2^{-\frac m{10}}, 2^{\frac m{50}}]}	\|\eqref{eq:s-mid-1}\|_{L_\xi^2} &\les \sum_{2^{k_1} \in [ 2^{-\frac m{10}}, 2^{\frac m{50}}]}2^{-\frac32 k} 2^{2H(1)\de m} \|e^{-\theta it\bra{D}}\phi_\theo^{\le J_0,k_2}\|_{L^\infty} \|\pkt \psi_\thet\|_{L^2}\\
	&\les \sum_{2^{k_1} \in [ 2^{-\frac m{10}}, 2^{\frac m{50}}]} \ve_1^2 2^{-\frac32m + 4H(1)\de m+\frac m{200}} 2^{-k} 2^{-\frac {k_1}2}   \bra{2^{k_1}}^{-N(1)}\bra{2^{k_2}}^{-N(0)}\\
	&\les \ve_1^2 2^{-\frac54 m} 2^{-k} \bra{2^k}^{-N(1)+5}
\end{align*}
and
\begin{align*}
	\sum_{2^{k_1} \in [ 2^{-\frac m{10}}, 2^{\frac m{50}}]}	\|\eqref{eq:s-mid-1}\|_{L_\xi^2} &\les \sum_{2^{k_1} \in [ 2^{-\frac m{10}}, 2^{\frac m{50}}]}2^{-\frac32 k} 2^{2H(1)\de m}  \|\phi_\theo^{> J_0,k_2}\|_{L^2} \|\pkt \psi_\thet\|_{L^\infty}\\
	&\les \sum_{2^{k_1} \in [ 2^{-\frac m{10}}, 2^{\frac m{50}}]} \ve_1^2 2^{-2m + 10H(1)\de m + \frac m{200}} 2^{-k}  2^{\frac{k_2}2}  \bra{2^{k_1}}^{-N(1)-1}\bra{2^{k_2}}^{-N(1)+2}\\
	&\les \ve_1^2 2^{-\frac32m} 2^{-k} \bra{2^k}^{-N(1)+5}.
\end{align*}
These lead us to \eqref{eq:equiv-maxwell}. The estimates for \eqref{eq:s-mid-2} can be obtained by \eqref{eq:esti-non-dirac-time} similarly to those for \eqref{eq:s-mid-1}.

\section*{Acknowledgements}
Y. Cho was supported by the National Research Foundation of Korea(NRF) grant
funded by the Korea government(MSIT) (RS-2024-00333393) and K. Lee was supported
in part by NRF-2022R1I1A1A0105640813 and NRF-2019R1A5A102832422,
the National Research Foundation of Korea(NRF) grant funded by the Korea government (MOE) and (MSIT), respectively.

\bibliographystyle{plain}
\bibliography{ReferencesKiyeon}

\begin{thebibliography}{10}

\bibitem{bemaupoup1998}
Philippe Bechouche, Norbert~J. Mauser, and Fr\'ed\'eric Poupaud.
\newblock ({S}emi)-nonrelativistic limits of the {D}irac equation with external
  time-dependent electromagnetic field.
\newblock {\em Comm. Math. Phys.}, 197(2):405--425, 1998.

\bibitem{book:bjorken-drell}
James~D. Bjorken and Sidney~D. Drell.
\newblock {\em {R}elativistic quamtum mechanics}.
\newblock New York:McGraw-Hill, 1964.

\bibitem{bour1996}
Nikolaos Bournaveas.
\newblock Local existence for the {M}axwell-{D}irac equations in three space
  dimensions.
\newblock {\em Communications in Partial Differential Equations},
  21(5-6):693--720, 1996.

\bibitem{canher2018-analpde}
Timothy Candy and Sebastian Herr.
\newblock Transference of bilinear restriction estimates to quadratic variation
  norms and the {D}irac-{K}lein-{G}ordon system.
\newblock {\em Anal. PDE}, 11(5):1171--1240, 2018.

\bibitem{cankaulin2019}
Timothy Candy, Christopher Kauffman, and Hans Lindblad.
\newblock Asymptotic behavior of the {M}axwell-{K}lein-{G}ordon system.
\newblock {\em Comm. Math. Phys.}, 367(2):683--716, 2019.

\bibitem{CKLY2022}
Yonggeun Cho, Soonsik Kwon, Kiyeon Lee, and Changhun Yang.
\newblock {The modified scattering for Dirac equations of scattering--critical
  nonlinearity}.
\newblock {\em Advances in Differential Equations}, 29(3/4):179 -- 222, 2024.

\bibitem{cloos}
Cai~Constantin Cloos.
\newblock On the long-time behavior of the three-dimensional {D}irac-{M}axwell
  equation with zero magnetic field.
\newblock {\em Bielefeld: Universit\"at Bielefeld}, pages 1--127, 2020.

\bibitem{anfosel2010}
Piero D'Ancona, Damiano Foschi, and Sigmund Selberg.
\newblock Null structure and almost optimal local well-posedness of the
  {M}axwell-{D}irac system.
\newblock {\em American Journal of Mathematics}, 132(3):771--839, 2010.

\bibitem{ansel}
Piero D'Ancona and Sigmund Selberg.
\newblock Global well-posedness of the {M}axwell-{D}irac system in two space
  dimensions.
\newblock {\em Journal of Functional Analysis}, 260(8):2300--2365, 2011.

\bibitem{deiopau2017}
Yu~Deng, Alexandru~D. Ionescu, Beno\^it Pausader, and Fabio Pusateri.
\newblock Global solutions of the gravity-capillary water-wave system in three
  dimensions.
\newblock {\em Acta Math.}, 219(2):213--402, 2017.

\bibitem{dolimayu2024}
Shijie Dong, Kuijie Li, Yue Ma, and Xu~Yuan.
\newblock {Global behavior of small data solutions for the 2D
  Dirac-Klein-Gordon system}.
\newblock {\em To appear in Trans. Amer. Math. Soc.}

\bibitem{dowya2024}
Shijie Dong and Zoe Wyatt.
\newblock {Asymptotic stability for the Dirac-Klein-Gordon system in two space
  dimensions}.
\newblock {\em To appear in Ann. Inst. H. Poincar\'e{} C Anal. Non Lin\'eaire}.

\bibitem{fangwangyang2021}
Allen Fang, Qian Wang, and Shiwu Yang.
\newblock Global solution for massive {M}axwell-{K}lein-{G}ordon equations with
  large {M}axwell field.
\newblock {\em Ann. PDE}, 7(1):Paper No. 3, 69, 2021.

\bibitem{flasimon1997}
Mosh\'{e} Flato, Jacques C.~H. Simon, and Erik Taflin.
\newblock Asymptotic completeness, global existence and the infrared problem
  for the {M}axwell-{D}irac equations.
\newblock {\em Mem. Amer. Math. Soc.}, 127(606):x+311, 1997.

\bibitem{gav2019}
Cristian Gavrus.
\newblock Global well-posedness for the massive {M}axwell-{K}lein-{G}ordon
  equation with small critical {S}obolev data.
\newblock {\em Ann. PDE}, 5(1):Paper No. 10, 101, 2019.

\bibitem{gaoh}
Cristian Gavrus and Sung-Jin Oh.
\newblock Global well-posedness of high dimensional {M}axwell-{D}irac for small
  critical data.
\newblock {\em Mem. Amer. Math. Soc.}, 264(1279):v+94, 2020.

\bibitem{geor1991}
Vladimir Georgiev.
\newblock Small amplitude solutions of the maxwell-dirac equations.
\newblock {\em Indiana Univ. Math. J.}, 40:845--883, 1991.

\bibitem{gemasha2008}
Pierre Germain, Nader Masmoudi, and Jalal Shatah.
\newblock {Global Solutions for 3D Quadratic {S}chr\"odinger Equations}.
\newblock {\em International Mathematics Research Notices}, 2009(3):414--432,
  12 2008.

\bibitem{gemasha2012-jmpa}
Pierre Germain, Nader Masmoudi, and Jalal Shatah.
\newblock Global solutions for 2{D} quadratic {S}chr\"odinger equations.
\newblock {\em J. Math. Pures Appl.}, 97(5):505--543, 2012.

\bibitem{gemasha2012-annals}
Pierre Germain, Nader Masmoudi, and Jalal Shatah.
\newblock Global solutions for the gravity water waves equation in dimension 3.
\newblock {\em Ann. of Math. (2)}, 175(2):691--754, 2012.

\bibitem{gross1966}
Leonard Gross.
\newblock The {C}auchy problem for the coupled {M}axwell and {D}irac equations.
\newblock {\em Communications on Pure and Applied Mathematics}, 19:1--15, 1966.

\bibitem{guoiopau2016}
Yan Guo, Alexandru~D. Ionescu, and Beno\^it Pausader.
\newblock Global solutions of the {E}uler-{M}axwell two-fluid system in 3{D}.
\newblock {\em Ann. of Math. (2)}, 183(2):377--498, 2016.

\bibitem{gunakatsa2009}
Stephen Gustafson, Kenji Nakanishi, and Tai-Peng Tsai.
\newblock Scattering theory for the {G}ross-{P}itaevskii equation in three
  dimensions.
\newblock {\em Commun. Contemp. Math.}, 11(4):657--707, 2009.

\bibitem{hapautzvvis2015}
Zaher Hani, Beno\^it Pausader, Nikolay Tzvetkov, and Nicola Visciglia.
\newblock Modified scattering for the cubic {S}chr\"odinger equation on product
  spaces and applications.
\newblock {\em Forum Math. Pi}, 3:e4, 63, 2015.

\bibitem{he2021}
Lili He.
\newblock Scattering from infinity of the {M}axwell {K}lein {G}ordon equations
  in {L}orenz gauge.
\newblock {\em Comm. Math. Phys.}, 386(3):1747--1801, 2021.

\bibitem{herrifrim2024}
Sebastian Herr, Mihaela Ifrim, and Martin Spitz.
\newblock {Modified scattering for the three dimensional Maxwell-Dirac system}.
\newblock {\em Preprint, arXiv:2406.02460}, 2024.

\bibitem{huhoh2016}
Hyungjin Huh and Sung-Jin Oh.
\newblock {Low regularity solutions to the Chern-Simons-Dirac and the
  Chern-Simons-Higgs equations in the Lorenz gauge}.
\newblock {\em Communications in Partial Differential Equations},
  41(3):375--397, 2016.

\bibitem{iopau2019}
Alexandru~D. Ionescu and Beno\^{i}t Pausader.
\newblock On the global regularity for a wave-{K}lein-{G}ordon coupled system.
\newblock {\em Acta Math. Sin. (Engl. Ser.)}, 35(6):933--986, 2019.

\bibitem{book:iopau2022}
Alexandru~D. Ionescu and Beno\^{i}t Pausader.
\newblock {\em The {E}instein-{K}lein-{G}ordon coupled system: global stability
  of the {M}inkowski solution}, volume 213 of {\em Annals of Mathematics
  Studies}.
\newblock Princeton University Press, Princeton, NJ, 2022.

\bibitem{klaiwangyang2020}
Sergiu Klainerman, Qian Wang, and Shiwu Yang.
\newblock Global solution for massive {M}axwell-{K}lein-{G}ordon equations.
\newblock {\em Comm. Pure Appl. Math.}, 73(1):63--109, 2020.

\bibitem{krieluhr2015}
Joachim Krieger and Jonas L\"{u}hrmann.
\newblock Concentration compactness for the critical {M}axwell-{K}lein-{G}ordon
  equation.
\newblock {\em Ann. PDE}, 1(1):Art. 5, 208, 2015.

\bibitem{kristertata2015-duke}
Joachim Krieger, Jacob Sterbenz, and Daniel Tataru.
\newblock {Global well-posedness for the Maxwell-Klein-Gordon equation in $4+1$
  dimensions: Small energy}.
\newblock {\em Duke Mathematical Journal}, 164(6):973--1040, 2015.

\bibitem{lee2023}
Kiyeon Lee.
\newblock Scattering results for the (1+4) dimensional massive maxwell-dirac
  system under lorenz gauge condition.
\newblock {\em Preprint, arXiv:2312.13621}, 2023.

\bibitem{masnaka2003-imrn}
Nader Masmoudi and Kenji Nakanishi.
\newblock {Nonrelativistic limit from Maxwell-Klein-Gordon and Maxwell-Dirac to
  Poisson-Schr\"odinger}.
\newblock {\em International Mathematics Research Notices}, 2003(13):697--734,
  2003.

\bibitem{masnaka2003-cmp}
Nader Masmoudi and Kenji Nakanishi.
\newblock {Uniqueness of Finite Energy Solutions for Maxwell-Dirac and
  Maxwell-Klein-Gordon Equations}.
\newblock {\em Comm. Math. Phys.}, 243:123--136, 2003.

\bibitem{ohtataru2016-inven}
Sung-Jin Oh and Daniel Tataru.
\newblock Global well-posedness and scattering of the {$(4+1)$}-dimensional
  {M}axwell-{K}lein-{G}ordon equation.
\newblock {\em Invent. Math.}, 205(3):781--877, 2016.

\bibitem{ohtata2018}
Sung-Jin Oh and Daniel Tataru.
\newblock Energy dispersed solutions for the {$(4+1)$}-dimensional
  {M}axwell-{K}lein-{G}ordon equation.
\newblock {\em Amer. J. Math.}, 140(1):1--82, 2018.

\bibitem{ouyang2023}
Zhimeng Ouyang.
\newblock Modified wave operators for the {W}ave-{K}lein-{G}ordon system.
\newblock {\em Adv. Math.}, 423:Paper No. 109042, 84, 2023.

\bibitem{psa2005}
Maria Psarelli.
\newblock {Maxwell--Dirac equations in four-dimensional Minkowski space}.
\newblock {\em Communications in Partial Differential Equations},
  30(1-2):97--119, 2005.

\bibitem{pusa}
Fabio Pusateri.
\newblock Modified scattering for the {B}oson star equation.
\newblock {\em Comm. Math. Phys.}, 332(3):1203--1234, 2014.

\bibitem{rodtao2004}
Igor Rodnianski and Terence Tao.
\newblock Global regularity for the {M}axwell-{K}lein-{G}ordon equation with
  small critical {S}obolev norm in high dimensions.
\newblock {\em Comm. Math. Phys.}, 251(2):377--426, 2004.

\bibitem{book:schwartz}
Matthew~D. Schwartz.
\newblock {\em {Quantum Field Theory and the Standard Model}}.
\newblock Cambridge: Cambridge university Press, 2013.

\bibitem{seltes2010}
Sigmund Selberg and Achenef Tesfahun.
\newblock Finite-energy global well-posedness of the {M}axwell-{K}lein-{G}ordon
  system in {L}orenz gauge.
\newblock {\em Comm. Partial Differential Equations}, 35(6):1029--1057, 2010.

\bibitem{book:thaller}
Bernd Thaller.
\newblock {\em {The Dirac Equation}}.
\newblock Springer, 1992.

\bibitem{yang2018}
Shiwu Yang.
\newblock On the global behavior of solutions of the {M}axwell-{K}lein-{G}ordon
  equations.
\newblock {\em Adv. Math.}, 326:490--520, 2018.

\bibitem{yangyu2019}
Shiwu Yang and Pin Yu.
\newblock On global dynamics of the {M}axwell-{K}lein-{G}ordon equations.
\newblock {\em Camb. J. Math.}, 7(4):365--467, 2019.

\end{thebibliography}

\medskip

\end{document}